\numberwithin{equation}{section}
\theoremstyle{plain} 
\newtheorem{thm}{Theorem}[section] 
\newtheorem{prop}[thm]{Proposition} 
\newtheorem{lemma}[thm]{Lemma} 
\newtheorem{cor}[thm]{Corollary}
\theoremstyle{definition} 
\newtheorem{defin}{Definition}[section] 
\newtheorem{remark}[thm]{Remark}
\newcommand{\C}{{\mathbb C}}
\newcommand{\p}{{\mathbb P}}
\newcommand{\z}{{\mathbb Z}} 
\newcommand{\pj}{{{\mathbb P}^1}}
\newcommand{\pii}{{{\mathbb P}^2}}
\newcommand{\piii}{{{\mathbb P}^3}}
\newcommand{\piv}{{{\mathbb P}^4}}
\newcommand{\pv}{{{\mathbb P}^5}}  
\newcommand{\pvi}{{{\mathbb P}^6}}
\newcommand{\sce}{\mathscr{E}}
\newcommand{\scf}{\mathscr{F}} 
\newcommand{\scg}{\mathscr{G}}
\newcommand{\sco}{\mathscr{O}} 
\newcommand{\sch}{\mathscr{H}}
\newcommand{\sci}{\mathscr{I}} 
\newcommand{\sck}{\mathscr{K}}
\newcommand{\tH}{\text{H}} 
\newcommand{\h}{\text{h}}
\newcommand{\izo}{\overset{\sim}{\rightarrow}} 
\newcommand{\ra}{\rightarrow} 
\newcommand{\lra}{\longrightarrow} 
\newcommand{\xra}{\xrightarrow}  
\newcommand{\vb}{\, \vert \, } 
\newcommand{\prim}{{\, \prime}} 
\newcommand{\secund}{{\prime \prime}}
\newcommand{\Ker}{\text{Ker}\, }
\newcommand{\Cok}{\text{Coker}\, }
\newcommand{\e}{\varepsilon}
\begin{document}

\title[Vector bundles with $c_1 = 5$ on $\mathbb{P}^n$]{Globally generated 
vector bundles with $c_1 = 5$ on $\mathbb{P}^n$, $n \geq 4$}

\author[Anghel]{Cristian~Anghel} 
\address{Institute of Mathematics ``Simion Stoilow'' of the Romanian Academy, 
         P.O. Box 1-764,   
         RO--014700, Bucharest, Romania} 
\email{Cristian.Anghel@imar.ro}   

\author[Coand\u{a}]{Iustin~Coand\u{a}} 
\email{Iustin.Coanda@imar.ro} 

\author[Manolache]{Nicolae~Manolache} 
\email{Nicolae.Manolache@imar.ro} 

\subjclass[2010]{Primary: 14J60; Secondary: 14H50, 14N25} 

\keywords{projective space, vector bundle, globally generated sheaf} 


\begin{abstract}
We complete the classification of globally generated vector bundles with 
small $c_1$ on projective spaces by treating the case $c_1 = 5$ on 
$\mathbb{P}^n$, $n \geq 4$ (the case $c_1 \leq 3$ has been considered by 
Sierra and Ugaglia, while the cases $c_1 = 4$ on any projective space and 
$c_1 = 5$ on $\mathbb{P}^2$ and $\mathbb{P}^3$ have been studied in two of 
our previous papers). It turns out that there are very few indecomposable 
bundles of this kind: besides some obvious examples there are, roughly 
speaking, only the (first twist of the) rank 5 vector bundle which is the 
middle term of the monad defining the Horrocks bundle of rank 3 on 
$\mathbb{P}^5$, and its restriction to $\mathbb{P}^4$. We recall, in an 
appendix, from our preprint [arXiv:1805.11336], the main results allowing the 
classification of globally generated vector bundles with $c_1 = 5$ on 
$\mathbb{P}^3$. Since there are many such bundles, a large part of the main 
body of the paper is occupied with the proof of the fact that, except for the 
simplest ones, they do not extend to $\mathbb{P}^4$ as globally generated 
vector bundles.    
\end{abstract}

\maketitle 
\tableofcontents

\section*{Introduction} 

We classify, in this paper, the globally generated vector bundles with first 
Chern class $c_1 = 5$ on the $n$-dimensional projective space $\p^n$ (over 
an algebraically closed field $k$ of characteristic 0) for $n \geq 4$. This 
completes the classification of globally generated vector bundles with 
$c_1 \leq 5$ on projective spaces. Indeed, Sierra and Ugaglia \cite{su}, 
\cite{su2} solved the case $c_1 \leq 3$, while we treated the cases $c_1 = 4$ 
on any projective space and $c_1 = 5$ on $\pii$ in \cite{acm1} and the case 
$c_1 = 5$ on $\piii$ in \cite{acm3}. Moreover, Chiodera and Ellia \cite{ce} 
noticed that there is no indecomposable globally generated rank 2 vector 
bundle with $c_1 = 5$ on $\piv$. Besides their own interest, these 
classification results are useful in attacking other geometric problems$\, :$ 
see, for example, the paper of Fania and Mezzetti \cite{fm}. 

Our main result is the following$\, :$ 

\begin{thm}\label{T:main} 
Let $E$ be an indecomposable globally generated vector bundle with $c_1 = 5$  
on $\p^n$, $n \geq 4$, such that ${\fam0 H}^i(E^\vee) = 0$, $i = 0,\, 1$. Then 
one of the following holds$\, :$ 
\begin{enumerate}
\item[(i)] $E \simeq \sco_{\p^n}(5)$$\, ;$ 
\item[(ii)] $E \simeq P(\sco_{\p^n}(5))$$\, ;$ 
\item[(iii)] $n = 4$ and one has an exact sequence$\, :$ 
\[
0 \lra \Omega_\piv^3(3) \lra \Omega_\piv^2(2) \oplus \Omega_\piv^1(1) \lra 
E(-1) \lra 0\, ; 
\]
\item[(iv)] $n = 4$ and one has an exact sequence$\, :$ 
\[
0 \lra E(-1) \lra \Omega_\piv^2(2) \oplus \Omega_\piv^1(1) \lra \sco_\piv 
\lra 0\, ; 
\]
\item[(v)] $n = 5$ and one has an exact sequence$\, :$ 
\[
0 \lra \Omega_\pv^4(4) \lra \Omega_\pv^2(2) \lra E(-1) \lra 0\, ; 
\]
\item[(vi)] $n = 5$ and one has an exact sequence$\, :$ 
\[
0 \lra E(-1) \lra \Omega_\pv^2(2) \lra \sco_\pv \lra 0\, ; 
\]
\item[(vii)] $n = 6$ and $E \simeq \Omega_\pvi^1(2)$$\, ;$ 
\item[(viii)] $n = 6$ and $E \simeq \Omega_\pvi^4(5)$. 
\end{enumerate}
\end{thm}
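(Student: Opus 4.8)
The plan is to reconstruct $E$ from its cohomology table by means of the Beilinson spectral sequence, after first constraining that table using global generation, the two vanishing hypotheses, and an induction on $n$ via hyperplane restriction. I would normalise to the twist $F := E(-1)$ that already figures in the conclusions. Global generation of $E$ bounds its Castelnuovo--Mumford regularity and kills the higher cohomology of its positive twists, while Serre duality rewrites the hypotheses $H^0(E^\vee) = 0$ and $H^1(E^\vee) = 0$ as the vanishing of $H^n(E(-n-1))$ and $H^{n-1}(E(-n-1))$, that is, of the two deepest cohomology groups of $E$. The hypothesis $H^0(E^\vee) = 0$ moreover guarantees that $E$ has no trivial direct summand, and $H^1(E^\vee) = \mathrm{Ext}^1(E,\sco_{\p^n}) = 0$ forces every extension of $E$ by $\sco_{\p^n}$ to split; together these remove the reducible degenerations and make the eventual monad minimal.

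Next I would restrict $E$ to a general hyperplane $H \simeq \p^{n-1}$ and argue by induction on $n$, the base of the induction being the classification of globally generated bundles with $c_1 = 5$ on $\p^2$ in \cite{acm1} and on $\p^3$ recalled in the appendix. From the restriction sequence $0 \to E(-1) \to E \to E_H \to 0$, with $E_H := E|_H$, one transports the cohomological constraints between $E$ and $E_H$; since $E_H$ is again globally generated with $c_1 = 5$ it must occur in the complete classification in dimension $n-1$. A direct numerical analysis of which $\Omega^p(p)$-monads the resulting (collapsed) cohomology table can support then shows that nontrivial bundles beyond $\sco_{\p^n}(5)$ and $P(\sco_{\p^n}(5))$ survive only for $n \le 6$, which is what confines the exotic cases (iii)--(viii) to $\p^4$, $\p^5$ and $\p^6$.

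The step I expect to dominate the argument is the \emph{lifting} problem: deciding, for each admissible $E_H$ on $\p^{n-1}$, whether it is the restriction of a globally generated vector bundle on $\p^n$. This is precisely the phenomenon highlighted in the abstract, namely that most globally generated bundles with $c_1 = 5$ on $\p^3$ do not extend to $\p^4$, and it is the hardest step because the $\p^3$-list is long. I would control a putative lift through the comparison of the cohomology of $E$ with that of $E_H$ and its twists, the obstructions living in suitable $H^1$- and $\mathrm{Ext}^1$-groups built from $E_H$; one must then impose the two further requirements that the lift be locally free and remain globally generated. These local-freeness and positivity constraints are what eliminate all but the bundles in the list, and checking them case by case over the $\p^3$-classification is where the bulk of the technical work lies.

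Finally, for the bundles that do survive I would feed the now-determined cohomology table of $F = E(-1)$ into the Beilinson spectral sequence (with $-n \le p \le 0$)
\[
E_1^{p,q} = H^q\bigl(E(-1+p)\bigr) \otimes \Omega_{\p^n}^{-p}(-p) \;\Longrightarrow\;
\begin{cases} E(-1), & p+q = 0,\\ 0, & \text{otherwise.}\end{cases}
\]
Because the hypotheses have collapsed the table to only a few nonzero entries, the sequence degenerates to a short two-term monad whose terms are exactly the bundles $\Omega^p_{\p^n}(p)$ occurring in (iii)--(vi); reading off the surviving differential produces the stated exact sequences, the $\p^5$ case recovering the first twist of the rank $5$ bundle that is the middle term of the monad defining the rank $3$ Horrocks bundle. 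On $\p^6$ the table collapses to a single column, so the monad reduces to one term and yields the isomorphisms $E \simeq \Omega_{\pvi}^1(2)$ and $E \simeq \Omega_{\pvi}^4(5)$, completing the classification.
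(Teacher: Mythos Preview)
Your outline matches the paper's strategy at the coarsest level --- restrict to a hyperplane, use the lower-dimensional classification as the inductive input, and identify the survivors via the Beilinson monad of $E(-1)$ --- and the paper does exactly this in Sections~1--3. But several load-bearing steps are missing or understated in your sketch.

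First, you never invoke the functor $P(\ast)$ (Remark~\ref{R:p(e)}) to reduce to $c_2 \leq 12$. Without this bound the $\piii$-classification in the appendix is open-ended, and the whole induction has no finite base; the paper uses this reduction at the outset and then appeals to it again (cases (iv), (vi), (viii) arise as $P$ of cases (iii), (v), (vii)). Second, the paper does not simply restrict to a single hyperplane and climb one step at a time: the key structural reduction is Theorem~\ref{T:h0e(-2)neq0}, which restricts all the way to a fixed $3$-plane $\Pi$ and shows that if $\tH^0(E_\Pi(-2)) \neq 0$ then $E$ splits off a line bundle. This lands you directly in the $\piii$-theory regardless of $n$, and the hypothesis $\tH^0(E_\Pi(-2)) = 0$ for \emph{every} $\Pi$ is then used repeatedly in combination with the Bilinear Map Lemma (Lemma~\ref{L:bml}) to force vanishings. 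Your proposal does not isolate this condition.

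Third, and most importantly, the sentence ``the hypotheses have collapsed the table to only a few nonzero entries'' hides essentially all of the paper. Global generation does not bound the Castelnuovo--Mumford regularity of $E$ by itself, and the two vanishings $\tH^i(E^\vee)=0$ kill only two entries of the table; the remaining vanishings are won one at a time using the spectrum of the associated stable rank~$3$ bundle $G$ on $\piii$ (Remark~\ref{R:fprimstable}), Schwarzenberger's congruence and Riemann--Roch to rule out Chern classes, repeated applications of the Bilinear Map Lemma to the maps $\tH^i(E(l-1)) \overset{h}{\to} \tH^i(E(l))$ as $H$ varies, and several ad~hoc non-existence arguments (e.g.\ Subclaim~4.1 and Claim~2 in Prop.~\ref{P:c3=c2onp4} and Prop.~\ref{P:c1=5c2=11onp4}, or Cor.~\ref{C:sasakura} and Cor.~\ref{C:horrocks}, which are explicit linear-algebra computations with $\bigwedge^pV$ needed to decide which Beilinson monads actually have globally generated cohomology). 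Your description of the lifting step as ``obstructions living in suitable $H^1$- and $\mathrm{Ext}^1$-groups'' does not capture this machinery; the obstructions are not cohomological in any uniform way, and the bulk of the paper is precisely the case-by-case elimination you allude to but do not carry out.
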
   

As a matter of notation$\, :$ if $E$ is a globally generated vector bundle on 
$\p^n$, $P(E)$ denotes the dual of the kernel of the evaluation morphism 
$\tH^0(E) \otimes_k \sco_{\p^n} \ra E$. It is globally generated and has Chern 
classes $c_1(P(E)) = c_1(E)$, $c_2(P(E)) = c_1(E)^2 - c_2(E)$ etc. This 
construction allows one, when classifying globally generated vector bundles, 
to assume that $c_2(E) \leq c_1(E)^2/2$. Notice that $\Omega_\pvi^4(5) 
\simeq P(\Omega_\pvi^1(2))$ and if $E$ is the bundle from item (iii) (resp., 
(v)) of the theorem then $P(E)$ is the bundle from item (iv) (resp., (vi)). 

As for the condition $\tH^i(E^\vee) = 0$, $i = 0,\, 1$, if $E$ is a globally 
generated vector bundle on $\p^n$ then $\tH^0(E^\vee) = 0$ if and only if $E$ 
has no direct summand isomorphic to $\sco_{\p^n}$ and, in this case, 
considering the universal extension$\, :$ 
\[
0 \lra \tH^1(E^\vee)^\vee \otimes_k \sco_{\p^n} \lra {\widetilde E} \lra 
E \lra 0\, , 
\]
$\widetilde E$ is globally generated, it has the same Chern classes as $E$, 
and $\tH^i({\widetilde E}^\vee) = 0$, $i = 0,\, 1$. 

It is stricking, again, how rare are the globally generated vector 
bundles, this time with $c_1 = 5$, on higher dimensional projective spaces. 
Notice that if $E$ is the vector bundle from item (v) of the theorem then 
$E(-1)$ is the middle term of the monad defining the Horrocks bundle of 
rank 3 on $\pv$ (see \cite{ho}). 

\vskip2mm 

The proof of Theorem~\ref{T:main} uses the classification of globally 
generated vector bundles with $c_1 = 5$ on $\piii$ from our lengthy preprint 
\cite{acm3}. Fortunately, we use here only the basic principles of this 
classification and we recall everything we need, with complete proofs (except 
for one fact), in Appendix~\ref{A:c1=5onp3}. More precisely, if $F$ is a 
globally generated vector bundle of rank $\geq 3$ with $c_1 = 5$ on $\piii$ 
such that $\tH^i(F^\vee) = 0$, $i = 0,\, 1$, and if $\tH^0(F(-2)) \neq 0$ then 
$F$ admits a direct summand of the form $\sco_\piii(a)$, for some $a$ with 
$2 \leq a \leq 5$. A nine pages long proof of this fact can be found in 
\cite[Appendix~A]{acm3} and we decided to not reproduce it in the present 
paper. On the other hand, if $\tH^0(F(-2)) = 0$ then, with some exceptions 
that can be described explicitly, $F$ can be realized as an extension$\, :$ 
\[
0 \lra (\text{rk}\, F - 3)\sco_\piii \lra F \lra G(2) \lra 0\, ,
\] 
where $G$ is a \emph{stable} rank 3 vector bundle with $c_1(G) = -1$. If 
$c_2(F) \leq 12$, which we can assume using the functor $P(\ast)$ defined 
above, then $c_2(G) \leq 4$. Taking advantage of the fact that the 
intermediate cohomology of $G$ (and its twists) can be described by a numerical 
invariant called the \emph{spectrum} of $G$, one can get a description of the 
Horrocks (or, sometimes, Beilinson) monad of $F$. The hard part of the 
classification on $\piii$ is to show that the cohomology bundles of these 
monads are really globally generated but, fortunately, we do not need this 
here. For a significant application of our constructions of globally generated 
vector bundles with $c_1 = 5$ on $\piii$, see, however, \cite{acm4}. 

\vskip2mm 

As for the classification problem we are concerned with in this paper, if $E$ 
is a globally generated vector bundle with $c_1 = 5$ on $\p^n$, $n \geq 4$, 
such that $\tH^i(E^\vee) = 0$, $i = 0,\, 1$, and if $\tH^0(E_\Pi(-2)) \neq 0$ for 
some fixed 3-plane $\Pi \subset \p^n$ then we show, in Section~1, that   
$E$ has a direct summand of the 
form $\sco_{\p^n}(a)$, for some $a$ with $2 \leq a \leq 5$. The proof of this 
fact uses two lifting results from \cite[Chap.~1]{acm1} that we recall, too. 
It follows that we can concentrate only on the case where $\tH^0(E_\Pi(-2)) 
= 0$, for every 3-plane $\Pi \subset \p^n$. This turns out to be a quite 
strong restriction. 

\vskip2mm 

We classify, in Section~2, the globally generated vector bundles $E$ with 
$c_1 = 5$ and $c_2 \leq 12$ on $\piv$ such that $\tH^i(E^\vee) = 0$, $i = 0,\, 
1$, and that $\tH^0(E_H(-2)) = 0$, for every hyperplane $H \subset \piv$. We 
spend most of the time showing that, except for the simplest ones, the 
globally generated vector bundles with $c_1 = 5$ on $\piii$ do not extend to 
$\piv$ as globally generated vector bundles. 

\vskip2mm 

Finally, we describe, in Section~3, the globally generated vector bundles with 
$c_1 = 5$ on $\p^n$, $n \geq 5$. This is easier because on $\piv$ there are 
very few such bundles. 

\vskip2mm 

Unfortunately, the method used in this paper (and in the previous ones), which 
consists in classifying globally generated vector bundles on $\piii$ (the 
case of $\pii$ is special$\, :$ see Ellia \cite{e}) and then trying to 
decide which of them extend to higher dimensional projective spaces, does 
not seem to work, anymore, for $c_1 > 5$. The reason is that on $\piii$ 
there are too many globally generated vector bundles. Moreover, in order 
to achieve the classification in the case $c_1 \leq 5$, we almost exhausted 
the results about vector bundles on projective spaces, obtained by several 
authors in the period when this was a quite active domain, namely the 1970s 
and 1980s. There might be possible to classify globally generated vector 
bundles with $c_1 \leq n$ on $\p^n$ but a different approach is needed. 
Note, in this context, that Theorem~\ref{T:main} settles the case $n = 6$ 
of \cite[Conjecture~0.3]{acm1} about globally generated vector bundles with 
$c_1 < n$ on $\p^n$ (the case $n \leq 5$ was settled in \cite{acm1}).     

\vskip5mm 

\noindent 
{\bf Notation.}\quad (i) We work over an algebraically closed field $k$ of 
characteristic 0. 

(ii) If $X$ is a $k$-scheme of finite type, with structure sheaf $\sco_X$, 
and $\scf$ an $\sco_X$-module we denote its dual $\sch om_{\sco_X}(\scf , 
\sco_X)$ by $\scf^\vee$. We use, most of the time, the additive notation 
$m\scf$ for the direct sum of $m$ copies of $\scf$. Similarly for modules 
over a ring. We shall writte, however, $k^m$ instead of $mk$. 

(iii) For $X$ and $\scf$ as above, if $Y$ is a closed subscheme of $X$ we 
put $\scf_Y := \scf \otimes_{\sco_X} \sco_Y$ and identify it, if necessary, 
with the restriction $\scf \vb Y := i^\ast\scf$, where $i \colon Y \ra X$ is 
the inclusion morphism. 

(iv) We denote by $\p^n$ the projective space $\p(V)$ parametrizing the 
1-dimensional $k$-vector spaces of $V := k^{n+1}$. Its homogeneous coordinate 
ring is $S := \text{Symm}(V^\vee)$. If $e_0 , \ldots , e_n$ is the canonical 
basis of $V$ and $X_0 , \ldots , X_n$ the dual basis of $V^\vee$ then $S$ is 
isomorphic to the polynomial $k$-algebra $k[X_0 , \ldots , X_n]$. We denote 
by $S_+$ the ideal $(X_0 , \ldots , X_n)$ of $S$ and by $\underline k$ the 
graded $S$-module $S/S_+$.  

(v) If $\scf$ is a coherent $\sco_{\p^n}$-module and $i \geq 0$ an integer 
we denote by $\tH^i_\ast(\scf)$ the graded $S$-module 
$\bigoplus_{l \in \z}\tH^i(\scf(l))$ and by $\h^i(\scf)$ the dimension of 
$\tH^i(\scf)$ as a $k$-vector space.

\section{Preliminaries}\label{S:prelim} 

Our main purpose, in this section, is to show how one can reduce the 
classification of globally generated vector bundles $E$ with $c_1 = 5$ on 
$\p^n$, $n \geq 4$, to the case where $\tH^0(E_\Pi(-2)) = 0$, for every 
3-plane $\Pi \subset \p^n$. We also record some auxiliary results that are 
needed in the sequel.  

We begin by recalling, from Sierra and Ugaglia 
\cite[Lemma~1~and~Lemma~2]{su2}, two observations 
allowing one to reduce the classification of globally generated vector bundles 
$E$ on $\p^n$ to the case where $\tH^i(E^\vee) = 0$, $i = 0,\, 1$, and 
$c_2 \leq c_1^2/2$ ($c_1$, $c_2$ being the first two Chern classes of $E$). 

\begin{remark}\label{R:p(e)} 
(a) If $E$ is a globally generated vector bundle on $\p^n$ we denote,  
following Brugui\`{e}res \cite[Appendix~A]{bru}, by $P(E)$   
the dual of the kernel of the evaluation morphism $\tH^0(E) \otimes_k 
\sco_{\p^n} \ra E$. $P(E)$ is a globally generated vector bundle with the 
property that $\tH^i(P(E)^\vee) = 0$, $i = 0,\, 1$, and if $\tH^i(E^\vee) = 0$, 
$i = 0,\, 1$, then $P(P(E)) \simeq E$.  

(b) If $\phi \colon E \ra E^\prim$ is a morphism of globally generated vector 
bundles with the property that its kernel and cokernel are direct sums of 
copies of $\sco_{\p^n}$ then $P(\phi) \colon P(E^\prim) \ra P(E)$ is an 
isomorphism. \emph{Indeed}, one checks easily that the kernel and cokernel of 
$P(\phi)$ are direct sums of copies of $\sco_{\p^n}$. Since $\tH^0(P(E)^\vee) 
= 0$ it follows that $\Cok P(\phi) = 0$, that is, $P(\phi)$ is an epimorphism. 
Since $\tH^1(P(E)^\vee) = 0$ it follows that $\Ker P(\phi)$ is a direct 
summand of $P(E^\prim)$. But this implies that $\Ker P(\phi) = 0$ because 
$\tH^0(P(E^\prim)^\vee) = 0$.  

(c) The Chern classes of $P(E)$ can be 
related to the Chern classes $c_1, \, c_2,\, \ldots$ of $E$ by the 
formulae$\, :$ 
\begin{gather*} 
c_1(P(E)) = c_1\, ,\  c_2(P(E)) = c_1^2 - c_2\, ,\  c_3(P(E)) = c_3 + 
c_1(c_1^2 - 2c_2)\, ,\\
c_4(P(E)) = -c_4 + c_2^2 + 2c_1c_3 - 3c_1^2c_2 + c_1^4\, ,\  \text{etc.}  
\end{gather*} 
In particular, if $c_2 > c_1^2/2$ then $c_2(P(E)) < c_1^2/2$. 
\end{remark} 

\begin{remark}\label{R:f} 
Let $E$ be a globally generated vector bundle on $\p^n$. Consider the globally 
generated vector bundle $F := P(P(E))$. Recall, from Remark~\ref{R:p(e)}(a), 
that one has, by definition, an exact sequence$\, :$ 
\[
0 \lra P(E)^\vee \overset{u}{\lra} \tH^0(E) \otimes_k \sco_{\p^n} 
\xra{\text{ev}_E} E \lra 0\, . 
\]  
Applying the Snake Lemma to the dual of the commutative diagram$\, :$ 
\[
\SelectTips{cm}{12}\xymatrix{0\ar[r] & 
E^\vee\ar[r]^-{\text{ev}_E^\vee}\ar @{-->}[d] & 
{\tH^0(E)^\vee\otimes_k\sco_{\p^n}}\ar[r]^-{u^\vee}\ar[d]_{\tH^0(u^\vee)} & 
P(E)\ar[r]\ar @{=}[d] & 0\\ 
0\ar[r] & F^\vee\ar[r] & 
{\tH^0(P(E))\otimes_k\sco_{\p^n}}\ar[r]^-{\text{ev}_{P(E)}} & 
P(E)\ar[r] & 0} 
\]
one gets an exact sequence$\, :$ 
\[
0 \lra \tH^1(E^\vee)^\vee \otimes_k \sco_{\p^n} \lra F \lra E 
\xra{\text{ev}_{E^\vee}^\vee} \tH^0(E^\vee)^\vee \otimes_k \sco_{\p^n} \lra 0\, . 
\]
Since $E$ is globally generated, $\text{ev}_{E^\vee}^\vee$ has a right inverse 
hence $E \simeq (\tH^0(E^\vee)^\vee \otimes_k \sco_{\p^n}) \oplus Q$, where $Q$ 
is the cokernel of $\tH^1(E^\vee)^\vee \otimes_k \sco_{\p^n} \ra F$. 

(b) With the notation from (a), one has$\, :$ $F$ is globally generated, 
$\tH^i(F^\vee) = 0$, $i = 0,\, 1$, $E$ and $F$ have the same Chern classes, 
$\tH^0(F(l)) \izo \tH^0(E(l))$ for $l \leq -1$, $\tH^i_\ast(F) \izo 
\tH_\ast^i(E)$ if $1 \leq i \leq n-2$, and, if $i = n-1$ or $n$,  
$\tH^i(F(l)) \izo \tH^i(E(l))$ for $l \geq -n$. 

(c) If one has an exact sequence $0 \ra E^\prim \ra m\sco_{\p^n} \ra E \ra 0$ 
then $P(E^{\prim \vee}) \simeq F$. \emph{Indeed}, the kernel and cokernel of the 
obvious morphism $E^\prim \ra P(E)^\vee$ are direct sums of copies of 
$\sco_{\p^n}$ hence the same is true for its dual $P(E) \ra E^{\prim \vee}$. 
One applies, now, Remark~\ref{R:p(e)}(b). 
\end{remark} 

The next two results are \cite[Lemma~1.18]{acm1} and a (more practical) 
variant of \cite[Lemma~1.19]{acm1} (combined with \cite[Remark~1.20(c)]{acm1}). 

\begin{lemma}\label{L:a+p(b)} 
Let $E$ be a globally generated vector bundle on $\p^n$, $n \geq 4$, such that 
${\fam0 H}^i(E^\vee) = 0$, $i = 0,\, 1$, and $H \subset \p^n$ a fixed 
hyperplane. Let $F$ be the vector bundle $P(P(E_H))$ on $H \simeq \p^{n-1}$ 
$($see Remark~\emph{\ref{R:f}}$)$. If $F \simeq A \oplus P(B)$, with 
$A$ and $B$ direct sums of line bundles on $H$ such that ${\fam0 H}^0(A^\vee) = 
0$ and ${\fam0 H}^0(B^\vee) = 0$, then $E \simeq {\widehat A} \oplus 
P(\widehat B)$, where $\widehat A$ and $\widehat B$ are direct sums of line 
bundles on $\p^n$ lifting $A$ and $B$, respectively. 
\qed 
\end{lemma}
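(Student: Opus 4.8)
The plan is to restrict everything to the hyperplane $H$ and to lift the resulting splitting back to $\p^n$ using the cohomological vanishing built into the construction of $F = P(P(E_H))$. First I would record, via Remark~\ref{R:f}, that $F$ is globally generated, that $\tH^i(F^\vee) = 0$ for $i = 0,\, 1$, and that $F$ shares the Chern classes and the relevant cohomology of $E_H$. The hypothesis $F \simeq A \oplus P(B)$ then gives a splitting of $E_H$ up to direct sums of $\sco_H$: indeed, since $A$ is a direct sum of line bundles with $\tH^0(A^\vee) = 0$ and $P(B)$ satisfies $\tH^i(P(B)^\vee) = 0$ for $i = 0,\, 1$, one recovers $E_H$ from $F$ by the universal-extension mechanism of Remark~\ref{R:f}(a), and the vanishing $\tH^i(E_H^\vee) = 0$ inherited from $\tH^i(E^\vee) = 0$ forces the extension to be trivial, so in fact $E_H \simeq A \oplus P(B)$ directly.

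Next I would lift the line-bundle summands. The direct summands of $A$ are line bundles $\sco_H(a_j)$ on $H \simeq \p^{n-1}$; since $n - 1 \geq 3$, each extends uniquely to $\sco_{\p^n}(a_j)$, so I set $\widehat A := \bigoplus_j \sco_{\p^n}(a_j)$, and similarly $\widehat B$ lifting $B$. The point is to produce a morphism $P(\widehat B) \oplus \widehat A \to E$ (or the reverse) whose restriction to $H$ is the given isomorphism, and then to argue that it is itself an isomorphism. For the line-bundle part this is governed by the restriction maps $\tH^0(E(-a_j)) \to \tH^0(E_H(-a_j))$ and their duals; for the $P(\widehat B)$ part one uses that $P$ of a globally generated bundle is built from $\tH^0$, and the surjectivity of restriction on global sections (valid because $\tH^1(E(-1)) $ and analogous groups vanish for a globally generated bundle with the stated cohomology) lets one lift generating sections from $H$ to $\p^n$.

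The main obstacle, and the reason two lifting results from \cite[Chap.~1]{acm1} are invoked just before the statement, is precisely this lifting step: having a splitting on $H$ does not automatically produce a splitting on $\p^n$, and one must check that the natural map on $\p^n$ restricting to the given isomorphism on $H$ has kernel and cokernel that are direct sums of copies of $\sco_{\p^n}$, so that Remark~\ref{R:p(e)}(b) applies and forces these to vanish. Concretely, I would first treat $\widehat A$ by lifting the corresponding subbundle or quotient using $\tH^i(E^\vee) = 0$ for $i = 0,\, 1$ together with the elementary cohomology of line bundles on $\p^n$, splitting off $\widehat A$ as a genuine direct summand of $E$; then I would apply the $P(\ast)$ functor to the complementary bundle, reduce to showing $P(\text{complement}) \simeq \widehat B \oplus (\text{trivial})$, and lift $B$ by the same mechanism. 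At each stage the key is to exhibit a morphism restricting to the known one on $H$ and to verify, using the vanishing of the appropriate $\tH^1$ of twists, that the obstruction to lifting vanishes; invoking Remark~\ref{R:p(e)}(b) then upgrades ``isomorphism on $H$ modulo trivial summands'' to ``isomorphism on $\p^n$,'' yielding $E \simeq \widehat A \oplus P(\widehat B)$.
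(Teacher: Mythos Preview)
The paper does not give its own proof of this lemma: it is quoted verbatim from \cite[Lemma~1.18]{acm1} and closed with a \emph{qed}. So there is no in-paper argument to compare your proposal against, only the overall methodology visible in the companion Lemma~\ref{L:a+p(b)+omega(2)}, whose proof the paper \emph{does} include.

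Your proposal contains a genuine gap. You assert that ``the vanishing $\tH^i(E_H^\vee) = 0$ inherited from $\tH^i(E^\vee) = 0$ forces the extension to be trivial, so in fact $E_H \simeq A \oplus P(B)$ directly.'' But $\tH^i(E_H^\vee)$ is \emph{not} inherited from $\tH^i(E^\vee)$: from the restriction sequence $0 \to E^\vee(-1) \to E^\vee \to E_H^\vee \to 0$ one gets $\tH^0(E_H^\vee) \simeq \tH^1(E^\vee(-1))$ and $\tH^1(E_H^\vee) \hookrightarrow \tH^2(E^\vee(-1))$, neither of which is assumed to vanish. The paper makes this explicit in Remark~\ref{R:generalities}(c), where $t := \h^0(E_H^\vee) = \h^1(E^\vee(-1))$ and $s := \h^1(E_H^\vee)$ are allowed to be positive. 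Consequently $E_H$ need not equal $F$; in general $E_H \simeq t\sco_H \oplus Q$ with $Q$ a quotient of $F$ by $s\sco_H$. Your first step therefore collapses, and the rest of the outline, which is built on having $E_H \simeq A \oplus P(B)$ and then lifting morphisms componentwise, never gets off the ground.

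If you look at the paper's proof of Lemma~\ref{L:a+p(b)+omega(2)} you will see the shape of the missing argument. One works along a saturated flag and \emph{proves} the needed vanishings (e.g.\ $\tH^1(E_i^\vee) = 0$ by descending induction, using $\tH^2_\ast(E_i^\vee) = 0$) rather than assuming them; only after establishing such vanishings can one identify $E_H$ (or $E_\Pi$) up to trivial summands and then lift via an auxiliary extension. The hypothesis $F \simeq A \oplus P(B)$ is what makes these cohomological computations go through (both summands have vanishing intermediate cohomology on $H$), but this has to be exploited systematically, not bypassed. Your later paragraphs gesture at lifting via $\tH^1$-vanishing of twists, but the specific groups you would need are exactly the ones you incorrectly took for granted at the start.
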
 

\begin{lemma}\label{L:a+p(b)+omega(2)} 
Let $E$ be a globally generated vector bundle on $\p^n$, $n \geq 4$, such that 
${\fam0 H}^i(E^\vee) = 0$, $i = 0,\, 1$, and $\Pi \subset \p^n$ a fixed 
$3$-plane. Let $F$ be the vector bundle $P(P(E_\Pi))$ on $\Pi \simeq \piii$ 
$($see Remark~\emph{\ref{R:f}}$)$. If $F \simeq A \oplus P(B) \oplus 
\Omega_\Pi(2)$, with $A$ and $B$ direct sums of line bundles on $\Pi$ such 
that ${\fam0 H}^0(A^\vee) = 0$, ${\fam0 H}^0(B^\vee) = 0$, ${\fam0 rk}\, A < n$  
and ${\fam0 rk}\, B < n$, then one of the following holds$\, :$ 
\begin{enumerate} 
\item[(i)] $A \simeq A_1 \oplus (n-3)\sco_\Pi(1)$ and $E \simeq {\widehat A}_1 
\oplus P(\widehat B) \oplus \Omega_{\p^n}(2)$, where ${\widehat A}_1$ and 
$\widehat B$ are direct sums of line bundles on $\p^n$ lifting $A_1$ and $B$, 
respectively$\, ;$  
\item[(ii)] $B \simeq B_1 \oplus (n-3)\sco_\Pi(1)$ and $E \simeq {\widehat A}
\oplus P({\widehat B}_1) \oplus \Omega_{\p^n}^{n-2}(n-1)$, where $\widehat A$ and 
${\widehat B}_1$ are direct sums of line bundles on $\p^n$ lifting $A$ and 
$B_1$, respectively. 
\end{enumerate}
\end{lemma}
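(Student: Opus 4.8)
The plan is to lift the decomposition of $F$ from $\Pi \simeq \piii$ to $\p^n$ one summand-type at a time, the only non-formal piece being $\Omega_\Pi(2)$. Since $\tH^0(A^\vee) = \tH^0(B^\vee) = 0$, every summand of $A$ and of $B$ is a line bundle $\sco_\Pi(a)$ with $a \geq 1$, and each lifts uniquely to $\sco_{\p^n}(a)$; the line-bundle part $A$ and the part $P(B)$ are carried up to $\p^n$ by the same mechanism as in Lemma~\ref{L:a+p(b)}, lifting the inclusion and the retraction of each summand across $\Pi \subset \p^n$ using that the pertinent restriction maps on $\tH^0(\sch om(-,-))$ are bijective and that the corresponding $\tH^1$ vanish. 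Everything therefore reduces to understanding the globally generated bundles $E$ on $\p^n$ for which $F = P(P(E_\Pi))$ contains the indecomposable summand $\Omega_\Pi(2)$.

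The key observation is that $\Omega_\Pi(2)$ is never, by itself, the reduced restriction of such a bundle. As $\Pi$ is a linear $3$-plane its conormal bundle is $(n-3)\sco_\Pi(-1)$, and the conormal sequence splits (because $\tH^1(\piii, T_\piii(-1)) = 0$), giving $\Omega^1_{\p^n}\vert_\Pi \simeq \Omega_\Pi \oplus (n-3)\sco_\Pi(-1)$. Twisting by $2$ yields
\[
\Omega_{\p^n}(2)\vert_\Pi \simeq \Omega_\Pi(2) \oplus (n-3)\sco_\Pi(1),
\]
while forming $(n-2)$-nd exterior powers and twisting by $n-1$ yields, on $\piii$, a direct sum whose only reduced terms are $\Omega_\Pi(2)$ and $(n-3)$ copies of $T_\Pi(-1) \simeq P(\sco_\Pi(1))$, the remaining $\binom{n-3}{2}$ terms being copies of $\sco_\Pi$ (coming from $\Omega^3_\Pi$), which $P(P(\ast))$ discards; thus the reduced restriction of $\Omega_{\p^n}^{n-2}(n-1)$ to $\Pi$ is $\Omega_\Pi(2) \oplus (n-3)P(\sco_\Pi(1))$. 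These two computations are exactly alternatives (i) and (ii): the $\Omega_\Pi(2)$-summand of $F$ forces $(n-3)$ spare copies of $\sco_\Pi(1)$ to be present --- either inside $A$, completing $\Omega_\Pi(2)$ to $\Omega_{\p^n}(2)\vert_\Pi$, or inside $B$, so that $(n-3)P(\sco_\Pi(1)) \subseteq P(B)$ completes it to the reduced restriction of $\Omega_{\p^n}^{n-2}(n-1)$.

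With the appropriate candidate $\Sigma \in \{\Omega_{\p^n}(2),\, \Omega_{\p^n}^{n-2}(n-1)\}$ thus singled out, I would split $\Sigma$ off $E$: the inclusion, as a direct summand of $E_\Pi$, of $\Omega_\Pi(2)$ together with its spare companions, which agrees with $\Sigma\vert_\Pi$ up to trivial summands, lifts to a morphism $\Sigma \to E$ (and a retraction $E \to \Sigma$) because the obstructions live in cohomology groups of $\sch om(\Sigma, E)$ twisted by the powers $\sco(-1), \ldots, \sco(-(n-3))$ of the conormal filtration, all of which vanish by Bott's formulas. The complementary summands are then precisely the lifts $\widehat A_1 \oplus P(\widehat B)$ (resp.\ $\widehat A \oplus P(\widehat B_1)$) produced by the first step. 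Moreover, the symmetry between (i) and (ii) is nothing but the functor $P(\ast)$ on $\p^n$: since $P(\Omega_{\p^n}(2)) \simeq \Omega_{\p^n}^{n-2}(n-1)$ (checked exactly as in the case $n=6$ recorded after Theorem~\ref{T:main}) and $P(\ast)$ interchanges the roles of $A$ and $B$, applying it to a bundle of type (i) yields one of type (ii), so it suffices to treat one alternative and transport the other.

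The main obstacle is to prove that one of the two alternatives \emph{must} occur --- equivalently, that the $\Omega_\Pi(2)$-summand genuinely forces $(n-3)$ spare copies of $\sco_\Pi(1)$ entirely into $A$ or entirely into $B$, with no mixed distribution and no competing third lift. Here the hypotheses $\rk A < n$, $\rk B < n$ and $\tH^i(E^\vee) = 0$, $i = 0,1$, are decisive: the rank bounds forbid completing $\Omega_\Pi(2)$ using summands drawn partly from $A$ and partly from $B$, while the vanishing of $\tH^i(E^\vee)$ fixes the reduced model, so that $P(P(\ast))$ returns precisely the given $F$. Establishing this forcing --- rather than the cohomological splitting of the previous step --- is where the finer lifting results underlying Lemma~\ref{L:a+p(b)} are really needed.
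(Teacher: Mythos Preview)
Your proposal has a real gap at the step where you split $\Sigma \in \{\Omega_{\p^n}(2),\, \Omega_{\p^n}^{n-2}(n-1)\}$ off $E$. You claim the obstructions to lifting the inclusion (and a retraction) from $\Pi$ to $\p^n$ ``live in cohomology groups of $\sch om(\Sigma, E)$ twisted by \ldots\ $\sco(-1), \ldots, \sco(-(n-3))$ \ldots\ all of which vanish by Bott's formulas.'' But Bott computes the cohomology of $\Omega^p(q)$, not of $\sch om(\Sigma, E)$ for an \emph{unknown} $E$. Concretely, for $\Sigma = \Omega_{\p^n}(2)$ the obstruction to extending a section across $\Pi_i \subset \Pi_{i+1}$ lies in $\tH^1\bigl(\Pi_{i+1}, (T_{\Pi_{i+1}}(-3) \oplus (n-i-1)\sco(-2)) \otimes E_{\Pi_{i+1}}\bigr)$, which contains $\tH^1(E_{\Pi_{i+1}}(-2))$ as a summand; there is no reason this should vanish, and indeed in the paper's argument the case $\tH^1(E_4(-2)) \neq 0$ is exactly one of the two alternatives. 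Your final paragraph effectively admits that the forcing of the dichotomy is not proved; the step just before it is where the argument actually breaks.

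The paper bypasses this circularity by an extension trick. One first establishes, via the flag $\Pi = \Pi_3 \subset \cdots \subset \Pi_n$, a cohomological dichotomy: either $\tH^1(E(-2)) \neq 0$, or the same holds for $P(E)$ with $A$ and $B$ swapped (your $P(\ast)$-symmetry then reduces the second case to the first). When $\tH^1(E(-2)) \neq 0$, a nonzero class defines an extension $0 \to E \to E^\prime \to \sco_{\p^n}(2) \to 0$ whose restriction to $\Pi$ replaces the summand $\Omega_\Pi(2)$ by $4\sco_\Pi(1)$; now $E^\prime_\Pi$ has only line-bundle and $P(B)$-type pieces, so Lemma~\ref{L:a+p(b)} applies directly to $E^\prime$, and $E$ is recovered as the kernel of an explicit epimorphism $\widehat{A} \oplus 4\sco_{\p^n}(1) \oplus P(\widehat{B}) \to \sco_{\p^n}(2)$. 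The hypothesis $\rk B < n$ enters here, not to rule out a ``mixed distribution'' as you suggest, but through a degeneracy-locus codimension bound showing that the restriction of this epimorphism to the $\sco_{\p^n}(1)$-summands is already surjective, which is what forces the kernel to contain $\Omega_{\p^n}(2)$ as a direct summand.
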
 

We include, for the reader's convenience, an argument that uses the first half 
of the proof of \cite[Lemma~1.19]{acm1} and some arguments that appear in  
\cite[Remark~1.20(c),(d)]{acm1}.

\begin{proof}[Proof of Lemma~\emph{\ref{L:a+p(b)+omega(2)}}]  
Consider a saturated flag $\Pi = \Pi_3 \subset \Pi_4 \subset \ldots \subset 
\Pi_n = \p^n$ of linear subspaces of $\p^n$ and put $E_i := E \vb \Pi_i$, 
$i = 3 , \ldots , n$. In particular, $E_3 = E_\Pi$ and $E_n = E$. One has 
$\tH^1_\ast(E_3) \simeq {\underline k}(2)$. One deduces, by induction, that 
$\tH^1(E_i(l)) = 0$ for $l \leq -3$, $i = 3 , \ldots , n$. It follows, in 
particular, that $\tH^1(E_4(-2))$ injects into $\tH^1(E_3(-2)) \simeq k$. 

\vskip2mm 

\noindent 
{\bf Case 1.}\quad $\tH^1(E_4(-2)) \neq 0$. 

\vskip2mm 

\noindent 
In this case, $\tH^1(E_4(-2)) \izo \tH^1(E_3(-2))$. One deduces that 
$\tH^2_\ast(E_4) = 0$. This implies, by induction, that $\tH^2_\ast(E_i) = 0$, 
$i = 4 , \ldots , n$ hence the restriction map $\tH^1(E_i(-2)) \ra 
\tH^1(E_{i-1}(-2))$ is bijective, $i = 4 , \ldots , n$ (recall that 
$\tH^1(E_i(-3)) = 0$, $i = 4, \ldots , n$). In particular, $\tH^1(E(-2)) 
\izo \tH^1(E_\Pi(-2))$.  

On the other hand, by Serre duality, $\tH^2_\ast(E_4^\vee) = 0$. This implies, 
by induction, that $\tH^2_\ast(E_i^\vee) = 0$, $i = 4 , \ldots , n$. Recalling 
that $\tH^1(E^\vee) = 0$, one gets, using the exact sequence$\, :$ 
\[
\tH^1(E_n^\vee) \lra \tH^1(E_{n-1}^\vee) \lra \tH^2(E_n^\vee(-1)) 
\]   
that $\tH^1(E_{n-1}^\vee) = 0$. It follows, by decreasing induction, that 
$\tH^1(E_i^\vee) = 0$, $i = n,\, n-1,\, \ldots ,\, 3$. In particular, 
$\tH^1(E_\Pi^\vee) = 0$. One deduces, from Remark~\ref{R:f}, that$\, :$ 
\[
E_\Pi \simeq A \oplus \Omega_\Pi(2) \oplus P(B) \oplus t\sco_\Pi\, , 
\]
for some integer $t \geq 0$. 

A non-zero element of $\tH^1(E(-2))$ defines an extension$\, :$ 
\[
0 \lra E \lra E^\prim \lra \sco_{\p^n}(2) \lra 0\, ,
\]
whose restriction to $\Pi$ is equivalent to the extension$\, :$ 
\[
0 \lra E_\Pi \lra A \oplus 4\sco_\Pi(1) \oplus P(B) \oplus t\sco_\Pi 
\xra{(0\, ,\, \e\, ,\, 0\, ,\, 0)} \sco_\Pi(2) \lra 0\, , 
\]
with $\e$ an epimorphism. Since $\tH^i(E^{\prim \vee}) = 0$, $i = 0,\, 1$, 
Lemma~\ref{L:a+p(b)} implies that $E^\prim \simeq {\widehat A} \oplus 
4\sco_{\p^n}(1) \oplus P(\widehat B)$ hence the above extension is 
equivalent to an extension of the form$\, :$ 
\[
0 \lra E \lra {\widehat A} \oplus 4\sco_{\p^n}(1) \oplus P(\widehat B) 
\overset{\phi}{\lra} \sco_{\p^n}(2) \lra 0\, .  
\] 

Now, $A = A_2 \oplus m\sco_\Pi(1)$, where $A_2$ is a direct sum of sheaves of 
the form $\sco_\Pi(a)$, with $a \geq 2$. Since $\tH^1(E(-2)) \neq 0$ it 
follows that $\phi \vb {\widehat A}_2 = 0$ hence $E \simeq {\widehat A}_2 
\oplus K$, where $K$ is the kernel of the epimorphism $\psi \colon 
(m+4)\sco_{\p^n}(1) \oplus P(\widehat B) \ra \sco_{\p^n}(2)$ induced by $\phi$. 
Let $\psi_1 \colon (m+4)\sco_{\p^n}(1) \ra \sco_{\p^n}(2)$ and $\psi_2 \colon 
P(\widehat B) \ra \sco_{\p^n}(2)$ be the components of $\psi$. 

\vskip2mm 

\noindent 
{\bf Claim.}\quad $\psi_1$ \emph{is an epimorphism}. 

\vskip2mm 

\noindent 
\emph{Indeed}, assume, by contradiction, that it is not. Then $\Cok \psi_1 
\simeq \sco_\Lambda(2)$, for some non-empty linear subspace $\Lambda$ of $\p^n$ 
such that $\Lambda \cap \Pi = \emptyset$ (because $\tH^1(E_\Pi(-1)) = 0$ hence 
$\tH^0((\psi_1)_\Pi(-1))$ is surjective). One has an exact sequence$\, :$ 
\[
0 \lra \Ker \psi_1 \lra K \lra P(\widehat B) 
\overset{{\overline \psi}_2}{\lra} \sco_\Lambda(2) \lra 0\, , 
\]  
where ${\overline \psi}_2$ is the composite morphism $P(\widehat B) 
\overset{\psi_2}{\lra} \sco_{\p^n}(2) \ra \sco_\Lambda(2)$. Let $W \subseteq 
\tH^0(\sco_\Lambda(2))$ be the image of $\tH^0({\overline \psi}_2)$. Since the 
kernel $\sck$ of ${\overline \psi}_2$ is globally generated, applying the Snake 
Lemma to the diagram$\, :$ 
\[
\SelectTips{cm}{12}\xymatrix{0\ar[r] & 
{\tH^0(\sck) \otimes \sco_\p}\ar[r]\ar[d] & 
{\tH^0(\widehat B)^\vee \otimes \sco_\p}\ar[r]\ar[d] & 
{W \otimes \sco_\p}\ar[r]\ar[d] & 0\\ 
0\ar[r] & \sck\ar[r] & P(\widehat B)\ar[r]^-{{\overline \psi}_2} & 
\sco_{\Lambda}(2)\ar[r] & 0}
\] 
one gets an exact sequence ${\widehat B}^\vee \ra W \otimes \sco_{\p^n} \ra 
\sco_{\Lambda}(2) \ra 0$. Any component of the degeneracy locus of 
the morphism ${\widehat B}^\vee \ra W \otimes \sco_{\p^n}$ must have 
codimension $\leq \text{rk}\, {\widehat B}^\vee - \dim_k W + 1$ hence  
$\text{codim}(\Lambda , \p^n) \leq \text{rk}\, {\widehat B}^\vee - \dim_k W + 
1$. Since $W$ generates $\sco_{\Lambda}(2)$ on $\Lambda$ one must have $\dim_k W 
\geq \dim \Lambda + 1$. One deduces that $\text{rk}\, {\widehat B}^\vee \geq n$ 
which \emph{contradicts} our hypothesis that $\text{rk}\, B < n$.   
 
\vskip2mm 

It follows, from the claim, that $K \simeq (m - n + 3)\sco_{\p^n}(1) \oplus 
K^\prim$, where $K^\prim$ sits into an exact sequence$\, :$   
\[
0 \lra \Omega_{\p^n}(2) \lra K^\prim \lra P(\widehat B) \lra 0\, . 
\]  
But $\text{Ext}_{\sco_{\p^n}}^1(P(\widehat B) , \Omega_{\p^n}(2)) = 0$ hence  
$K^\prim \simeq P(\widehat B) \oplus \Omega_{\p^n}(2)$. 

\vskip2mm 

\noindent 
{\bf Case 2.}\quad $\tH^1(E_4(-2)) = 0$. 

\vskip2mm 

\noindent 
In this case, $\tH^1(E_3(-2))$ injects into $\tH^2(E_4(-3))$ hence 
$\tH^2(E_4(-3)) \neq 0$. Consider the (globally generated) vector bundle 
$E^\prim := P(E)$ on $\p^n$ and the vector bundle $F^\prim := P(P(E^\prim_\Pi))$ 
associated to $E^\prim_\Pi$ on $\Pi$. 
Using the exact sequence$\, :$ 
\[
0 \lra E^{\prim \vee}_\Pi \lra \tH^0(E) \otimes_k \sco_\Pi \lra E_\Pi \lra 0\, ,  
\]    
and Remark~\ref{R:f}(c), one gets that $P(E^\prim_\Pi) \simeq F$ (as vector 
bundles on $\Pi$) hence $F^\prim \simeq P(F) \simeq B \oplus P(A) \oplus 
\Omega_\Pi(2)$. 
Moreover, $\h^1(E^\prim_4(-2)) = \h^3(E^{\prim \vee}_4(-3)) = \h^2(E_4(-3)) \neq 
0$.  
Consequently, $E^\prim$ satisfies the hypotheses of the lemma, with $A$ 
and $B$ interchanged, and of Case 1. 
One deduces that there is a decomposition $B \simeq B_1 \oplus 
(n-3)\sco_\Pi(1)$ such that $E^\prim \simeq {\widehat B}_1 \oplus P(\widehat A) 
\oplus \Omega_{\p^n}(2)$ hence $E \simeq P(E^\prim) \simeq {\widehat A} \oplus 
P({\widehat B}_1) \oplus \Omega_{\p^n}^{n-2}(n-1)$.  
\end{proof}

The next result achieves the goal stated at the beginning of the section. 

\begin{thm}\label{T:h0e(-2)neq0} 
Let $E$ be a globally generated vector bundle with $c_1 = 5$ on $\p^n$, 
$n \geq 4$, such that ${\fam0 H}^i(E^\vee) = 0$, $i = 0,\, 1$. Let $\Pi 
\subset \p^n$ be a fixed $3$-plane. If ${\fam0 H}^0(E_\Pi(-2)) \neq 0$ then 
$E \simeq \sco_{\p^n}(a) \oplus E^\prim$, where $a$ is an integer with $2 \leq a 
\leq 5$ and $E^\prim$ is a globally generated vector bundle with $c_1(E^\prim) = 
5 - a$.  
\end{thm}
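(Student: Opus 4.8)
The plan is to move the whole question onto the $3$-plane $\Pi$, where the classification of globally generated vector bundles with $c_1 = 5$ on $\piii$ is available, and then to lift the resulting line-bundle summand back to $\p^n$ by means of Lemma~\ref{L:a+p(b)} and Lemma~\ref{L:a+p(b)+omega(2)}. First I would replace $E_\Pi$ by the normalized bundle $F := P(P(E_\Pi))$ on $\Pi \simeq \piii$. By Remark~\ref{R:f}(b) it is globally generated, has $c_1(F) = 5$ and $\tH^i(F^\vee) = 0$ for $i = 0,\,1$, and $\tH^0(F(-2)) \simeq \tH^0(E_\Pi(-2)) \neq 0$. The cases $\text{rk}\, F \leq 2$ are elementary — a globally generated line bundle of degree $5$ is $\sco_\Pi(5)$, and in rank $2$ the section coming from $\tH^0(F(-2)) \neq 0$ splits off a line-bundle summand $\sco_\Pi(a)$ with $a \geq 2$, the relevant extensions vanishing because $\tH^1(\sco_\Pi(m)) = 0$ for every $m$ — so I may assume $\text{rk}\, F \geq 3$.

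For $\text{rk}\, F \geq 3$ I would invoke the $\piii$-classification recalled in the appendix: since $\tH^0(F(-2)) \neq 0$, the bundle $F$ splits off a summand $\sco_\Pi(a)$ with $2 \leq a \leq 5$. Peeling it off and repeating, one is reduced to a complement with $c_1 \leq 3$ carrying no section in degree $-2$, whose structure is governed by the (already known) classification in that range. Here the normalization $\tH^i(F^\vee) = 0$ does essential work: it rules out the null-correlation / ``extension-type'' summands (those built from stable rank-$3$ bundles with $c_1 = -1$), whose dual has non-vanishing $\tH^1$. What survives is exactly a direct sum of line bundles, of bundles $P(\sco_\Pi(d))$, and of copies of $\Omega_\Pi(2)$ — that is, the input of the two lifting lemmas.

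At this stage a Chern-class count controls the remaining freedom. Since $\sco_\Pi(a)$ with $a \geq 2$ is already present and $c_1(\Omega_\Pi(2)) = 2$, at most one factor $\Omega_\Pi(2)$ can occur, and the same budget bounds $\text{rk}\, A$ and $\text{rk}\, B$ below $n$, where $F \simeq A \oplus P(B)$ or $F \simeq A \oplus P(B) \oplus \Omega_\Pi(2)$ with $A$, $B$ sums of line bundles, $\tH^0(A^\vee) = \tH^0(B^\vee) = 0$, and $\sco_\Pi(a)$ a summand of $A$. I would then apply Lemma~\ref{L:a+p(b)} in the first case and Lemma~\ref{L:a+p(b)+omega(2)} in the second, obtaining $E \simeq {\widehat A} \oplus P({\widehat B})$, respectively $E \simeq {\widehat A}_1 \oplus P({\widehat B}) \oplus \Omega_{\p^n}(2)$. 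In the second case only the copies of $\sco_\Pi(1)$ in $A$ are absorbed into $\Omega_{\p^n}(2)$, so the summand $\sco_\Pi(a)$ with $a \geq 2$ survives in ${\widehat A}_1$. Either way $E$ acquires a summand $\sco_{\p^n}(a)$; letting $E^\prim$ be the complementary summand, it is globally generated (being a direct summand of $E$) and has $c_1(E^\prim) = 5 - a$, which is the assertion.

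The main obstacle is the structural step: extracting from the $\piii$-classification, together with the normalization $\tH^i(F^\vee) = 0$, that $F$ has no summand outside the list admitted by Lemma~\ref{L:a+p(b)} and Lemma~\ref{L:a+p(b)+omega(2)}, the vanishing of the dual cohomology being precisely what excludes the extension-type bundles. A related delicate point, which the Chern-class count must also secure, is that the $\Omega_\Pi(2)$-factor — which, in order to lift, requires the $n-3$ copies of $\sco_\Pi(1)$ that Lemma~\ref{L:a+p(b)+omega(2)} feeds into $\Omega_{\p^n}(2)$ — is compatible with the budget $c_1 = 5$ only when $n = 4$; for $n \geq 5$ the budget leaves room for at most one copy of $\sco_\Pi(1)$, fewer than the $n-3 \geq 2$ required, so one has to confirm that in that range $F$ is already of the form $A \oplus P(B)$ and Lemma~\ref{L:a+p(b)} alone suffices. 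Once the structure of $F$ is pinned down, the lifting itself is formal.
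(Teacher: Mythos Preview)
Your overall strategy---pass to $F := P(P(E_\Pi))$ on $\Pi$, use the $\piii$-classification to put $F$ in the form required by Lemma~\ref{L:a+p(b)} or Lemma~\ref{L:a+p(b)+omega(2)}, then lift---is exactly the paper's. But there is a genuine gap in your handling of rank $2$. You assert that when $\text{rk}\, F = 2$ the non-zero section in $\tH^0(F(-2))$ ``splits off a line-bundle summand $\sco_\Pi(a)$ with $a \geq 2$, the relevant extensions vanishing because $\tH^1(\sco_\Pi(m)) = 0$.'' This is false: the section may vanish along a curve $C$, yielding $0 \ra \sco_\Pi(a) \ra F \ra \sci_C(5-a) \ra 0$ with $C \neq \emptyset$, and $\text{Ext}^1(\sci_C(5-a),\sco_\Pi(a)) \simeq \tH^2(\sci_C(2a-5-4))^\vee$ has no reason to vanish. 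Prop.~\ref{P:h0f(-2)neq0} records precisely one such exception that survives all your normalizations: $F \simeq M(3)$ with $M$ a \emph{stable} rank~$2$ bundle on $\Pi$, $c_1(M) = -1$, $c_2(M) = 2$ (so $c_1(F) = 5$, $c_2(F) = 8$). This bundle has $\tH^0(F(-2)) \neq 0$, $\tH^0(F(-3)) = 0$, $\tH^i(F^\vee) = 0$ for $i = 0,1$, and \emph{no} line-bundle summand at all---so it falls outside the scope of both lifting lemmas. The paper disposes of it by an entirely different argument: by \cite[Cor.~1.5]{acm1} such an $F$ would extend to a rank~$2$ bundle on $\piv$ with $c_1 = 5$, $c_2 = 8$, and Schwarzenberger's congruence (Remark~\ref{R:generalities}(b)) forbids this. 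Your proposal has no replacement for this step.

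A minor secondary point: Lemma~\ref{L:a+p(b)+omega(2)} has \emph{two} alternative conclusions, and you only invoke conclusion~(i). In conclusion~(ii) it is $B$, not $A$, that sheds the $(n-3)$ copies of $\sco_\Pi(1)$, and $E$ acquires a summand $\Omega_{\p^n}^{n-2}(n-1)$ rather than $\Omega_{\p^n}(2)$; since your $\sco_\Pi(a)$ with $a \geq 2$ sits in $A$, it lifts intact to $\widehat A$ either way, so the final assertion is unaffected---but this should be said (the paper does use conclusion~(ii) when $F^\prime \simeq \text{T}_\Pi(-1) \oplus \Omega_\Pi(2)$).
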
  

The globally generated vector bundles $E^\prim$ on $\p^n$ with $c_1(E^\prim) 
\leq 3$ and such that $\tH^i(E^{\prim \vee}) = 0$, $i = 0,\, 1$, have been 
classified by Sierra and Ugaglia \cite{su}, \cite{su2}. Their results are  
recalled in \cite[Thm.~0.1]{acm1}. On $\piv$, these bundles are direct 
sums of bundles of the form $\sco_\piv(b)$, $P(\sco_\piv(b))$ (both with $c_1 
= b$, $1 \leq b \leq 3$), $\Omega_\piv(2)$ and $\Omega_\piv^2(3)$ (both with 
$c_1 = 3$) while on $\p^n$, $n \geq 5$, they are direct sums of bundles of the 
form $\sco_{\p^n}(b)$ and $P(\sco_{\p^n}(b))$.   

\begin{proof}[Proof of Theorem~\emph{\ref{T:h0e(-2)neq0}}] 
The result is known if $\tH^0(E_\Pi(-3)) \neq 0$ (see \cite[Prop.~2.4]{acm1} 
and \cite[Prop.~2.11]{acm1}). Assume, now, that $\tH^0(E_\Pi(-2)) \neq 0$ and 
$\tH^0(E_\Pi(-3)) = 0$.  
Let $F$ be the vector bundle $P(P(E_\Pi))$ on $\Pi \simeq \piii$ (see 
Remark~\ref{R:f}).  
According to Prop.~\ref{P:h0f(-2)neq0} in Appendix~\ref{A:c1=5onp3}, either 
$F$ is a stable rank 2 vector bundle with $c_1(F) = 5$, $c_2(F) = 8$ or 
$F \simeq \sco_\Pi(2) \oplus F^\prim$ with $c_1(F^\prim) = 3$. In the former 
case, \cite[Cor.~1.5]{acm1} would imply that there exists a rank 2 vector 
bundle $E^\prim$ on $\piv$ with Chern classes $c_1(E^\prim) = 5$, $c_2(E^\prim) = 
8$ which would \emph{contradict} Schwarzenberger's congruence (recalled in  
Remark~\ref{R:generalities}(b) below). 

In the latter case, $F^\prim$ is a 
direct sum of bundles of the form $\sco_\Pi(b)$, $P(\sco_\Pi(b))$, or 
$\Omega_\Pi(2)$ (by the results of Sierra and Ugaglia).   

If $\Omega_\Pi(2)$ is not a direct summand of $F^\prim$ then  
Lemma~\ref{L:a+p(b)} implies that $\sco_{\p^n}(2)$ is a direct summand of 
$E$. 

If $F^\prim \simeq \sco_\Pi(1) \oplus \Omega_\Pi(2)$ then 
Lemma~\ref{L:a+p(b)+omega(2)} implies that $n = 4$ and $E \simeq \sco_\piv(2) 
\oplus \Omega_\piv(2)$ while if $F^\prim \simeq \text{T}_\Pi(-1) \oplus 
\Omega_\Pi(2)$ then the same result implies that $n = 4$ and $E \simeq 
\sco_\piv(2) \oplus \Omega_\piv^2(3)$.   
\end{proof} 

The second part of the section contains miscellaneous auxiliary results that 
are needed somewhere in the sequel. 

\begin{lemma}\label{L:erat(-1)} 
Let $E$ be a globally generated vector bundle on $\p^n$ such that 
${\fam0 H}^i(E^\vee) = 0$, $i = 0,\, 1$. If $\xi$ is a non-zero element of 
${\fam0 H}^1(E^\vee(-1))$ then there exists a locally split monomorphism 
$\phi \colon \Omega_{\p^n}(1) \ra E^\vee$ such that the image of 
${\fam0 H}^1(\phi(-1)) \colon {\fam0 H}^1(\Omega_{\p^n}) \ra 
{\fam0 H}^1(E^\vee(-1))$ is $k \xi$. 
\end{lemma}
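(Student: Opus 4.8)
The plan is to realize $\xi$, via the Euler sequence, as the class of a push‑out and to read $\phi$ off from the universal property; the only genuine work is then to check that the resulting $\phi$ is a subbundle inclusion.

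First I would record the Euler sequence
\[
0 \lra \Omega_{\p^n}(1) \overset{\iota}{\lra} V^\vee \otimes_k \sco_{\p^n} \overset{\e}{\lra} \sco_{\p^n}(1) \lra 0 \, .
\]
Twisting by $-1$ and taking cohomology gives $\tH^0(\Omega_{\p^n}) = 0$ and an isomorphism $\partial_0 \colon \tH^0(\sco_{\p^n}) \izo \tH^1(\Omega_{\p^n})$; in particular $\tH^1(\Omega_{\p^n}) = k$ is generated by $\partial_0(1)$. Applying $\text{Hom}(-,E^\vee)$ to the Euler sequence and using $\tH^0(E^\vee) = \tH^0(E^\vee(-1)) = 0$ (the latter because $E^\vee$ is torsion‑free) together with $\tH^1(E^\vee) = 0$, the outer terms vanish and I get an isomorphism
\[
\text{Hom}(\Omega_{\p^n}(1), E^\vee) \izo \tH^1(E^\vee(-1)) \, ,
\]
carrying $\phi$ to the class of the push‑out of the Euler extension along $\phi$. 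I take $\phi$ to be the element corresponding to $\xi$ (so $\phi$ is determined up to a nonzero scalar).

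For the cohomological condition, pushing out the Euler sequence along this $\phi$ produces a morphism of short exact sequences whose bottom row $0 \to E^\vee \to \scg \to \sco_{\p^n}(1) \to 0$ has class $\xi$. Twisting by $-1$ and invoking naturality of the connecting homomorphism yields $\tH^1(\phi(-1)) \circ \partial_0 = \partial_\xi$ as maps $\tH^0(\sco_{\p^n}) \to \tH^1(E^\vee(-1))$, where $\partial_\xi(1) = \xi$ by definition of the extension class. Since $\partial_0(1)$ generates $\tH^1(\Omega_{\p^n})$, the image of $\tH^1(\phi(-1))$ is exactly $k\xi$. Thus the cohomological requirement holds automatically, and the whole content of the lemma is that \emph{this} $\phi$ is locally split.

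It remains to prove local splitness, which I regard as the main obstacle. Dualizing, $\phi$ is a locally split monomorphism if and only if $\phi^\vee \colon E \to \Omega_{\p^n}(1)^\vee = \text{T}_{\p^n}(-1)$ is surjective (equivalently fiberwise surjective). Because $E$ is globally generated, $\text{im}(\phi^\vee)$ is the subsheaf of $\text{T}_{\p^n}(-1)$ generated by $W := \text{im}\big(\tH^0(\phi^\vee) \colon \tH^0(E) \to \tH^0(\text{T}_{\p^n}(-1)) = V\big)$; a fiberwise computation (using $\text{T}_{\p^n}(-1)(x) = V/\langle v \rangle$ at $x = [v]$) shows this subsheaf is all of $\text{T}_{\p^n}(-1)$ precisely when $W = V$, and otherwise $\phi^\vee$ drops rank along the linear subspace $\p(W)$. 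So everything reduces to the claim $W = V$. This is the delicate point, since $W \subseteq V$ encodes all of $\tH^0(\phi^\vee)$ while $\xi \neq 0$ is a single scalar condition, so surjectivity is not formal. The plan is to argue by contradiction: if $W \subsetneq V$, choose $0 \neq \lambda \in V^\vee$ with $\lambda \vert_W = 0$, pull back the dual Euler sequence $0 \to \sco_{\p^n}(-1) \to V \otimes_k \sco_{\p^n} \to \text{T}_{\p^n}(-1) \to 0$ along $\phi^\vee$ to obtain the non‑split extension $0 \to \sco_{\p^n}(-1) \to \scf \to E \to 0$ of class $\xi$ (with $\text{Cok}$ of $\scf \to V \otimes_k \sco_{\p^n}$ equal to $\text{Cok}(\phi^\vee)$, supported on $\p(W)$), and then use that the global sections of $E$ generate $\scf$ transversally to the sub‑line‑bundle $\sco_{\p^n}(-1)$, together with $\tH^0(E^\vee) = \tH^1(E^\vee) = 0$, to force $\xi$ to split. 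Controlling this degeneration from the nonvanishing of $\xi$ is exactly where I expect the hypotheses on $E$ to be used essentially, and it is the step I anticipate as the hardest.
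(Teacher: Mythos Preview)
Your setup is correct and, up to the point where you reduce everything to the surjectivity of $\tH^0(\phi^\vee) \colon \tH^0(E) \ra V$, it is essentially the same construction as the paper's. The gap is precisely the step you flag as hardest: you do not prove that $W := \text{im}\, \tH^0(\phi^\vee)$ equals $V$. The contradiction scheme you sketch (pull back the dual Euler sequence, produce a nonzero $\scf \ra \sco_{\p^n}$ killed on global sections, and somehow force $\xi$ to split) is left entirely unexecuted; as written it is not a proof, and it is not clear to me that it goes through without further input, since $\scf$ is not globally generated (it contains $\sco_{\p^n}(-1)$), so the vanishing of $\tH^0(\scf) \ra \tH^0(\sco_{\p^n})$ is not by itself a contradiction.

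The paper closes this gap with a one--line observation that avoids any contradiction argument. Compose $\phi$ with the canonical inclusion $E^\vee \hookrightarrow \tH^0(E)^\vee \otimes_k \sco_{\p^n}$ (dual of the evaluation map of $E$) and extend across the Euler sequence; this yields a commutative diagram
\[
\SelectTips{cm}{12}\xymatrix{0\ar[r] & \Omega_{\p^n}(1)\ar[r]\ar[d]_\phi &
{V^\vee \otimes_k \sco_{\p^n}}\ar[r]\ar[d] &
\sco_{\p^n}(1)\ar[r]\ar[d]^\sigma & 0\\
0\ar[r] & E^\vee\ar[r] & {\tH^0(P(E)) \otimes_k \sco_{\p^n}}\ar[r] &
P(E)\ar[r] & 0}
\]
where $\tH^0(P(E)) = \tH^0(E)^\vee$ and $\sigma$ is the global section of $P(E)(-1)$ corresponding to $\xi$ under the isomorphism $\tH^0(P(E)(-1)) \simeq \tH^1(E^\vee(-1))$. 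The middle vertical map on global sections is $h \mapsto h\sigma$, and this is \emph{exactly} the dual of your map $\tH^0(\phi^\vee) \colon \tH^0(E) \ra V$. Since $\sigma \neq 0$ and $P(E)$ is locally free (hence torsion--free), $h\sigma = 0$ forces $h = 0$; thus the middle vertical is a split inclusion of trivial bundles, and the induced $\phi$ is automatically a subbundle inclusion. So the ``hard step'' dissolves once one recognizes $\tH^0(\phi^\vee)^\vee$ as multiplication by a nonzero section of a vector bundle.
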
   

\begin{proof} 
Dualizing the exact sequence $0 \ra P(E)^\vee \ra \tH^0(E) \otimes \sco_{\p^n} 
\ra E \ra 0$ one gets that $\tH^0(E)^\vee \izo \tH^0(P(E))$ and 
$\tH^0(P(E)(-1)) \izo \tH^1(E^\vee(-1))$. It follows that $\xi$ corresponds to 
a global section $\sigma$ of $P(E)(-1)$. One uses, now, the commutative 
diagram$\, :$ 
\[
\SelectTips{cm}{12}\xymatrix{0\ar[r] & \Omega_{\p^n}(1)\ar[r]\ar @{-->}[d] & 
{\tH^0(\sco_{\p^n}(1)) \otimes \sco_{\p^n}}\ar[r]\ar[d] & 
\sco_{\p^n}(1)\ar[r]\ar[d]^\sigma & 0\\ 
0\ar[r] & E^\vee\ar[r] & {\tH^0(P(E)) \otimes \sco_{\p^n}}\ar[r] & 
P(E)\ar[r] & 0} 
\] 
taking into account the injectivity of the map $\tH^0(\sco_{\p^n}(1)) \ra 
\tH^0(P(E))$, $h \mapsto h\sigma$.  
\end{proof} 

The following elementary, well known result will be used several times in the 
sequel. Note that its particular case $r = b$ is the Bilinear Map Lemma 
\cite[Lemma~5.1]{ha} asserting that if $\mu \colon A \times B \ra C$ is a 
bilinear map such that $\mu(u , v) \neq 0$, $\forall \, u \in A \setminus 
\{0\}$, $\forall \, v \in B \setminus \{0\}$, then $\dim C \geq \dim A + 
\dim B - 1$.  

\begin{lemma}\label{L:bml} 
Let $A$, $B$ and $C$ be $k$-vector spaces, of finite dimension $a$, $b$ and 
$c$, respectively, $r$ an integer with $1 \leq r \leq \min(b , c)$ and 
$\phi \colon A \ra {\fam0 Hom}_k(B , C)$ a $k$-linear map. If $\phi(u) \colon 
B \ra C$ has rank $\geq r$, $\forall \, u \in A  \setminus \{0\}$, then $a 
\leq (b-r+1)(c-r+1)$. 
\qed 
\end{lemma}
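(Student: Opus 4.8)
The plan is to translate the rank hypothesis into the disjointness of two projective varieties and then invoke the projective dimension theorem. Since $\phi(u)$ has rank $\geq r \geq 1$ for every $u \neq 0$, the map $\phi$ is injective, so its image $V := \phi(A)$ is a linear subspace of $\text{Hom}_k(B,C)$ of dimension $a$. Identifying $\text{Hom}_k(B,C)$ with the space of $c \times b$ matrices, i.e.\ with $\A^{bc}$, the hypothesis says precisely that $V$ meets the locus $X$ of matrices of rank $\leq r-1$ only at $0$.

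First I would dispose of the trivial case $r = 1$, where $(b-r+1)(c-r+1) = bc \geq \dim_k \text{Hom}_k(B,C) \geq a$, and assume henceforth $r \geq 2$ and $a \geq 1$. Then I would recall the classical fact that the determinantal variety $X = \{\, \psi : \text{rank}\, \psi \leq r-1 \,\}$ is an irreducible closed cone in $\A^{bc}$ of codimension $(b-r+1)(c-r+1)$, hence of dimension $bc - (b-r+1)(c-r+1)$. Passing to the associated projective subvarieties $\p(V)$ and $\p(X)$ of $\p^{bc-1} = \p(\text{Hom}_k(B,C))$ --- both nonempty, since $r \geq 2$ forces $X$ to contain nonzero matrices and $a \geq 1$ forces $V \neq 0$ --- the relation $V \cap X = \{0\}$ becomes $\p(V) \cap \p(X) = \emptyset$.

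The conclusion then follows from the projective dimension theorem, valid because $k$ is algebraically closed: two closed subvarieties of $\p^N$ whose dimensions sum to at least $N$ must meet, so an empty intersection forces $\dim \p(V) + \dim \p(X) < N$. With $N = bc - 1$, $\dim \p(V) = a - 1$ and $\dim \p(X) = bc - (b-r+1)(c-r+1) - 1$, this inequality reads
\[
(a-1) + \bigl(bc - (b-r+1)(c-r+1) - 1\bigr) < bc - 1\, ,
\]
which simplifies at once to $a \leq (b-r+1)(c-r+1)$. The only genuine input --- and the step I expect to require care to state correctly rather than to prove --- is the codimension of the determinantal variety; everything else is bookkeeping with dimensions. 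As a sanity check, when $r = b$ the bound becomes $a \leq c - b + 1$, recovering the Bilinear Map Lemma quoted before the statement.
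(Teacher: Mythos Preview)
Your proof is correct. The paper does not actually supply a proof of this lemma: it introduces it as an ``elementary, well known result'' and closes the statement with a \qed, so there is nothing to compare against beyond noting that your argument via the codimension of the generic determinantal variety and the projective dimension theorem is one of the standard routes to this inequality. Your handling of the edge cases ($r=1$, $a=0$) and the sanity check recovering the Bilinear Map Lemma for $r=b$ are all in order.
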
  

\begin{defin}\label{D:contraction} 
(a) Let $V$ denote the $k$-vector space $k^{n+1}$. Consider, for $i \geq 0$, 
the canonical pairing $\langle \, \ast \, ,\, \ast \, \rangle \colon 
\bigwedge^iV^\vee \times \bigwedge^iV \ra k$. One defines, for $\omega \in 
\bigwedge^pV$, the \emph{contraction} mapping $\ast \llcorner \, \omega \colon 
\bigwedge^{p+q}V^\vee \ra \bigwedge^qV^\vee$ by$\, :$ 
\[
\langle \alpha \, \llcorner\, \omega , \eta \rangle := \langle \alpha , 
\omega \wedge \eta \rangle \, ,\  \forall \, \alpha \in 
{\textstyle \bigwedge}^{p+q}V^\vee \, ,\  \forall \, \eta \in 
{\textstyle \bigwedge}^qV\, . 
\] 
By definition, $\ast \llcorner \, \omega$ is the dual of $\omega \wedge \ast 
\colon \bigwedge^qV \ra \bigwedge^{p+q}V$ and 
\[
(\alpha \, \llcorner \, \omega)\, \llcorner \, \eta = 
\alpha\, \llcorner \, (\omega \wedge \eta)\, ,\  \forall \, \alpha \in 
{\textstyle \bigwedge}^{p+q+r}V^\vee \, ,\  \forall \, \eta \in 
{\textstyle \bigwedge}^qV\, .
\]
Moreover, if one considers the isomorphisms $\bigwedge^{n+1-i}V \izo 
\bigwedge^iV^\vee$ identifying the exterior multiplication pairings  
$\bigwedge^{n+1-i}V \times \bigwedge^iV \ra \bigwedge^{n+1}V \simeq k$ with 
the above canonical pairings then $\ast \llcorner \, \omega$ can be identified 
with $\ast \wedge \omega \colon \bigwedge^{n+1-p-q}V \ra \bigwedge^{n+1-q}V$. 

(b) Recall that we view $\p^n$ as the projective space $\p(V)$ of  
1-dimensional vector subspaces of $V$. Consider the tautological geometric 
Koszul complex on $\p^n$$\, :$ 
\[
0 \ra \sco(-n-1) \otimes {\textstyle \bigwedge}^{n+1}V^\vee  
\xra{d_{n+1}} \sco(-n) \otimes {\textstyle \bigwedge}^nV^\vee \ra \cdots \ra 
\sco(-1) \otimes V^\vee \overset{d_1}{\lra} \sco \ra 0\, .  
\] 
$\Omega_{\p^n}^i(i)$ is isomorphic to the image of $d_{i+1}(i) \colon 
\sco_{\p^n}(-1) \otimes \bigwedge^{i+1}V^\vee \ra \sco_{\p^n} \otimes 
\bigwedge^iV^\vee$ and the reduced fibre of $d_{i+1}(i)$ at a point $[v] \in 
\p(V)$ can be identified with $\ast \llcorner \, v \colon \bigwedge^{i+1}V^\vee 
\ra \bigwedge^iV^\vee$. One gets, for $\omega \in \bigwedge^pV$, commutative 
diagrams$\, :$ 
\[
\SelectTips{cm}{12}\xymatrix @C=4.5pc
{{\sco_{\p^n}(-1) \otimes {\textstyle 
\bigwedge}^{p+q+1}V^\vee}\ar[r]^-{d_{p+q+1}(p+q)}\ar[d]_{\ast \llcorner \, \omega} & 
{\sco_{\p^n} \otimes {\textstyle 
\bigwedge}^{p+q}V^\vee}\ar[d]^{\ast \llcorner \, \omega}\\ 
{\sco_{\p^n}(-1) \otimes {\textstyle 
\bigwedge}^{q+1}V^\vee}\ar[r]^-{(-1)^pd_{q+1}(q)} & 
{\sco_{\p^n} \otimes {\textstyle \bigwedge}^qV^\vee}}
\]
hence $\ast \llcorner \, \omega \colon \sco_{\p^n} \otimes \bigwedge^{p+q}V^\vee  
\ra \sco_{\p^n} \otimes \bigwedge^qV^\vee$ induces a map    
$\phi \colon \Omega_{\p^n}^{p+q}(p+q) \ra \Omega_{\p^n}^q(q)$ such that 
$\tH^0(\phi(1))$ can be identified with $(-1)^p(\ast \llcorner \, \omega) 
\colon \bigwedge^{p+q+1}V^\vee \ra \bigwedge^{q+1}V^\vee$. One thus gets an 
injective map$\, :$ 
\[
{\textstyle \bigwedge}^pV \lra \text{Hom}_{\sco_{\p^n}}(\Omega_{\p^n}^{p+q}(p+q) , 
\Omega_{\p^n}^q(q)) 
\]
which turns out to be bijective, by dimensional reasons. Moreover, the mapping  
$\tH^0(\phi^\vee) \colon \tH^0(\Omega_{\p^n}^q(q)^\vee) \ra 
\tH^0(\Omega_{\p^n}^{p+q}(p+q)^\vee)$ can be identified with $\omega \wedge \ast 
\colon \bigwedge^qV \ra \bigwedge^{p+q}V$. 
\end{defin}

The next lemma is the basic fact in the construction of the 
Trautmann-Vetter-Tango bundle of rank $n-1$ on $\p^n$. 

\begin{lemma}\label{L:vetter} 
Using the notation from the above definition, let $W$ be a vector subspace 
of $\bigwedge^2V^\vee$ $($resp., $\bigwedge^{n-1}V$$)$. Consider the vector 
subspace $W^\perp$ of $\bigwedge^2V$ consisting of the elements $\eta$ such that 
$\langle \alpha , \eta \rangle = 0$, $\forall \, \alpha \in W$ $($resp., 
$\eta \wedge \omega = 0$, $\forall \, \omega \in W$$)$. Then $W$ generates 
globally $\Omega_{\p^n}^1(2)$ $($resp., $\Omega_{\p^n}^{n-1}(n-1)^\vee$$)$ if and 
only if $W^\perp$ contains no decomposable element of $\bigwedge^2V$, i.e., 
no element of the form $v \wedge w$, with $v,\, w \in V$ linearly 
independent.   
\end{lemma}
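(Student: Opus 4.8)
The plan is to test global generation of each bundle fibre by fibre and to identify, via the contraction machinery of Definition~\ref{D:contraction}, the failure of pointwise surjectivity with the presence of a decomposable element in $W^\perp$. I would treat the two assertions in complete parallel, since both come down to the same linear-algebra observation.

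First, for $W\subseteq\bigwedge^2V^\vee$ and the bundle $\Omega_{\p^n}^1(2)$: I would use the twist $d_2(2)\colon\sco_{\p^n}\otimes\bigwedge^2V^\vee\to\sco_{\p^n}(1)\otimes V^\vee$ of the Koszul differential, whose image is $\Omega_{\p^n}^1(2)$, to get the canonical identification $\bigwedge^2V^\vee\izo\tH^0(\Omega_{\p^n}^1(2))$ (both spaces have dimension $\binom{n+1}{2}$, as one checks from the twisted Euler sequence $0\to\Omega_{\p^n}^1(2)\to V^\vee\otimes\sco_{\p^n}(1)\to\sco_{\p^n}(2)\to 0$). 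A constant section $\alpha\in\bigwedge^2V^\vee$ then has, as value of the corresponding section of $\Omega_{\p^n}^1(2)$ at a point $[v]\in\p(V)$, the image of $\alpha$ under the reduced fibre map, which by Definition~\ref{D:contraction}(b) is the contraction $\ast\llcorner v\colon\bigwedge^2V^\vee\to V^\vee$; its image is the hyperplane $v^\perp:=\{\ell\in V^\vee:\ell(v)=0\}$, i.e.\ the whole fibre (up to the one-dimensional twisting factor, which is irrelevant for spanning). Thus $W$ generates $\Omega_{\p^n}^1(2)$ globally if and only if, for every $[v]$, one has $\{\alpha\llcorner v:\alpha\in W\}=v^\perp$.

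The crux is to translate this last condition. Non-surjectivity of the map $W\to v^\perp$, $\alpha\mapsto\alpha\llcorner v$, at some $[v]$ means there is a $w\in V$, linearly independent from $v$, with $(\alpha\llcorner v)(w)=\langle\alpha,v\wedge w\rangle=0$ for all $\alpha\in W$, that is, with $v\wedge w\in W^\perp$. Hence $W$ fails to generate $\Omega_{\p^n}^1(2)$ at some point if and only if $W^\perp$ contains a nonzero decomposable element $v\wedge w$, which is exactly the asserted equivalence. For the dual case $W\subseteq\bigwedge^{n-1}V$ and $\Omega_{\p^n}^{n-1}(n-1)^\vee$, I would run the same argument using the bijection of Definition~\ref{D:contraction}(b) with $p=n-1$, $q=0$, which identifies $\bigwedge^{n-1}V$ with $\text{Hom}_{\sco_{\p^n}}(\Omega_{\p^n}^{n-1}(n-1),\sco_{\p^n})=\tH^0(\Omega_{\p^n}^{n-1}(n-1)^\vee)$. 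A section $\omega\in\bigwedge^{n-1}V$ sends a fibre vector $\beta\llcorner v$ of $\Omega_{\p^n}^{n-1}(n-1)$ at $[v]$ (with $\beta\in\bigwedge^nV^\vee$) to $(\beta\llcorner v)\llcorner\omega=\beta\llcorner(v\wedge\omega)=\langle\beta,v\wedge\omega\rangle$, by the composition rule of Definition~\ref{D:contraction}(a). Writing $\beta\leftrightarrow u\in V$ under the canonical isomorphism $\bigwedge^nV^\vee\izo V$, this pairing becomes $(u\wedge v)\wedge\omega\in\bigwedge^{n+1}V$, while $\beta\llcorner v\neq 0$ translates into $u\wedge v\neq 0$. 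So $W$ fails to generate the dual bundle at $[v]$ precisely when some $u\wedge v\in W^\perp$ with $u,v$ independent, again giving the decomposability criterion.

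The hard part will be purely bookkeeping: keeping the twists and the one-dimensional scaling factors straight so that fibrewise evaluation is literally the contraction map (harmless, since only the span matters), and checking that the two a priori different definitions of $W^\perp$ — one through the canonical pairing $\bigwedge^2V^\vee\times\bigwedge^2V\to k$, the other through $\eta\wedge\omega\in\bigwedge^{n+1}V$ — are matched by the identifications of Definition~\ref{D:contraction}(a). Once these are in place, the essential content is the short observation above, that failure of pointwise surjectivity at $[v]$ corresponds exactly to a decomposable element of $W^\perp$ through $v$, and ranging over all $[v]$ yields the global equivalence.
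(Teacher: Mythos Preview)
Your argument is correct and is essentially the paper's own proof: both identify the fibre of $\Omega_{\p^n}^1(2)$ at $[v]$ with the image of $\ast\llcorner v\colon\bigwedge^2V^\vee\to V^\vee$, and then observe that failure of $W$ to span this image is witnessed by some $w\notin kv$ with $\langle W,v\wedge w\rangle=0$, i.e.\ $v\wedge w\in W^\perp$. The only difference is that for the second assertion the paper dispatches it in one line by remarking that $\Omega_{\p^n}^{n-1}(n-1)^\vee\simeq\Omega_{\p^n}^1(2)$, whereas you rerun the contraction argument explicitly; both are fine.
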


\begin{proof} 
$W$ generates $\Omega_{\p^n}^1(2)$ globally if and only if $W \llcorner \, v = 
\bigwedge^2V^\vee \llcorner \, v$ (inside $V^\vee$), $\forall \, v \in V 
\setminus \{0\}$. But $\bigwedge^2V^\vee \llcorner \, v$ is the kernel of the 
linear function $\ast \llcorner \, v \colon V^\vee \ra k$ (which is, actually, 
evaluation at $v$). If, for some $v \in V \setminus \{0\}$, $W \llcorner \, v$ 
is contained strictly in $\bigwedge^2V^\vee \llcorner \, v$ then there exists 
another linear function on $V^\vee$ vanishing on $W \llcorner \, v$. This 
linear function is of the form $\ast \llcorner \, w$, for some $w \in V 
\setminus kv$. It follows that $W \llcorner \, (v \wedge w) = (W \llcorner \, 
v) \llcorner \, w = (0)$. 

The assertion about $\Omega_{\p^n}^{n-1}(n-1)^\vee$ can be proven similarly 
(actually, $\Omega_{\p^n}^{n-1}(n-1)^\vee \simeq \Omega_{\p^n}^1(2)$).  
\end{proof}
 
\begin{lemma}\label{L:sasakura} 
Consider a morphism $\phi \colon \Omega_\piv^3(3) \oplus \Omega_\piv^2(2) \ra 
\Omega_\piv^1(1)$ defined by contraction with an $\omega \in \bigwedge^2V$ and 
a $v \in V$, where $V := k^5$ $($see Definition~\emph{\ref{D:contraction}}$)$. 
Then the following assertions are equivalent$\, :$ 
\begin{enumerate} 
\item[(i)] $\phi$ is an epimorphism$\, ;$ 
\item[(ii)] There exists a $k$-basis $v_0 , \ldots , v_4$ of $V$ such that 
$\omega = v_0 \wedge v_1 + v_2 \wedge v_3$ and $v = v_4$$\, ;$ 
\item[(iii)] ${\fam0 H}^0(\phi(1)) \colon {\fam0 H}^0(\Omega_\piv^3(4) \oplus 
\Omega_\piv^2(3)) \ra {\fam0 H}^0(\Omega_\piv^1(2))$ is surjective.  
\end{enumerate} 
\end{lemma}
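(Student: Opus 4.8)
The plan is to translate both of the geometric conditions (i) and (iii) into purely multilinear statements about the pair $(\omega, v) \in \bigwedge^2 V \times V$, and then to run a short cycle of implications through the normal form (ii). Throughout I would use the identifications of Definition~\ref{D:contraction}.

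First I would reformulate (iii). By Definition~\ref{D:contraction}(b) the map $\tH^0(\phi(1))$ is, up to signs, the sum of the contractions $\ast\llcorner\omega\colon\bigwedge^4 V^\vee\to\bigwedge^2 V^\vee$ and $\ast\llcorner v\colon\bigwedge^3 V^\vee\to\bigwedge^2 V^\vee$. Since, by Definition~\ref{D:contraction}(a), $\ast\llcorner\omega$ and $\ast\llcorner v$ are the transposes of $\omega\wedge\ast$ and $v\wedge\ast$ for the canonical pairings, $\tH^0(\phi(1))$ is the transpose of
\[
\psi\colon{\textstyle\bigwedge}^2 V\lra{\textstyle\bigwedge}^4 V\oplus{\textstyle\bigwedge}^3 V,\quad \eta\longmapsto(\omega\wedge\eta,\;v\wedge\eta).
\]
Hence (iii) holds if and only if $\psi$ is injective, i.e. if and only if $0$ is the only $\eta\in\bigwedge^2 V$ with $\omega\wedge\eta=0$ and $v\wedge\eta=0$; I will call this condition $(\ast)$.

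Next I would reformulate (i). Being a morphism of vector bundles, $\phi$ is an epimorphism if and only if $\phi^\vee$ is injective on every fibre. Using that $\Omega^i_\piv(i)^\vee\simeq\bigwedge^i\text{T}_\piv(-i)$ is globally generated, with $\tH^0(\Omega^i_\piv(i)^\vee)=\bigwedge^i V$ (Definition~\ref{D:contraction}) and fibre $\bigwedge^i(V/ku)$ at a point $[u]$ (via the Euler sequence), the evaluation being the natural projection $\bigwedge^i V\to\bigwedge^i(V/ku)$ with kernel $u\wedge\bigwedge^{i-1}V$, I would feed the formula $\tH^0(\phi^\vee)(x)=(\omega\wedge x,\,v\wedge x)$ of Definition~\ref{D:contraction} into this computation. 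One finds that $\phi^\vee$ fails to be injective at $[u]$ exactly when there is an $x\notin ku$ with $\omega\wedge x\wedge u=0$ and $v\wedge x\wedge u=0$. Writing $\eta=u\wedge x$, this shows that (i) holds if and only if there is no nonzero \emph{decomposable} $\eta=u\wedge x$ with $\omega\wedge\eta=0$ and $v\wedge\eta=0$; I will call this condition $(\ast\ast)$.

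It then remains to prove the cycle $\text{(ii)}\Rightarrow(\ast)\Rightarrow(\ast\ast)\Rightarrow\text{(ii)}$. The implication $(\ast)\Rightarrow(\ast\ast)$ is trivial, and gives (iii) $\Rightarrow$ (i). For (ii) $\Rightarrow(\ast)$ I would substitute the normal form: $v\wedge\eta=v_4\wedge\eta=0$ forces $\eta=v_4\wedge w$ with $w\in\langle v_0,\dots,v_3\rangle$, and then $\omega\wedge v_4\wedge w=0$ expands into distinct basis $4$-vectors, forcing $w=0$. For the remaining, and main, implication $(\ast\ast)\Rightarrow\text{(ii)}$ I would argue by contraposition, invoking the rank normal form of $\omega$: if $\omega$ has rank $\le 2$, or has rank $4$ with $v$ in its support (the case $v=0$ included), I would exhibit in each case an explicit nonzero decomposable $\eta$ annihilated by both $\omega\wedge\ast$ and $v\wedge\ast$ -- for instance $\eta=v_0\wedge v_2$ when $\omega=v_0\wedge v_1+v_2\wedge v_3$ and $v\in\langle v_0,\dots,v_3\rangle$ -- so that $(\ast\ast)$ fails. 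The only genuine obstacle is the fibre computation of the previous paragraph: getting the identification $\Omega^i_\piv(i)^\vee\simeq\bigwedge^i\text{T}_\piv(-i)$ and the evaluation kernel $u\wedge\bigwedge^{i-1}V$ exactly right is what pins down $(\ast\ast)$ -- rather than the a priori stronger $(\ast)$ -- as the correct translation of (i); the closing case analysis is then elementary exterior algebra.
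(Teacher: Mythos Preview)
Your argument is correct and matches the paper's almost exactly: both translate (iii) and (i) into, respectively, the vanishing and the absence of nonzero decomposable elements in the space $\{\eta\in\bigwedge^2V:\omega\wedge\eta=0=v\wedge\eta\}$, and then run the same case analysis on the rank of $\omega$; the paper simply packages your direct fibre computation for (i) as an application of Lemma~\ref{L:vetter} (together with the identification $\bigwedge^3V\simeq\bigwedge^2V^\vee$), while you dualize instead. One small slip in your sketch: the illustrative choice $\eta=v_0\wedge v_2$ does not witness the failure of $(\ast\ast)$ for \emph{every} $v\in\langle v_0,\dots,v_3\rangle$ (try $v=v_1$, for which $v\wedge v_0\wedge v_2\neq 0$); what actually works---and is what the paper uses implicitly---is a dimension count: with $V':=\langle v_0,\dots,v_3\rangle$, the kernel of $\omega\wedge\ast$ on the $6$-dimensional space $\bigwedge^2V'$ is $5$-dimensional, the subspace $v\wedge V'$ of decomposables is $3$-dimensional, so their intersection has dimension at least $2$.
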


\begin{proof} 
According to Definition~\ref{D:contraction}, $\tH^0(\phi(1))$ can be 
identified with the map $\bigwedge^4V^\vee \oplus \bigwedge^3V^\vee \ra 
\bigwedge^2V^\vee$ defined by contraction with $\omega$ and with $-v$ and 
this map can be identified with the map $V \oplus \bigwedge^2V \ra 
\bigwedge^3V$ defined by exterior multiplication to the right by $\omega$ and 
by $-v$. Consider the subspace $W := V \wedge \omega - \bigwedge^2V \wedge v$ 
of $\bigwedge^3V$. Since the isomorphism $\bigwedge^3V \izo 
\bigwedge^2V^\vee$ identifies the exterior multiplication pairing $\bigwedge^3V 
\times \bigwedge^2V \ra \bigwedge^5V$ with the canonical pairing 
$\bigwedge^2V^\vee \times \bigwedge^2V \ra k$, Lemma~\ref{L:vetter} implies 
that $\phi(1)$ is an epimorphism if and only if the subspace $W^\perp$ of 
$\bigwedge^2V$ contains no decomposable element. 
Now, one has$\, :$ 
\[
(V \wedge \omega)^\perp = \Ker({\textstyle \bigwedge}^2V 
\xra{\omega \wedge \ast} {\textstyle \bigwedge}^4V)\, ,\  
({\textstyle \bigwedge}^2V \wedge v)^\perp = \Ker({\textstyle \bigwedge}^2V 
\xra{v \wedge \ast} {\textstyle \bigwedge}^3V) \supseteq v \wedge V\, .  
\]

If $\omega = v_0 \wedge v_1$, with $v_0$, $v_1$ linearly independent then 
$(V \wedge \omega)^\perp \supseteq v_0 \wedge V + v_1 \wedge V$ hence $W^\perp$ 
contains decomposable elements. 

It remains that $\omega = v_0 \wedge v_1 + v_2 \wedge v_3$, with $v_0, \ldots , 
v_3$ linearly independent. Put $V^\prime := kv_0 + \cdots + kv_3$. Then$\, :$ 
\[
(V \wedge \omega)^\perp \supseteq \Ker({\textstyle \bigwedge}^2V^\prime  
\xra{\omega \wedge \ast} {\textstyle \bigwedge}^4V^\prime \simeq k) 
\]
hence if $v \in V^\prime$ then $W^\perp$ contains a (non-zero) decomposable 
element of $\bigwedge^2V$. 

Consequently, $v_0 , \ldots , v_3 , v$ must be linearly independent. In this 
case, $V^\prime \wedge \omega = \bigwedge^3V^\prime$ and $\bigwedge^2V \wedge v 
\supseteq \bigwedge^2V^\prime \wedge v$ hence $W = \bigwedge^3V$. 
\end{proof} 

\begin{cor}\label{C:sasakura} 
Consider an epimorphism $\e \colon \Omega_\piv^2(2) \oplus \Omega_\piv^1(1) 
\ra \sco_\piv$ defined by contraction with an $\omega \in \bigwedge^2V$ and a 
$v \in V$, where $V := k^5$. Then ${\fam0 Ker}\, \e(1)$ is globally generated 
if and only if there exists a $k$-basis $v_0, \ldots , v_4$ of $V$ such that 
$\omega = v_0 \wedge v_1 + v_2 \wedge v_3$ and $v = v_4$.  
\end{cor}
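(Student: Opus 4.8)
The plan is to relate the kernel of the epimorphism $\e(1)$ to the cokernel of a morphism $\phi(1)$ of the type treated in Lemma~\ref{L:sasakura}, and then read off the global generation from the conclusion of that lemma. Concretely, dualizing the contraction morphism $\e \colon \Omega_\piv^2(2) \oplus \Omega_\piv^1(1) \ra \sco_\piv$ (using that $\sco_\piv^\vee \simeq \sco_\piv$ and the self-duality of the exterior powers via $\bigwedge^i V^\vee \simeq \bigwedge^{5-i}V$) should produce a morphism $\sco_\piv \ra \Omega_\piv^3(3) \oplus \Omega_\piv^2(2)$ whose cokernel is dual to $\Ker\e$, and the residual data is exactly a morphism of the shape $\Omega_\piv^3(3) \oplus \Omega_\piv^2(2) \ra \Omega_\piv^1(1)$ given by contraction with the same $\omega$ and $v$. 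So the first step is to set up this duality carefully and identify the relevant morphism with a $\phi$ as in Lemma~\ref{L:sasakura}.

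First I would write down the section $\sco_\piv \ra \Omega_\piv^2(2) \oplus \Omega_\piv^1(1)$ dual to (a twist of) $\e$ and splice it into the tautological Koszul-type complexes from Definition~\ref{D:contraction}, obtaining a three-term complex $\sco_\piv \ra \Omega_\piv^2(2) \oplus \Omega_\piv^1(1) \ra \sco_\piv$ or, after dualizing and twisting, a complex involving $\Omega_\piv^3(3)$, $\Omega_\piv^2(2)$ and $\Omega_\piv^1(1)$. The point is that $\Ker\e$ is globally generated precisely when a certain map on global sections of the twist by $\sco(1)$ is surjective, and this map is, up to sign and the identifications of Definition~\ref{D:contraction}, the map $\tH^0(\phi(1))$ appearing in Lemma~\ref{L:sasakura}(iii). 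Thus the bookkeeping I would do is to check that $\Ker\e(1)$ is globally generated $\iff$ $\tH^0(\phi(1))$ is surjective, where $\phi$ is the contraction morphism $\Omega_\piv^3(3) \oplus \Omega_\piv^2(2) \ra \Omega_\piv^1(1)$ defined by the \emph{same} $\omega$ and $v$.

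Once that identification is in place, the corollary is immediate: by the equivalence (i)$\Leftrightarrow$(iii) of Lemma~\ref{L:sasakura}, surjectivity of $\tH^0(\phi(1))$ is equivalent to $\phi$ being an epimorphism, and by (i)$\Leftrightarrow$(ii) this happens exactly when there is a $k$-basis $v_0,\ldots,v_4$ of $V$ with $\omega = v_0\wedge v_1 + v_2\wedge v_3$ and $v = v_4$, which is the asserted condition. So the second half of the proof is a direct appeal to the lemma with no new computation.

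The main obstacle I expect is the first step: getting the self-duality identifications exactly right so that the dualized/twisted morphism is genuinely contraction by the \emph{same} pair $(\omega,v)$ rather than some sign- or conjugate-twisted variant, and making sure the twist by $\sco(1)$ lines up so that ``$\Ker\e(1)$ globally generated'' really matches ``$\tH^0(\phi(1))$ surjective'' and not the surjectivity of some other twist. This is exactly the kind of diagram-chasing the paper sets up Definition~\ref{D:contraction} to handle, so I would lean on the commutative diagrams and the explicit description of $\tH^0(\phi^\vee)$ there to track the identifications, and otherwise the content is entirely carried by Lemma~\ref{L:sasakura}.
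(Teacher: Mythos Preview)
Your end goal---reducing to Lemma~\ref{L:sasakura}---is exactly right, and once the morphism $\phi \colon \Omega_\piv^3(3) \oplus \Omega_\piv^2(2) \ra \Omega_\piv^1(1)$ is in hand the corollary follows immediately from (i)$\Leftrightarrow$(ii). But the mechanism you propose (dualizing $\e$ and chasing self-duality identifications of the $\Omega^i(i)$'s) is not how the paper does it and, as you yourself anticipate, would be awkward to make precise: on $\piv$ one has $(\Omega^i_\piv(i))^\vee \simeq \Omega^{4-i}_\piv(5-i)$, so $\e^\vee$ lands in $\Omega^2_\piv(3) \oplus \Omega^3_\piv(4)$ rather than in $\Omega^3_\piv(3) \oplus \Omega^2_\piv(2)$, and there is no direct reason why global generation of $\Ker\e(1)$ should be read off from properties of that dual sequence.

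The paper's route is more direct and avoids duality entirely. Twist the defining sequence by $\sco_\piv(1)$ to get $0 \ra K(1) \ra \Omega^2_\piv(3) \oplus \Omega^1_\piv(2) \xra{\e(1)} \sco_\piv(1) \ra 0$, and apply the Snake Lemma to the diagram whose vertical arrows are the evaluation morphisms of each term. The kernels of the evaluation morphisms of $\Omega^2_\piv(3)$, $\Omega^1_\piv(2)$, $\sco_\piv(1)$ are precisely $\Omega^3_\piv(3)$, $\Omega^2_\piv(2)$, $\Omega^1_\piv(1)$ (this is the tautological Koszul complex of Definition~\ref{D:contraction}(b)), and their cokernels vanish. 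The snake sequence therefore identifies the cokernel of the evaluation morphism of $K(1)$ with the cokernel of the induced map $\Omega^3_\piv(3) \oplus \Omega^2_\piv(2) \ra \Omega^1_\piv(1)$, which is contraction by $\omega$ and $-v$. Hence $K(1)$ is globally generated iff this $\phi$ is an epimorphism, and Lemma~\ref{L:sasakura} finishes. This approach hands you for free exactly the identification you were worried about: the induced map on kernels of evaluation is automatically contraction by the same pair $(\omega,v)$ up to sign, by the commutative diagrams in Definition~\ref{D:contraction}(b).
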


\begin{proof} 
Let $K$ be the kernel of $\e$. Applying the Snake Lemma to the diagram whose 
vertical morphisms are the evaluation morphisms of the terms of the short 
exact sequence$\, :$ 
\[
0 \lra K(1) \lra \Omega_\piv^2(3) \oplus \Omega_\piv^1(2) \xra{\e(1)} 
\sco_\piv(1) \lra 0\, , 
\]
one gets that $K(1)$ is globally generated if and only if the morphism 
$\Omega_\piv^3(3) \oplus \Omega_\piv^2(2) \ra \Omega_\piv^1(1)$ defined by 
contraction with $\omega$ and with $-v$ is an epimorphism. One can apply, 
now, Lemma~\ref{L:sasakura}. 
\end{proof}

\section{The case $c_1 = 5$ on $\piv$}\label{S:c1=5onp4}  

We classify, in this section, the globally generated vector bundles $E$ with 
$c_1 = 5$ on $\piv$ with the property that $\tH^i(E^\vee) = 0$, $i = 0,\, 1$, 
and such that $\tH^0(E_H(-2)) = 0$, for every hyperplane $H \subset \piv$. 
We actually use the results about the classification of the analogous bundles 
on $\piii$, recalled in Appendix~\ref{A:c1=5onp3}, and try to decide which 
of these bundles extend to $\piv$ (as globally generated vector bundles). We 
spend most of the time showing that many of them do not extend.   

We begin by collecting, in the next result, some general information about 
globally generated vector bundles with $c_1 = 5$ on $\piv$.  

\begin{remark}\label{R:generalities} 
Let $E$ be a globally generated vector bundle on $\piv$, of rank $r$, with 
Chern classes $c_1 = 5$, $c_2 \leq 12$, $c_3$, $c_4$, and such that 
$\tH^i(E^\vee) = 0$, $i = 0,\, 1$. 

\vskip2mm 

(a) $r - 1$ general global sections of $E$ define an exact sequence$\, :$ 
\[
0 \lra (r-1)\sco_\piv \lra E \lra \sci_Y(5) \lra 0\, , 
\] 
with $Y$ a \emph{nonsingular} surface in $\piv$ of degree $c_2$. Severi's 
theorem (asserting that the only surface in $\piv$ which is not linearly 
normal is the Veronese surface) implies that $\tH^1(E(l)) = 0$, for $l \leq 
-4$ (recalling \cite[Prop.~2.2]{acm1}). Moreover, Kodaira's vanishing theorem 
implies that $\tH^2(E(l)) \simeq \tH^2(\sci_Y(5 + l)) \simeq 
\tH^1(\sco_Y(5 + l)) = 0$, for $l \leq -6$. 

\vskip2mm 

(b) Applying the Riemann-Roch theorem (recalled in \cite[Thm.~7.3]{acm1}) to 
$E^\vee$ and taking into account that $\h^3(E^\vee) = \h^1(E(-5)) = 0$ and 
$\h^4(E^\vee) = \h^0(E(-5)) = 0$ (because, otherwise, $E \simeq \sco_\piv(5)$), 
one gets that$\, :$ 
\[
r = \frac{5c_3 + 2c_4 - c_2(c_2 - 10)}{12} + \h^2(E^\vee)\, . 
\]   
Moreover, Schwarzenberger's congruence $(2c_1 + 3)(c_3 - c_1c_2) + c_2^2 + c_2 
\equiv 2c_4 \pmod{12}$ (see \cite[Cor.~7.4]{acm1}) becomes, in our case$\, :$  
\[
c_2(c_2 - 4) + c_3 \equiv 2c_4 \pmod{12} 
\]
(recall, also, that $c_3 \equiv c_1c_2 \pmod{2}$ hence, in our case, $c_3 
\equiv c_2 \pmod{2}$).  

\vskip2mm 

(c) Assume, now, that there is a hyperplane $H \subset \piv$ such that 
$\tH^0(E_H(-2)) = 0$. In this case, $E$ must have rank $r \geq 3$ because 
Chiodera and Ellia \cite{ce} showed that every globally generated rank 2 vector 
bundle with $c_1 = 5$ on $\piv$ splits, i.e., is of the form $\sco_\piv(a) 
\oplus \sco_\piv(b)$, with $a \geq 0$, $b \geq 0$ and $a + b = 5$. 

According to Remark~\ref{R:f}, the (globally generated) 
vector bundle $F := P(P(E_H))$ on $H \simeq \piii$ satisfies $\tH^i(F^\vee) = 
0$, $i = 0,\, 1$, and there is an exact sequence$\, :$ 
\[
0 \lra s\sco_H \lra F \lra Q \lra 0\, , 
\]
with $s := \h^1(E_H^\vee)$, such that $E_H \simeq t\sco_H \oplus Q$, where 
$t := \h^0(E_H^\vee)$. Since $\tH^i(E^\vee) = 0$, $i = 0,\, 1$, it follows that 
$\h^0(E_H^\vee) = \h^1(E^\vee(-1))$. 
One has $\tH^1_\ast(E_H) \simeq \tH^1_\ast(F)$ and $\tH^2(E_H(l)) \simeq 
\tH^2(F(l))$ for $l \geq -3$. 

We assert that, under the above assumption, one has $\text{rk}\, Q \geq 3$ 
(hence $\text{rk}\, F \geq 3$). \emph{Indeed}, if $\text{rk}\, Q \leq 2$ then, 
by \cite[Cor.~1.5(b)]{acm1}, there exists a globally generated vector bundle 
$E^\prim$ on $\piv$ such that $E_H^\prim \simeq Q$. By the above mentioned result 
of Chiodera and Ellia, $E^\prim \simeq \sco_\piv(a) \oplus \sco_\piv(b)$ hence 
$Q \simeq \sco_H(a) \oplus \sco_H(b)$. But this \emph{contradicts} our 
assumption that $\tH^0(E_H(-2)) = 0$. 

One deduces, now, from 
Lemma~\ref{L:h2f(-2)=0}(b), that $\tH^2(E_H(l)) = 0$ for $l \geq -2$ hence 
$\tH^3(E(l)) = 0$ for $l \geq -3$. Moreover, $\tH^3(E(-5)) \simeq 
\tH^1(E^\vee)^\vee = 0$ and $\h^2(E_H(-3)) \geq \h^3(E(-4)) = \h^1(E^\vee(-1)) = 
t$. One also gets, from the Riemann-Roch formula, that$\, :$ 
\begin{gather*}
\h^2(E(-3)) - \h^1(E(-3)) = \chi(\sco_\piv(c_1 - 3)) + 
\frac{(2c_1 - 3)(c_3 - c_1c_2) + c_2^2 + c_2 - 2c_4}{12}\, ,
\end{gather*}    
where, of course, $c_1 = 5$.   

We would like to point out the following \emph{basic fact}$\, :$ either 
$F$ is one of the bundles from the conclusion of Prop.~\ref{P:fprimunstable} 
or it can be realized as an extension$\, :$ 
\[
0 \lra (\text{rk}\, F - 3)\sco_H \lra F \lra G(2) \lra 0\, , 
\] 
where $G$ is a \emph{stable} rank 3 vector bundle on $H \simeq \piii$ with 
$c_1(G) = -1$, $c_2(G) = c_2 - 8$, $c_3(G) = c_3 - 2c_2 + 12$. In the latter 
case one deduces easily, from the above exact sequence, that $\text{rk}\, F = 
3 + \h^2(G(-2))$. For further information (including the definition and the 
properties of the \emph{spectrum} of $G$) the reader is refered to 
Remark~\ref{R:fprimstable}.  

\vskip2mm 

(d) Assume, finally, that $\tH^0(E_H(-2)) = 0$, for \emph{every} hyperplane 
$H \subset \piv$. Then, as we noticed in (c), one has $\tH^2(E_H(l)) = 0$ for 
$l \geq -2$ and for any hyperplane $H \subset \piv$. Consider the exact 
sequences$\, :$ 
\begin{gather*}
0 = \tH^0(E_H(-2)) \ra \tH^1(E(-3)) \overset{h}{\lra} \tH^1(E(-2)) \ra 
\tH^1(E_H(-2)) \ra\\ 
\ra \tH^2(E(-3)) \overset{h}{\lra} \tH^2(E(-2)) \ra \tH^2(E_H(-2)) = 0\, .  
\end{gather*}
Applying the Bilinear Map Lemma \cite[Lemma~5.1]{ha} one gets that 
if $\h^1(E_H(-2)) \leq 3$ then $\tH^1(E(-3)) = 0$ and $\tH^2(E(-2)) = 0$. 
The latter vanishing implies that $\tH^2(E(l)) = 0$, $\forall \, l \geq -2$. 
Notice, also, that $\h^1(E_H(-2)) = \frac{1}{2}(5(c_2 - 8) - c_3)$, by 
Lemma~\ref{L:h2f(-2)=0}(b). 
\end{remark}

\begin{lemma}\label{L:ggkoszul} 
Let $d_0 \leq d_1 \leq \cdots \leq d_n$ be positive integers and let $K$ be 
the kernel of an epimorphism $\bigoplus_{i = 0}^n\sco_{\p^n}(-d_i) \ra 
\sco_{\p^n}$. Then $K(l)$ is globally generated if and only if $l \geq d_{n-1} 
+ d_n$. 
\end{lemma}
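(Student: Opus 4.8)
The plan is to make $K$ completely explicit via the Koszul complex and then read off global generation from a two-term resolution of $K$ by sums of line bundles. First I would unwind the hypothesis: an epimorphism $\phi$ as in the statement is given by homogeneous forms $f_0,\dots,f_n$ with $\deg f_i=d_i$ and no common zero on $\p^n$, so $(f_0,\dots,f_n)$ is $S_+$-primary, hence a homogeneous system of parameters, hence (as $S$ is Cohen--Macaulay) a regular sequence. Therefore the Koszul complex of $f_0,\dots,f_n$ resolves the finite-length module $S/(f_0,\dots,f_n)$, which sheafifies to $0$, so the geometric Koszul complex
\[
0 \to {\textstyle\bigwedge}^{n+1}F \to \cdots \to {\textstyle\bigwedge}^2 F \xra{d_2} F \xra{d_1} \sco_{\p^n} \to 0, \qquad F := {\textstyle\bigoplus}_{i=0}^n\sco_{\p^n}(-d_i),
\]
is exact. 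Exactness at $F$ gives $K=\Ker d_1=\mathrm{im}\,d_2$, and exactness at $\bigwedge^2 F$ gives $\mathrm{im}\,d_2=\Cok d_3$; thus $K\simeq\Cok\big(d_3\colon\bigwedge^3F\to\bigwedge^2F\big)$, with $\bigwedge^2F=\bigoplus_{i<j}\sco_{\p^n}(-d_i-d_j)$.

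The ``if'' direction is then immediate: for $l\geq d_{n-1}+d_n$ every twist $l-d_i-d_j$ is $\geq 0$, so $(\bigwedge^2F)(l)$ is a direct sum of globally generated line bundles, and its quotient $K(l)$ is globally generated.

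For the converse I would argue by contraposition, testing a single well-chosen point. Assume $l<d_{n-1}+d_n$. The forms $f_0,\dots,f_{n-2}$ cut out a closed subscheme of dimension $\geq 1$ which cannot lie inside $V(f_{n-1})\cap V(f_n)$ (otherwise all of $f_0,\dots,f_n$ would share a zero); so I can pick $v$ with $f_i(v)=0$ for $i\le n-2$ and $(f_{n-1}(v),f_n(v))\neq(0,0)$, and set $x=[v]$. At $x$ the $(n-1,n)$-coordinate projection $\mathrm{pr}\colon\bigwedge^2F\to\sco_{\p^n}(-d_{n-1}-d_n)$ precomposed with $d_3$ is, up to sign, given by the forms $f_a$ for $a\le n-2$, which all vanish at $v$; hence $\mathrm{pr}(l)$ kills $\mathrm{im}(d_3(l)_x)$ and descends to a surjection $\overline{\mathrm{pr}}\colon K(l)\otimes k(x)\twoheadrightarrow(\sco_{\p^n}(l-d_{n-1}-d_n))\otimes k(x)\neq 0$. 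On the other hand $\tH^1((\bigwedge^3F)(l))=0$ (a sum of groups $\tH^1(\sco_{\p^n}(m))$, which vanish for $n\geq 2$; for $n=1$ one has $\bigwedge^3F=0$ and $K\simeq\sco_{\p^1}(-d_0-d_1)$, settling the claim directly), so every global section of $K(l)$ lifts to $\bigwedge^2F(l)$ and its image under $\overline{\mathrm{pr}}$ is the value at $v$ of a section of $\sco_{\p^n}(l-d_{n-1}-d_n)$ --- a sheaf with no global sections since $l-d_{n-1}-d_n<0$. Thus $\tH^0(K(l))$ maps into the proper subspace $\Ker\overline{\mathrm{pr}}\subsetneq K(l)\otimes k(x)$, the evaluation map fails to be surjective at $x$, and $K(l)$ is not globally generated.

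The step I expect to be the main obstacle is exactly this converse. Testing the naive inclusion $K\subset F$ only yields $l\geq d_n$; the reason the sharp threshold is the larger $d_{n-1}+d_n$ is that the top Koszul syzygy $f_{n-1}e_n-f_ne_{n-1}$ lives in degree $d_{n-1}+d_n$, and the cokernel presentation is what exposes it. The delicate points are verifying that at the chosen $v$ the $(n-1,n)$ summand genuinely survives in the cokernel fibre (so that $\overline{\mathrm{pr}}$ is nonzero) while remaining invisible to global sections, and that the lifting of sections is legitimate; the vanishing $\tH^1((\bigwedge^3F)(l))=0$ is precisely what guarantees this lifting and ties the two halves of the argument together.
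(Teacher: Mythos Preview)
Your argument is correct in outline, but the ``only if'' direction takes a more elaborate route than the paper's. The paper simply restricts $K$ to the complete intersection curve $C = V(f_0,\ldots,f_{n-2})$: since $f_0,\ldots,f_{n-2}$ vanish identically on $C$ while $f_{n-1}|_C,\, f_n|_C$ have no common zero, one reads off $K_C \simeq \bigoplus_{i=0}^{n-2}\sco_C(-d_i)\oplus\sco_C(-d_{n-1}-d_n)$, and the last summand fails to be globally generated when $l<d_{n-1}+d_n$. Your point $x$ is just a point of $C$, and your quotient $\overline{\mathrm{pr}}$ is the fibre at $x$ of the projection onto that last summand; so you are computing the same obstruction, but at a single fibre rather than along the whole curve. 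The curve argument is shorter because the splitting of $K_C$ makes both the obstruction and its global invisibility immediate, with no lifting step needed.

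On that lifting step, there is a small imprecision worth fixing. Surjectivity of $\tH^0(\bigwedge^2F(l))\to\tH^0(K(l))$ needs $\tH^1(\mathrm{im}\,d_3(l))=0$, not merely $\tH^1(\bigwedge^3F(l))=0$; for $n\geq 3$ the map $d_3$ is not injective, so these differ. One must chase the truncated Koszul resolution $0\to\bigwedge^{n+1}F\to\cdots\to\bigwedge^3F\to\mathrm{im}\,d_3\to 0$ and use that $\tH^i(\sco_{\p^n}(m))=0$ for $1\leq i\leq n-1$, obtaining $\tH^1(\mathrm{im}\,d_3(l))\simeq\tH^{n-1}(\bigwedge^{n+1}F(l))=0$. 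Your conclusion stands, but the sentence ``$\tH^1(\bigwedge^3F(l))=0$ \ldots\ is precisely what guarantees this lifting'' is only literally correct for $n=2$.
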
 

\begin{proof} 
The epimorphism from the statement is defined by homogeneous polynomials 
$f_0 , \ldots , f_n$ of degree $d_0 , \ldots , d_n$, respectively. Let $C 
\subset \p^n$ be the complete intersection defined by $f_0 , \ldots , 
f_{n-2}$. Then $K_C \simeq \bigoplus_{i=0}^{n-2}\sco_C(-d_i) \oplus 
\sco_C(-d_{n-1} - d_n)$. It follows that if $K(l)$ is globally generated then 
$l \geq d_{n-1} + d_n$. 
The converse can be proven using the Koszul complex.  
\end{proof}

\begin{lemma}\label{L:h2eHvee=0} 
Let $E$ be a globally generated vector bundle on $\piv$ with $c_1 = 5$, $c_2 
\leq 12$, and such that ${\fam0 H}^i(E^\vee) = 0$, $i = 0,\, 1$, and let 
$H \subset \piv$ be a hyperplane. If ${\fam0 H}^0(E_H(-2)) = 0$ then  
${\fam0 H}^2(E_H^\vee) = 0$.  
\end{lemma}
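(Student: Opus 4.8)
The plan is to transport the vanishing to the hyperplane $H \simeq \piii$ and then to the bundle $F := P(P(E_H))$, whose structure is understood. First, by Serre duality on $\piii$ one has $\tH^2(E_H^\vee) \simeq \tH^1(E_H(-4))^\vee$, so it is enough to prove that $\tH^1(E_H(-4)) = 0$. Applying Remark~\ref{R:f}(b) on $\piii$ (where the range $1 \le i \le n-2$ reduces to $i = 1$) gives $\tH^1_\ast(F) \simeq \tH^1_\ast(E_H)$, hence $\tH^1(E_H(-4)) \simeq \tH^1(F(-4))$; likewise $\tH^0(F(-2)) \simeq \tH^0(E_H(-2)) = 0$ and $\tH^i(F^\vee) = 0$, $i = 0,\, 1$. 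As explained in Remark~\ref{R:generalities}(c), our hypothesis $\tH^0(E_H(-2)) = 0$ forces $\text{rk}\, F \ge 3$. Thus everything reduces to showing $\tH^1(F(-4)) = 0$ for this globally generated bundle $F$ on $\piii$.

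Next I would invoke the basic fact recorded in Remark~\ref{R:generalities}(c): either $F$ is one of the finitely many, explicitly described bundles from Prop.~\ref{P:fprimunstable}, or it sits in an exact sequence $0 \to (\text{rk}\, F - 3)\sco_\piii \to F \to G(2) \to 0$, with $G$ a \emph{stable} rank $3$ bundle on $\piii$ having $c_1(G) = -1$ and $c_2(G) = c_2 - 8 \le 4$. In the latter (main) case, twisting this sequence by $-4$ and using $\tH^1(\sco_\piii(-4)) = \tH^2(\sco_\piii(-4)) = 0$ identifies $\tH^1(F(-4)) \simeq \tH^1(G(-2))$. So the whole problem collapses to the single vanishing $\tH^1(G(-2)) = 0$ for a stable rank $3$ bundle $G$ with $c_1 = -1$ and small second Chern class.

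The hard part will be this last vanishing, which is the main obstacle. Stability already yields $\tH^0(G(l)) = 0$ for $l \le 0$ (a nonzero section would produce a rank-$1$ subsheaf of non-negative degree, contradicting $\mu(G) = -1/3$), but it does not by itself control $\tH^1$. Here I would use the spectrum of $G$ (Remark~\ref{R:fprimstable}): for a stable rank $3$ bundle with $c_1(G) = -1$ the spectrum is a short list of integers, of length $c_2(G) \le 4$, governing the intermediate cohomology, and it forces $\tH^1(G(l)) = 0$ for $l \le -2$, in particular $\tH^1(G(-2)) = 0$. The point is that the bound $c_2(G) \le 4$ keeps the admissible spectra extremely limited, so the required vanishing can be verified for each of the few possibilities. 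For the finitely many exceptional bundles $F$ coming from Prop.~\ref{P:fprimunstable}, I would instead read off $\tH^1(F(-4)) = 0$ directly from their explicit resolutions.

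As a consistency check, the same vanishing can be seen on $\piv$: writing $0 \to E(-5) \to E(-4) \to E_H(-4) \to 0$ and using $\tH^1(E(-4)) = 0$ from Remark~\ref{R:generalities}(a), the group $\tH^1(E_H(-4))$ is the kernel of multiplication by the equation of $H$ on $\tH^2(E(-5)) \simeq \tH^1(\sco_Y)$, where $Y$ is the nonsingular surface attached to $E$. This kernel vanishes exactly when the restriction $\tH^0(\sco_Y(1)) \to \tH^0(\sco_{Y \cap H}(1))$ is surjective, i.e.\ when the hyperplane section $Y \cap H$ is $1$-normal on $Y$ — an equivalent but less self-contained formulation, which is why I prefer the route through $F$ and its spectrum.
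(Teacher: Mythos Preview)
Your reduction to showing $\tH^1(F(-4)) = 0$ (equivalently $\tH^1(G(-2)) = 0$ in the stable case) is exactly right, and matches the paper. The gap is in the next step: you assert that ``the spectrum \ldots\ forces $\tH^1(G(l)) = 0$ for $l \le -2$''. This is false. By Remark~\ref{R:fprimstable}(i), $\h^1(G(-2)) = \h^0\big(\bigoplus_i \sco_\pj(k_i - 1)\big)$, which vanishes precisely when $k_1 \le 0$. Lemma~\ref{L:spectrumg} only gives $k_1 \le 1$, so the spectra $(1,0)$, $(1,0,0)$, $(1,1,0)$, $(1,0,0,-1)$, $(1,0,0,0)$, $(1,1,0,-1)$, $(1,1,0,0)$, $(1,1,1,0)$ are all admissible for $G$ and all have $\tH^1(G(-2)) \neq 0$. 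Likewise, your claim that the exceptional bundles of Prop.~\ref{P:fprimunstable} can be checked directly fails for item (ii) there: if $F_0 = \Ker(4\sco_\piii(2) \twoheadrightarrow \sco_\piii(4))$ then the sequence $0 \to F_0(-4) \to 4\sco_\piii(-2) \to \sco_\piii \to 0$ gives $\h^1(F_0(-4)) = 1$.

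The missing idea is that the vanishing $\tH^1(F(-4)) = 0$ is \emph{not} a property of $F$ on $\piii$; it genuinely uses that $F$ arises by restricting a globally generated bundle from $\piv$. The paper's proof lists the problematic spectra above (plus the exceptional case from Prop.~\ref{P:fprimunstable}(ii)) and eliminates each one using $\piv$-constraints: Schwarzenberger's congruence on $\piv$ together with the rank bound $r \le \text{rk}\, F$ (coming from $t = 0$ when $\tH^2(F(-3)) = 0$) rules out most spectra; Lemma~\ref{L:(1,0,0,-1)} combined with Lemma~\ref{L:ggkoszul} handles $(1,0,0,-1)$ and Prop.~\ref{P:fprimunstable}(ii); and for $(1,1,0,-1)$ one uses Lemma~\ref{L:(1,1,0,-1)} together with Kodaira vanishing on $\piv$ (Remark~\ref{R:generalities}(a)) to obtain the contradiction $\tH^2(E^\vee(1)) \neq 0$. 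Your ``consistency check'' at the end hints at the right kind of argument, but the $1$-normality of $Y \cap H$ on $Y$ is not something you can assume --- it is equivalent to the statement you are trying to prove.
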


\begin{proof} 
Assume, by contradiction, that $\tH^2(E_H^\vee) \neq 0$. By Serre duality, 
$\tH^2(E_H^\vee) \simeq \tH^1(E_H(-4))^\vee$. If $F$ is the (globally generated) 
vector bundle $P(P(E_H))$ on $H$ (see Remark~\ref{R:f}), then $\tH^1(E_H(-4)) 
\simeq \tH^1(F(-4))$. It follows (see Remark~\ref{R:fprimstable}) that either
\begin{enumerate}
\item[(1)] $F$ is as in item (ii) of the conclusion of 
Prop.~\ref{P:fprimunstable}, i.e., $F \simeq \sco_H(1) \oplus F_0$, where 
$F_0$ is the kernel of an epimorphism $4\sco_H(2) \ra \sco_H(4)$ 
\end{enumerate} 
or $F$ can be realized as an extension$\, :$ 
\[
0 \lra (\text{rk}\, F - 3)\sco_H \lra F \lra G(2) \lra 0\, ,
\]
where $G$ is a stable rank 3 vector bundle on $H \simeq \piii$ with $c_1(G) = 
-1$, $c_2(G) = c_2 - 8$, $c_3(G) = c_3 - 2c_2 + 12$ such that $\tH^1(G(-2)) 
\neq 0$. Using the properties of the spectrum of $G$ (recalled in 
Remark~\ref{R:fprimstable}), Lemma~\ref{L:spectrumg} and 
Lemma~\ref{L:impossiblespectra}, one deduces that one of the following 
holds$\, :$  
\begin{enumerate}
\item[(2)] $c_2(G) = 2$ and $G$ has spectrum $(1 , 0)$$\, ;$ 
\item[(3)] $c_2(G) = 3$ and $G$ has one of the spectra $(1 , 0 , 0)$, 
$(1 , 1 , 0)$$\, ;$ 
\item[(4)] $c_2(G) = 4$ and $G$ has one of the spectra $(1 , 0 , 0 , -1)$, 
$(1 , 0 , 0 , 0)$, $(1 , 1 , 0 , -1)$, $(1 , 1 , 0 , 0)$, $(1 , 1 , 1 , 0)$. 
\end{enumerate}
We shall eliminate all of these possibilities one by one. 

\vskip2mm 

\noindent 
{\bf Case 1.}\quad $F$ \emph{as in item} (ii) \emph{of the conclusion of 
Prop.}~\ref{P:fprimunstable}. 

\vskip2mm 

\noindent 
In this case $c_2 = 12$, $c_3 = 8$ hence, according to Schwarzenberger's 
congruence, one must have $c_4 > 0$. In particular, $r \geq 4$. Since 
$\tH^2(F(l)) = 0$ for $l \geq -3$ it follows that $t = 0$ (using the notation 
from Remark~\ref{R:generalities}(c)) hence $E_H \simeq F$. In particular, 
$\tH^1_\ast(E_H^\vee) = 0$ which implies that $\tH^1_\ast(E^\vee) = 0$. Applying 
\cite[Lemma~1.14(b)]{acm1} to $E^\vee$ one deduces that $E$ is the kernel of 
an epimorphism $\sco_\piv(1) \oplus 4\sco_\piv(2) \ra \sco_\piv(4)$. But this 
\emph{contradicts}, according to Lemma~\ref{L:ggkoszul}, the fact that $E$ is 
globally generated. Consequently, \emph{this case cannot occur}. 

\vskip2mm 

The case where $G$ has spectrum $(1 , 0 , 0 , -1)$ can be eliminated 
similarly, using Lemma~\ref{L:(1,0,0,-1)}. 

\vskip2mm 

\noindent 
{\bf Case 2.}\quad $G$ \emph{has spectrum} $(1 , 0)$. 

\vskip2mm 

\noindent 
In this case, $\text{rk}\, F = 3$, $c_2(G) = 2$, and $c_3(G) = -4$ hence $c_2 = 
c_2(F) = 10$ and $c_3 = c_3(F) = 4$ (see Remark~\ref{R:fprimstable}). 
Since $\tH^2(F(l)) = 0$ for $l \geq -3$ it follows that, using the notation 
from Remark~\ref{R:generalities}(c), one has $t = 0$ hence $E$ has rank 
$r \leq 3$. Using Schwarzenberger's congruence one gets a 
\emph{contradiction} hence \emph{this case cannot occur}, either. 

\vskip2mm 

The cases where $G$ has one of the spectra $(1 , 0 , 0)$, $(1 , 1 , 0)$, 
$(1 , 0 , 0 , 0)$, $(1 , 1 , 0 , 0)$, $(1 , 1 , 1 , 0)$ can be eliminated 
similarly.             

\vskip2mm 

\noindent 
{\bf Case 3.}\quad $G$ \emph{has spectrum} $(1 , 1 , 0 , -1)$. 

\vskip2mm 

\noindent 
In this case, $\text{rk}\, F = 4$, $c_2(G) = 4$ and $c_3(G) = -6$ hence 
$c_2 = 12$ and $c_3 = 6$. Since $\tH^2(F(l)) = 0$ for $l \geq -3$ it follows 
that $t = 0$ (using the notation from Remark~\ref{R:generalities}(c)). On the 
other hand, Schwarzenberger's congruence implies that $c_4 > 0$ hence $E$ has 
rank $r \geq 4$. One deduces that $E_H \simeq F$. 

Now, $\tH^2(E_H^\vee) \neq 0$ implies that $\tH^2(E^\vee) \neq 0$ because 
$\tH^3(E^\vee(-1)) \simeq \tH^1(E(-4))^\vee = 0$ (see 
Remark~\ref{R:generalities}(a)). On the other hand, by 
Lemma~\ref{L:(1,1,0,-1)}, $\tH^1(E_H^\vee(1)) = 0$ hence $\tH^2(E^\vee) \neq 0$ 
implies that 
$\tH^2(E^\vee(1)) \neq 0$. But $\tH^2(E^\vee(1)) \simeq \tH^2(E(-6))^\vee$ and 
$\tH^2(E(-6)) = 0$ by Kodaira vanishing (see Remark~\ref{R:generalities}(a)). 
This \emph{contradiction} shows that Case 3 \emph{cannot occur}.   
\end{proof}

\begin{cor}\label{C:h2eHvee=0} 
Under the hypothesis of Lemma~\emph{\ref{L:h2eHvee=0}},  
${\fam0 h}^1(E_H^\vee) = {\fam0 h}^2(E^\vee(-1)) - {\fam0 h}^2(E^\vee)$. 
Moreover, if ${\fam0 H}^0(E_H(-2)) = 0$, for every hyperplane $H \subset 
\piv$, then either ${\fam0 H}^2(E^\vee) = 0$ or ${\fam0 h}^1(E_H^\vee) \geq 4$, 
$\forall \, H \subset \piv$ hyperplane, and ${\fam0 h}^2(E^\vee(-1)) \geq 5$.  
\end{cor}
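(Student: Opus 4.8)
The plan is to read both assertions off the restriction sequence
\[
0 \lra E^\vee(-1) \lra E^\vee \lra E_H^\vee \lra 0
\]
obtained by tensoring $0 \to \sco_\piv(-1) \xra{h} \sco_\piv \to \sco_H \to 0$ (with $h$ an equation of $H$) by the locally free sheaf $E^\vee$. First I would extract the cohomology sequence $\tH^1(E^\vee) \to \tH^1(E_H^\vee) \to \tH^2(E^\vee(-1)) \to \tH^2(E^\vee) \to \tH^2(E_H^\vee)$. By the hypothesis of Lemma~\ref{L:h2eHvee=0} one has $\tH^1(E^\vee) = 0$, and its conclusion gives $\tH^2(E_H^\vee) = 0$; hence the middle portion is short exact,
\[
0 \lra \tH^1(E_H^\vee) \lra \tH^2(E^\vee(-1)) \lra \tH^2(E^\vee) \lra 0\, ,
\]
and counting dimensions yields $\h^1(E_H^\vee) = \h^2(E^\vee(-1)) - \h^2(E^\vee)$, the first assertion.

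For the \emph{moreover} part I would use that now $\tH^0(E_H(-2)) = 0$ holds for \emph{every} hyperplane $H$, so Lemma~\ref{L:h2eHvee=0} applies to each of them and gives $\tH^2(E_H^\vee) = 0$ for all $H$. The crux is to reinterpret this uniform vanishing: for every nonzero $h \in \tH^0(\sco_\piv(1)) = V^\vee$, the right-hand end $\tH^2(E^\vee) \to \tH^2(E_H^\vee) = 0$ of the sequence above shows that the multiplication map
\[
\mu_h \colon \tH^2(E^\vee(-1)) \lra \tH^2(E^\vee)
\]
is surjective. Writing $b := \h^2(E^\vee(-1))$ and $c := \h^2(E^\vee)$, this means that every $\mu_h$ with $h \neq 0$ has rank equal to $c$.

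Now suppose $\tH^2(E^\vee) \neq 0$, i.e.\ $c \geq 1$. Surjectivity of each $\mu_h$ forces $b \geq c$, so I may apply Lemma~\ref{L:bml} to the linear map $V^\vee \to \text{Hom}_k(\tH^2(E^\vee(-1)), \tH^2(E^\vee))$, $h \mapsto \mu_h$, with $r := c = \min(b,c)$: since each $\mu_h$ ($h \neq 0$) has rank $\geq c$, one obtains $5 = \dim_k V^\vee \leq (b - c + 1)(c - c + 1) = b - c + 1$. Hence $\h^2(E^\vee(-1)) - \h^2(E^\vee) = b - c \geq 4$, whence $\h^1(E_H^\vee) = b - c \geq 4$ for every $H$ by the first assertion, and $\h^2(E^\vee(-1)) = b \geq c + 4 \geq 5$. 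The one genuinely substantive step is the second paragraph: converting the cohomological vanishing $\tH^2(E_H^\vee) = 0$, valid for \emph{all} $H$ by Lemma~\ref{L:h2eHvee=0}, into surjectivity of \emph{every} multiplication $\mu_h$, which is exactly the constant-rank hypothesis needed to feed the Bilinear Map Lemma; the remainder is the restriction sequence and a dimension count.
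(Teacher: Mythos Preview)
Your proof is correct and follows essentially the same approach as the paper: both use the cohomology of the restriction sequence $0 \to E^\vee(-1) \to E^\vee \to E_H^\vee \to 0$ together with $\tH^1(E^\vee)=0$ and Lemma~\ref{L:h2eHvee=0} to get the dimension identity, and then the Bilinear Map Lemma (of which your application of Lemma~\ref{L:bml} with $r=c$ is an equivalent formulation) to handle the ``moreover'' part. The paper's proof is merely more terse, citing the Bilinear Map Lemma without spelling out the surjectivity-for-all-$h$ argument that you make explicit.
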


\begin{proof} 
One uses the exact sequence$\, :$ 
\[
0 = \tH^1(E^\vee) \lra \tH^1(E_H^\vee) \lra \tH^2(E^\vee(-1)) \overset{h}{\lra}  
\tH^2(E^\vee) \lra \tH^2(E_H^\vee) = 0 
\]
($h = 0$ being an equation of $H$ in $\piv$) and, for the second part, the 
Bilinear Map Lemma \cite[Lemma~5.1]{ha}. 
\end{proof} 

\begin{remark}\label{R:spectrumg} 
Under the hypothesis of Lemma~\ref{L:h2eHvee=0}, let $F$ be the vector bundle 
$P(P(E_H))$ on $H$ (see Remark~\ref{R:f}). Assume that $F$ can be realized as 
an extension$\, :$ 
\[
0 \lra (\text{rk}\, F - 3)\sco_H \lra F \lra G(2) \lra 0,  
\] 
where $G$ is a \emph{stable} rank 3 vector bundle on $H$ (see the last part  
of Remark~\ref{R:generalities}(c)).  
Lemma~\ref{L:h2eHvee=0} implies that $\tH^1(E_H(-4)) = 0$ hence $\tH^1(G(-2)) 
= 0$. Taking into account Lemma~\ref{L:spectrumg}, one deduces that 
the spectrum $(k_1 , \ldots , k_m)$ of $G$ must satisfy the inequalities 
$0 \geq k_1 \geq \cdots \geq k_m \geq -2$.  
\end{remark}

\begin{lemma}\label{L:c3geqc2} 
Let $E$ be a globally generated vector bundle on $\piv$,   
with $c_1 = 5$, $c_2 \leq 12$, and such that ${\fam0 H}^i(E^\vee) = 0$, 
$i = 0,\, 1$. Assume, also, that ${\fam0 H}^0(E_H(-2)) = 0$, for every 
hyperplane $H \subset \piv$. Then $c_2 \geq 10$ and $c_3 \geq c_2$. 
\end{lemma}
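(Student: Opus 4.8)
The plan is to analyze the restriction $E_H$ to a hyperplane $H \simeq \piii$ through the associated bundle $F := P(P(E_H))$, exactly as in the proof of Lemma~\ref{L:h2eHvee=0}. First I would extract the two structural consequences of the hypotheses: by Remark~\ref{R:f} the assumption $\tH^0(E_H(-2)) = 0$ gives $\tH^0(F(-2)) = 0$, while Lemma~\ref{L:h2eHvee=0} gives $\tH^2(E_H^\vee) = 0$, i.e. $\tH^1(F(-4)) = 0$. Consequently $F$ is either one of the finitely many exceptional bundles of Prop.~\ref{P:fprimunstable} or it sits in an extension $0 \to (\rk F - 3)\sco_H \to F \to G(2) \to 0$ with $G$ stable of rank $3$, $c_1(G) = -1$, $c_2(G) = c_2 - 8$, $c_3(G) = c_3 - 2c_2 + 12$; in the latter case, by Remark~\ref{R:spectrumg}, the spectrum $(k_1, \dots, k_m)$ of $G$ satisfies $0 \ge k_1 \ge \cdots \ge k_m \ge -2$ and has length $m = c_2(G)$. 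Since $E$ and $F$ share the relevant Chern numbers $c_2$ and $c_3$, all numerical conclusions can be read off from $F$.

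The arithmetic backbone is the identity $\h^1(E_H(-2)) = \frac{1}{2}(5(c_2 - 8) - c_3)$ from Remark~\ref{R:generalities}(d). As the left side is nonnegative this already yields $c_3 \le 5c_2 - 40$. The key observation is that the two claims are linked: \emph{once $c_3 \ge c_2$ is proved}, feeding it into $c_3 \le 5c_2 - 40$ gives $c_2 \le 5c_2 - 40$, hence $c_2 \ge 10$. So the whole lemma reduces to the single inequality $c_3 \ge c_2$, which in terms of $G$ reads $c_2(G) + c_3(G) \ge 4$ (using $c_2 = c_2(G) + 8$ and $c_3 = c_3(G) + 2c_2 - 12$).

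To prove $c_3 \ge c_2$ I would treat the two cases separately. In the exceptional case I would run through the list of Prop.~\ref{P:fprimunstable}, discarding every member with $\tH^1(F(-4)) \neq 0$ --- for instance item~(ii), for which $\tH^1(F(-4)) \neq 0$ exactly as in Case~1 of Lemma~\ref{L:h2eHvee=0} (and which indeed has $c_3 = 8 < 12 = c_2$, so \emph{must} be excluded) --- and checking the survivors directly. In the stable case I would combine Riemann--Roch on $\piii$, which gives $\chi(G) = 2 - \frac{3}{2}c_2(G) + \frac{1}{2}c_3(G)$ together with $\h^0(G) = \h^3(G) = 0$ forced by stability, with the description of the intermediate cohomology of $G$ by its spectrum. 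The inequality $c_2(G) + c_3(G) \ge 4$ then becomes a condition on the spectrum; because the spectrum is admissible (the connectedness and monotonicity conditions recalled in Remark~\ref{R:fprimstable}) and confined to $\{0, -1, -2\}$, only a short explicit list of spectra survives for each value of $c_2(G)$, and the inequality is verified case by case.

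The hard part will be twofold. First, the borderline value $c_2 = 9$ (i.e. $c_2(G) = 1$) is not excluded by the Bogomolov discriminant inequality, which only yields $c_2(G) \ge 1$; the resulting length-one spectrum must be eliminated separately, presumably by showing that the associated $E$ cannot be globally generated, in the spirit of the Koszul argument of Lemma~\ref{L:ggkoszul} used in Case~1 of Lemma~\ref{L:h2eHvee=0}. Second, and more delicate, one must be certain that \emph{no} admissible spectrum in $\{0,-1,-2\}$ drives $c_2(G) + c_3(G)$ below $4$; this is precisely where the full force of the spectrum conditions --- in particular the prohibition of gaps between the value $-2$ and $0$ --- is indispensable, and it is the technical heart of the argument.
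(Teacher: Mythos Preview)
Your reduction of the lemma to the single inequality $c_3 \geq c_2$ (equivalently $c_2(G) + c_3(G) \geq 4$, i.e.\ $\sum k_i \leq -2$) is correct and elegant. However, your plan for proving this inequality has a genuine gap: the spectrum conditions alone \emph{do not} force $\sum k_i \leq -2$. The spectra $(0)$, $(0,0)$, $(0,0,0)$, $(0,0,0,0)$ (giving $c_3 = c_2 - 4$) and $(0,-1)$, $(0,0,-1)$, $(0,0,0,-1)$ (giving $c_3 = c_2 - 2$) all satisfy every admissibility constraint in Remark~\ref{R:fprimstable}---there is no ``prohibition of gaps'' that excludes them---and the corresponding stable bundles $G$ on $\piii$ with $G(2)$ globally generated genuinely exist. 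So your case-by-case verification will simply fail: these spectra yield $c_2(G) + c_3(G) \in \{0,2\}$.

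What is missing is the recognition that this is a $\piv$ statement requiring $\piv$ tools. The paper's proof assumes $c_3 < c_2$ (or $c_2 = 9$), lists exactly the spectra above together with case~(i) of Prop.~\ref{P:fprimunstable}, and then eliminates each by arguments that live on $\piv$: Schwarzenberger's congruence $c_2(c_2-4) + c_3 \equiv 2c_4 \pmod{12}$ kills most cases immediately (forcing $c_4 > 0$ hence $r \geq 4$, which is incompatible with $\text{rk}\,F = 3$); the remaining case $r = 3$, $c_2 = 11$, $c_3 = 7$ falls to the Riemann--Roch formula of Remark~\ref{R:generalities}(b) combined with Cor.~\ref{C:h2eHvee=0}; and the hardest cases (the $(0,\ldots,0,-1)$ spectra with $r = 4$) require a detailed cohomological analysis on $\piv$ using the restriction exact sequence and the Bilinear Map Lemma, culminating for $c_2 = 12$ in a bound $\h^0(E) \leq r+3$ that contradicts global generation via a Chern-class computation on the kernel. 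None of this is visible from $\piii$.
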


\begin{proof} 
Assume, by contradiction, that $c_2 = 9$ (see Prop.~\ref{P:c2geq9}) or that 
$10 \leq c_2 \leq 12$ and $c_3 < c_2$. Let $H \subset \piv$ be an arbitrary 
hyperplane and let $F$ be the vector bundle $P(P(E_H))$ on $H$ (see 
Remark~\ref{R:f}). Then either$\, :$ 
\begin{enumerate} 
\item[(1)] $F$ is as in item (i) of Prop.~\ref{P:fprimunstable}   
\end{enumerate}   
($F$ cannot be as in item (ii) of Prop.~\ref{P:fprimunstable} by 
Lemma~\ref{L:h2eHvee=0}) or it can be realized as an extension $0 \ra 
(\text{rk}\, F - 3)\sco_H \ra F \ra G(2) \ra 0$, where $G$ is a stable rank 3 
vector bundle on $H \simeq \piii$. Let $(k_1 , \ldots , k_m)$ be the spectrum 
of $G$ ($m = c_2 - 8$). Since $-2\sum k_i = c_3 - c_2 + 4$ it follows that 
$\sum k_i \geq -1$. On the other hand, by Remark~\ref{R:spectrumg}, $k_1 \leq 
0$. On deduces that one of the following holds$\, :$ 
\begin{enumerate} 
\item[(2)] $G$ has one of the spectra $(0)$, $(0 , 0)$, $(0 , 0 , 0)$, 
$(0 , 0 , 0 , 0)$ in which case $9 \leq c_2 \leq 12$, $c_3 = c_2 - 4$ and 
$\text{rk}\, F = 3$$\, ;$ 
\item[(3)] $G$ has one of the spectra $(0 , -1)$, $(0 , 0 , -1)$, 
$(0 , 0 , 0 , -1)$ in which case $10 \leq c_2 \leq 12$, $c_3 = c_2 - 2$ and 
$\text{rk}\, F = 4$. 
\end{enumerate} 
Notice, also, that if $F$ is as in item (i) of Prop.~\ref{P:fprimunstable} 
then $9 \leq c_2 \leq 12$, $c_3 = c_2 - 4$ and $\text{rk}\, F = 3$. 

In all of the cases, $\tH^2(E_H(l)) = 0$ for $l \geq -3$ hence, using the 
notation from Remark~\ref{R:generalities}(c), $t = 0$. Consequently, $3 \leq r 
\leq \text{rk}\, F$. Moreover, among the above mentioned Chern classes, the 
only ones that satisfy the congruence $c_2(c_2 - 4) + c_3 \equiv 0 \pmod{12}$ 
are $c_2 = 11$, $c_3 = 7$. One deduces that either $r = 3$, $c_2 = 11$, 
$c_3 = 7$ or $c_4 > 0$. In the latter case $r \geq 4$ hence $\text{rk}\, F = 
4$ and $E_H = F$. In both cases, $\h^1(E_H^\vee) = \h^1(F^\vee) = 0$.   
One can get rid of the former case using the relation$\, :$ 
\[
r = \frac{5c_3 + 2c_4 - c_2(c_2 - 10)}{12} + \h^2(E^\vee)\, . 
\]   
from Remark~\ref{R:generalities}(b). \emph{Indeed}, this relation gives 
$\h^2(E^\vee) = 1$ while Cor.~\ref{C:h2eHvee=0} implies that $\tH^2(E^\vee) = 
0$. 

Assume, now, that $r = 4$. Then $F$ is as in item (3) above. Since, as we 
already saw, $\h^1(E_H^\vee) = 0$, Cor.~\ref{C:h2eHvee=0} implies that 
$\tH^2(E^\vee) = 0$ and $\tH^2(E^\vee(-1)) = 0$. By Serre duality, $\tH^2(E(-4)) 
= 0$. 

Moreover, if $F$ is as in item (3) above and $c_2 \in \{10,\, 11\}$ then 
Remark~\ref{R:generalities}(d) implies that $\tH^1(E(-3)) = 0$. Using the 
exact sequence$\, :$ 
\[
0 = \tH^1(E(-3)) \lra \tH^1(E_H(-3)) \lra \tH^2(E(-4)) = 0 
\] 
one gets that $\tH^1(E_H(-3)) = 0$. But, according to the spectrum, one must 
have $\h^1(E_H(-3)) \in \{1,\, 2\}$ and this is a \emph{contradiction}. 

Assume, finally, that $F$ is as in item (3) above with $c_2 = 12$. As we 
noticed above, $\tH^2(E(-4)) = 0$. Since $\tH^2(E_H(l)) = 0$ for $l \geq -3$, 
one deduces that $\tH^2(E(l)) = 0$ for $l \geq -4$. Since $\tH^1(E(-4)) = 0$ 
(see Remark~\ref{R:generalities}(a)) it follows that $\h^1(E(-3)) = 
\h^1(E_H(-3)) = 3$. Since $\tH^0(E_H(-2)) = 0$ one gets that $\h^1(E(-2)) - 
\h^1(E(-3)) = \h^1(E_H(-2)) = 5$ hence $\h^1(E(-2)) = 8$. 
Consider, now, the exact sequence$\, :$ 
\[
0 \ra \tH^0(E(-1)) \ra \tH^0(E_H(-1)) \ra \tH^1(E(-2)) \overset{h}{\ra} 
\tH^1(E(-1)) \ra \tH^1(E_H(-1)) \ra 0\, . 
\]    
Lemma~\ref{L:h0f(-1)c2=12} implies that $\h^0(E_H(-1)) \leq 1$ hence, by 
Lemma~\ref{L:h2f(-2)=0}(b), 
\[
\h^1(E_H(-1)) = \frac{1}{2}(7(c_2 - 10) - c_3) + \h^0(E_H(-1)) \leq 3\, . 
\]
Using the exact sequence above (for any linear form $h$ on $\piv$) and the 
Bilinear Map Lemma \cite[Lemma~5.1]{ha}, one gets that $\tH^0(E(-1)) = 0$. 
This implies that $\h^1(E(-1)) - \h^1(E(-2)) = \h^1(E_H(-1)) - \h^0(E_H(-1)) 
= 2$ hence $\h^1(E(-1)) = 10$. 

Since $\h^1(E_H(-1)) \leq 3$, one deduces, from Lemma~\ref{L:h2f(-2)=0}(b), 
that $\h^1(E_H) = 0$. Since this happens for every hyperplane $H \subset 
\piv$, the Bilinear Map Lemma implies that $\h^1(E(-1)) \geq \h^1(E) + 4$. 

We want, finally, to estimate $\h^0(E)$ using the exact sequence$\, :$ 
\[
0 \lra \tH^0(E) \lra \tH^0(E_H) \lra \tH^1(E(-1)) \overset{h}{\lra} \tH^1(E) 
\lra 0\, . 
\]    
By Riemann-Roch, $\h^0(E_H) = \chi(E_H) = 10$ hence $\h^0(E) \leq \h^0(E_H) 
- 4 = 6$. Since there is no epimorphism $6\sco_\piv \ra E$ (its kernel would 
have rank 2 and strictly positive $c_3$) one gets a \emph{contradiction} 
and this eliminates the case where $F$ is as in item (3) above with 
$c_2 = 12$.   
\end{proof}

\begin{prop}\label{P:c3=c2onp4} 
Let $E$ be a globally generated vector bundle on $\piv$,  
with $c_1 = 5$, $10 \leq c_2 \leq 12$, $c_3 = c_2$, and such that 
${\fam0 H}^i(E^\vee) = 0$, $i = 0,\, 1$. Assume, also, that 
${\fam0 H}^0(E_H(-2)) = 0$, for every hyperplane $H \subset \piv$. Then 
$c_2 = 10$ and $E \simeq 5\sco_\piv(1)$. 
\end{prop}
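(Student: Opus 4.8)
The plan is to restrict to a hyperplane and exploit the classification on $\piii$. Fix a hyperplane $H \subset \piv$ and set $F := P(P(E_H))$ on $H \simeq \piii$. By the basic dichotomy of Remark~\ref{R:generalities}(c), either $F$ is one of the explicitly described bundles of Prop.~\ref{P:fprimunstable}, or $F$ fits into an extension
\[
0 \lra (\rk F - 3)\sco_H \lra F \lra G(2) \lra 0
\]
with $G$ a \emph{stable} rank $3$ bundle, $c_1(G) = -1$, $c_2(G) = c_2 - 8$, $c_3(G) = 12 - c_2$. In the extension case the spectrum $(k_1 , \ldots , k_m)$ of $G$ ($m = c_2 - 8$) satisfies $0 \geq k_1 \geq \cdots \geq k_m \geq -2$ (Remark~\ref{R:spectrumg}) and $\sum_i k_i = -2$, the latter because $-2\sum_i k_i = c_3 - c_2 + 4 = 4$; this leaves very few candidate spectra for each $c_2$, the gapless ones being $(-1,-1)$, $(0,-1,-1)$ and $(0,0,-1,-1)$ for $c_2 = 10, 11, 12$. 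On the other hand, a splitting $F \simeq \bigoplus_i \sco_H(a_i)$ with $c_1 = 5$ and $c_2 = c_3$ forces $\sum_i a_i = 5$ and $\sum_i a_i^2 = 25 - 2c_2$, which is solvable in nonnegative integers only when $c_2 = 10$, in which case necessarily $F \simeq 5\sco_H(1)$; the remaining items (i), (ii) of Prop.~\ref{P:fprimunstable} have $c_3 = c_2 - 4$, resp. $(c_2 , c_3) = (12 , 8)$, both incompatible with $c_3 = c_2 \geq 10$.

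Next I would run the cohomological bookkeeping. By Lemma~\ref{L:h2f(-2)=0}(b) one has $\h^1(E_H(-2)) = 2(c_2 - 10)$, so for $c_2 \in \{10 , 11\}$ this is $\leq 3$ and Remark~\ref{R:generalities}(d) gives $\tH^1(E(-3)) = 0$ and $\tH^2(E(l)) = 0$ for all $l \geq -2$. Combining the Riemann--Roch formula and Schwarzenberger's congruence (Remark~\ref{R:generalities}(b)) with the relation $\rk F = 3 + \h^2(G(-2))$, the bound $r \leq \rk F$ (once $t = 0$) and with Cor.~\ref{C:h2eHvee=0}, one pins down $r$, $c_4$ and $\h^2(E^\vee)$ in each extension-type case. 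I expect each of these cases---that is, \emph{all} of $c_2 = 11$ and $c_2 = 12$, and the spectrum-$(-1,-1)$ possibility for $c_2 = 10$---to be eliminated by a numerical contradiction in the exact spirit of the endgame of the proof of Lemma~\ref{L:c3geqc2}: the spectrum determines the $\h^1(E_H(l))$, the hyperplane-restriction sequences and the Bilinear Map Lemma \cite[Lemma~5.1]{ha} then bound $\h^0(E)$, and one concludes by noting that the forced epimorphism $\tH^0(E) \otimes_k \sco_\piv \ra E$ cannot exist, its kernel having too small a rank and a positive $c_3$. Since no split $F$ exists for $c_2 = 11, 12$, this disposes of those two values of $c_2$ entirely.

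There then remains only $c_2 = c_3 = 10$ with $F \simeq 5\sco_H(1)$ for every hyperplane $H$. One finishes by applying Lemma~\ref{L:a+p(b)} with $A = 5\sco_H(1)$ and $B = 0$ (note $\tH^0(A^\vee) = 0$), which lifts $F$ to $E \simeq 5\sco_\piv(1)$. A more self-contained variant is also available: here $\h^1(E_H(-2)) = 0$, so the restriction sequences give $\tH^1(E(-2)) = 0$ and $\tH^2(E(-3)) = 0$; together with $\tH^3(E(-5)) = \tH^1(E^\vee)^\vee = 0$ and $\tH^3(E(-4)) = \tH^1(E^\vee(-1))^\vee = 0$ (the latter being exactly $t = 0$, supplied by the exclusion of the stable-$G$ case) this shows $E(-1)$ is $0$-regular, hence globally generated. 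Restricting to a line gives $r \leq 5$, and then the rank formula with Schwarzenberger's congruence and $c_4 \geq 0$ forces $c_4 = 5$, $\h^2(E^\vee) = 0$ and $r = 5$; as $E(-1)$ is globally generated with $c_1(E(-1)) = 0$ it is trivial, whence $E \simeq 5\sco_\piv(1)$.

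The main obstacle I anticipate is the elimination of the extension-type cases, and in particular $c_2 = 12$: there $\h^1(E_H(-2)) = 4 > 3$ blocks the immediate vanishing of $\tH^2(E(-2))$, so the cohomology of the twists of $E$ must be propagated by hand through a chain of restriction sequences and Bilinear Map Lemma estimates, exactly as in the final and longest part of the proof of Lemma~\ref{L:c3geqc2}; the related point of showing $t = 0$ (equivalently, pinning $F \simeq 5\sco_H(1)$) in the $c_2 = 10$ case rests on the same spectral analysis.
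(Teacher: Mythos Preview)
There is a real gap: you have overlooked item (iii) of Prop.~\ref{P:fprimunstable}. That item has precisely $r = 5$ and $c_3 = c_2$, presenting $F$ as an extension $0 \to M(2) \to F \to \text{T}_H(-1) \to 0$ with $M$ a rank~$2$ instanton of charge $c_2 - 9$; it is therefore fully compatible with the hypothesis $c_3 = c_2$ for every $c_2 \in \{10,11,12\}$ and cannot be dismissed on Chern-class grounds. The paper's Claim~2 is devoted to eliminating exactly this case: here $\h^2(F_{[h]}(-3)) = \h^2(\text{T}_H(-4)) = 1$, so $t = \h^3(E(-4)) = 1$, and Lemma~\ref{L:erat(-1)} then produces an epimorphism $E \to \text{T}_\piv(-1)$ whose kernel would be (a twist of) a stable rank~$2$ bundle on $\piv$ with $c_1 = 0$, $c_2 = c_2 - 9 \in \{1,2,3\}$ --- ruled out by Schwarzenberger's congruence for $c_2 \in \{10,11\}$ and by Barth--Elencwajg \cite{be} for $c_2 = 12$. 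Without this step your parenthetical ``$t = 0$, supplied by the exclusion of the stable-$G$ case'' is backwards: $t = 0$ follows from \emph{excluding item (iii) and landing in the stable-$G$ case}, not the other way round, and your regularity argument for $c_2 = 10$ collapses without it.

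Two further corrections. First, the spectrum-$(-1,-1)$ case for $c_2 = 10$ is \emph{not} eliminated: it is the sole surviving case, and the paper (Claim~3, essentially your ``self-contained variant'') shows directly via $(-1)$-regularity that $E \simeq 5\sco_\piv(1)$ there; your separate ``split $F \simeq 5\sco_H(1)$'' branch is not a case in the dichotomy and never gets established. Second, your proposed numerical elimination of $c_2 = 11$ does not go through as written: the paper's Claim~4 instead computes the Beilinson monad of $E(-1)$ and reduces to the nonexistence of a locally split monomorphism $\Omega_\piv^3(3) \to 2\Omega_\piv^1(1)$, an argument in $\bigwedge^\bullet V$ rather than a rank count. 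Your sketch for $c_2 = 12$ is closer to the paper's Claim~5, but note that $\h^1(E_H(-2)) = 4$ there, so one must first prove $\tH^2(E(-3)) = 0$ separately (Claim~1) before the rest of the bookkeeping can proceed.
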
 

Note that this proposition completes the classification of globally generated 
vector bundles with $c_1 = 5$ and $c_2 = 10$ on $\piv$. Indeed, by 
Prop.~\ref{P:fprimunstable} and Remark~\ref{R:fprimstable}, if $F$ is a 
globally generated vector bundle on $\piii$ with $c_1 = 5$, $c_2 = 10$, and 
such that $\tH^0(F(-2)) = 0$ then $c_3 \leq 10$.  

\begin{proof}[Proof of Prop.~\emph{\ref{P:c3=c2onp4}}]  
Let $H \subset \piv$ be an arbitrary hyperplane, of equation $h = 0$, and 
let $F_{[h]}$ be the vector bundle $P(P(E_H))$ on $H$ (see Remark~\ref{R:f}). 
Then, according to Remark~\ref{R:fprimstable},  
one of the following holds$\, :$ 
\begin{enumerate} 
\item[(i)] $F_{[h]}$ is as in item (iii) of Prop.~\ref{P:fprimunstable}$\, ;$ 
\item[(ii)] One has an exact sequence $0 \ra (\text{rk}\, F_{[h]} - 3)\sco_H 
\ra F_{[h]} \ra G_{[h]}(2) \ra 0$, where $G_{[h]}$ is a stable rank 3 vector 
bundle on $H$ with $c_1(G_{[h]}) = -1$, $2 \leq c_2(G_{[h]}) \leq 4$ and 
spectrum $(-1 , -1)$, $(0 , -1 , -1)$, $(0 , 0 , -1 , -1)$, respectively. 
\end{enumerate} 
Since $\text{rk}\, F_{[h]} = 5$ in both cases (in case (ii) one uses the 
formula $\text{rk}\, F_{[h]} = 3 + \h^2(G_{[h]}(-2))$) and $c_3(F_{[h]}) \neq 0$ 
it follows that, using the notation from Remark~\ref{R:generalities}, $s \leq 
2$, i.e., $\h^1(E_H^\vee) \leq 2$ for every hyperplane $H \subset \piv$. 
Cor.~\ref{C:h2eHvee=0} implies, now, that $\tH^2(E^\vee) = 0$ and that 
$s = \h^2(E^\vee(-1)) = \h^2(E(-4))$. Recall, also, that $t = \h^1(E^\vee(-1)) 
= \h^3(E(-4))$ ($s$ and $t$ are defined in Remark~\ref{R:generalities}(c)).     

\vskip2mm 

\noindent 
{\bf Claim 1.}\quad $\tH^2(E(-3)) = 0$. 

\vskip2mm 

\noindent 
\emph{Indeed}, by Lemma~\ref{L:h2f(-2)=0}(b), $\h^1(E_H(-2)) = 0$ for $c_2 = 
10$, $\h^1(E_H(-2)) = 2$ for $c_2 = 11$ and $\h^1(E_H(-2)) = 4$ for $c_2 = 12$.
Remark~\ref{R:generalities}(d) implies that if $c_2 \in \{10,\, 11\}$ then 
$\tH^1(E(-3)) = 0$ and $\tH^2(E(-2)) = 0$. Moreover, if $c_2 = 10$ one also 
has $\tH^1(E(-2)) = 0$ and $\tH^2(E(-3)) = 0$, because $\tH^1(E_H(-2)) = 0$.  

Now, by the Riemann-Roch formula (see Remark~\ref{R:generalities}(c)), 
$\h^2(E(-3)) - \h^1(E(-3))$ is equal to 
$(5 - c_4)/6$ if $c_2 = 10$, to $(2 - c_4)/6$ if $c_2 = 11$ and to 
$-c_4/6$ if $c_2 = 12$. Since $c_4 \geq 0$, one deduces that $c_4 = 5$ if 
$c_2 = 10$, that $\h^2(E(-3)) = 0$ and $c_4 = 2$ if $c_2 = 11$, and that 
$\h^2(E(-3)) = 0$ if $\h^1(E(-3)) = 0$ in the case $c_2 = 12$. 

Assume, finally, that $c_2 = 12$ and $\tH^1(E(-3)) \neq 0$. Since 
$\h^1(E_H(-2)) = 4$, for every hyperplane $H \subset \piv$, using the exact 
sequence from Remark~\ref{R:generalities}(d) and the Bilinear Map 
Lemma one deduces that the map $\tH^1(E(-2)) \ra \tH^1(E_H(-2))$ is surjective, 
$\forall \, H$. It follows that the multiplication by any non-zero linear 
form $h \colon \tH^2(E(-3)) \ra \tH^2(E(-2))$ is bijective, which implies 
that $\tH^2(E(-3)) = 0$ and Claim 1 is proven. 

\vskip2mm 

\noindent 
{\bf Claim 2.}\quad $F_{[h]}$ \emph{is as in item} (ii) \emph{above, for every 
hyperplane} $H \subset \piv$. 
 
\vskip2mm        

\noindent 
\emph{Indeed}, assume, by contradiction, that there exists a hyperplane 
$H \subset \piv$ such that $F_{[h]}$ can be realized as an extension 
$0 \ra M(2) \ra F_{[h]} \ra \text{T}_H(-1) \ra 0$, where $M$ is a stable rank 
2 vector bundle on $H$ with $c_1(M) = 0$, $c_2(M) = c_2 - 9$ and such that 
$\tH^1(M(-2)) = 0$ (which implies that $\tH^2(M(-2)) = 0$). Since 
$\tH^i(E(-3)) = 0$, $i = 2,\, 3$ (by Claim 1 and 
Remark~\ref{R:generalities}(c)), 
one gets that $\tH^2(E_H(-3)) \izo \tH^3(E(-4))$ hence $\h^3(E(-4)) = 1$. 
Using the notation from Remark~\ref{R:generalities}(c), it follows that 
$t = 1$ hence $r \leq 6$. 

Now, since $\tH^3(E(-4)) \neq 0$, Lemma~\ref{L:erat(-1)} implies that there 
exists an epimorphism $\e \colon E \ra \text{T}_\piv(-1)$.  
The kernel $K$ of $\e$ is a 
vector bundle of rank $\leq 2$ and, since $\text{T}_\piv(-1)_H \simeq \sco_H 
\oplus \text{T}_H(-1)$, one has $c_i(K) = c_i(M(2))$, $i = 1,\, 2,\, 3$. 
One deduces that $K = {\widetilde M}(2)$, where $\widetilde M$ is a rank 2 
vector bundle on $\piv$ with $c_1({\widetilde M}) = 0$, $c_2({\widetilde M}) 
= c_2 - 9$. Moreover, since $\tH^0(E(-2)) = 0$ one has $\tH^0({\widetilde M}) 
= 0$, i.e., $\widetilde M$ is stable. But, if $c_2 \in \{10,\, 11\}$, such 
a bundle \emph{cannot exist} because its Chern classes do not satisfy 
Schwarzenberger's congruence while, for $c_2 = 12$, it cannot exist according 
to a result of Barth and Elencwajg \cite{be} (which says that there is no 
stable rank 2 vector bundle on $\piv$ with $c_1 = 0$, $c_2 = 3$). 
Consequently, Claim 2 is proven. 

\vskip2mm 

\noindent 
{\bf Claim 3.}\quad \emph{If} $c_2 = 10$ \emph{then} $E \simeq 5\sco_\piv(1)$. 

\vskip2mm 

\noindent 
\emph{Indeed}, as we saw in the proof of Claim 1, $\tH^1(E(-2)) = 0$, 
$\tH^2(E(-3)) = 0$ and $c_4 = 5$. 
Using a formula from Remark~\ref{R:generalities}(b), one deduces that $r = 5$. 
Moreover, since $\tH^3(E(-4)) = 0$ (because $\tH^2(E_H(-3)) = 0$, for every 
hyperplane $H \subset \piv$, by Claim 2) and $\tH^4(E(-5)) \simeq 
\tH^0(E^\vee)^\vee = 0$, $E$ is $(-1)$-regular. In particular, $E(-1)$ is 
globally generated and $c_1(E(-1)) = 0$ hence $E(-1) \simeq 5\sco_\piv$. 

\vskip2mm 

\noindent 
{\bf Claim 4.}\quad \emph{The case} $c_2 = 11$ \emph{cannot occur}. 

\vskip2mm 

\noindent 
\emph{Indeed}, assume, by contradiction, that it does. We saw in the proof 
of Claim 1 that $\h^1(E_H(-2)) = 2$, for every hyperplane $H \subset \piv$, 
that $\tH^1(E(-3)) = 0$ and that $\tH^2(E(-3)) = 0$. One deduces that 
$\tH^1(E(-2)) \izo \tH^1(E_H(-2))$, $\tH^1(E_H(-3)) \izo \tH^2(E(-4))$ and 
$\tH^2(E_H(-3)) \izo \tH^3(E(-4))$ hence, taking into account Claim 2,  
$\h^1(E(-2)) = 2$, $\h^2(E(-4)) = 1$ and $\h^3(E(-4)) = 0$. 
Since 
$\h^4(E(-5)) = \h^0(E^\vee) = 0$, the Castelnuovo-Mumford lemma implies that 
$\tH^2(E(l)) = 0$ for $l \geq -3$. One also has $\h^2(E(-5)) = \h^2(E^\vee) 
= 0$ (by Cor.~\ref{C:h2eHvee=0}). 

Consider, now, the exact sequences$\, :$ 
\[
0 \ra \tH^0(E(-1)) \ra \tH^0(E_H(-1)) \ra \tH^1(E(-2)) \overset{h}{\ra} 
\tH^1(E(-1)) \ra \tH^1(E_H(-1)) \ra 0\, .  
\]  
The inequality $\h^1(E_H(-1)) \leq \max \, (\h^1(E_H(-2)) - 3 , 0)$ from 
the proof of Lemma~\ref{L:h0f(-1)c2=11} implies that $\tH^1(E_H(-1)) = 0$. 
Since this happens for every hyperplane $H \subset \piv$ and since 
$\h^1(E(-2)) = 2$, the Bilinear Map Lemma \cite[Lemma~5.1]{ha} implies that 
$\tH^1(E(-1)) = 0$. Moreover, by Riemann-Roch on $H$, $\h^0(E_H(-1)) = 2$. 
Taking into account that $\h^1(E(-2)) = 2$, it follows that $\tH^0(E(-1)) = 
0$. Applying Beilinson's theorem (recalled in \cite[Thm.~1.23]{acm1} and 
\cite[Remark~1.25]{acm1}) to $E(-1)$ one deduces an exact sequence  
$
0 \ra \Omega_\piv^3(3) \ra 2\Omega_\piv(1) \ra E(-1) \ra 0 
$.  
In order to get a \emph{contradiction} it suffices to prove the 
following$\, :$ 

\vskip2mm 

\noindent 
{\bf Subclaim 4.1.}\quad \emph{There is no locally split monomorphism} 
$\Omega_\piv^3(3) \ra 2\Omega_\piv(1)$. 

\vskip2mm 

\noindent 
\emph{Indeed}, according to Definition~\ref{D:contraction}, any morphism 
$\phi \colon \Omega_\piv^3(3) \ra 2\Omega_\piv(1)$ is defined by contraction 
with two elements $\omega ,\, \omega^\prim$ of $\bigwedge^2V$ (where $V = k^5$). 
We want to show that the dual morphism $\phi^\vee \colon 2\Omega_\piv(1)^\vee \ra 
\Omega_\piv^3(3)^\vee$ cannot be an epimorphism. Let 
$W$ be the subspace $\omega \wedge V + \omega^\prim \wedge V$ of 
$\bigwedge^3V$ (recall the description of $\tH^0(\phi^\vee)$ from the above 
mentioned definition). According to Lemma~\ref{L:vetter}, we have to show that 
$W^\perp$ contains a decomposable element of $\bigwedge^2V$. We consider, for 
that, only the generic case. More precisely, we assume that there exist two 
bases $u_0, \ldots , u_4$ and $u_0^\prime , \ldots , u_4^\prime$ of $V$ such that 
$\omega = u_0 \wedge u_1 + u_2 \wedge u_3$ and $\omega^\prim = u_0^\prime \wedge 
u_1^\prime + u_2^\prime \wedge u_3^\prime$. Moreover, putting $U := ku_0 + \cdots 
+ ku_3$ and $U^\prime := ku_0^\prime + \cdots + ku_3^\prime$, we assume that $U + 
U^\prime = V$. One has $\omega \wedge V = \bigwedge^3U + k(\omega \wedge u_4)$  
and   $\omega^\prim \wedge V = \bigwedge^3U^\prime + k(\omega^\prime \wedge 
u_4^\prime)$. Moreover, $(\bigwedge^3U)^\perp = \bigwedge^2U$, 
$(\bigwedge^3U^\prime)^\perp = \bigwedge^2U^\prime$ and $\bigwedge^2U \cap 
\bigwedge^2U^\prime = \bigwedge^2(U \cap U^\prime)$ which is a 3-dimensional 
vector subspace of $\bigwedge^2V$ consisting of decomposable elements. 
Exterior multiplications  by $\omega \wedge u_4$ and $\omega^\prim \wedge 
u_4^\prime$ define linear functions on $\bigwedge^2(U \cap U^\prime)$ hence there 
exists a non-zero element $\eta$ of $\bigwedge^2(U \cap U^\prime)$ such that 
$\eta \wedge \omega \wedge u_4 = 0$ and $\eta \wedge \omega^\prim \wedge 
u_4^\prime = 0$. $\eta$ belongs to $W^\perp$ and it is decomposable. This proves 
the subclaim and, with it, Claim 4. 

\vskip2mm 

\noindent 
{\bf Claim 5.}\quad \emph{The case} $c_2 = 12$ \emph{cannot occur}. 

\vskip2mm 

\noindent 
\emph{Indeed}, assume, by contradiction, that it does. Let $H \subset \piv$ 
be an arbitrary hyperplane.  Since $\tH^2(E_H(l)) = 0$, $\forall \, l \geq 
-2$, one gets, from Claim 1, that $\tH^2(E(l)) = 0$ for $l \geq -3$. 
Lemma~\ref{L:h0f(-1)c2=12} implies that $\h^0(E_H(-1)) \leq 1$ hence, by 
Lemma~\ref{L:h2f(-2)=0}(b), $\h^1(E_H(-1)) = 1 + \h^0(E_H(-1)) \leq 2$. The 
last inequality in Lemma~\ref{L:h2f(-2)=0}(b) implies, now, that $\h^1(E_H) 
= 0$. 

As we saw in the proof of Claim 1, $\h^1(E(-3)) = c_4/6$. Since 
$\tH^0(E_H(-2)) = 0$ and $\tH^2(E(-3)) = 0$ one gets that $\h^1(E(-2)) = 
\h^1(E(-3)) + \h^1(E_H(-2)) = \h^1(E(-3)) + 4$. Consider the exact 
sequence$\, :$ 
\[
0 \ra \tH^0(E(-1)) \ra \tH^0(E_H(-1)) \ra \tH^1(E(-2)) \overset{h}{\ra} 
\tH^1(E(-1)) \ra \tH^1(E_H(-1)) \ra 0\, .  
\]       
Since $\h^0(E_H(-1)) \leq 1$ and $\h^1(E_H(-1)) \leq 2$ the Bilinear Map Lemma 
implies that $\tH^0(E(-1)) = 0$ (recall that $H$ is an 
\emph{arbitrary} hyperplane). The above exact sequence shows, now, that$\, :$ 
\[
\h^1(E(-1)) - \h^1(E(-2)) = \h^1(E_H(-1)) - \h^0(E_H(-1)) = 1 
\]
hence $\h^1(E(-1)) = \h^1(E(-3)) + 5$. We want to evaluate, next, $\h^0(E)$ 
using the exact sequence$\, :$ 
\[
0 = \tH^0(E(-1)) \ra \tH^0(E) \ra \tH^0(E_H) \ra \tH^1(E(-1)) 
\overset{h}{\lra} \tH^1(E) \ra \tH^1(E_H) = 0\, . 
\]
Firstly, the Bilinear Map Lemma implies that 
$\h^1(E(-1)) - \h^1(E) \geq 4$ (recall, again, that $H$ is an arbitrary 
hyperplane). Secondly, by Riemann-Roch on $H$, $\h^0(E_H) = (r-1) + 8 = 
r + 7$ hence $\h^0(E) \leq \h^0(E_H) - 4 = r + 3$. Since $E$ is globally 
generated, there exists an epimorphism $(r + 3)\sco_\piv \ra E$. The kernel 
$K$ of this epimorphism is a rank 3 vector bundle. But an easy computation 
shows that $c_4(K) = -c_4 + c_2^2 + 2c_1c_3 - 3c_1^2c_2 + c_1^4$ which implies 
that $c_4(K) \neq 0$ because the first four terms are divisible by 6 (recall 
that $c_4 = 6\h^1(E(-3))$) while $c_1^4$ is not. This \emph{contradiction} 
concludes the proof of Claim 5.  
\end{proof} 

\begin{prop}\label{P:c1=5c2=11onp4} 
Let $E$ be a globally generated vector bundle on $\piv$, 
with Chern classes $c_1 = 5$, $c_2 = 11$, $c_3$, $c_4$, and such that 
${\fam0 H}^i(E^\vee) = 0$, $i = 0,\, 1$. Assume, also, that 
${\fam0 H}^0(E_H(-2)) = 0$, for every hyperplane $H \subset \piv$. Then one 
of the following holds$\, :$ 
\begin{enumerate} 
\item[(i)] $c_3 = 15$, $c_4 = 16$ and $E \simeq 4\sco_\piv(1) \oplus 
{\fam0 T}_\piv(-1)$$\, ;$ 
\item[(ii)] $c_3 = 13$, $c_4 = 9$ and $E \simeq 2\sco_\piv(1) \oplus 
\Omega_\piv(2)$. 
\end{enumerate}
\end{prop}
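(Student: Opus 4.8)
The plan is to first pin down the possible values of $c_3$, and then to compute the full Beilinson cohomology table of $E(-1)$ and read off the bundle. \emph{Reducing $c_3$:} By Lemma~\ref{L:c3geqc2} we have $c_3 \geq c_2 = 11$, and since $c_3 \equiv c_2 \equiv 1 \pmod 2$ (Remark~\ref{R:generalities}(b)) the value $c_3$ is odd; Proposition~\ref{P:c3=c2onp4} excludes $c_3 = c_2 = 11$. On the other hand, for any hyperplane $H \subset \piv$, Lemma~\ref{L:h2f(-2)=0}(b) gives $\h^1(E_H(-2)) = \tfrac12(5(c_2-8)-c_3) = \tfrac12(15 - c_3)$, so nonnegativity forces $c_3 \leq 15$. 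Hence $c_3 \in \{13,\, 15\}$, the two cases of the statement.

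\emph{Local and numerical data.} For a hyperplane $H$ with equation $h$, let $F := P(P(E_H))$ on $H \simeq \piii$. By Lemma~\ref{L:h2eHvee=0}, $\tH^2(E_H^\vee) = 0$, so (Remark~\ref{R:spectrumg}) either $F$ is one of the exceptions of Prop.~\ref{P:fprimunstable} or it is governed by a \emph{stable} rank $3$ bundle $G$ with $c_1(G) = -1$, $c_2(G) = 3$ and spectrum $(k_1 , k_2 , k_3)$ satisfying $0 \geq k_i \geq -2$ and $\sum k_i = -(c_3 - 7)/2$ (equal to $-3$, resp. $-4$). I would enumerate the finitely many admissible spectra, use Lemma~\ref{L:h2f(-2)=0}(b) to compute $\h^1(E_H(l))$ for $l = -3,\, -2,\, -1,\, 0$ in each, and record that the invariants $s = \h^1(E_H^\vee)$, $t = \h^0(E_H^\vee)$ are controlled by the vanishing $\tH^2(F(l)) = 0$ for $l \geq -3$. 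Corollary~\ref{C:h2eHvee=0} then yields $\tH^2(E^\vee) = 0$, and the rank formula of Remark~\ref{R:generalities}(b) together with Schwarzenberger's congruence pins $c_4$ (namely $c_4 = 9$, $r = 6$ when $c_3 = 13$, and $c_4 = 16$, $r = 8$ when $c_3 = 15$).

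\emph{The cohomology table and identification.} This is the technical core. Using the restriction sequences relating $\tH^\bullet(E(l))$ to $\tH^\bullet(E_H(l))$ as in Remark~\ref{R:generalities}(d), the Bilinear Map Lemma to propagate vanishing in $l$ (an argument parallel to Claims~1 and~4 of Prop.~\ref{P:c3=c2onp4}), Riemann--Roch for the Euler characteristics, and Kodaira vanishing with Serre duality, I would show that all groups $\tH^q(E(-1-p))$, $0 \leq p,\, q \leq 4$, vanish off the diagonal $q = p$, the only nonzero diagonal entries being $\h^0(E(-1)) = 2$, $\h^1(E(-2)) = 1$ when $c_3 = 13$, and $\h^0(E(-1)) = 4$, $\h^3(E(-4)) = 1$ when $c_3 = 15$. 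Since the Beilinson table of $E(-1)$ is then concentrated on the diagonal, the spectral sequence of Beilinson's theorem (\cite[Thm.~1.23]{acm1}, \cite[Remark~1.25]{acm1}) degenerates and exhibits $E(-1)$ as $\bigoplus_p \Omega_\piv^p(p)^{\oplus \h^p(E(-1)(-p))}$. For $c_3 = 13$ this gives $E(-1) \simeq 2\sco_\piv \oplus \Omega_\piv^1(1)$, i.e. $E \simeq 2\sco_\piv(1) \oplus \Omega_\piv(2)$; for $c_3 = 15$ it gives $E(-1) \simeq 4\sco_\piv \oplus \Omega_\piv^3(3)$, i.e. $E \simeq 4\sco_\piv(1) \oplus \text{T}_\piv(-1)$, using $\Omega_\piv^3(4) \simeq \text{T}_\piv(-1)$.

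\emph{Main obstacle.} The hard part is the off-diagonal vanishing, which must be forced uniformly and simultaneously for \emph{every} hyperplane $h$; the danger is an extra nonzero entry (typically in $\tH^2$ at an intermediate twist) surviving for some admissible spectrum of $G$ and creating a nontrivial Beilinson differential. As in Prop.~\ref{P:c3=c2onp4}, I expect to run the Bilinear Map Lemma for an arbitrary linear form to make the relevant multiplication maps injective or surjective, and then to use the spectrum-dependent values of $\h^1(E_H(l))$ to eliminate, one by one, the spectra that would contribute such spurious cohomology. Carrying this bookkeeping through all the spectra summing to $-3$ and to $-4$ is where most of the effort will go.
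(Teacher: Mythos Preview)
Your overall route---fix $c_3 \in \{13,15\}$, then force a diagonal Beilinson table for $E(-1)$---differs from the paper's. For the ``good'' spectra the paper does not compute any Beilinson cohomology: once it knows (for a \emph{single} hyperplane $H$) that the spectrum of $G_{[h]}$ is $(-1,-1,-2)$ or $(-1,-1,-1)$, it invokes \cite[Prop.~3.4]{acm3} to write $F_{[h]}$ explicitly as $4\sco_H(1)\oplus\text{T}_H(-1)$, resp.\ $3\sco_H(1)\oplus\Omega_H(2)$, and then the lifting Lemmas~\ref{L:a+p(b)} and~\ref{L:a+p(b)+omega(2)} finish immediately. This is cheaper than your table computation and needs the spectrum to be nice on only one $H$, not all.

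The genuine gap is the remaining case $c_3=13$ with spectrum $(0,-1,-2)$ on every hyperplane. Your asserted diagonal table is \emph{false} there: one computes $\h^1(E_H(-3))=\h^2(E_H(-3))=1$, and after proving $\tH^3(E(-4))=0$ (the paper does this via Lemma~\ref{L:erat(-1)} and a Chern-class count, not BML) one finds $\h^2(E(-3))=\h^2(E(-4))=1$, $\h^1(E(-2))=0$, $\h^0(E(-1))=3$. The Bilinear Map Lemma cannot kill these $\tH^2$ terms: the multiplication $\tH^2(E(-4))\to\tH^2(E(-3))$ is the zero map $k\to k$ for every $h$, which is perfectly consistent with BML. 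So Beilinson produces an honest off-diagonal resolution
\[
0\lra\Omega_\piv^3(3)\lra 3\sco_\piv\oplus\Omega_\piv^2(2)\lra E(-1)\lra 0,
\]
and the paper eliminates it by an exterior-algebra argument (its Claim~2, via Lemma~\ref{L:vetter}) showing that no locally split monomorphism $\Omega_\piv^3(3)\to 3\sco_\piv\oplus\Omega_\piv^2(2)$ exists. That step, or something equivalent, is what your plan is missing; it does not fall out of the restriction--BML machinery. A smaller point: Schwarzenberger only gives $c_4\equiv 3\pmod 6$ when $c_3=13$ (and the paper in fact finds $c_4=3$ in the $(0,-1,-2)$ subcase), so $c_4$ and $r$ are not pinned a priori as you claim.
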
 

\begin{proof} 
According to Lemma~\ref{L:c3geqc2} and to Prop.~\ref{P:c3=c2onp4}, one 
must have $c_3 \geq 13$ (recall that $c_3 \equiv c_1c_2 \mod{2}$). 
Let $H \subset \piv$ be an arbitrary hyperplane, of equation $h = 0$, and 
let $F_{[h]}$ be the vector bundle $P(P(E_H))$ on $H$ (see Remark~\ref{R:f}). 
Then, according to Remark~\ref{R:fprimstable}, 
$F_{[h]}$ can be realized as an extension$\, :$ 
\[
0 \lra (\text{rk}\, F_{[h]} - 3)\sco_H \lra F_{[h]} \lra G_{[h]}(2) \lra 0\, ,  
\] 
where $G_{[h]}$ is a \emph{stable} rank 3 vector bundle with $c_1(G_{[h]}) = 
-1$, $c_2(G_{[h]}) = 3$, $c_3(G_{[h]}) \geq 3$. One deduces that $G_{[h]}$ has  
one of the following spectra$\, :$ $(0 , -1 , -2)$, $(-1 , -1 , -1)$ and 
$(-1 , -1 , -2)$. 

If the spectrum of $G_{[h]}$ is $(-1 , -1 , -2)$, for at least one hyperplane 
$H \subset \piv$, then $c_3(G_{[h]}) = 5$ hence $c_3 = 15$. It is easy to 
show (see \cite[Prop.~3.4]{acm3}) that, in this case, $F_{[h]} \simeq 
4\sco_H(1) \oplus \text{T}_H(-1)$. One deduces, from Lemma~\ref{L:a+p(b)}, 
that $E \simeq 4\sco_\piv(1) \oplus \text{T}_\piv(-1)$. 

Similarly, if the spectrum of $G_{[h]}$ is $(-1 , -1 , -1)$, for at least one 
hyperplane $H \subset \piv$, then $c_3(G_{[h]}) = 3$ hence $c_3 = 13$ and 
$F_{[h]} \simeq 3\sco_H(1) \oplus \Omega_H(2)$ (by \cite[Prop.~3.4]{acm3}). 
Lemma~\ref{L:a+p(b)+omega(2)} implies that, in this case, $E \simeq 
2\sco_\piv(1) \oplus \Omega_\piv(2)$. 

It remains to investigate the case where $G_{[h]}$ has spectrum $(0 , -1 , 
-2)$, for every hyperplane $H \subset \piv$. \emph{We want, actually, to 
eliminate this case}. Assume, by contradiction, that it occurs. Then 
$c_3(G_{[h]}) = 3$ hence $c_3 = 13$. Moreover, $\text{rk}\, F_{[h]} = 3 + 
\h^2(G_{[h]}(-2)) = 6$ (see the last part of Remark~\ref{R:generalities}(c)). 
Since $\h^2(F_{[h]}(-3)) = \h^2(G_{[h]}(-1)) = 1$, one has $t \leq 1$ (see 
Remark~\ref{R:generalities}(c) for the notation) hence $E$ has rank $r \leq 7$. 

Now, one has $\h^1(E_H(-3)) = 1$ (use the spectrum). Moreover, by 
Lemma~\ref{L:h2f(-2)=0}(b), $\h^1(E_H(-2)) = 1$ and $\h^1(E_H(-1)) = 
\h^0(E_H(-1)) - 3$. But Lemma~\ref{L:h0f(-1)c2=11} implies that 
$\h^0(E_H(-1)) \leq 3$ hence $\h^0(E_H(-1)) = 3$ and $\h^1(E_H(-1)) = 0$. 
Remark~\ref{R:generalities}(d) implies that $\tH^1(E(-3)) = 0$ and that 
$\tH^2(E(l)) = 0$ for $l \geq -2$. The formula from 
Remark~\ref{R:generalities}(c) shows, now, that $\h^2(E(-3)) = (9 - c_4)/6$. 

\vskip2mm 

\noindent 
{\bf Claim 1.}\quad $\tH^3(E(-4)) = 0$. 

\vskip2mm 

\noindent 
\emph{Indeed}, assume, by contradiction, that $\tH^3(E(-4)) \neq 0$. Then, 
by Lemma~\ref{L:erat(-1)}, $E$ can be realized as an extension$\, :$ 
\[
0 \lra E_1 \lra E \lra \text{T}_\piv(-1) \lra 0\, , 
\] 
where $E_1$ is a vector bundle of rank $r - 4 \leq 3$. One must have 
$1 + c_1(E_1) + \cdots + c_i(E_1) = c_i$, $i = 1, \ldots , 4$, hence 
$c_1(E_1) = 4$, $c_2(E_1) = 6$, $c_3(E_1) = 2$ and, since $c_4(E_1) = 0$, 
$c_4 = 13$. But this \emph{contradicts} the formula $\h^2(E(-3)) = 
(9 - c_4)/6$ and Claim 1 is proven. 

\vskip2mm 

It follows, from Claim 1 and from the fact that $\tH^1(E(-3)) = 0$, that 
one has, for every hyperplane $H \subset \piv$, an exact sequence$\, :$ 
\[
0 \lra \tH^1(E_H(-3)) \lra \tH^2(E(-4)) \overset{h}{\lra} \tH^2(E(-3)) 
\lra \tH^2(E_H(-3)) \lra 0\, . 
\]
Since $\h^1(E_H(-3)) = 1$, $\h^2(E_H(-3)) = 1$ and $\h^2(E(-3)) = 
(9 - c_4)/6 \leq 1$, one gets that $\h^2(E(-3)) = 1$ and $\h^2(E(-4)) = 1$. 
Using the exact sequence$\, :$ 
\[
0 \lra \tH^1(E(-2)) \lra \tH^1(E_H(-2)) \lra \tH^2(E(-3)) \lra \tH^2(E(-2)) = 
0  
\]
and the fact, noticed above, that $\h^1(E_H(-2)) = 1$, 
one gets that $\tH^1(E(-2)) = 0$. Since $\tH^1(E_H(-1)) = 0$ it follows that 
$\tH^1(E(-1)) = 0$ and, moreover, $\tH^0(E(-1)) \izo \tH^0(E_H(-1))$ hence 
$\h^0(E(-1)) = 3$. 

Putting together the cohomological information obtained so far one deduces, 
applying Beilinson's theorem (recalled in \cite[Thm.~1.23]{acm1} and 
\cite[Remark~1.25]{acm1}) to $E(-1)$, that one has an exact sequence$\, :$ 
\[
0 \lra \Omega_\piv^3(3) \lra 3\sco_\piv \oplus \Omega_\piv^2(2) \lra E(-1) 
\lra 0\, .   
\]   
In order to get the desired \emph{contradiction} it suffices to prove the 
following$\, :$ 

\vskip2mm 

\noindent 
{\bf Claim 2.}\quad \emph{There is no locally split monomorphism} 
$\Omega_\piv^3(3) \ra 3\sco_\piv \oplus \Omega_\piv^2(2)$. 

\vskip2mm 

\noindent 
\emph{Indeed}, according to Definition~\ref{D:contraction}, any morphism 
$\phi \colon \Omega_\piv^3(3) \ra 3\sco_\piv \oplus \Omega_\piv^2(2)$ is defined 
by contraction with three elements $\omega_1,\, \omega_2,\, \omega_3$ 
of $\bigwedge^3V$ and with a vector $v_0 \in V$ (where $V = k^5$).    
We want to show that the dual morphism $\phi^\vee \colon 3\sco_\piv \oplus 
\Omega_\piv^2(2)^\vee \ra \Omega_\piv^3(3)^\vee$ cannot be an epimorphism. Let 
$W$ be the subspace $v_0 \wedge \bigwedge^2V + \sum k\omega_i$ of 
$\bigwedge^3V$ (recall the description of $\tH^0(\phi^\vee)$ from the above 
mentioned definition). According to Lemma~\ref{L:vetter}, we have to show that 
$W^\perp$ contains a decomposable element of $\bigwedge^2V$.
One has $(v_0 \wedge \bigwedge^2V)^\perp = 
V \wedge v_0$. Exterior multiplication to the right by $\omega_i$ defines a 
linear function on $V \wedge v_0$, $i = 1,\, 2,\, 3$. Since $V \wedge v_0$ 
has dimension 4, there exists $v_1 \in V \setminus kv_0$ such that 
$v_1 \wedge v_0 \wedge \omega_i = 0$, $i = 1, \, 2,\, 3$. It follows that 
$W^\perp$ contains the decomposable element $v_1 \wedge v_0$. This concludes the 
proof of Claim 2 and, with it, of the proposition.      
\end{proof} 

\begin{lemma}\label{L:c2=12c3=14onp4} 
There exists no globally generated vector bundle $E$ on $\piv$, with Chern 
classes $c_1 = 5$, $c_2 = 12$, $c_3 = 14$, $c_4$, such that 
${\fam0 H}^i(E^\vee) = 0$, $i = 0,\, 1$, and ${\fam0 H}^0(E_H(-2)) = 0$, for 
every hyperplane $H \subset \piv$. 
\end{lemma}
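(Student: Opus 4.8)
The plan is to argue by contradiction, following the same pattern as the preceding propositions (especially Prop.~\ref{P:c1=5c2=11onp4} and Claim~5 of Prop.~\ref{P:c3=c2onp4}): assume such an $E$ exists, extract the restricted bundle $F_{[h]} := P(P(E_H))$ on a hyperplane $H$, use the classification on $\piii$ (via Remark~\ref{R:fprimstable}) to pin down the spectrum of the associated stable rank~3 bundle $G_{[h]}$, accumulate enough cohomological dimension data to run Beilinson's theorem on $E(-1)$, and finally rule out the resulting monad by a contraction/Lemma~\ref{L:vetter} argument. First I would record that, since $c_2 = 12$ and $c_3 = 14$, the bundle $G_{[h]}$ has $c_1(G_{[h]}) = -1$, $c_2(G_{[h]}) = 4$, and $c_3(G_{[h]}) = c_3 - 2c_2 + 12 = 2$; combined with Remark~\ref{R:spectrumg} (giving $k_1 \leq 0$) and the relation $-2\sum k_i = c_3 - c_2 + 4 = 6$, so $\sum k_i = -3$, this constrains the spectrum $(k_1,\dots,k_4)$ of $G_{[h]}$ severely. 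I would enumerate the admissible spectra with $0 \geq k_1 \geq \cdots \geq k_4 \geq -2$ and $\sum k_i = -3$ (the candidates being things like $(0,-1,-1,-1)$, $(0,0,-1,-2)$, and $(-1,-1,-1,0)$ etc.), and for each read off $\mathrm{rk}\,F_{[h]} = 3 + \h^2(G_{[h]}(-2))$ and the intermediate cohomology $\h^1(E_H(l))$ from the spectrum.

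Next I would carry out the cohomology bookkeeping exactly as in the earlier proofs. Using Lemma~\ref{L:h2eHvee=0} and Cor.~\ref{C:h2eHvee=0} I would first try to force $\tH^2(E^\vee) = 0$, which gives $s = \h^2(E(-4))$ and $t = \h^3(E(-4))$. Then, invoking Remark~\ref{R:generalities}(d) (note $\h^1(E_H(-2)) = \tfrac12(5(c_2-8)-c_3) = \tfrac12(20-14) = 3 \leq 3$, so the Bilinear Map Lemma applies) I would deduce $\tH^1(E(-3)) = 0$ and $\tH^2(E(l)) = 0$ for $l \geq -2$. The Riemann--Roch formula from Remark~\ref{R:generalities}(c) then expresses $\h^2(E(-3)) - \h^1(E(-3))$ as a linear function of $c_4$, pinning down $\h^2(E(-3))$ in terms of $c_4$; Schwarzenberger's congruence $c_2(c_2-4)+c_3 \equiv 2c_4 \pmod{12}$, i.e. $96 + 14 \equiv 2c_4$, constrains $c_4 \pmod 6$. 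I would chase the hyperplane-restriction long exact sequences (as in Claim~5 of Prop.~\ref{P:c3=c2onp4}) to evaluate $\h^0(E(-1))$, $\h^1(E(-l))$ for small $l$, and ultimately $\h^0(E)$, keeping the rank $r$ and $c_4$ as the free parameters.

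Once the cohomology table is complete, I would apply Beilinson's theorem to $E(-1)$ to produce a monad, expecting a presentation resembling $0 \to \Omega_\piv^3(3) \to (\text{sum of }\Omega^i(i)\text{'s}) \to E(-1) \to 0$ as in cases (iii)--(iv) of the main theorem, but with one too many generators to be admissible. The decisive step, which I expect to be the main obstacle, is to show that the leftmost map of the monad cannot be a locally split monomorphism: dualizing and translating via Definition~\ref{D:contraction} into a statement about a subspace $W \subseteq \bigwedge^3V$, I would need $W^\perp \subseteq \bigwedge^2V$ to contain a decomposable element, contradicting global generation through Lemma~\ref{L:vetter}. This is precisely the kind of multilinear-algebra computation carried out in Subclaim~4.1 and Claim~2 of Prop.~\ref{P:c1=5c2=11onp4}, and the genericity reduction and dimension count there is the template I would adapt; the tricky part is that the number of contraction vectors/bivectors here is governed by $c_4$ and $r$, so I may first need to fix these numerically (possibly eliminating several $(r,c_4)$ subcases by the same epimorphism/$c_4(K)\neq 0$ parity trick used to close Claim~5 above, where one exhibits a putative epimorphism $m\sco_\piv \to E$ whose kernel has $c_4 \neq 0$ for divisibility reasons) before the contraction argument applies cleanly.
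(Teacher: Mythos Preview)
Your setup is correct through the point where Remark~\ref{R:generalities}(d) gives $\tH^1(E(-3))=0$ and $\tH^2(E(l))=0$ for $l\geq -2$. But from there the paper takes a far shorter route than the Beilinson/contraction program you outline, and you are working much harder than necessary.

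The paper never enumerates spectra, never computes the full cohomology table, and never invokes Beilinson. Instead it finishes in two strokes of the Bilinear Map Lemma. First, the Riemann--Roch formula of Remark~\ref{R:generalities}(c) gives $\h^2(E(-3))=(7-c_4)/6\leq 1$, so the exact sequence $0\to\tH^1(E(-2))\to\tH^1(E_H(-2))\to\tH^2(E(-3))\to 0$ forces $\h^1(E(-2))\in\{2,3\}$. Second, since $c_3=14\notin\{16,18,20\}$, Lemma~\ref{L:h0f(-1)c2=12} gives $\h^0(E_H(-1))\leq 1$, and Lemma~\ref{L:h2f(-2)=0}(b) then yields $\h^1(E_H(-1))=\h^0(E_H(-1))\leq 1$ for every hyperplane $H$. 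Feeding this into the restriction sequence at level $-1$ and applying the Bilinear Map Lemma gives $\tH^0(E(-1))=0$, hence $\h^1(E(-1))=\h^1(E(-2))$, and moreover multiplication by every nonzero linear form $h\colon\tH^1(E(-2))\to\tH^1(E(-1))$ has corank at most $1$. Now Lemma~\ref{L:bml}, applied to the $5$-dimensional space $\tH^0(\sco_\piv(1))\to\text{Hom}_k(\tH^1(E(-2)),\tH^1(E(-1)))$, demands $5\leq 2\cdot 2=4$, which is absurd.

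Your approach is not wrong in principle, but carrying it through would require fixing $\h^2(E(-3))$, $\h^2(E(-4))$, $\h^3(E(-4))$ and $\h^1(E(-1))$ separately (these are not determined by the data you have assembled), leading to a branching case analysis before Beilinson can even be applied, and then a separate multilinear argument for each resulting monad shape. The paper's use of Lemma~\ref{L:bml} short-circuits all of this: the contradiction appears already at the level of the multiplication maps on $\tH^1$, with no need to resolve $c_4$ or $r$ or to identify the monad.
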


\begin{proof} 
Assume, by contradiction, that such a bundle exists. Since $\h^1(E_H(-2)) = 3$ 
(by Lemma~\ref{L:h2f(-2)=0}(b)), Remark~\ref{R:generalities}(d) implies that 
$\tH^2(E(l)) = 0$ for $l \geq -2$, and that $\tH^1(E(-3)) = 0$. It follows, 
from the formula in Remark~\ref{R:generalities}(c), that $\h^2(E(-3)) = 
(7 - c_4)/6$. One deduces that, for every hyperplane $H \subset 
\piv$ of equation $h = 0$, one has an exact sequence$\, :$ 
\[
0 \lra \tH^1(E(-2)) \lra \tH^1(E_H(-2)) \lra \tH^2(E(-3)) \lra 0\, . 
\] 
Since $\h^2(E(-3)) \leq 1$ it follows that $2 \leq \h^1(E(-2)) \leq 3$ (recall 
that $\h^1(E_H(-2)) = 3$). Consider, now, the exact sequence$\, :$ 
\[
0 \ra \tH^0(E(-1)) \ra \tH^0(E_H(-1)) \ra \tH^1(E(-2)) \overset{h}{\ra} 
\tH^1(E(-1)) \ra \tH^1(E_H(-1)) \ra 0\, . 
\] 
Since $\h^1(E_H(-1)) = \h^0(E_H(-1)) \leq 1$, by Lemma~\ref{L:h2f(-2)=0}(b) and 
Lemma~\ref{L:h0f(-1)c2=12}, the Bilinear Map Lemma \cite[Lemma~5.1]{ha} 
implies that $\tH^0(E(-1)) = 0$. One deduces that $\h^1(E(-1)) = \h^1(E(-2))$ 
and that the multiplication by any non-zero linear form $h \colon \tH^1(E(-2)) 
\ra \tH^1(E(-1))$ has corank $\leq 1$. Applying, now, Lemma~\ref{L:bml} to the 
map $\tH^0(\sco_\piv(1)) \ra \text{Hom}_k(\tH^1(E(-2)) , \tH^1(E(-1)))$ one gets 
a \emph{contradiction} (recall that $\h^1(E(-2)) \in \{2,\, 3\}$).   
\end{proof} 

\begin{prop}\label{P:c1=5c2=12onp4} 
Let $E$ be a globally generated vector bundle on $\piv$ with Chern classes 
$c_1 = 5$, $c_2 = 12$, $c_3$, $c_4$, such that ${\fam0 H}^i(E^\vee) = 0$, 
$i = 0,\, 1$, and ${\fam0 H}^0(E_H(-2)) = 0$, for every hyperplane $H \subset 
\piv$. Then one of the following holds$\, :$ 
\begin{enumerate} 
\item[(i)] $c_3 = 20$, $c_4 = 28$ and $E \simeq 3\sco_\piv(1) \oplus 
2{\fam0 T}_\piv(-1)$$\, ;$ 
\item[(ii)] $c_3 = 18$, $c_4 = 21$ and $E \simeq \sco_\piv(1) \oplus 
{\fam0 T}_\piv(-1) \oplus \Omega_\piv(2)$$\, ;$ 
\item[(iii)] $c_3 = 18$, $c_4 = 15$ and $E \simeq 2\sco_\piv(1) \oplus 
\Omega_\piv^2(3)$$\, ;$ 
\item[(iv)] $c_3 = 16$, $c_4 = 8$ and $E \simeq \sco_\piv(1) \oplus E_0$, 
where $E_0(-1)$ is the cohomology sheaf of a monad of the form$\, :$ 
\[
0 \lra \Omega_\piv^3(3) \lra \Omega_\piv^2(2) \oplus \Omega_\piv^1(1) \lra 
\sco_\piv \lra 0\, ,  
\] 
i.e., $E_0(-1)$ is the bundle $\mathcal{G}$ from the statement of the main 
result of the paper of Abo, Decker and Sasakura \emph{\cite{ads}}$\, ;$ 
\item[(v)] $c_3 = 16$, $c_4 = 8$ and one has an exact sequence$\, :$ 
\[
0 \lra \Omega_\piv^3(3) \lra \Omega_\piv^2(2) \oplus \Omega_\piv^1(1) \lra 
E(-1) \lra 0\, . 
\]
\end{enumerate}
\end{prop}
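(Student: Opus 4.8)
The plan is to follow the pattern of Propositions~\ref{P:c3=c2onp4} and~\ref{P:c1=5c2=11onp4}: reduce to the spectrum of the stable rank~$3$ bundle attached to a general hyperplane section, dispose of the ``split'' spectra by the lifting Lemmas~\ref{L:a+p(b)} and~\ref{L:a+p(b)+omega(2)}, eliminate the remaining spectra at the two larger values of $c_3$ by a no-locally-split-monomorphism argument, and treat the single genuinely new spectrum, at $c_3 = 16$, by a Beilinson monad computed directly on $\piv$.

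First I would fix an arbitrary hyperplane $H \subset \piv$ (of equation $h = 0$) and put $F_{[h]} := P(P(E_H))$ on $H \simeq \piii$. By Lemma~\ref{L:c3geqc2}, Proposition~\ref{P:c3=c2onp4} and Lemma~\ref{L:c2=12c3=14onp4}, and since $c_3 \equiv c_2 \equiv 0 \pmod 2$, one has $c_3 \geq 16$. By Lemma~\ref{L:h2eHvee=0}, $F_{[h]}$ is not of type~(ii) of Proposition~\ref{P:fprimunstable}, so by Remark~\ref{R:fprimstable} it is an extension $0 \to (\operatorname{rk} F_{[h]} - 3)\sco_H \to F_{[h]} \to G_{[h]}(2) \to 0$ with $G_{[h]}$ stable of rank $3$, $c_1(G_{[h]}) = -1$, $c_2(G_{[h]}) = 4$, and spectrum $(k_1, \ldots , k_4)$ satisfying $0 \geq k_1 \geq \cdots \geq k_4 \geq -2$ by Remark~\ref{R:spectrumg}. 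From $-2\sum k_i = c_3 - c_2 + 4 = c_3 - 8$ (Lemma~\ref{L:c3geqc2}) and $\operatorname{rk} F_{[h]} = 3 + \h^2(G_{[h]}(-2))$ (Remark~\ref{R:generalities}(c)), the spectrum determines both $c_3$ and $\operatorname{rk} F_{[h]}$; discarding the inadmissible spectra by Lemma~\ref{L:spectrumg} and Lemma~\ref{L:impossiblespectra} (in particular $(-1,-2,-2,-2)$ and $(-2,-2,-2,-2)$, which would force $c_3 \geq 22$) leaves exactly $c_3 \in \{16, 18, 20\}$.

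For $c_3 = 20$ the surviving spectrum forces, via the classification on $\piii$ recalled in the appendix (cf.~\cite[Prop.~3.4]{acm3}), $F_{[h]} \simeq 3\sco_H(1) \oplus 2\text{T}_H(-1) = 3\sco_H(1) \oplus 2P(\sco_H(1))$, and Lemma~\ref{L:a+p(b)} lifts this to case~(i). For $c_3 = 18$ the admissible spectrum gives $F_{[h]} \simeq A \oplus P(B) \oplus \Omega_H(2)$ with $A, B$ sums of line bundles, and Lemma~\ref{L:a+p(b)+omega(2)} yields, according to whether the extra $\sco_H(1)$ lies in $A$ or in $B$, case~(ii) or case~(iii) (note $\Omega_\piv^2(3) = \Omega_\piv^{n-2}(n-1)$ for $n = 4$). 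As in Proposition~\ref{P:c1=5c2=11onp4}, it is enough that the relevant spectrum occur for one hyperplane; any remaining spectrum at $c_3 \in \{18, 20\}$ is eliminated by reducing $E(-1)$ to a Beilinson monad and showing, by the method of Claim~2 of Proposition~\ref{P:c1=5c2=11onp4} (Definition~\ref{D:contraction} and Lemma~\ref{L:vetter}), that the required left-hand map cannot be a locally split monomorphism.

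The hard part will be $c_3 = 16$. Here I would work directly on $\piv$: using Remark~\ref{R:generalities}(d), Corollary~\ref{C:h2eHvee=0}, the spectrum values of the $\h^i(E_H(l))$, Riemann--Roch, the hypothesis $\tH^0(E_H(-2)) = 0$, and repeated use of the Bilinear Map Lemma~\cite[Lemma~5.1]{ha} run over \emph{all} hyperplanes, I would compute the full cohomology table of $E(-1)$ and show that for each $l \in \{0, -1, -2, -3\}$ exactly one group $\h^q(E(-1+l))$ is nonzero and equal to $1$, all other relevant groups vanishing, and that $c_4 = 8$. Feeding this into Beilinson's theorem (\cite[Thm.~1.23, Remark~1.25]{acm1}) produces a monad with terms $\Omega_\piv^3(3)$, $\Omega_\piv^2(2) \oplus \Omega_\piv^1(1)$, $\sco_\piv$. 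The dichotomy $\h^0(E(-1)) = 1$ versus $\h^0(E(-1)) = 0$ then separates the conclusions: in the first case a section splits off $\sco_\piv(1)$ and the complement $E_0(-1)$ is the cohomology of the full three-term monad, i.e.\ the Abo--Decker--Sasakura bundle $\mathcal{G}$ of \cite{ads}, giving~(iv); in the second, $E(-1)$ is itself the cokernel of $\Omega_\piv^3(3) \hookrightarrow \Omega_\piv^2(2) \oplus \Omega_\piv^1(1)$, giving~(v). The two principal obstacles are pinning down this cohomology table precisely enough to invoke Beilinson --- the Bilinear Map Lemma must be applied simultaneously for every hyperplane and the borderline dimensions controlled by the Chern-class and congruence bookkeeping --- and checking that the left-hand monad map is genuinely a subbundle inclusion, which I would analyze with the contraction formalism of Definition~\ref{D:contraction} and Lemma~\ref{L:vetter}.
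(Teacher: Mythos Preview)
Your overall plan is sound and matches the paper's strategy, but two points deserve comment.

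For $c_3 \geq 18$ the paper bypasses the spectrum analysis entirely: from Lemma~\ref{L:h2f(-2)=0}(b) one has $\h^0(F_{[h]}(-1)) - \h^1(F_{[h]}(-1)) = \tfrac{1}{2}(c_3 - 14) \geq 2$, so $\h^0(F_{[h]}(-1)) \geq 2$ and Remark~\ref{R:h0f(-1)c2=12} classifies $F_{[h]}$ directly as $3\sco_H(1) \oplus 2\text{T}_H(-1)$ or $2\sco_H(1) \oplus \text{T}_H(-1) \oplus \Omega_H(2)$ (note that \cite[Prop.~3.4]{acm3} concerns $c_2 = 11$, not $12$). Your route through spectra would also work but is longer, and there are in fact no ``remaining spectra'' to eliminate at these values: after Remark~\ref{R:spectrumg} and property~(vi) of the spectrum the only possibilities are $(-1,-1,-2,-2)$ for $c_3 = 20$ and $(-1,-1,-1,-2)$ for $c_3 = 18$ (the case $(0,-1,-2,-2)$ being ruled out by Lemma~\ref{L:impossiblespectra}); the spectra $(-1,-2,-2,-2)$ and $(-2,-2,-2,-2)$ you mention are excluded by property~(vi), not by those lemmas.

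The genuine gap is in case (iv). Your dichotomy $\h^0(E(-1)) \in \{0,1\}$ is correct (in fact $\h^0(E(-1)) = \h^1(E(-1))$, so your ``exactly one nonzero group'' is slightly off at $l=0$), but the assertion that ``a section splits off $\sco_\piv(1)$'' needs real work. The paper argues as follows: when the Beilinson monad has the three-term form $\Omega^3(3) \overset{\alpha}{\to} \Omega^2(2) \oplus \Omega^1(1) \oplus \sco \overset{\beta}{\to} \sco$, the component $\beta_2 \colon \sco \to \sco$ vanishes, so $\beta_1 \colon \Omega^2(2) \oplus \Omega^1(1) \to \sco$ is an epimorphism and, since $E$ is globally generated, $\Ker \beta_1(1)$ must be globally generated; Corollary~\ref{C:sasakura} then pins down $\beta_1$ up to a basis of $V$, and a wedge computation with the constraint $\beta_1 \circ \alpha_1 = 0$ shows that $\alpha_1$ (the component of $\alpha$ landing in $\Omega^2(2) \oplus \Omega^1(1)$) is itself a locally split monomorphism with $\tH^0(\alpha_1^\vee)$ surjective. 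Only then does one get a sub-monad with cohomology $E_1$ satisfying $\tH^1(E_1^\vee) = 0$, and hence the splitting $E(-1) \simeq \sco \oplus E_1$. Lemma~\ref{L:vetter} alone will not give you this; the specific input of Corollary~\ref{C:sasakura} is essential. You should also be aware that the two steps you pass over in the cohomology computation --- $\tH^3(E(-4)) = 0$ (via Lemma~\ref{L:erat(-1)} and a $c_4$-contradiction) and $\tH^2(E(-3)) \neq 0$ (by contradiction) --- each require a separate argument in the paper.
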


\begin{proof} 
According to Lemma~\ref{L:c3geqc2}, Prop.~\ref{P:c3=c2onp4} and 
Lemma~\ref{L:c2=12c3=14onp4} one must have $c_3 \geq 16$ (recall that 
$c_3 \equiv c_1c_2 \pmod{2}$). If $H \subset \piv$ is an arbitrary hyperplane, 
of equation $h = 0$, let $F_{[h]}$ denote the vector bundle $P(P(E_H))$ on $H$ 
(see Remark~\ref{R:f}). By Lemma~\ref{L:h2f(-2)=0}(b), 
\[
\h^0(F_{[h]}(-1)) - \h^1(F_{[h]}(-1)) = \frac{1}{2}(c_3 - 14) 
\] 
hence $\h^0(F_{[h]}(-1)) \geq 2$ if $c_3 \geq 18$. In this case, by 
Remark~\ref{R:h0f(-1)c2=12}, either $c_3 = 20$ and $F_{[h]} \simeq 3\sco_H(1) 
\oplus 2\text{T}_H(-1)$ or $c_3 = 18$ and $F_{[h]} \simeq 2\sco_H(1) \oplus 
\text{T}_H(-1) \oplus \Omega_H(2)$. In the former case one deduces, from 
Lemma~\ref{L:a+p(b)}, that $E$ is as in item (i) of the statement while, 
in the latter case, $E$ is as in item (ii) or in item (iii) of the statement, 
by Lemma~\ref{L:a+p(b)+omega(2)}. It thus remains to consider the case $c_3 = 
16$. In this case, by Remark~\ref{R:fprimstable}, $F_{[h]}$ can be realized as 
an extension$\, :$ 
\[
0 \lra (\text{rk}\, F_{[h]} - 3) \lra F_{[h]} \lra G_{[h]}(2) \lra 0\, , 
\] 
where $G_{[h]}$ is a stable rank 3 vector bundle on $H$ with $c_1(G_{[h]}) = 
-1$, $c_2(G_{[h]}) = 4$, $c_3(G_{[h]}) = 4$. Taking into account 
Remark~\ref{R:spectrumg}, the possible spectra of $G_{[h]}$ are $(0 , -1 , -1 
, -2)$ and $(-1 , -1 , -1 , -1)$. In both cases $\h^2(G_{[h]}(-2)) = 4$ hence 
$F_{[h]}$ has rank 7 (see the last part of Remark~\ref{R:generalities}(c)). One 
also has, by Lemma~\ref{L:h2f(-2)=0}(b), $\h^1(E_H(-2)) = 2$ (and 
$\h^1(E_H(-1)) = \h^0(E_H(-1)) - 1$) hence, by Remark~\ref{R:generalities}(d), 
$\tH^1(E(-3)) = 0$ and  $\tH^2(E(l)) = 0$ for $l \geq -2$. One deduces, from 
the formula in Remark~\ref{R:generalities}(c), that $\h^2(E(-3)) = 
(14 - c_4)/6$. One gets an exact sequence$\, :$ 
\[
0 \lra \tH^1(E(-2)) \lra \tH^1(E_H(-2)) \lra \tH^2(E(-3)) \lra 0  
\] 
hence $\h^1(E(-2)) + \h^2(E(-3)) = 2$. 

\vskip2mm 

\noindent 
{\bf Claim 1.}\quad $\tH^3(E(-4)) = 0$. 

\vskip2mm 

\noindent 
\emph{Indeed}, assume, by contradiction, that $\tH^3(E(-4)) \neq 0$. For 
every hyperplane $H \subset \piv$, one has an exact sequence$\, :$ 
\[
0 \ra \tH^1(E_H(-3)) \ra \tH^2(E(-4)) \overset{h}{\ra} \tH^2(E(-3)) \ra 
\tH^2(E_H(-3)) \ra \tH^3(E(-4)) \ra 0\, . 
\] 
Since $\h^2(E_H(-3)) = \h^2(G_{[h]}(-1)) \leq 1$ (use the spectrum) one gets  
that $\h^3(E(-4)) = \h^2(E_H(-3)) = 1$ (hence, in particular, $G_{[h]}$ has 
spectrum $(0 , -1 , -1 , -2)$). It follows that the multiplication by 
any non-zero linear form $h \colon \tH^2(E(-4)) \ra \tH^2(E(-3))$ is 
surjective. Since $\h^1(E_H(-3)) = 1$, the Bilinear Map Lemma 
\cite[Lemma~5.1]{ha} implies that $\tH^2(E(-3)) = 0$ hence $c_4 = 14$. 
Moreover, one gets that $\h^2(E(-4)) = \h^1(E_H(-3)) = 1$ hence, by 
Cor.~\ref{C:h2eHvee=0}, $\tH^2(E^\vee) = 0$. Using a formula from 
Remark~\ref{R:generalities}(b), one deduces that $E$ has rank $r = 7$.  

Now, the assumption $\tH^3(E(-4)) \neq 0$ implies, by Lemma~\ref{L:erat(-1)}, 
that $E$ can be realized as an extension$\, :$ 
\[
0 \lra E_1 \lra E \lra \text{T}_\piv(-1) \lra 0\, , 
\] 
where $E_1$ is a vector bundle of rank $r - 4 = 3$. One gets that $c_4(E_1) = 
c_4 - c_3 = -2$ and this \emph{contradicts} the fact that $E_1$ has rank 3. 
This contradiction proves the claim. 

\vskip2mm 

One deduces, from Claim 1, that one has, for every hyperplane $H \subset 
\piv$, an exact sequence$\, :$ 
\[
0 \lra \tH^1(E_H(-3)) \lra \tH^2(E(-4)) \overset{h}{\lra} \tH^2(E(-3)) 
\lra \tH^2(E_H(-3)) \lra 0\, . 
\]   
Since $\h^1(E_H(-3)) = \h^2(E_H(-3)) \leq 1$ (use the spectrum), one gets that 
$\h^2(E(-4)) = \h^2(E(-3))$. 

\vskip2mm 

\noindent 
{\bf Claim 2.}\quad $\tH^2(E(-3)) \neq 0$. 

\vskip2mm 

\noindent 
\emph{Indeed}, assume, by contradiction, that $\tH^2(E(-3)) = 0$. It follows 
that $\h^1(E_H(-3)) = \h^2(E_H(-3)) = 0$, for every hyperplane $H \subset 
\piv$. Moreover, using the formula preceding Claim 1, $\h^1(E(-2)) = 
2$. Consider, for an arbitrary hyperplane $H \subset \piv$, the exact 
sequence$\, :$ 
\[
0 \ra \tH^0(E(-1)) \ra \tH^0(E_H(-1)) \ra \tH^1(E(-2)) \overset{h}{\ra} 
\tH^1(E(-1)) \ra \tH^1(E_H(-1)) \ra 0\, . 
\] 
Since $\h^1(E_H(-3)) = 0$, the last assertion in Lemma~\ref{L:h0f(-1)c2=12} 
implies that $\h^0(E_H(-1)) \leq 1$ hence, actually, $\h^0(E_H(-1)) = 1$ and 
$\h^1(E_H(-1)) = 0$ (recall that $\h^1(E_H(-1)) = \h^0(E_H(-1)) - 1$, by 
Lemma~\ref{L:h2f(-2)=0}(b)). Because this happens for every hyperplane $H 
\subset \piv$, the Bilinear Map Lemma \cite[Lemma~5.1]{ha} implies that 
$\tH^1(E(-1)) = 0$ and this clearly \emph{contradicts} the fact that 
$\h^1(E(-2)) = 2$ and $\h^0(E_H(-1)) = 1$.   

\vskip2mm 

Consequently, one has $\h^2(E(-4)) = \h^2(E(-3)) \in \{1\, ,\, 2\}$. Since 
the multiplication by any non-zero linear form $h \colon \tH^2(E(-4)) \ra 
\tH^2(E(-3))$ has corank $\leq 1$ one must have $\h^2(E(-4)) = \h^2(E(-3)) 
= 1$ (there is no injective linear map $k^5 \ra \text{Hom}_k(k^2 , k^2)$). 
One deduces that $c_4 = 8$ and that $\h^1(E(-2)) = 1$ (by the formula 
preceding Claim 1). 
The last assertion in Lemma~\ref{L:h0f(-1)c2=12} 
implies that $\h^0(E_H(-1)) \leq 2$ hence $\h^1(E_H(-1)) = \h^0(E_H(-1)) - 1 
\leq 1$, for every hyperplane $H \subset \piv$. Using the exact sequence from 
the proof of Claim 2 and the Bilinear Map Lemma one deduces easily that one 
must have $\h^1(E(-1)) \leq 1$. One also deduces that $\h^0(E(-1)) =  
\h^1(E(-1))$ (because $\h^1(E(-2)) = 1$).  
The cohomological information obtained so far suffices 
to conclude that the Beilinson monad of $E(-1)$ has one of the forms$\, :$ 
\begin{gather*} 
0 \lra \Omega_\piv^3(3) \lra \Omega_\piv^2(2) \oplus \Omega_\piv^1(1) 
\lra 0 \lra 0\, ,\\ 
0 \lra \Omega_\piv^3(3) \overset{\alpha}{\lra} \Omega_\piv^2(2) \oplus 
\Omega_\piv^1(1) \oplus \sco_\piv  \overset{\beta}{\lra} \sco_\piv \lra 0  
\end{gather*}  
(with the direct sums as the term of cohomological degree 0). If the 
Beilinson monad of $E(-1)$ has the first form then $E$ is as in item (v) of 
the statement. 

Assume, finally, that the Beilinson monad of $E(-1)$ has the second form. By 
the basic properties of Beilinson monads, the component $\beta_2 \colon 
\sco_\piv \ra \sco_\piv$ of $\beta$ is 0. It follows that the component 
$\beta_1 \colon \Omega_\piv^2(2) \oplus \Omega_\piv^1(1) \ra \sco_\piv$ of 
$\beta$ is an epimorphism. Since $E$ is globally generated, $\Ker \beta(1)$ 
must be globally generated hence $\Ker \beta_1(1)$ is globally generated. 
Cor.~\ref{C:sasakura} implies that there exists a $k$-basis $v_0 , \ldots , 
v_4$ of $V := k^5$ such that $\beta_1$ is defined by contraction with 
$\omega := v_0 \wedge v_1 + v_2 \wedge v_3$ and with $v := v_4$. The component 
$\alpha_1 \colon \Omega_\piv^3(3) \ra \Omega_\piv^2(2) \oplus \Omega_\piv^1(1)$ 
of $\alpha$ is defined by contraction with a $w \in V$ and an $\eta \in 
\bigwedge^2V$. The condition $\beta_1 \circ \alpha_1 = 0$ is equivalent to 
$w \wedge \omega + \eta \wedge v = 0$ in $\bigwedge^3V$. Put $V^\prime := kv_0 
+ \cdots + kv_3 \subset V$. Since $\bigwedge^3V = \bigwedge^3V^\prime \oplus  
(\bigwedge^2V^\prime \wedge v_4)$ and since $\ast \wedge \omega$ maps $V^\prime$ 
isomorphically onto $\bigwedge^3V^\prime$, one deduces that one must have 
$w = -cv_4$, for some $c \in k$. This implies that $\eta = c\omega + u \wedge 
v_4$, for some $u \in V^\prime$. 

Now, since there is no locally split monomorphism $\Omega_\piv^3(3) \ra 
\Omega_\piv^1(1) \oplus \sco_\piv$ (the cokernel of such a monomorphism would 
be isomorphic to $\sco_\piv(2)$) it follows that $c \neq 0$. 
$\tH^0(\alpha_1^\vee)$ can be identified to the map $\bigwedge^2V \oplus V \ra 
\bigwedge^3V$ defined by $w \wedge \ast$ and $\eta \wedge \ast$. One deduces 
that $\tH^0(\alpha_1^\vee)$ is surjective (because its image contains 
$v_4 \wedge \bigwedge^2V^\prime$ and $\omega \wedge V^\prime = 
\bigwedge^3V^\prime$) hence $\alpha_1^\vee$ is an epimorphism hence $\alpha_1$ is 
a locally split monomorphism. One thus gets a monad$\, :$ 
\[
0 \lra \Omega_\piv^3(3) \overset{\alpha_1}{\lra} \Omega_\piv^2(2) \oplus 
\Omega_\piv^1(1)  \overset{\beta_1}{\lra} \sco_\piv \lra 0\, . 
\]
Let $E_1$ be the cohomology sheaf of this monad ($E_1$ is, of course, locally 
free). One gets an exact sequence$\, :$ 
\[
0 \lra \sco_\piv \lra E(-1) \lra E_1 \lra 0\, . 
\]
Since $\tH^0(\alpha_1^\vee)$ is surjective it follows that $\tH^1(E_1^\vee) = 0$ 
hence $E(-1) \simeq \sco_\piv \oplus E_1$ hence $E$ is as in item (iv) of the 
statement. 
\end{proof}

\section{The case $c_1 = 5$ on $\p^n$, 
$n \geq 5$}\label{S:c1=5onp5} 

We classify, in this section, the globally generated vector bundles $E$ with 
$c_1 = 5$ on $\p^n$, $n \geq 5$, with the property that $\tH^i(E^\vee) = 0$, 
$i = 0,\, 1$, and that $\tH^0(E_\Pi(-2)) = 0$ for every $3$-plane $\Pi 
\subset \p^n$. We use the analogous classification for vector bundles on 
$\piv$ from the preceding section and the following two auxiliary results.   

\begin{lemma}\label{L:horrocks} 
Consider  a morphism $\phi \colon \Omega_\pv^3(3) \ra \Omega_\pv^1(1)$ defined 
by contraction with an element $\omega$ of $\bigwedge^2V$, where $V := k^6$ 
$($see Definition~\emph{\ref{D:contraction}}$)$. Then the following 
assertions are equivalent$\, :$ 
\begin{enumerate} 
\item[(i)] $\phi$ is an epimorphism$\, ;$ 
\item[(ii)] There exists a $k$-basis $v_0 , \ldots , v_5$ of $V$ such that 
$\omega = v_0 \wedge v_1 + v_2 \wedge v_3 + v_4 \wedge v_5$$\, ;$ 
\item[(iii)] ${\fam0 H}^0(\phi(1))$ is bijective. 
\end{enumerate} 
\end{lemma}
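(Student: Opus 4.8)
The plan is to imitate the proof of Lemma~\ref{L:sasakura}, reducing everything to the behaviour of the Lefschetz-type operator ``exterior multiplication by $\omega$'' on the exterior algebra of $V = k^6$, and then to apply Lemma~\ref{L:vetter}.

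First I would pass to the twist $\phi(1)$, which is an epimorphism if and only if $\phi$ is. Using Definition~\ref{D:contraction}, I identify $\tH^0(\phi(1)) \colon \tH^0(\Omega_\pv^3(4)) \ra \tH^0(\Omega_\pv^1(2))$ with the contraction $\bigwedge^4V^\vee \ra \bigwedge^2V^\vee$ by $\omega$ and, via the isomorphisms $\bigwedge^{6-i}V \izo \bigwedge^iV^\vee$, with exterior multiplication $\ast \wedge \omega \colon \bigwedge^2V \ra \bigwedge^4V$. Since $\dim \bigwedge^4V^\vee = \dim \bigwedge^2V^\vee = 15$, the map $\tH^0(\phi(1))$ is surjective if and only if it is bijective if and only if it is injective; this already reduces (iii) to the condition $\Ker(\ast \wedge \omega) = 0$.

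Next I set $W := \text{im}\, \tH^0(\phi(1)) \subseteq \bigwedge^2V^\vee$, so that $W$ corresponds to $\bigwedge^2V \wedge \omega \subseteq \bigwedge^4V$. Exactly as in Lemma~\ref{L:sasakura}, the perfectness of the pairing $\bigwedge^4V \times \bigwedge^2V \ra \bigwedge^6V$ gives $W^\perp = \Ker(\ast \wedge \omega \colon \bigwedge^2V \ra \bigwedge^4V)$ (which is therefore also $\Ker \tH^0(\phi(1))$, since $\ast \wedge \omega$ is self-adjoint for this pairing). As $\Omega_\pv^3(4)$ is globally generated, the image of $\phi(1)$ is the subsheaf of $\Omega_\pv^1(2)$ generated by $W$; hence Lemma~\ref{L:vetter} shows that (i) holds if and only if $W^\perp$ contains no decomposable element of $\bigwedge^2V$.

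It then remains to analyse $\Ker(\ast \wedge \omega)$ through the normal form $\omega = v_0 \wedge v_1 + \cdots + v_{2r-2} \wedge v_{2r-1}$ of an alternating form. If $r \leq 2$ (so $\omega$ is degenerate and (ii) fails), then $v_0 \wedge v_2$ is a nonzero decomposable element of $\bigwedge^2V$ with $\omega \wedge v_0 \wedge v_2 = 0$, so both (i) and (iii) fail. The decisive case is the nondegenerate one $r = 3$, i.e.\ case (ii), where I must prove that $\ast \wedge \omega \colon \bigwedge^2V \ra \bigwedge^4V$ is injective. This is the hard Lefschetz statement for the symplectic form $\omega$ on $k^6$; I would prove it by matching coefficients: writing $\eta = \sum_{i<j} a_{ij}\, v_i \wedge v_j$ with $\omega \wedge \eta = 0$, the coefficients of $v_0 \wedge v_1 \wedge v_2 \wedge v_3$, of $v_0 \wedge v_1 \wedge v_4 \wedge v_5$ and of $v_2 \wedge v_3 \wedge v_4 \wedge v_5$ give $a_{01} + a_{23} = a_{01} + a_{45} = a_{23} + a_{45} = 0$, whence $a_{01} = a_{23} = a_{45} = 0$, while for every remaining pair $\{i,j\}$ (not one of $\{0,1\},\{2,3\},\{4,5\}$) one chooses one of these three pairs $\{p,q\}$ disjoint from $\{i,j\}$ and reads off $a_{ij} = 0$ from the coefficient of $v_i \wedge v_j \wedge v_p \wedge v_q$. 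Thus $\eta = 0$, so in case (ii) one has $W^\perp = 0$, whence (i) and (iii) both hold, and all three conditions are equivalent. I expect this coefficient computation --- or, if preferred, a direct appeal to hard Lefschetz --- to be the only genuine content, the rest being a transcription of the $\piv$-argument of Lemma~\ref{L:sasakura}.
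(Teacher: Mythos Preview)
Your proposal is correct and follows essentially the same route as the paper: identify $\tH^0(\phi(1))$ with $\ast \wedge \omega \colon \bigwedge^2V \ra \bigwedge^4V$, invoke Lemma~\ref{L:vetter} to reduce (i) to the absence of decomposable elements in $W^\perp$, and then run through the normal forms of $\omega$. The only cosmetic difference is in the nondegenerate case: the paper shows directly that $W = \bigwedge^4V$ (surjectivity) by producing each basis 4-vector, while you show $\Ker(\ast \wedge \omega) = 0$ (injectivity) by the dual coefficient computation; both use $\text{char}\, k \neq 2$ at the same point.
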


\begin{proof} 
$\tH^0(\phi(1)) \colon \tH^0(\Omega_\pv^3(4)) \ra \tH^0(\Omega_\pv^1(2))$ is 
the map $\ast \llcorner \, \omega \colon \bigwedge^4V^\vee \ra 
\bigwedge^2V^\vee$ which can be identified with the map $\ast \wedge \omega 
\colon \bigwedge^2V \ra \bigwedge^4V$. Let $W$ be the subspace $\bigwedge^2V 
\wedge \omega$ of $\bigwedge^4V$. By Lemma~\ref{L:vetter}, $\phi(1)$ is an 
epimorphism if and only if the subspace $W^\perp$ of $\bigwedge^2V$ contains 
no decomposable element. 

If $\omega = v_0 \wedge v_1$, with $v_0,\, v_1 \in V$ linearly independent 
then $W^\perp$ contains the element $v_0 \wedge v_1$. 

If $\omega = v_0 \wedge v_1 + v_2 \wedge v_3$, with $v_0 , \ldots , v_3 \in V$ 
linearly independent then $W^\perp$ contains $v_0 \wedge v_2$. 

One deduces that if $\phi$ is an epimorphism then there exists a $k$-basis 
$v_0 , \ldots , v_5$ of $V$ such that $\omega = v_0 \wedge v_1 + v_2 \wedge v_3 
+ v_4 \wedge v_5$. We assert that, in this case, $W = \bigwedge^4V$. 

\emph{Indeed}, any subset of $\{0 , \ldots , 5\}$ consisting of 4 elements 
contains one of the subsets $\{0,\, 1\}$, $\{2,\, 3\}$, $\{4,\, 5\}$. If it 
contains, for example, $\{0,\, 1\}$ and the other two elements $i,\, j$ belong 
one to $\{2,\, 3\}$ and the other one to $\{4,\, 5\}$ then$\, :$ 
\[
v_0 \wedge v_1 \wedge v_i \wedge v_j = v_i \wedge v_j \wedge \omega \in W\, . 
\]  
On the other hand, one has$\, :$ 
\begin{gather*} 
W \ni v_0 \wedge v_1 \wedge \omega = v_0 \wedge v_1 \wedge v_2 \wedge v_3 
+ v_0 \wedge v_1 \wedge v_4 \wedge v_5\, ,\\
W \ni v_2 \wedge v_3 \wedge \omega = v_0 \wedge v_1 \wedge v_2 \wedge v_3 
+ v_2 \wedge v_3 \wedge v_4 \wedge v_5\, ,\\ 
W \ni v_4 \wedge v_5 \wedge \omega = v_0 \wedge v_1 \wedge v_4 \wedge v_5  
+ v_2 \wedge v_3 \wedge v_4 \wedge v_5\, , 
\end{gather*}
hence $v_0 \wedge v_1 \wedge v_2 \wedge v_3$, $v_0 \wedge v_1 \wedge v_4 \wedge 
v_5$ and $v_2 \wedge v_3 \wedge v_4 \wedge v_5$ belong to $W$ (one uses the 
fact that $\text{char}\, k \neq 2$). 
\end{proof} 

\begin{cor}\label{C:horrocks} 
Consider a morphism $\phi \colon \Omega_\pv^2(2) \ra \sco_\pv$ defined by 
contraction with an $\omega \in \bigwedge^2V$, where $V := k^6$. 

\emph{(a)} $\phi$ is an epimorphism if and only if there exists a $k$-basis 
$v_0 , \ldots , v_5$ of $V$ such that either $\omega = v_0 \wedge v_1 + v_2 
\wedge v_3$ or $\omega = v_0 \wedge v_1 + v_2 \wedge v_3 + v_4 \wedge v_5$. 

\emph{(b)} If $\phi$ is an epimorphism then ${\fam0 Ker}\, \phi(1)$ is 
globally generated if and only if there exists a $k$-basis $v_0, \ldots , v_5$ 
of $V$ such that $\omega = v_0 \wedge v_1 + v_2 \wedge v_3 + v_4 \wedge v_5$.  
\end{cor}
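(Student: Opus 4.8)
The plan is to reduce everything to the normal form of a $2$-vector and to invoke Lemma~\ref{L:horrocks}, following the template of the proof of Corollary~\ref{C:sasakura}. For part~(a) I would argue fibrewise. By Definition~\ref{D:contraction}, $\phi$ is induced by $\ast \llcorner \, \omega \colon \sco_\pv \otimes \bigwedge^2V^\vee \to \sco_\pv$, and the fibre of $\Omega_\pv^2(2)$ at a point $[v] \in \p(V)$ is the image of $\ast \llcorner \, v \colon \bigwedge^3V^\vee \to \bigwedge^2V^\vee$. Hence $\phi$ is surjective at $[v]$ if and only if there is an $\alpha \in \bigwedge^3V^\vee$ with $(\alpha \llcorner \, v)\llcorner \, \omega \neq 0$; by the identity $(\alpha \llcorner \, v)\llcorner \, \omega = \alpha \llcorner \,(v \wedge \omega)$ from Definition~\ref{D:contraction} this means $\langle \alpha , v \wedge \omega \rangle \neq 0$ for some $\alpha$, i.e. $v \wedge \omega \neq 0$ in $\bigwedge^3V$. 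Thus $\phi$ is an epimorphism if and only if $v \wedge \omega \neq 0$ for every $v \in V \setminus \{0\}$. To translate this into normal form, I would note that for $v \neq 0$ one has $v \wedge \omega = 0$ exactly when $\omega = v \wedge u$ for some $u \in V$ (extend $v$ to a basis and read off the coefficients of $\omega$), so $\phi$ fails to be an epimorphism precisely when $\omega$ is decomposable or zero, i.e. $\omega \wedge \omega = 0$. Since $\dim_k V = 6$, the condition $\omega \wedge \omega \neq 0$ says $\mathrm{rk}\, \omega \geq 4$, which by the classification of $2$-vectors is equivalent to the existence of a basis with $\omega = v_0 \wedge v_1 + v_2 \wedge v_3$ or $\omega = v_0 \wedge v_1 + v_2 \wedge v_3 + v_4 \wedge v_5$. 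This is part~(a).

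For part~(b), assume $\phi$ is an epimorphism and put $K := \Ker \phi$. I would apply the Snake Lemma to the diagram whose vertical arrows are the evaluation morphisms of the three terms of
\[
0 \lra K(1) \lra \Omega_\pv^2(3) \xra{\phi(1)} \sco_\pv(1) \lra 0\, ,
\]
exactly as in the proof of Corollary~\ref{C:sasakura}. Here $\Ker \mathrm{ev}_{\Omega_\pv^2(3)} = \Omega_\pv^3(3)$ and $\Ker \mathrm{ev}_{\sco_\pv(1)} = \Omega_\pv^1(1)$; moreover the hypothesis that $\phi$ is an epimorphism guarantees, by part~(a), that $\ast \llcorner \, \omega \colon \bigwedge^3V^\vee \to V^\vee$ is surjective, hence that $\tH^0(\phi(1))$ is surjective. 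The Snake Lemma then yields that $K(1)$ is globally generated if and only if the induced morphism $\Omega_\pv^3(3) \to \Omega_\pv^1(1)$ is an epimorphism. By Definition~\ref{D:contraction} (case $p = 2$, $q = 1$) this induced morphism is contraction with $\omega$, so Lemma~\ref{L:horrocks} shows that it is an epimorphism if and only if there is a basis with $\omega = v_0 \wedge v_1 + v_2 \wedge v_3 + v_4 \wedge v_5$, which is part~(b).

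The main obstacle, as in Corollary~\ref{C:sasakura}, is the Snake-Lemma bookkeeping in part~(b): one must check with care that the map induced on the kernels of the evaluation morphisms is exactly the contraction morphism $\Omega_\pv^3(3) \to \Omega_\pv^1(1)$ appearing in Lemma~\ref{L:horrocks}, with the correct twists (and trivial sign, since $p = 2$). Once this identification is made, both parts reduce to the linear algebra of $2$-vectors in $\bigwedge^2 V$ and to Lemma~\ref{L:horrocks}.
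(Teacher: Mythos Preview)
Your proof is correct. Part~(b) is exactly the paper's argument: apply the Snake Lemma to the evaluation diagram of $0 \to K(1) \to \Omega_\pv^2(3) \to \sco_\pv(1) \to 0$ and invoke Lemma~\ref{L:horrocks}, just as in the proof of Corollary~\ref{C:sasakura}. For part~(a) you take a slightly different route: the paper uses the criterion ``$\phi$ is an epimorphism if and only if $\tH^0(\phi(1))$ is surjective'' and then checks the linear map $\ast \wedge \omega \colon \bigwedge^3V \to \bigwedge^5V$; you instead work fibrewise and obtain the equivalent condition $v \wedge \omega \neq 0$ for all $v \neq 0$. These two conditions are dual (the transpose of $\ast \wedge \omega \colon \bigwedge^3V \to \bigwedge^5V$ under the pairings $\bigwedge^iV \times \bigwedge^{6-i}V \to \bigwedge^6V$ is $v \mapsto v \wedge \omega \colon V \to \bigwedge^3V$), so your ``by part~(a)'' justification of the surjectivity of $\tH^0(\phi(1))$ in part~(b) is legitimate, though making this duality explicit would strengthen the exposition. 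Your fibrewise argument has the advantage of handling both directions of~(a) uniformly, whereas the paper only spells out the case $\omega$ decomposable and leaves the rank~$4$ and rank~$6$ verifications implicit.
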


\begin{proof} 
(a) $\phi$ is an epimorphism if and only if $\tH^0(\phi(1))$ is surjective, 
i.e., if and only if the contraction mapping $\ast \llcorner \, \omega \colon 
\bigwedge^3V^\vee \ra V^\vee$ is surjective. On the other hand, this mapping 
can be identified with $\ast \wedge \omega \colon \bigwedge^3V \ra 
\bigwedge^5V$. If there is a basis $v_0 , \ldots , v_5$ of $V$ such that 
$\omega = v_0 \wedge v_1$ then $v_1 \wedge \ldots \wedge v_5$ does not belong 
to $\bigwedge^3V \wedge \omega$. 

(b) One uses the same kind of argument as in the proof of 
Cor.~\ref{C:sasakura}. 
\end{proof} 

\begin{prop}\label{P:c1=5onp5} 
Let $E$ be a globally generated vector bundle on $\p^n$, $n \geq 5$, with  
$c_1 = 5$, $c_2 \leq 12$, such that ${\fam0 H}^i(E^\vee) = 0$, 
$i = 0,\, 1$, and ${\fam0 H}^0(E_\Pi(-2)) = 0$, for every $3$-plane $\Pi 
\subset \p^n$. Then one of the following holds$\, :$ 
\begin{enumerate} 
\item[(i)] $c_2 = 10$ and $E \simeq 5\sco_{\p^n}(1)$$\, ;$ 
\item[(ii)] $c_2 = 11$ and $E \simeq 4\sco_{\p^n}(1) \oplus 
{\fam0 T}_{\p^n}(-1)$$\, ;$ 
\item[(iii)] $c_2 = 12$ and $E \simeq 3\sco_{\p^n}(1) \oplus 
2{\fam0 T}_{\p^n}(-1)$$\, ;$ 
\item[(iv)] $n = 5$, $c_2 = 11$ and $E \simeq \sco_\pv(1) \oplus 
\Omega_\pv(2)$$\, ;$ 
\item[(v)] $n = 6$, $c_2 = 11$ and $E \simeq \Omega_\pvi(2)$$\, ;$
\item[(vi)] $n = 5$, $c_2 = 12$ and $E \simeq {\fam0 T}_\pv(-1) \oplus 
\Omega_\pv(2)$$\, ;$ 
\item[(vii)] $n = 5$, $c_2 = 12$ and one has an exact sequence$\, :$ 
\[
0 \lra \Omega_\pv^4(4) \lra \Omega_\pv^2(2) \lra E(-1) \lra 0\, .
\]
\end{enumerate}
\end{prop}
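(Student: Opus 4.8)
The plan is to argue by induction on $n$, reducing everything to the classification on $\piv$ obtained in Section~\ref{S:c1=5onp4}. The base case is $n=5$: I fix a hyperplane $H\cong\piv\subset\pv$ and pass to $F:=P(P(E_H))$ on $H$, which by Remark~\ref{R:f} is globally generated, satisfies $\tH^i(F^\vee)=0$ for $i=0,1$, and has the same Chern classes as $E$; the inductive step $\p^{n-1}\rightsquigarrow\p^n$ is identical, with Section~\ref{S:c1=5onp4} replaced by the inductive hypothesis. Since we are assuming $\tH^0(E_\Pi(-2))=0$ for every $3$-plane $\Pi$ (the complementary situation being exactly the one handled by Theorem~\ref{T:h0e(-2)neq0}), this vanishing descends to $F$, so $F$ falls under the hypotheses of the relevant $\piv$-results. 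Because $c_2(E)=c_2(F)\le 12$ and $c_2\ge 10$ (by Lemma~\ref{L:c3geqc2} in the base case, and by the inductive hypothesis otherwise), only $c_2\in\{10,11,12\}$ occur, and $F$ is one of the finitely many bundles listed in Prop.~\ref{P:c3=c2onp4}, Prop.~\ref{P:c1=5c2=11onp4} and Prop.~\ref{P:c1=5c2=12onp4}.

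First I would dispose of the split possibilities. Whenever $F\simeq A\oplus P(B)$ with $A,B$ sums of line bundles of positive degree — this covers $5\sco_\piv(1)$ ($c_2=10$), $4\sco_\piv(1)\oplus \text{T}_\piv(-1)$ ($c_2=11$) and $3\sco_\piv(1)\oplus 2\text{T}_\piv(-1)$ ($c_2=12$), using $\text{T}(-1)=P(\sco(1))$ — Lemma~\ref{L:a+p(b)} lifts the decomposition verbatim and yields items (i), (ii), (iii), valid for all $n\ge 5$.

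The bundles carrying an $\Omega(2)$ factor require restricting instead to a $3$-plane $\Pi\subset\p^n$. The known shape of $F$ on $H$ restricts (iterating the conormal sequence) to give $P(P(E_\Pi))\simeq A\oplus P(B)\oplus\Omega_\Pi(2)$ on $\Pi\cong\piii$, so Lemma~\ref{L:a+p(b)+omega(2)} applies. Its dichotomy, with the bookkeeping $A\simeq A_1\oplus(n-3)\sco_\Pi(1)$ (resp. $B\simeq B_1\oplus(n-3)\sco_\Pi(1)$), is precisely what forces the dimension restrictions: for $2\sco_\piv(1)\oplus\Omega_\piv(2)$ ($c_2=11$) one gets $A=3\sco_\Pi(1)$, $B=0$, hence $A_1=(6-n)\sco_\Pi(1)$, which is effective only for $n\le 6$, producing $\sco_\pv(1)\oplus\Omega_\pv(2)$ (item (iv), $n=5$) and $\Omega_\pvi(2)$ (item (v), $n=6$) and giving a contradiction — i.e. non-extension — for $n\ge 7$. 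Likewise $\sco_\piv(1)\oplus\text{T}_\piv(-1)\oplus\Omega_\piv(2)$ ($c_2=12$) yields item (vi) only for $n=5$, while $2\sco_\piv(1)\oplus\Omega_\piv^2(3)$ falls under case~(ii) of the lemma, whose lift $\Omega_{\p^n}^{n-2}(n-1)$ violates $c_1=5$, $c_2\le 12$ for every $n\ge 5$ and so does not extend. All of this is consistent with the elementary identity $c_1(\Omega_{\p^n}(2))=n-1$, which pins down the admissible dimension.

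The remaining, genuinely new, possibility is item (vii), which on $H=\piv$ restricts to the exceptional $c_2=12$ bundles of Prop.~\ref{P:c1=5c2=12onp4} (items (iv), (v), the Abo--Decker--Sasakura bundle and its $P$-dual). Here the lifting lemmas do not apply; instead I would reconstruct the Horrocks monad on $\pv$ from the Beilinson cohomology of $E(-1)$, obtaining $0\to\Omega_\pv^4(4)\to\Omega_\pv^2(2)\to E(-1)\to 0$, the first map being contraction with some $\omega\in\bigwedge^2V$, $V=k^6$. The crucial point — and what I expect to be the main obstacle — is to decide exactly when the resulting bundle is globally generated, and to show this happens only for $n=5$. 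This is the content of Lemma~\ref{L:horrocks} and Cor.~\ref{C:horrocks}: dualizing identifies the relevant surjectivity and global-generation conditions with the map $\Omega_\pv^3(4)\to\Omega_\pv^1(2)$ (resp. $\Omega_\pv^2(2)\to\sco_\pv$), which behaves correctly if and only if $\omega$ has the full-rank normal form $v_0\wedge v_1+v_2\wedge v_3+v_4\wedge v_5$. Such a normal form exists precisely when $\dim V=6$, i.e. exactly on $\pv$, forcing $n=5$ and identifying $E$ with the bundle whose first twist is the middle term of the Horrocks monad; for $n\ge 6$ no such $\omega$ is available and the exceptional bundle does not extend. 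I expect the most delicate bookkeeping to be the verification that the inherited vanishing $\tH^0(F_\Pi(-2))=0$ and the Chern-class data feed correctly into the $\piv$-results, together with the exclusion of the $\Omega^2(3)$ case and of all exceptional bundles in the higher-dimensional range.
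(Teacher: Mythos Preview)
Your treatment of the first five $(c_2, c_3)$ pairs is essentially the paper's argument, though the paper restricts directly to a $3$-plane $\Pi$ rather than stopping at $H \cong \piv$; since Lemma~\ref{L:a+p(b)+omega(2)} is stated for $3$-planes, and since $P(P(E_\Pi))$ is determined on $\piii$ by $(c_2,c_3)$ alone via the classification in Appendix~\ref{A:c1=5onp3}, the two $(12,18)$ bundles on $\piv$ collapse to the single $F_\Pi = 2\sco_\Pi(1) \oplus \text{T}_\Pi(-1) \oplus \Omega_\Pi(2)$ and need not be treated separately. (Your remark that $\Omega_{\p^n}^{n-2}(n-1)$ ``violates $c_1 = 5$'' is not the operative obstruction: case~(ii) of the lemma already fails because $B = \sco_\Pi(1)$ cannot contain $(n-3)\sco_\Pi(1)$ as a summand for $n \geq 5$.)

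The genuine gap is in the $(c_2, c_3) = (12, 16)$ case, and there are two separate problems.

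For $n = 5$, you assert that the Beilinson monad of $E(-1)$ is $0 \to \Omega_\pv^4(4) \to \Omega_\pv^2(2) \to E(-1) \to 0$, but this requires knowing $\h^i(E(l))$ for all $i$ and $-5 \leq l \leq -1$, which you have not computed. The paper obtains these by a careful descent: from the description of $F_{[h]}$ on $H \cong \piv$ one reads off $\h^i(E_H(l))$, and then repeated use of the restriction sequence together with the Bilinear Map Lemma pins down $\h^i(E(l))$. One finds in particular $\h^0(E(-1)) = \h^1(E(-1)) \leq 1$, so there is a \emph{second} possible monad shape $0 \to \Omega^4(4) \to \Omega^2(2) \oplus \sco \to \sco \to 0$ which must be excluded. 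This is where Cor.~\ref{C:horrocks}(b) actually enters: global generation forces the epimorphism $\Omega^2(2) \to \sco$ to come from an $\omega$ of full rank, whence $\ast \wedge \omega \colon \bigwedge^2V \to \bigwedge^4V$ is bijective and the component $\Omega^4(4) \to \Omega^2(2)$ of the monad is forced to vanish, a contradiction. You have pointed at the right lemma but missed both the cohomology computation and the two-monad dichotomy.

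For $n \geq 6$, your claim that ``no such $\omega$ is available'' is incorrect: the normal form $v_0 \wedge v_1 + v_2 \wedge v_3 + v_4 \wedge v_5$ exists in $\bigwedge^2V$ for every $\dim V \geq 6$, so non-extension cannot be read off from this. The paper's argument is purely cohomological and quite different: for $n = 6$, restriction to $H \cong \pv$ gives (by the case $n = 5$ just established) the Horrocks-type bundle from item~(vii), whence $\tH^1_\ast(E_H) = 0$, forcing $\tH^i_\ast(E) = 0$ for $i = 1, 2$; a short computation with the restriction sequence then yields $\tH^3(E(-4)) = 0$, contradicting the injection $\tH^2(E_H(-3)) \hookrightarrow \tH^3(E(-4))$ whose source is one-dimensional. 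Your inductive/normal-form heuristic does not see this obstruction.
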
 

\begin{proof}
According to Lemma~\ref{L:c3geqc2}, Prop.~\ref{P:c3=c2onp4}, 
Prop.~\ref{P:c1=5c2=11onp4} and Prop.~\ref{P:c1=5c2=12onp4} the pair of 
Chern classes $(c_2 , c_3)$ of $E$ must take one of the values $(10 , 10)$,  
$(11 , 15)$, $(11 , 13)$, $(12 , 20)$, $(12 , 18)$, $(12 , 16)$. 
Let $\Pi \subset \p^n$ be a 3-plane and let $F$ be the vector bundle 
$P(P(E_\Pi))$ on $\Pi$ (see Remark~\ref{R:f}). Taking into account the 
precise description of the globally generated vector bundles on $\piv$ from 
the above mentioned results, one sees that, for the first five possible pairs 
of Chern classes, $F$ is isomorphic to one of the bundles$\, :$ $5\sco_\Pi(1)$, 
$4\sco_\Pi(1) \oplus \text{T}_\Pi(-1)$, $3\sco_\Pi(1) \oplus \Omega_\Pi(2)$, 
$3\sco_\Pi(1) \oplus 2\text{T}_\Pi(-1)$, $2\sco_\Pi(1) \oplus \text{T}_\Pi(-1) 
\oplus \Omega_\Pi(2)$. It follows, from Lemma~\ref{L:a+p(b)} and 
Lemma~\ref{L:a+p(b)+omega(2)}, that, in the first five cases, $E$ is as in one 
of the items (i)--(vi) from the statement. 

Assume, from now on, that $(c_2 , c_3) = (12 , 16)$. If $H \subset \p^n$ is 
an arbitrary hyperplane, of equation $h = 0$, then, as we noticed in 
Remark~\ref{R:f}, the (globally generated) vector bundle $F_{[h]} := P(P(E_H))$ 
on $H$ satisfies $\tH^i(F_{[h]}^\vee) = 0$, $i = 0,\, 1$, and $E_H \simeq 
t\sco_H \oplus Q_{[h]}$ where $t = \h^0(E_H^\vee)$ and $Q_{[h]}$ is a quotient of 
$F_{[h]}$ by a trivial subbundle $s\sco_H$, where $s = \h^1(E_H^\vee)$. 

\vskip2mm 

\noindent 
{\bf Case 1.}\quad $n = 5$ (\emph{and, of course,} $c_2 = 12$, $c_3 = 16$). 

\vskip2mm 

\noindent
In this case, by Prop.~\ref{P:c1=5c2=12onp4}, $F_{[h]}(-1)$ is the cohomology 
sheaf of a monad of the form$\, :$ 
\[
0 \lra \Omega_H^3(3) \lra \Omega_H^2(2) \oplus \Omega_H^1(1) \oplus \sco_H 
\lra \sco_H \lra 0\, ,  
\]   
in which the component $\sco_H \ra \sco_H$ of the differential from the right 
can be non-zero. One deduces the following cohomological information about 
$E_H$$\, :$ 
\begin{enumerate} 
\item[(1)] $\tH^1(E_H(l)) = 0$ for $l \leq -3$ and $l \geq 0$, $\h^1(E_H(-2)) 
= 1$, $\h^1(E_H(-1)) = \h^0(E_H(-1)) \leq 1$$\, ;$ 
\item[(2)] $\tH^2(E_H(l)) = 0$ for $l \leq -5$ and $l \geq -2$, $\h^2(E_H(-4)) 
= \h^2(E_H(-3)) = 1$$\, ;$ 
\item[(3)] $\tH^3(E_H(l)) = 0$ for $l \geq -4$, $\h^3(E_H(-5)) = \h^1(E_H^\vee) 
= s$. 
\end{enumerate} 
Moreover, since $F_{[h]}$ has rank 6 and $c_4(F_{[h]}) = 8 \neq 0$, one must 
have $s \leq 2$. 

Now, since $\tH^5(E(-6)) \simeq \tH^0(E^\vee)^\vee = 0$ it follows that 
$\tH^5(E(l)) = 0$ for $l \geq -6$. 

One gets, from (3), that $\tH^4(E(l)) = 0$ for $l \geq -5$. Moreover, 
$\tH^4(E(-6)) \simeq \tH^1(E^\vee)^\vee = 0$. 

Using the exact sequence$\, :$ 
\[
\tH^2(E_H(-4)) \lra \tH^3(E(-5)) \overset{h}{\lra} \tH^3(E(-4)) \lra 
\tH^3(E_H(-4)) = 0 
\]  
and the Bilinear Map Lemma \cite[Lemma~5.1]{ha} (recall that the hyperplane 
$H$ is arbitrary) one deduces that $\tH^3(E(-4)) = 0$. Together with (3) 
this implies that $\tH^3(E(l)) = 0$ for $l \geq -4$. Moreover, using the 
exact sequence$\, :$ 
\[
0 = \tH^2(E_H(-5)) \lra \tH^3(E(-6)) \overset{h}{\lra} \tH^3(E(-5)) 
\lra \tH^3(E_H(-5)) 
\]  
and the Bilinear Map Lemma one gets that $\tH^3(E(-6)) = 0$. 

It follows, from (2), that $\tH^2(E(l)) = 0$ for $l \leq -5$. Using the 
exact sequence$\, :$ 
\[
\tH^1(E_H(-2)) \lra \tH^2(E(-3)) \overset{h}{\lra} \tH^2(E(-2)) \lra 
\tH^2(E_H(-2)) = 0 
\] 
and the Bilinear Map Lemma one deduces that $\tH^2(E(-2)) = 0$. Together 
with (2) this implies that $\tH^2(E(l)) = 0$ for $l \geq -2$. 

One gets, from (1), that $\tH^1(E(l)) = 0$, for $l \leq -3$. Using the exact 
sequence$\, :$ 
\[
0 = \tH^1(E(-3)) \ra \tH^2(E(-4)) \overset{h}{\ra} \tH^2(E(-3)) \ra 
\tH^2(E_H(-3)) \ra \tH^3(E(-4)) = 0 
\]
and the Bilinear Map Lemma one deduces that $\tH^2(E(-4)) = 0$ and 
$\tH^2(E(-3)) \izo \tH^2(E_H(-3))$ hence $\h^2(E(-3)) = 1$. Since 
$\tH^i(E(-4)) = 0$, $i = 2,\, 3$, it follows that $\tH^2(E_H(-4)) \izo 
\tH^3(E(-5))$ hence $\h^3(E(-5)) = 1$. 

Finally, using the exact sequence$\, :$ 
\[
0 = \tH^1(E(-3)) \ra \tH^1(E(-2)) \ra \tH^1(E_H(-2)) \ra \tH^2(E(-3)) \ra 
\tH^2(E(-2)) = 0 
\]
and recalling that $\h^1(E_H(-2)) = 1 = \h^2(E(-3))$, one obtains that 
$\tH^1(E(-2)) = 0$. Since $\tH^i(E(-2)) = 0$, $i = 0,\, 1,\, 2$, it follows 
that $\tH^i(E(-1)) \izo \tH^i(E_H(-1))$, $i = 0,\, 1$. 

We have gathered enough cohomological information to conclude that the 
Beilinson monad of $E(-1)$ has one of the following two forms$\, :$ 
\begin{gather*} 
0 \lra \Omega_\pv^4(4) \lra \Omega_\pv^2(2) \lra 0 \lra 0\, ,\\ 
0 \lra \Omega_\pv^4(4) \overset{\alpha}{\lra} \Omega_\pv^2(2) \oplus \sco_\pv 
\overset{\beta}{\lra} \sco_\pv \lra 0\, . 
\end{gather*}
If the monad of $E(-1)$ has the first form then $E$ is as in item (vii) from 
the statement. We assert that $E(-1)$ \emph{cannot have a monad of the second 
form}. 
\emph{Indeed}, assume, by contradiction, that it does. Since, by the basic 
properties of the Beilinson monad, the component $\beta_2 \colon \sco_\pv \ra 
\sco_\pv$ of $\beta$ is 0, the component $\beta_1 \colon  \Omega_\pv^2(2) \ra 
\sco_\pv$ must be an epimorphism. $E$ globally generated implies that 
$\Ker \beta(1)$ is globally generated hence $\Ker \beta_1(1)$ is globally 
generated. Cor.~\ref{C:horrocks}(b) implies that there exists a $k$-basis 
$v_0 , \ldots , v_5$ of $V := k^6$ such that $\beta_1$ is defined by 
contraction with $\omega := v_0 \wedge v_1 + v_2 \wedge v_3 + v_4 \wedge v_5 
\in \bigwedge^2V$. The component $\alpha_1 \colon \Omega_\pv^4(4) \ra 
\Omega_\pv^2(2)$ is defined by contraction with an element $\eta$ of 
$\bigwedge^2V$. The condition $\beta_1 \circ \alpha_1 = 0$ is equivalent to 
$\eta \wedge \omega = 0$ (in $\bigwedge^4V$). But, as we saw in the final 
part of the proof of Lemma~\ref{L:horrocks}, $\ast \wedge \omega \colon 
\bigwedge^2V \ra \bigwedge^4V$ is bijective hence $\eta = 0$. Since there is 
no locally split monomorphism $\Omega_\pv^4(4) \ra \sco_\pv$ we have got the 
desired \emph{contradiction}. 

\vskip2mm 

\noindent 
{\bf Case 2.}\quad $n \geq 6$ (\emph{and, of course,} $c_2 = 12$, $c_3 = 16$). 

\vskip2mm 

\noindent  
We will show that this case \emph{cannot occur}. Assume, by contradiction, 
that it does. We can suppose, of course, that $n = 6$. Using the notation 
from the beginning of the proof, Case 1 implies that one has an exact 
sequence$\, :$ 
\[
0 \lra \Omega_H^4(4) \lra \Omega_H^2(2) \lra F_{[h]}(-1) \lra 0\, ,
\]    
for every hyperplane $H \subset \pvi$. It follows that $\tH^1_\ast(E_H) = 0$ 
and this implies that $\tH^i_\ast(E) = 0$, $i = 1,\, 2$. Using the exact 
sequence$\, :$ 
\[
\tH^2(E_H(-4)) \lra \tH^3(E(-5)) \overset{h}{\lra} \tH^3(E(-4)) \lra 
\tH^3(E_H(-4)) 
\] 
and the fact that $\tH^i(E_H(-4)) = 0$, $i = 2,\, 3$, for every hyperplane 
$H \subset \pvi$, one gets that $\tH^3(E(-5)) = 0$ and $\tH^3(E(-4)) = 0$. 
But this \emph{contradicts} the fact that $\tH^2(E_H(-3))$, which is 
1-dimensional, injects into $\tH^3(E(-4))$ (because $\tH^2(E(-3)) = 0$). 
\end{proof}

\appendix 
\section{Overview of the case $c_1 = 5$ on 
$\piii$}\label{A:c1=5onp3} 

We explain in this appendix, for ease of reference, the method used in 
\cite{acm3} to classify globally generated vector bundles with $c_1 = 5$ on 
$\piii$. Most of the results are of a technical nature but the way in which 
the method works effectively can be seen in the proof of 
Lemma~\ref{L:(1,0,0,-1)} below.  

Firstly, let us recall the following result, which is a particular case of 
\cite[Prop.~3.5]{acm2}, and for which a short self-contained proof can be 
found in \cite[Appendix~A]{acm3}. 

\begin{prop}\label{P:h0f(-2)neq0} 
Let $F$ be a globally generated vector bundle on $\piii$, with Chern classes 
$c_1 = 5$, $c_2$, $c_3$, and such that ${\fam0 H}^i(F^\vee) = 0$, $i = 0,\, 1$. 
If ${\fam0 H}^0(F(-3)) = 0$ and ${\fam0 H}^0(F(-2)) \neq 0$ then either 
$\sco_\piii(2)$ is a direct summand of $F$ or $F \simeq M(3)$, for some stable 
rank $2$ vector bundle $M$ with $c_1(M) = -1$, $c_2(M) = 2$ \emph{(}in which 
case $c_2(F) = 8$\emph{)}. 
\end{prop}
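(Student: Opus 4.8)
The plan is to manufacture the subbundle $\sco_\piii(2)$ from a section and then control the quotient. First I would pick a nonzero $s \in \tH^0(F(-2))$. Since $\tH^0(F(-3)) = 0$, this $s$ cannot vanish along a divisor: if it vanished on a divisor of degree $d \geq 1$, dividing by that divisor's equation would produce a nonzero element of $\tH^0(F(-2-d)) \subseteq \tH^0(F(-3)) = 0$. Hence the image of $s \colon \sco_\piii(2) \ra F$ is a \emph{saturated} line subbundle (its saturation, being reflexive of rank $1$, is some $\sco_\piii(a)$ with $a \geq 2$, and $a > 2$ would again force vanishing on a divisor), so the cokernel is torsion-free and I obtain
\[
0 \lra \sco_\piii(2) \overset{s}{\lra} F \lra Q \lra 0\, ,
\]
with $Q$ torsion-free, globally generated (as a quotient of $F$), of rank $\text{rk}\, F - 1$ and $c_1(Q) = 3$. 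Twisting by $-3$ and using $\tH^0(F(-3)) = 0$ together with $\tH^0(\sco_\piii(-1)) = \tH^1(\sco_\piii(-1)) = 0$ gives $\tH^0(Q(-3)) = 0$.

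Next I would handle the case where $s$ is nowhere vanishing, i.e. $Q$ is a vector bundle. First, $Q$ has no summand $\sco_\piii$: since $\text{Ext}^1(\sco_\piii, \sco_\piii(2)) = \tH^1(\sco_\piii(2)) = 0$, such a summand would split an $\sco_\piii$ off $F$, contradicting $\tH^0(F^\vee) = 0$. By the Sierra--Ugaglia classification of globally generated bundles with $c_1 = 3$ on $\piii$ (recalled in \cite[Thm.~0.1]{acm1}), $Q$ is then a direct sum of bundles $\sco_\piii(b)$, $P(\sco_\piii(b))$ and $\Omega_\piii(2)$. For each such summand a direct computation (Bott's formulas, resp. surjectivity of the relevant multiplication maps) yields $\text{Ext}^1(Q, \sco_\piii(2)) = \tH^1(Q^\vee(2)) = 0$, so the displayed sequence splits and $\sco_\piii(2)$ is a direct summand of $F$ (note $\tH^0(F(-3)) = 0$ also forbids an $\sco_\piii(3)$ summand, consistently).

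Finally I would treat the case where $s$ vanishes on a nonempty locus $Z$ of codimension $2$, so that $Q$ fails to be locally free along $Z$. The aim is to show this forces $\text{rk}\, F = 2$, in which case $Q \simeq \sci_Z(3)$ with $Z$ a curve, and $F \simeq M(3)$ for $M := F(-3)$ a rank $2$ bundle with $c_1(M) = -1$; here $\tH^0(F(-3)) = \tH^0(M) = 0$ is exactly the stability of $M$ (semistability equals stability since $c_1(M)$ is odd). Global generation of $F$ forces $\sci_Z(3)$ to be globally generated, and the subcanonical structure $\omega_Z \simeq \sco_Z(c_1(F(-2)) - 4)$ together with this generation should pin down $\deg Z$, hence $c_2(M) = 2$ and $c_2(F) = 8$. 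I expect the two genuine obstacles to lie here: first, ruling out $\text{rk}\, F \geq 3$ with $Z \neq \emptyset$, which requires passing to the reflexive hull $Q^{\vee\vee}$ and arguing that the section can still be used to split off $\sco_\piii(2)$ (reducing to the locally free analysis); and second, the precise identification of the exceptional stable bundle and the verification that $c_2 = 8$ is the only value compatible with global generation of $\sci_Z(3)$. By comparison, the $\text{Ext}^1$-vanishing in the locally free case is routine.
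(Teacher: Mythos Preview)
Your strategy differs substantially from the paper's. The paper drops $r-1$ general sections of $F$ to get $0 \to (r-1)\sco_\piii \to F \to \sci_Y(5) \to 0$ with $Y$ a smooth curve, so that $\tH^0(F(-2)) \neq 0$ becomes $\tH^0(\sci_Y(3)) \neq 0$; it then links $Y$ via a complete intersection of type $(3,5)$ to a residual curve $Y'$ of degree $15 - c_2$. For $c_2 \geq 13$ this forces $Y'$ to be a $(1,2)$ complete intersection, and an explicit resolution of $\sci_Y(5)$ yields the $\sco_\piii(2)$ summand; for $c_2 \leq 12$ the paper simply cites \cite[Prop.~A.1]{acm3}, a nine-page argument it declines to reproduce. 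Your section-plus-quotient approach is more direct and, when $Q$ is locally free, pleasantly uniform in $c_2$. The only omission there is that Sierra--Ugaglia also requires $\tH^1(Q^\vee) = 0$, but this is immediate: $Q$ locally free gives a dual sequence $0 \to Q^\vee \to F^\vee \to \sco_\piii(-2) \to 0$, and $\tH^0(\sco_\piii(-2)) = \tH^1(F^\vee) = 0$.

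The obstacles you flag are real, however, and neither of your proposed fixes is an argument. For $\text{rk}\, F \geq 3$ with $Z \neq \emptyset$, passing to $Q^{\vee\vee}$ does not obviously help: even when $Q^{\vee\vee}$ happens to be globally generated, it need not be locally free, and in any case the extension you must split is $0 \to \sco_\piii(2) \to F \to Q \to 0$, not one with $Q^{\vee\vee}$ on the right. Nor can you simply pick a better section, since $\tH^0(F(-2))$ may well be one-dimensional. This is exactly the case that absorbs nine pages in \cite{acm3}; the liaison method bypasses it by never fixing a single section of $F(-2)$, working instead with the curve produced by general sections of $F$ itself. Likewise, in rank~$2$, global generation of $\sci_Z(3)$ together with $\omega_Z \simeq \sco_Z(-3)$ does not by itself bound $\deg Z$ from above; one needs further input, for instance the vanishing $\tH^1(M(-2)) = 0$ (coming from $\tH^1(F^\vee) = 0$) combined with the structure results of \cite{hs}, \cite{ma}.
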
 

\begin{proof} 
This result is proven in \cite[Prop.~A.1]{acm3} under the hypothesis $c_2 
\leq 12$. The case $c_2 \geq 13$ is, however, easy. Indeed, the dependency 
locus of $r - 1$ general global sections of $F$ is a nonsingular (but not, 
necessarily, connected) curve $Y$, whence one gets an exact sequence$\, :$ 
\[
0 \lra (r-1)\sco_\piii \lra F \lra \sci_Y(5) \lra 0\, . 
\]
The degree of $Y$ is $c_2$. Since $\sci_Y(5)$ is globally generated and 
$\tH^0(\sci_Y(3)) \neq 0$ it follows that $Y$ is contained in a complete 
intersection of type $(3 , 5)$. One deduces that either $Y$ is a complete 
intersection of type $(3 , 5)$ or $c_2 \leq 14$ and $Y$ is directly linked by 
a complete intersection of type $(3 , 5)$ to a (locally Cohen-Macaulay) curve 
$Y^\prime$ of degree $15 - c_2$. 

In the former case, one gets an exact sequence$\, :$ 
\[
0 \lra \sco_\piii(-3) \lra \sco_\piii(2) \oplus r\sco_\piii \lra F \lra 0\, .  
\] 
Since $\tH^i(F^\vee) = 0$, $i = 0,\, 1$, it follows that, by dualizing the 
exact sequence, the map $\tH^0(r\sco_\piii) \ra \tH^0(\sco_\piii(3))$ is 
bijective. One deduces, easily, that $\sco_\piii(2)$ is a direct summand of $F$. 

In the latter case, one gets, from the exact sequence of liaison (recalled in 
\cite[Remark~2.6]{acm1})$\, :$ 
\[
0 \lra \sco_\piii(-8) \lra \sco_\piii(-3) \oplus \sco_\piii(-5) \lra \sci_Y 
\lra \omega_{Y^\prime}(-4) \lra 0\, , 
\] 
that $\omega_{Y^\prime}(1)$ is globally generated. If $c_2 \geq 13$ then 
$\text{deg}\, Y^\prime \in \{1,\, 2\}$. The condition $\omega_{Y^\prime}(1)$ 
globally generated implies that $Y^\prime$ has degree 2 and it is a complete 
intersection of type $(1 , 2)$ or a double structure on a line $L \subset 
\piii$. Such a double structure is defined by an exact sequence $0 \ra 
\sci_{Y^\prime} \ra \sci_L \ra \sco_L(l) \ra 0$, for some $l \geq -1$. It is 
well known that one has $\omega_{Y^\prime} \simeq \sco_{Y^\prime}(-l-2)$ hence 
$\omega_{Y^\prime}(1)$ globally generated implies that $l = -1$, i.e., $Y^\prime$ 
is a complete intersection of type $(1 , 2)$ in this case, too. Using a result 
of Ferrand about resolutions under liaison (also recalled in 
\cite[Remark~2.6]{acm1}) one gets a resolution$\, :$ 
\[
0 \lra \sco_\piii(-2) \oplus \sco_\piii(-1) \lra 
2\sco_\piii \oplus \sco_\piii(2) \lra \sci_Y(5) \lra 0 
\] 
and one concludes as in the case where $Y$ is a complete intersection of 
type $(3 , 5)$. 
\end{proof} 

Now, using Prop.~\ref{P:h0f(-2)neq0}, \cite[Prop.~2.4]{acm1} and 
\cite[Prop.~2.10]{acm1} one sees that, in order to classify globally 
generated vector bundles $F$ on $\piii$ with $c_1 = 5$, one can assume that 
$\tH^0(F(-2)) = 0$. The next result provides some preliminary cohomological 
information about such a bundle. 

\begin{lemma}\label{L:h2f(-2)=0} 
Let $F$ be a globally generated vector bundle on $\piii$ of rank $r \geq 3$, 
with Chern classes $c_1 = 5$, $c_2 \leq 12$, $c_3$, and such that 
${\fam0 H}^i(F^\vee) = 0$, $i = 0,\, 1$. Then$\, :$ 

\emph{(a)} ${\fam0 H}^1(F(l)) = 0$ for $l \leq -5$.  

\emph{(b)} If, moreover, ${\fam0 H}^0(F(-2)) = 0$ then ${\fam0 H}^2(F(l)) = 
0$, for $l \geq -2$, and one has$\, :$ 
\[
{\fam0 h}^1(F(-2)) = \frac{1}{2}(5(c_2 - 8) - c_3)\, ,\  
{\fam0 h}^1(F(-1)) = \frac{1}{2}(7(c_2 - 10) - c_3) + {\fam0 h}^0(F(-1))\, , 
\]
and ${\fam0 h}^1(F) \leq \max\, ({\fam0 h}^1(F(-1)) - 3 , 0)$.    
\end{lemma}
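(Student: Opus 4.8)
The plan is to run everything through the two exact sequences attached to $F$. First, $r-1$ general global sections of $F$ vanish on a curve $Y\subset\piii$ of degree $c_2$, giving $0\to(r-1)\sco_\piii\to F\to\sci_Y(5)\to0$; second, global generation yields a surjection $\sco_\piii^{\,\h^0(F)}\to F$, hence an embedding $F^\vee\hookrightarrow\sco_\piii^{\,\h^0(F)}$, so that $\tH^0(F^\vee(j))=0$ for $j<0$ and, by Serre duality, $\tH^3(F(l))=\tH^0(F^\vee(-l-4))^\vee=0$ for $l\ge-3$. Since $\tH^1(\sco_\piii(l))=\tH^2(\sco_\piii(l))=0$ for every $l$, the first sequence gives $\tH^1(F(l))\cong\tH^1(\sci_Y(5+l))$ for all $l$, and, comparing with the structure sequence of $Y$, $\tH^2(F(l))\cong\tH^2(\sci_Y(5+l))\cong\tH^1(\sco_Y(5+l))$ for $l\ge-3$.

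For (a) I would reduce $\tH^1(F(l))=0$, $l\le-5$, to $\tH^1(\sci_Y(m))=0$ for $m\le0$. From $0\to\sci_Y(m)\to\sco_\piii(m)\to\sco_Y(m)\to0$ and $\tH^1(\sco_\piii(m))=0$ this group is $\Cok(\tH^0(\sco_\piii(m))\to\tH^0(\sco_Y(m)))$; for $m<0$ it vanishes because a general such $Y$ is reduced, of pure dimension $1$, with every component of positive degree, and for $m=0$ it vanishes precisely when $Y$ is connected. Establishing connectedness of the dependency locus is the one delicate point here; I expect it to follow from the global generation of $\sci_Y(5)$ together with $\deg Y=c_2\le12$.

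For the vanishing in (b) I would use that the extra hypothesis is exactly a statement about $Y$: from the first sequence $\tH^0(F(-2))\cong\tH^0(\sci_Y(3))$, so $\tH^0(F(-2))=0$ means $Y$ lies on no cubic surface. Now $\tH^2(F(l))\cong\tH^1(\sco_Y(5+l))$, and Serre duality on the (locally complete intersection) curve $Y$ identifies $\tH^1(\sco_Y(m))^\vee$ with $\tH^0(\omega_Y(-m))$. The point is that a curve of degree $c_2\le12$ lying on no cubic has arithmetic genus small enough (Castelnuovo's bound for curves not contained in low-degree surfaces) that $\deg\omega_Y<3\deg Y$ on each component, so $\omega_Y(-m)$ has no sections for $m\ge3$; this yields $\tH^2(F(l))=0$ for $l\ge-2$. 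I expect this genus estimate — which is where $c_2\le12$ enters in an essential, and rather tight, way — to be the main obstacle.

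Granting the two vanishings, (ii) and (iii) are Riemann--Roch computations: for $l\in\{-2,-1\}$ one has $\tH^2(F(l))=0$ by (i) and $\tH^3(F(l))=0$ by the embedding $F^\vee\hookrightarrow\sco_\piii^{\,\h^0(F)}$, together with $\tH^0(F(-2))=0$, so $\chi(F(-2))=-\h^1(F(-2))$ and $\chi(F(-1))=\h^0(F(-1))-\h^1(F(-1))$; substituting $c_1=5$ into Riemann--Roch produces the two displayed formulas. Finally, for (iv) I would dualize: by Serre duality $\h^1(F)=\h^2(F^\vee(-4))$ and $\h^1(F(-1))=\h^2(F^\vee(-3))$, and the multiplication $\tH^0(\sco_\piii(1))\otimes\tH^2(F^\vee(-4))\to\tH^2(F^\vee(-3))$ is the transpose of multiplication by linear forms $\tH^1(F(-1))\to\tH^1(F)$. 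Applying the Bilinear Map Lemma (with $\dim\tH^0(\sco_\piii(1))=4$, whence the constant $4-1=3$) gives $\h^1(F(-1))\ge\h^1(F)+3$ as soon as multiplication by every nonzero linear form $h$ is injective on $\tH^2(F^\vee(-4))$. Using (i) to get $\tH^2(F(-1))=0$, one checks that the cokernel of $\cdot h\colon\tH^1(F(-1))\to\tH^1(F)$ is $\tH^1(F_H)$ for the plane $H=\{h=0\}$, so the required non-degeneracy amounts to $\tH^1(F_H)=0$; verifying this (and otherwise falling into the alternative $\max(\,\cdot\,,0)=0$) is the crux of part (iv).
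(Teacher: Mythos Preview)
Your treatment of (a), of the two Riemann--Roch identities in (b), and of the final inequality is essentially the paper's: the paper cites \cite[Lemma~1.1]{acm3} for the connectedness of $Y$ and \cite[Prop.~3.6]{acm1} for $\tH^1(F_H)=0$ on every plane, and you correctly isolate these as the two external inputs. (For the last inequality there is no need to pass through Serre duality: once $\tH^2(F(-1))=0$, the cokernel of $h\colon\tH^1(F(-1))\to\tH^1(F)$ is $\tH^1(F_H)=0$, so every such map is surjective and the Bilinear Map Lemma gives the bound directly.)

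Where you diverge is the vanishing $\tH^2(F(-2))=0$. Your route via $\tH^2(F(-2))\simeq\tH^1(\sco_Y(3))\simeq\tH^0(\omega_Y(-3))^\vee$ and a genus bound for curves not on a cubic is plausible but delicate at $c_2=12$: you need $2g-2<3c_2$, i.e.\ $g\le18$, and the Gruson--Peskine/Halphen maximum for degree $12$ sits right at this edge (the extremal $g=19$ is the complete intersection of type $(3,4)$, which \emph{does} lie on a cubic, so the bound can be pushed through, but it takes a real argument). The paper sidesteps this entirely. It passes instead to the rank-$3$ quotient $G:=F^\prime(-2)$ obtained from $r-3$ general sections, and uses the restriction theorems of Schneider and Barth to arrange that either $\tH^2(G)=0$ holds directly (when $G\simeq\Omega_\piii(1)$ or $G$ is built from a nullcorrelation bundle) or $\tH^0(F_H(-2))=0$ for a \emph{general} plane $H$, whence $\h^1(F_H(-2))=c_2-10\le2$ by Riemann--Roch on $H$. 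Since $\tH^2(F_H(-2))\simeq\tH^0(F_H^\vee(-1))^\vee=0$ for \emph{every} plane, the Bilinear Map Lemma applied to $h\colon\tH^2(F(-3))\to\tH^2(F(-2))$ would force $\h^2(F(-3))\ge\h^2(F(-2))+3$ if $\tH^2(F(-2))\neq0$, contradicting $\h^1(F_H(-2))\le2$; Castelnuovo--Mumford then propagates the vanishing to all $l\ge-2$. Besides being cleaner at the boundary, this argument introduces the bundle $G$ whose spectrum drives the entire subsequent classification, so it does double duty.
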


\begin{proof} 
(a) The dependency locus of $r - 1$ general global 
sections of $F$ is a nonsingular curve $Y$ of degree $c_2$. One gets an exact 
sequence$\, :$ 
\[
0 \lra (r-1)\sco_\piii \lra F \lra \sci_Y(5) \lra 0\, . 
\]
According to \cite[Lemma~1.1]{acm3}, $Y$ is \emph{connected} hence 
$\tH^1(F(l)) = 0$ for $l \leq -5$. 

(b) $r - 3$ general global sections of $F$ define an exact sequence$\, :$ 
\[
0 \lra (r - 3)\sco_\piii \lra F \lra F^\prim \lra 0\, ,  
\]
with $F^\prim$ a rank $3$ vector bundle. Consider the \emph{normalized} rank 3 
vector bundle $G := F^\prim(-2)$. It has Chern classes $c_1(G) = -1$, $c_2(G) = 
c_2 - 8$, $c_3(G) = c_3 - 2c_2 + 12$. Using the exact sequence$\, :$ 
\[
0 \lra (r - 3)\sco_\piii \lra F \lra G(2) \lra 0 
\] 
and its dual one deduces that $\tH^0(G) = 0$ and $\tH^0(G^\vee(-2)) = 0$. 
If $\tH^0(G^\vee(-1)) = 0$ then $G$ is \emph{stable}. In this case, according 
to the restriction theorem of Schneider \cite{sch} (see, also, Ein et al.  
\cite[Thm.~3.4]{ehv}) either $G \simeq \Omega_\piii(1)$ (in which case $c_2(G) 
= 1$) or the restriction $G_H$ of $G$ to a \emph{general} plane $H \subset 
\piii$ is stable (in which case $c_2(G) \geq 2$). In the latter case 
$\tH^0(G_H) = 0$ hence $\tH^0(F_H(-2)) = 0$. 

If $\tH^0(G^\vee(-1)) \neq 0$, a non-zero global section  
of $G^\vee(-1)$ defines a non-zero morphism $\phi : G \ra 
\sco_\piii(-1)$. The image of $\phi$ is of the form $\sci_Z(-1)$, where $Z$ 
is a closed subscheme of $\piii$, of codimension $\geq 2$ (because 
$\tH^0(G^\vee(-2)) = 0$).  
Since $G(2)$ is globally generated, $\sci_Z(1)$ globally generated, 
hence $Z$ must be the empty set, a simple point or a line. But 
$c_3(G^\vee(-1)) = -c_3 + c_2 -4 \equiv 0 \pmod{2}$ (because $c_3 \equiv 
c_1c_2 \equiv c_2 \pmod{2}$). One deduces that $Z$ cannot be a simple point 
hence $G$ can be realized as an extension of one of the following forms$\, :$ 
\begin{gather*} 
(\text{A})\ \  0 \ra M \ra G \ra \sco_\piii(-1) \ra 0\, ,\\   
(\text{B})\ \  0 \ra M \ra G \ra \sci_L(-1) \ra 0\, ,  
\end{gather*}
where $M$ is a rank 2 vector bundle with $c_1(M) = 0$ and $\tH^0(M) = 0$ 
(hence it is stable) and $L$ is a line in $\piii$. Moreover, $c_2(M) = c_2 - 
8$ and $c_3 = c_2 - 4$ in case (A) while in case (B), $c_2(M) = c_2 - 9$ and 
$c_3 = c_2$.  
According to the restriction theorem of Barth \cite{b} (see, also, Ein et al. 
\cite[Thm.~3.3]{ehv}) either $M$ can be described by an exact sequence 
$0 \ra \sco_\piii(-1) \ra \Omega_\piii(1) \ra M \ra 0$ (in which case $c_2(M) 
= 1$; these bundles are called \emph{nullcorrelation} bundles) or the 
restriction $M_H$ of $M$ to a general plane $H \subset \piii$ is stable (in
 which case $c_2(M) \geq 2$). In the latter case $\tH^0(M_H) = 0$ hence 
$\tH^0(F_H(-2)) = 0$. 

Now, with the above notation, if $G \simeq \Omega_\piii(1)$ (resp., if $M$ is 
a nullcorrelation bundle) then $\tH^2(G) = 0$ (resp., $\tH^2(M) = 0$). It 
follows that, in order to prove that $\tH^2(F(-2)) = 0$, one can assume that 
$\tH^0(F_H(-2)) = 0$, for the \emph{general} plane $H \subset \piii$. 
Consider, for an \emph{arbitrary} plane $H \subset \piii$, the exact 
sequence$\, :$ 
\[
\tH^1(F_H(-2)) \lra \tH^2(F(-3)) \overset{h}{\lra} \tH^2(F(-2)) \lra 
\tH^2(F_H(-2))\, . 
\]   
One has $\tH^2(F_H(-2)) \simeq \tH^0(F_H^\vee(-1))^\vee = 0$ (since $F_H$ is 
globally generated, $F_H^\vee$ embeds into a direct sum of copies of $\sco_H$). 
Applying the Bilinear Map Lemma \cite[Lemma~5.1]{ha} one deduces that if 
$\tH^2(F(-2)) \neq 0$ then $\h^2(F(-3)) - \h^2(F(-2)) \geq 3$. But, for a 
general plane $H \subset \piii$, one has, by Riemann-Roch, $\h^1(F_H(-2)) 
= c_2 - 10 \leq 2$ and this \emph{contradiction} shows that, in fact, 
$\tH^2(F(-2)) = 0$. Since $\tH^3(F(-3)) \simeq \tH^0(F^\vee(-1))^\vee = 0$, the 
Castelnuovo-Mumford lemma (in the slightly more general form quoted in 
\cite[Lemma~1.21]{acm1}) implies that $\tH^2(F(l)) = 0$, $\forall \, l \geq 
-2$.  

The next two relations from item (b) of the statement can be deduced from the  
Riemann-Roch formula (recalled in \cite[Thm.~4.5]{acm1}).    
Finally, since $\tH^1(F_H) = 0$, for every plane $H \subset 
\piii$ (by the proof of \cite[Prop.~3.6]{acm1}), it follows that the 
multiplication by any non-zero linear form $h \colon \tH^1(F(-1)) \ra \tH^1(F)$ 
is surjective hence, using again the Bilinear Map Lemma, one gets the last 
inequality from the statement.    
\end{proof}
  
Prop.~\ref{P:fprimunstable} below shows that, except for a few cases in 
which the bundle $F$ can be explicitly described, the rank 3 vector bundle $G$ 
associated to $F$ in the proof of Lemma~\ref{L:h2f(-2)=0}(b) is 
\emph{stable}. This reduces the classification of globally generated vector 
bundles $F$ on $\piii$ with $c_1 = 5$ and $\tH^0(F(-2)) = 0$ to the 
classification of stable rank 3 vector bundles $G$ on $\piii$, with $c_1(G) = 
-1$, $c_2(G) \leq 4$ and such that $G(2)$ is globally generated. 
In order to prove Prop.~\ref{P:fprimunstable} one needs two auxiliary results.  

\begin{lemma}\label{L:h0f(-1)c2=11} 
Let $F$ be a globally generated vector bundle on $\piii$ with Chern classes 
$c_1 = 5$, $c_2 = 11$, $c_3$, and such that ${\fam0 H}^0(F(-2)) = 0$. 
Then$\, :$ 
\[
{\fam0 h}^0(F(-1)) \leq \max\left(\frac{1}{2}(c_3 - 7)\, ,\, 
1\right)\, . 
\]
\end{lemma}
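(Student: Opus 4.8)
The plan is to reduce the stated bound to a single cohomological inequality and then to establish that inequality by the plane-restriction technique already used at the end of the proof of Lemma~\ref{L:h2f(-2)=0}(b). Applying Lemma~\ref{L:h2f(-2)=0}(b) with $c_2 = 11$ gives $\h^1(F(-2)) = \tfrac{1}{2}(15 - c_3)$ and $\h^1(F(-1)) = \tfrac{1}{2}(7 - c_3) + \h^0(F(-1))$, so that $\h^0(F(-1)) = \h^1(F(-1)) + \tfrac{1}{2}(c_3 - 7)$. Using the elementary identity $\max(A,1) - A = \max(0 , 1 - A)$ with $A = \tfrac{1}{2}(c_3 - 7)$, together with $\tfrac{1}{2}(9 - c_3) = \h^1(F(-2)) - 3$, the assertion $\h^0(F(-1)) \leq \max(\tfrac{1}{2}(c_3 - 7) , 1)$ is seen to be \emph{equivalent} to the cohomological inequality
\[
\h^1(F(-1)) \leq \max(\h^1(F(-2)) - 3 ,\, 0)\, .
\]
This is exactly the inequality that will be invoked later, in the proof of Prop.~\ref{P:c3=c2onp4}, so I would take it as the target.

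To prove it, I would fix a plane $H \subset \piii$ of equation $h$ and use the exact sequence $0 \to F(-2) \overset{h}{\lra} F(-1) \to F_H(-1) \to 0$. Since $\tH^2(F(-2)) = 0$ by Lemma~\ref{L:h2f(-2)=0}(b), the cokernel of multiplication $h \colon \tH^1(F(-2)) \to \tH^1(F(-1))$ is isomorphic to $\tH^1(F_H(-1))$. Hence, once one knows that $\tH^1(F_H(-1)) = 0$ for \emph{every} plane $H$, multiplication by every non-zero linear form is surjective onto $\tH^1(F(-1))$, and feeding the induced map $\tH^0(\sco_\piii(1)) \to \text{Hom}_k(\tH^1(F(-2)) , \tH^1(F(-1)))$ into Lemma~\ref{L:bml} (with $r = \h^1(F(-1))$) yields $4 \leq \h^1(F(-2)) - \h^1(F(-1)) + 1$ as soon as $\h^1(F(-1)) > 0$. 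This is precisely the required inequality.

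Everything thus rests on the vanishing $\tH^1(F_H(-1)) = 0$ for all planes $H$, which I would translate into a statement about a curve. Realizing $F$ through $r-1$ general sections as an extension $0 \to (r-1)\sco_\piii \to F \to \sci_Y(5) \to 0$, with $Y$ a connected (by \cite[Lemma~1.1]{acm3}) curve of degree $c_2 = 11$, one gets $\tH^1(F(t)) \simeq \tH^1(\sci_Y(5+t))$, while $\tH^0(F(-2)) = 0$ becomes $\tH^0(\sci_Y(3)) = 0$, i.e.\ $Y$ lies on no cubic surface; in particular $Y$ is nondegenerate, so $Y \not\subset H$ and the plane section $\Gamma := Y \cap H \subset H \simeq \pii$ is zero-dimensional of length $11$ for every $H$. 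Restricting the extension to $H$ identifies $\tH^1(F_H(-1))$ with $\tH^1(\sci_{\Gamma/H}(4))$, so it remains to show that $\Gamma$ imposes independent conditions on plane quartics, for every $H$.

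The main obstacle is exactly this last point, and it is genuinely global over all planes: one cannot appeal to a general-position or semicontinuity argument, since the surjectivity in the Bilinear Map Lemma step is needed for \emph{every} $h$. I would argue by contradiction: a failure of $\Gamma$ to impose independent conditions on quartics forces, by the standard analysis of the Hilbert function of points in $\pii$, an excessive part of $\Gamma$ on a line or on a conic (for quartics, at least $6$ collinear points, or a conic carrying too large a subscheme of $\Gamma$); this propagates to an excessive intersection of $Y$ with a line or with a plane conic, which, together with $\deg Y = 11$, the absence of a cubic through $Y$, and the global generation of $\sci_Y(5)$, is impossible. This is where the specific value $c_2 = 11$ enters, and it is the delicate part of the argument; by contrast, the reduction in the first two paragraphs is routine given Lemma~\ref{L:h2f(-2)=0} and Lemma~\ref{L:bml}.
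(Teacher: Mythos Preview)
Your reduction and the use of the Bilinear Map Lemma match the paper exactly: both arrive at the target inequality $\h^1(F(-1)) \leq \max(\h^1(F(-2)) - 3,\,0)$, and both need $\tH^1(F_H(-1)) = 0$ for \emph{every} plane $H$ to feed into the Bilinear Map Lemma. The only difference is how this last vanishing is obtained. The paper simply cites the description of globally generated vector bundles with $c_1 = 5$, $c_2 = 11$ on $\pii$ contained in the proof of \cite[Prop.~3.6]{acm1}, which yields $\tH^1(F_H(-1)) = 0$ immediately.

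Your alternative route through the plane section $\Gamma = Y \cap H$ and independent conditions on quartics is in principle reasonable, but it is left as a sketch precisely at the point you yourself flag as delicate. The step ``at least $6$ collinear points, or too many on a conic, and then exclude this using $\deg Y = 11$, $\tH^0(\sci_Y(3)) = 0$, and $\sci_Y(5)$ globally generated'' is not carried out; making it rigorous (for an arbitrary, possibly non-reduced, length-$11$ scheme $\Gamma$ and then propagating the bad configuration back to $Y$) essentially amounts to reproving a fragment of the $\pii$-classification in \cite{acm1}. So there is no genuine new idea here beyond the paper's proof, and the cleanest way to close the argument is to cite \cite[Prop.~3.6]{acm1} for the vanishing $\tH^1(F_H(-1)) = 0$, as the paper does.
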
 

\begin{proof} 
It follows, from the description of globally generated vector bundles with 
$c_1 = 5$, $c_2 = 11$ on $\pii$ from the proof of \cite[Prop.~3.6]{acm1}, 
that $\tH^1(F_H(-1)) = 0$, for every plane $H \subset \piii$. 
Using the exact sequences $\tH^1(F(-2)) \overset{h}{\lra} 
\tH^1(F(-1)) \ra \tH^1(F_H(-1)) = 0$ and applying the Bilinear Map Lemma, one 
gets that$\, :$ 
\[
\h^1(F(-1)) \leq \max\, (\h^1(F(-2)) - 3\, ,\, 0)\, . 
\]  
On the other hand, by Lemma~\ref{L:h2f(-2)=0}(b)$\, :$ 
\begin{gather*}
\h^1(F(-2)) = \frac{1}{2}(15 - c_3)\  \text{and} \ 
\h^0(F(-1)) = \frac{1}{2}(c_3 - 7) + \h^1(F(-1))\, . 
\end{gather*}
The inequality from the statement is now clear. 
\end{proof}

\begin{lemma}\label{L:h0f(-1)c2=12} 
Let $F$ be a globally generated vector bundle on $\piii$ with Chern classes 
$c_1 = 5$, $c_2 = 12$, $c_3$, and such that ${\fam0 H}^0(F(-2)) = 0$. If 
${\fam0 h}^0(F(-1)) \geq 2$ then $c_3 \in \{16,\, 18,\, 20\}$. Moreover, if 
$c_3 = 16$ then ${\fam0 h}^1(F(-3)) = 1$ and ${\fam0 h}^0(F(-1)) = 2$.   
\end{lemma}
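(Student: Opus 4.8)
The plan is to reduce the whole statement to the cohomological formulae of Lemma~\ref{L:h2f(-2)=0}(b) combined with a restriction-to-a-plane argument of the same type as in the proof of Lemma~\ref{L:h0f(-1)c2=11}. By Lemma~\ref{L:h2f(-2)=0}(b) one has $\h^1(F(-2)) = \frac{1}{2}(20 - c_3)$ and $\h^0(F(-1)) - \h^1(F(-1)) = \frac{1}{2}(c_3 - 14)$; since $\h^1(F(-2)) \geq 0$ and $c_3 \equiv c_1c_2 \equiv 0 \pmod 2$, this already forces $c_3$ even with $c_3 \leq 20$. Hence the assertion ``$\h^0(F(-1)) \geq 2 \Rightarrow c_3 \in \{16,\, 18,\, 20\}$'' is equivalent to the single implication$\, :$ \emph{if $c_3 \leq 14$ then $\h^0(F(-1)) \leq 1$}. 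As a preliminary structural remark I would observe that $F$ admits no sub-line-bundle $\sco_\piii(d)$ with $d \geq 2$, since such a subbundle would give $\tH^0(F(-2)) \neq 0$; consequently any two linearly independent sections of $F(-1)$, regarded as maps $\sco_\piii(1) \ra F$, are \emph{generically} linearly independent (otherwise their images would generate a rank-one subsheaf whose saturation is a line bundle $\sco_\piii(d)$ with $\h^0(\sco_\piii(d)) \geq 2$, i.e.\ $d \geq 2$, a contradiction).

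To get the upper bound I would fix a general plane $H \subset \piii$. Exactly as in the proof of Lemma~\ref{L:h2f(-2)=0}(b) --- where for $c_2 = 12$ the associated normalized rank $3$ bundle $G$ has $c_2(G) = 4 \geq 2$, so that its general plane restriction is stable --- one may assume $\tH^0(F_H(-2)) = 0$. Then $\tH^0(F(-1)) \hookrightarrow \tH^0(F_H(-1))$ identifies $\tH^0(F(-1))$ with the kernel of the connecting map $\delta \colon \tH^0(F_H(-1)) \ra \tH^1(F(-2))$, whose target has dimension $\frac{1}{2}(20 - c_3)$. The new ingredient, replacing the vanishing $\tH^1(F_H(-1)) = 0$ that was available for $c_2 = 11$, is the classification of globally generated vector bundles with $c_1 = 5$, $c_2 = 12$ on $\pii$ (obtained as in \cite[Prop.~3.6]{acm1}), which controls $\h^1(F_H(-1))$ for the general plane and the corank of multiplication by a linear form. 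The point is that this invariant is small precisely in the range $c_3 \leq 14$, so that there the device of Lemma~\ref{L:h0f(-1)c2=11} applies$\, :$ feeding the exact sequences $\tH^1(F(-2)) \overset{h}{\lra} \tH^1(F(-1)) \ra \tH^1(F_H(-1))$ (valid for every plane $H = \{h = 0\}$, since $\tH^2(F(-2)) = 0$ by Lemma~\ref{L:h2f(-2)=0}(b)) into Lemma~\ref{L:bml}, applied to the induced map $\tH^0(\sco_\piii(1)) \ra \mathrm{Hom}_k(\tH^1(F(-1))^\vee , \tH^1(F(-2))^\vee)$, bounds $\h^1(F(-1))$ from above in terms of $\h^1(F(-2)) = \frac{1}{2}(20 - c_3)$. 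Substituting into $\h^0(F(-1)) = \h^1(F(-1)) + \frac{1}{2}(c_3 - 14)$ is designed to yield $\h^0(F(-1)) \leq 1$ as soon as $c_3 \leq 14$, which is the required implication.

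For the final assertion assume $c_3 = 16$ and $\h^0(F(-1)) \geq 2$. Then $\h^1(F(-2)) = 2$ and $\h^0(F(-1)) = \h^1(F(-1)) + 1$, so the bound just proved forces $\h^0(F(-1)) = 2$ and $\h^1(F(-1)) = 1$; to identify $\h^1(F(-3))$ I would run the sequence $0 = \tH^0(F(-2)) \ra \tH^0(F_H(-2)) \ra \tH^1(F(-3)) \overset{h}{\lra} \tH^1(F(-2))$ and read off $\h^0(F_H(-2))$ together with the rank of multiplication by $h$ from the plane data, pinning $\h^1(F(-3)) = 1$. The main obstacle is precisely this plane input$\, :$ unlike the case $c_2 = 11$, the group $\tH^1(F_H(-1))$ need not vanish (indeed it cannot vanish uniformly, for otherwise the same computation would give $\h^0(F(-1)) = 0$ for \emph{all} $c_3$), so the delicate part is to extract from the $\pii$-classification the exact control on $\h^1(F_H(-1))$ and on $\delta$ that makes the threshold fall at $c_3 = 14$ and that produces the sharp values in the $c_3 = 16$ case. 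I expect the cleanest way to secure this control is to work instead with the normalized stable rank $3$ bundle $G$ attached to $F$ (with $c_1(G) = -1$, $c_2(G) = 4$, $c_3(G) = c_3 - 12$ and $G(2)$ globally generated)$\, :$ one has $\tH^0(F(-1)) \cong \tH^0(G(1))$, and global generation of $G(2)$ restricts the sub-line-bundles and rank-two subsheaves of $G(1)$ severely, which should furnish directly the bound $\h^0(G(1)) \leq 1$ in the range $c_3 \leq 14$.
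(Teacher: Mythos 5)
Your reduction of the first assertion to the implication ``$c_3 \leq 14 \Rightarrow \h^0(F(-1)) \leq 1$'' is correct, and the Riemann--Roch bookkeeping from Lemma~\ref{L:h2f(-2)=0}(b) is fine. But the proposal stops exactly where the real work begins. The whole argument hinges on a bound for $\h^1(F_H(-1))$, uniform in the plane $H$ (the Bilinear Map Lemma needs the restriction sequence for \emph{every} linear form $h$), and you never establish such a bound: you only assert that ``this invariant is small precisely in the range $c_3 \leq 14$'' and that the $\pii$-classification ``controls'' it, while simultaneously acknowledging that, unlike the case $c_2 = 11$, the vanishing $\tH^1(F_H(-1)) = 0$ is not available and that extracting the needed control is ``the delicate part''. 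The fallback you propose --- passing to the normalized stable rank $3$ bundle $G$ and arguing that global generation of $G(2)$ ``should furnish directly'' $\h^0(G(1)) \leq 1$ when $c_3 \leq 14$ --- is likewise only a hope: stability gives $\tH^0(G) = 0$ but says nothing by itself about $\h^0(G(1))$, and you give no mechanism tying that bound to the value of $c_3$. The same gap recurs, amplified, in the $c_3 = 16$ case, where ``read off $\h^0(F_H(-2))$ together with the rank of multiplication by $h$ from the plane data'' is not an argument. As it stands this is a plan with its central step missing.

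For comparison, the paper does not try to bound $\h^0(F(-1))$ cohomologically at all; it \emph{uses} the hypothesis $\h^0(F(-1)) \geq 2$ constructively, via liaison. The two sections give $\h^0(\sci_Y(4)) \geq 2$ for the degree-$12$ dependency curve $Y$ (which satisfies $\tH^0(\sci_Y(3)) = 0$), so $Y$ is linked by a complete intersection of type $(4,4)$ to a curve $Y^\prime$ of degree $4$; a general section of the globally generated sheaf $\omega_{Y^\prime}(1)$ produces a rank $2$ reflexive sheaf $\scg$ with $c_1(\scg) = -1$, $c_2(\scg) = 2$, which is stable (otherwise $Y^\prime$ would be a $(1,4)$ complete intersection, forcing $\tH^0(\sci_Y(3)) \neq 0$); the liaison formula $c_3 = c_3(\scg) + 16$ together with Hartshorne's list $c_3(\scg) \in \{0,\, 2,\, 4\}$ gives $c_3 \in \{16,\, 18,\, 20\}$, and the case $c_3 = 16$ is read off from the known cohomology of the rank $2$ bundles with $c_1 = -1$, $c_2 = 2$. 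To salvage a purely cohomological proof along your lines you would have to supply precisely the plane-restriction input you identified as missing.
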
   

\begin{proof} 
Let $r$ be the rank of $F$. As we saw in the proof of 
Lemma~\ref{L:h2f(-2)=0}(a), $F$ can be realized as an extension$\, :$  
\[
0 \lra (r-1)\sco_\piii \lra F \lra \sci_Y(5) \lra 0\, , 
\]
with $Y$ a nonsingular connected curve of degree $c_2 = 12$.  
Our hypotheses imply that $\tH^0(\sci_Y(3)) = 0$ 
and $\h^0(\sci_Y(4)) \geq 2$. It follows that $Y$ is directly linked, by a 
complete intersection of type $(4,4)$, to a curve $Y^\prime$ of degree 4.  
Since $2\, \text{deg}\, Y > 4 \times 4$, $Y^\prime$ must be locally complete 
intersection except at finitely many points, where it is locally 
Cohen-Macaulay. The fundamental exact sequence of liaison (recalled in 
\cite[Remark~2.6]{acm1})$\, :$ 
\[
0 \lra \sco_\piii(-8) \lra 2\sco_\piii(-4) \lra \sci_Y \lra \omega_{Y^\prime}(-4) 
\lra 0 
\] 
implies that $\omega_{Y^\prime}(1)$ is globally generated. It follows that a 
general global section of $\omega_{Y^\prime}(1)$ generates this sheaf except at 
finitely many points hence it defines an extension$\, :$ 
\[
0 \lra \sco_\piii(-2) \lra \scg \lra \sci_{Y^\prime}(1) \lra 0
\]
with $\scg$ a rank 2 reflexive sheaf with $c_1(\scg) = -1$, $c_2(\scg) = 
\text{deg}\, Y^\prime - 2 = 2$ (see \cite[Thm.~4.1]{ha}). Since $\chi(\scg) 
= \chi(\sci_{Y^\prime}(1)) = \chi(\sco_\piii(1)) - \chi(\sco_{Y^\prime}(1))$ and 
$\chi(\sco_{Y^\prime}(1)) = \text{deg}\, Y^\prime + \chi(\sco_{Y^\prime})$,  
the Riemann-Roch formula for $\chi(\scg)$ (see, for example, 
\cite[Thm.~4.5]{acm1}) implies that that $c_3(\scg) = 4 - 
2\chi(\sco_{Y^\prime})$. One can show, similarly, that $c_3 = -12 - 
2\chi(\sco_Y)$. 
On the other hand, by a basic formula in liaison theory (recalled in the 
footnote on page 24 in \cite{acm1}), one has$\, :$ 
\[
\chi(\sco_{Y^\prime}) - \chi(\sco_Y) = 
\frac{1}{2}(4 + 4 - 4)(\text{deg}\, Y - \text{deg}\, Y^\prime) = 16\, . 
\]   
It follows that $c_3 = c_3(\scg) + 16$. 

Now, if $\tH^0(\sci_{Y^\prime}(1)) \neq 0$ then $Y^\prime$ is a complete 
intersection of type $(1,4)$, hence $\omega_{Y^\prime} \simeq \sco_{Y^\prime}(1)$. 
It follows that $\tH^0(\omega_{Y^\prime}(-1)) \neq 0$, hence $\tH^0(\sci_Y(3)) 
\neq 0$, a \emph{contradiction}. 

It remains that $\tH^0(\sci_{Y^\prime}(1)) = 0$ hence $\scg$ is \emph{stable}. 
\cite[Thm.~8.2(b)]{ha} implies, now, that $c_3(\scg) \in \{0,\, 2,\, 4\}$ 
hence $c_3 \in \{16,\, 18,\, 20\}$. 

Assume, finally, that $c_3 = 16$. In this case $c_3(\scg) = 0$ hence $\scg$ is 
a rank 2 vector bundle. These bundles have been studied, independently, by 
Hartshorne and Sols \cite{hs} and by Manolache \cite{ma}. One has 
$\tH^1(F(-3)) \simeq \tH^1(\sci_Y(2))$ and, by the well known behaviour of the 
Hartshorne-Rao module $\tH^1_\ast(\sci_C)$ ($C$ space curve) under liason, 
$\tH^1(\sci_Y(2)) \simeq \tH^1(\sci_{Y^\prime}(2))^\vee$. But 
$\h^1(\sci_{Y^\prime}(2)) = \h^1(\scg(1)) = 1$ (see, for example, 
\cite[Prop.~2.2]{hs}) hence $\h^1(F(-3)) = 1$. Moreover, dualizing the 
extension defining $\scg$ and using the fact that $\tH^1(\scg(-2)) = 0$ one 
gets that $\tH^0(\omega_{Y^\prime}) = 0$ hence $\h^0(\sci_Y(4)) = 2$ hence 
$\h^0(F(-1)) = 2$.    
\end{proof} 

\begin{remark}\label{R:h0f(-1)c2=12} 
Since the rank 2 reflexive sheaves $\scg$ appearing in the proof of 
Lemma~\ref{L:h0f(-1)c2=12} can be described concretely, one gets (see 
\cite[Prop.~4.1]{acm3}) that if $F$ is a globally generated vector bundle on 
$\piii$ with Chern classes $c_1 = 5$, $c_2 = 12$, $c_3$, such that 
$\tH^i(F^\vee) = 0$, $i = 0,\, 1$, $\tH^0(F(-2)) = 0$ and $\h^0(F(-1)) \geq 2$ 
then one of the following holds$\, :$ 
\begin{enumerate} 
\item[(i)] $c_3 = 20$ and $F \simeq 3\sco_\piii(1) \oplus 
2{\fam0 T}_\piii(-1)$$\, ;$ 
\item[(ii)] $c_3 = 18$ and $F \simeq 2\sco_\piii(1) \oplus {\fam0 T}_\piii(-1) 
\oplus \Omega_\piii(2)$$\, ;$ 
\item[(iii)] $c_3 = 16$ and $F \simeq \sco_\piii(1) \oplus F_0$, where, up to a 
linear change of coordinates, $F_0$ is the cohomology of the monad$\, :$ 
\[
\sco_\piii(-1) \xra{\left(\begin{smallmatrix} s\\ u 
\end{smallmatrix}\right)} 2\sco_\piii(2) \oplus 
2\sco_\piii(1) \oplus 4\sco_\piii \xra{(p\, ,\, 0)} \sco_\piii(3) 
\] 
where $\sco_\piii(-1) \overset{s}{\ra} 2\sco_\piii(2) \oplus 2\sco_\piii(1) 
\overset{p}{\ra} \sco_\piii(3)$ is a subcomplex of the Koszul complex defined 
by $x_0,\, x_1,\, x_2^2,\, x_3^2$ and $u : \sco_\piii(-1) \ra 4\sco_\piii$ is 
defined by $x_0, \ldots ,x_3$.  
\end{enumerate}
\end{remark}

\begin{prop}\label{P:fprimunstable} 
Let $F$ be a globally generated vector bundle of rank $r \geq 3$ on $\piii$, 
with Chern classes $c_1 = 5$, $c_2 \leq 12$, $c_3$, such that 
${\fam0 H}^i(F^\vee) = 0$, $i = 0,\, 1$. Assume, also, that 
${\fam0 H}^0(F(-2)) = 0$. As we saw in the proof of 
Lemma~\emph{\ref{L:h2f(-2)=0}(b)}, $F$ can be realized as an extension$\, :$ 
\[
0 \lra (r - 3)\sco_\piii \lra F \lra G(2) \lra 0\, ,  
\]
where $G$ is a rank $3$ vector bundle with $c_1(G) = -1$, $c_2(G) = c_2 - 8$, 
$c_3(G) = c_3 - 2c_2 + 12$. If $G$ is not stable then one of the following 
holds$\, :$ 
\begin{enumerate} 
\item[(i)] $r = 3$, $c_3 = c_2 - 4$, and $F$ can be realized as an 
extension$\, :$ 
\[
0 \lra M(2) \lra F \lra \sco_\piii(1) \lra 0\, , 
\] 
where $M$ is a rank $2$ vector bundle with $c_1(M) = 0$, $c_2(M) = c_2 - 8$, 
${\fam0 H}^0(M) = 0$ and ${\fam0 H}^1(M(-2)) = 0$ $($i.e., $M$ is a 
mathematical instanton bundle of charge $c_2 - 8$$)$$\, ;$ 
\item[(ii)] $r = 4$, $c_2 = 12$, $c_3 = c_2 - 4 = 8$, and $F \simeq 
\sco_\piii(1) \oplus F_0$, where $F_0$ is the kernel of an epimorphism 
$4\sco_\piii(2) \ra \sco_\piii(4)$, and the image of the morphism $\sco_\piii 
\ra F$ is contained in $F_0$$\, ;$  
\item[(iii)] $r = 5$, $c_3 = c_2$ and there exists an exact sequence$\, :$ 
\[
0 \lra M(2) \lra F \lra {\fam0 T}_\piii(-1) \lra 0\, , 
\]
where $M$ is a rank $2$ vector bundle with $c_1(M) = 0$, $c_2(M) = c_2 - 9$, 
${\fam0 H}^0(M) = 0$ and ${\fam0 H}^1(M(-2)) = 0$. 
\end{enumerate} 
\end{prop}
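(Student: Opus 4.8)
The plan is to feed the non-stability of $G$ into the alternative already extracted in the proof of Lemma~\ref{L:h2f(-2)=0}(b), and then to reconstruct $F$ from $G$ as a universal extension. Recall that when $G$ is not stable a nonzero section of $G^\vee(-1)$ realizes $G$ either as an extension of type (A), $0 \ra M \ra G \ra \sco_\piii(-1) \ra 0$ (forcing $c_3 = c_2 - 4$), or of type (B), $0 \ra M \ra G \ra \sci_L(-1) \ra 0$ (forcing $c_3 = c_2$), with $M$ a stable rank $2$ bundle, $c_1(M) = 0$, $\tH^0(M) = 0$, and $c_2(M) = c_2 - 8$ resp.\ $c_2 - 9$. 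First I would fix the rank: dualizing $0 \ra (r-3)\sco_\piii \ra F \ra G(2) \ra 0$ and using $\tH^i(F^\vee) = 0$, $i = 0,1$, together with Serre duality, gives $r - 3 = \h^2(G(-2)) = \h^1(G^\vee(-2)) = \dim \text{Ext}^1(G(2),\sco_\piii)$. Moreover $F$ is forced to be \emph{the} universal extension of $G(2)$ by $\text{Ext}^1(G(2),\sco_\piii)^\vee \otimes \sco_\piii$, since that is the only extension of $G(2)$ by a trivial bundle of this rank with $\tH^i$ of its dual vanishing for $i=0,1$; so the whole analysis is governed by $\text{Ext}^1(G(2),\sco_\piii)$.

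In case (A), twisting by $2$ gives $0 \ra M(2) \ra G(2) \ra \sco_\piii(1) \ra 0$, and since $\text{Ext}^i(\sco_\piii(1),\sco_\piii) = \tH^i(\sco_\piii(-1)) = 0$ for $i = 0,1$ one gets $\text{Ext}^1(G(2),\sco_\piii) \izo \text{Ext}^1(M(2),\sco_\piii) \izo \tH^1(M(-2))$. Thus $M$ is an instanton $(\tH^1(M(-2)) = 0)$ precisely when $r = 3$, and then $F = G(2)$ sits in the sequence of item (i). When $M$ is not an instanton, the spectrum of $M$ (Remark~\ref{R:fprimstable}), together with $c_2(M) \leq 4$ and the connectedness of the spectrum, leaves only the two symmetric spectra $(1,0,-1)$ $(c_2(M) = 3$, i.e.\ $c_2 = 11)$ and $(1,0,0,-1)$ $(c_2(M) = 4$, i.e.\ $c_2 = 12)$, both with $\h^1(M(-2)) = 1$, hence $r = 4$.

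In case (B) the Koszul resolution $0 \ra \sco_\piii(-1) \ra 2\sco_\piii \ra \sci_L(1) \ra 0$ gives $\text{Ext}^1(\sci_L(1),\sco_\piii) \izo k^2$ and $\text{Ext}^i(\sci_L(1),\sco_\piii) = 0$ for $i \geq 2$; feeding this into $0 \ra M(2) \ra G(2) \ra \sci_L(1) \ra 0$ and using $\tH^0(M(-2)) = 0$ yields $\dim \text{Ext}^1(G(2),\sco_\piii) = 2 + \h^1(M(-2))$, so $M$ is an instanton exactly when $r = 5$. For instanton $M$ I would then transport the universal extension along the canonical sequence $0 \ra 2\sco_\piii \ra \text{T}_\piii(-1) \ra \sci_L(1) \ra 0$ (which is itself the universal extension of $\sci_L(1)$, obtained by comparing the Euler sequence of $\text{T}_\piii(-1)$ with the above Koszul resolution): because $\text{Ext}^1(M(2),\sco_\piii) = \tH^1(M(-2)) = 0$, the surjection $G(2) \ra \sci_L(1)$ lifts to a surjection $F \ra \text{T}_\piii(-1)$ whose kernel is $M(2)$, which is item (iii).

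The main obstacle will be the non-instanton analysis: disposing of the charge-$3$ bundle with spectrum $(1,0,-1)$ in case (A) with $c_2 = 11$ and in case (B) with $c_2 = 12$ (where it would give $r=6$), and identifying the surviving $c_2 = 12$ bundle of case (A) with item (ii). Here I would use the hypothesis that $F$, hence its quotient $G(2)$, is globally generated: I expect to show that for the spectrum-$(1,0,-1)$ bundle $M$ no extension $G$ of either type has $G(2)$ globally generated, which is where the concrete geometry of these bundles (their jumping lines and the descriptions in \cite{hs}, \cite{ma}) enters and which I anticipate to be the delicate point, precisely because it is the sort of global-generation question the authors otherwise sidestep. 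For the remaining case, $c_2(M) = 4$ with spectrum $(1,0,0,-1)$, a cohomology computation (giving $\tH^1(M(1)) = 0$) shows the $\sco_\piii(1)$-summand splits off the universal extension, so $F \izo \sco_\piii(1) \oplus F_0$ with $0 \ra \sco_\piii \ra F_0 \ra M(2) \ra 0$; computing $\tH^0_\ast(F_0)$ for the globally generated bundle $F_0$ (of rank $3$, $c_1 = 4$, $c_2 = 8$, $c_3 = 0$) then exhibits the presentation $0 \ra F_0 \ra 4\sco_\piii(2) \ra \sco_\piii(4) \ra 0$ and places the distinguished sub $\sco_\piii \ra F$ inside $F_0$, yielding item (ii).
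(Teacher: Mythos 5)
Your overall architecture agrees with the paper's: the formula $r-3=\h^2(G(-2))=\dim\text{Ext}^1(G(2),\sco_\piii)$ with $F$ the universal extension, the dichotomy (A)/(B) inherited from the proof of Lemma~\ref{L:h2f(-2)=0}(b), and the equivalence ``$M$ is an instanton $\Leftrightarrow$ item (i), resp.\ item (iii)'' are all correct, and your derivation of item (iii) by transporting the universal extension along $0\ra 2\sco_\piii\ra\text{T}_\piii(-1)\ra\sci_L(1)\ra 0$ (using that $\text{Ext}^1(\sci_L(1),\sco_\piii)\izo\text{Ext}^1(G(2),\sco_\piii)$ when $\tH^1(M(-2))=0$) is a clean repackaging of the paper's Claim 5. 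However, the two sub-cases you yourself identify as the crux are not actually established, and one of them rests on a false computation.

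First, the exclusion of the charge-$3$ bundle with spectrum $(1,0,-1)$ (case (A) with $c_2=11$, and case (B) with $c_2=12$) is left at the level of ``I expect to show.'' No jumping-line geometry is needed: for that spectrum one has $\h^0(M(1))=2$ by \cite[Lemma~9.15]{ha}, and since $\tH^0(F(-1))\simeq\tH^0(G(1))\supseteq\tH^0(M(1))$ this gives $\h^0(F(-1))\geq 2$, contradicting Lemma~\ref{L:h0f(-1)c2=11} (which yields $\h^0(F(-1))\leq 1$ when $c_2=11$, $c_3=7$) in the first case and Lemma~\ref{L:h0f(-1)c2=12} (which would force $c_3\in\{16,18,20\}$, whereas case (B) has $c_3=c_2=12$) in the second. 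These are precisely the two auxiliary lemmas the paper proves immediately before the proposition, and they are what your argument is missing.

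Second, and more seriously, your route to item (ii) hinges on ``a cohomology computation (giving $\tH^1(M(1))=0$).'' That vanishing is false: for the spectrum-$(1,0,0,-1)$ bundle realized as the cohomology of the self-dual monad $0\ra\sco_\piii(-2)\ra 4\sco_\piii\ra\sco_\piii(2)\ra 0$ one finds $\tH^1(M(1))\simeq\tH^3(\sco_\piii(-5))\simeq k^4$, so the group $\text{Ext}^1(\sco_\piii(1),M(2))=\tH^1(M(1))$ classifying the extension $0\ra M(2)\ra G(2)\ra\sco_\piii(1)\ra 0$ is $4$-dimensional and the splitting of $G$ is by no means automatic. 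The paper gets it by showing that the multiplication $\tH^1(M(1))\otimes\tH^0(\sco_\piii(1))\ra\tH^1(M(2))\simeq k$ is a perfect pairing and that global generation of $G(2)$ forces the connecting map $\tH^0(\sco_\piii(1))\ra\tH^1(M(2))$ to vanish, whence the extension class in $\tH^1(M(1))$ is annihilated by all linear forms and is therefore zero. You also skip the prior dichotomy from Chang \cite[Prop.~1.5]{ch}: before assuming $M$ is a monad bundle one must rule out the alternative that $M$ has an unstable plane of order $1$, which the paper again does using global generation. Without these two steps the identification of $F$ in item (ii) does not go through.
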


\begin{proof} 
Firstly, since $\tH^i(F(-4)) \simeq \tH^{3-i}(F^\vee)^\vee = 0$, 
$i = 2,\, 3$, one has $\tH^2(G(-2)) \izo \tH^3((r-3)\sco_\piii(-4))$ hence 
$r = 3 + \h^2(G(-2))$. 

If $G$ is not stable then one has the \emph{alternatives} (A) and (B) from the 
proof of Lemma~\ref{L:h2f(-2)=0}(b).   
It is well known (see \cite[Thm.~8.1(c)]{ha}) that if $\scf$ is a rank 2 
reflexive sheaf on $\piii$ with $c_1(\scf) = 0$, $c_2(\scf) \leq 2$ and 
$\tH^0(\scf) = 0$ then $\tH^1(\scf(-2)) = 0$.  

\vskip2mm 

\noindent 
{\bf Claim 1.}\quad \emph{If} $c_2 = 11$ \emph{then the bundle} $M$ 
\emph{from} (A) \emph{satisfies} $\tH^1(M(-2)) = 0$. 

\vskip2mm 

\noindent 
\emph{Indeed}, $M$ has, \emph{a priori} two possible spectra$\, :$ 
$(1 , 0 , -1)$ and $(0 , 0 , 0)$ (see \cite[Sect.~7]{ha} for the definition 
and the properties of the spectrum of a stable rank 2 reflexive sheaf on 
$\piii$). But if $M$ has spectrum $(1 , 0 , -1)$ then $\h^0(M(1)) = 2$ (see 
\cite[Lemma~9.15]{ha}) and this \emph{contradicts} the fact that, by 
Lemma~\ref{L:h0f(-1)c2=11}, $\h^0(G(1)) \leq 1$ (because $\h^0(F(-1)) \leq 
1$). 

\vskip2mm 

\noindent 
{\bf Claim 2.}\quad \emph{If the vector bundle} $M$ \emph{from} (A) 
\emph{satisfies} $\tH^1(M(-2)) = 0$ \emph{then} $r = 3$, \emph{i.e.,} 
$F = G(2)$. 

\vskip2mm 

\noindent 
\emph{Indeed}, $\tH^2(M(-2)) = 0$ by Serre duality and the fact that $M \simeq 
M^\vee$. It follows that $\tH^2(G(-2)) = 0$ hence $r = 3$ by the formula 
from the beginning of the proof. 

\vskip2mm 

\noindent 
{\bf Claim 3.}\quad \emph{If} $c_2 = 12$ \emph{and the bundle} $M$ 
\emph{from} (A) \emph{satisfies} $\tH^1(M(-2)) \neq 0$ \emph{then} $F$ 
\emph{is as in item} (ii) \emph{from the statement}.  

\vskip2mm 

\noindent 
\emph{Indeed}, in this case, $M$ has spectrum $(1,0,0,-1)$. According to Chang 
\cite[Prop.~1.5]{ch}, either $M$ has an unstable plane $H$ of order 1 or 
it can be realized as the cohomology sheaf of a selfdual monad$\, :$ 
\[
0 \lra \sco_\piii(-2) \lra 4\sco_\piii \lra \sco_\piii(2) \lra 0\, . 
\] 
The former case cannot, however, occur because, in that case, there exists an 
epimorphism $M \ra \sci_{Z,H}(-1) \ra 0$ where $Z$ is a 0-dimensional 
subscheme of $H$, of length 5, and this would \emph{contradict} the fact that 
$M(3)$ must be globally generated (since $G(2)$ is globally generated, the 
diagram of evaluation morphisms corresponding to the exact sequence (A) 
tensorized by $\sco_\piii(2)$ induces an epimorphism from $\Omega_\piii(1)$, 
which is the kernel of the evaluation morphism of $\sco_\piii(1)$, to the 
cokernel of the evaluation morphism of $M(2)$). 

It thus remains that $M$ is the cohomology of a monad as above. Let $K$ 
be the kernel of the epimorphism $4\sco_\piii \ra \sco_\piii(2)$ from the 
monad. $K$ admits a (Koszul) resolution of the form$\, :$ 
\[
0 \lra \sco_\piii(-6) \lra 4\sco_\piii(-4) \lra 6\sco_\piii(-2) \lra K \lra 0\, . 
\]     
One deduces that $\tH^1(M(1)) \simeq \tH^3(\sco_\piii(-5))$ and 
$\tH^1(M(2)) \simeq \tH^3(\sco_\piii(-4)) \simeq k$. It follows that the 
multiplication map $\tH^1(M(1)) \otimes_k \tH^0(\sco_\piii(1)) \ra 
\tH^1(M(2))$ is a 
perfect pairing, that is, if $\xi \in \tH^1(M(1))$ is annihilated by 
every linear form $h \in \tH^0(\sco_\piii(1))$ then $\xi = 0$. 
Since $G(2)$ is globally generated, the map $\tH^0(G(2)) \ra 
\tH^0(\sco_\piii(1))$ must be surjective hence the connecting map  
$\tH^0(\sco_\piii(1)) \ra \tH^1(M(2))$ associated to the exact sequence (A) 
tensorized by $\sco_\piii(2)$ is zero. This implies that the element $\xi \in 
\tH^1(M(1))$ defining the extension $0 \ra M(1) \ra G(1) \ra \sco_\piii 
\ra 0$ is zero hence $G \simeq \sco_\piii(-1) \oplus M$. Since 
$\h^2(G(-2)) = \h^2(M(-2)) = 1$ one has $r = 4$. Since 
$\text{Ext}^1(M(2) , \sco_\piii) \simeq \tH^1(M^\vee(-2)) \simeq \tH^1(M(-2))$ 
is 1-dimensional and since the extension $0 \ra \sco_\piii \ra K(2) \ra M(2) 
\ra 0$ is non-trivial, one gets that $F \simeq \sco_\piii(1) \oplus K(2)$.  

\vskip2mm 

\noindent 
{\bf Claim 4.}\quad \emph{If} $c_2 = 12$ \emph{then the bundle} $M$ 
\emph{from} (B) \emph{satisfies} $\tH^1(M(-2)) = 0$. 

\vskip2mm 

\noindent 
\emph{Indeed}, one can use the same argument as in the proof of Claim 1 with 
Lemma~\ref{L:h0f(-1)c2=12} instead of Lemma~\ref{L:h0f(-1)c2=11}. 

\vskip2mm 

\noindent 
{\bf Claim 5.}\quad \emph{If} $G$ \emph{is as in} (B) \emph{then} $F$ 
\emph{is as in item} (iii) \emph{from the statement}. 

\vskip2mm 

\noindent 
\emph{Indeed}, since $G(2)$ is globally generated the map $\tH^0(G(2)) 
\ra \tH^0(\sci_L(1))$ must be surjective. Applying the Snake Lemma to the 
diagram$\, :$ 
\[
\SelectTips{cm}{12}\xymatrix{0\ar[r] & \sco_\piii(-1)\ar[r]\ar[d] & 
2\sco_\piii\ar[r]\ar[d] & \sci_L(1)\ar[r]\ar @{=}[d] & 0\\
0\ar[r] & M(2)\ar[r] & G(2)\ar[r] & \sci_L(1)\ar[r] & 0} 
\]  
one gets an exact sequence$\, :$ 
\[
0 \lra \sco_\piii(-1) \lra M(2) \oplus 2\sco_\piii \lra G(2) \lra 0\, . 
\]
Since $\tH^2(M(-2)) = 0$ it follows that $\h^2(G(-2)) = 2$ hence $F$ has 
rank $r = 5$. Using the fact that $\text{Ext}^1(M(2) , \sco_\piii) \simeq 
\tH^1(M^\vee(-2)) \simeq \tH^1(M(-2)) = 0$ one gets a commutative diagram$\, :$ 
\[
\SelectTips{cm}{12}\xymatrix{0\ar[r] & \sco_\piii(-1)\ar[r]\ar[d] & 
M(2) \oplus 2\sco_\piii\ar[r]\ar[d] & G(2)\ar[r]\ar @{=}[d] & 0\\ 
0\ar[r] & 2\sco_\piii\ar[r] & F\ar[r] & G(2)\ar[r] & 0}  
\] 
from which one deduces an exact sequence$\, :$ 
\[
0 \lra \sco_\piii(-1) \xra{\left(\begin{smallmatrix} u\\ 
v\end{smallmatrix}\right)} M(2) \oplus 4\sco_\piii \lra F \lra 0\, , 
\]
Since $\tH^i(F^\vee) = 0$, $i = 0,\, 1$, $\tH^0(v^\vee) \colon \tH^0(4\sco_\piii) 
\ra \tH^0(\sco_\piii(1))$ is an isomorphism hence $v$ is defined by 4 linearly 
independent linear forms. One gets, now, easily, the exact sequence from item 
(iii) of the conclusion.  
\end{proof} 

\begin{lemma}\label{L:h2f(-3)neq0}
Let $F$ be a globally generated vector bundle on $\piii$, of rank $r$,  
with $c_1 = 5$, $c_2 \leq 12$, such that ${\fam0 H}^i(F^\vee) = 0$, $i = 0,\, 
1$, and ${\fam0 H}^0(F(-2)) = 0$. If ${\fam0 H}^2(F(-3)) \neq 0$ then $r \geq 
5$ and $F$ can be realized as an extension$\, :$ 
\[
0 \lra F_1 \lra F \lra {\fam0 T}_\piii(-1) \lra 0\, , 
\] 
with $F_1$ a vector bundle of rank $r - 3$ which, in turn, can be realized as 
an extension$\, :$ 
\[
0 \lra (r - 5)\sco_\piii \lra F_1 \lra \scf_1(2) \lra 0\, , 
\]
where $\scf_1$ is a stable rank $2$ reflexive sheaf with $c_1(\scf_1) = 0$, 
$c_2(\scf_1) = c_2 - 9$ and $c_3(\scf_1) = c_3 - c_2$. 
\end{lemma}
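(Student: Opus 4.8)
The plan is to split off a copy of $\text{T}_\piii(-1)$ from $F$ and then to reduce the resulting kernel to a rank $2$ reflexive sheaf. I would begin by rephrasing the hypothesis through Serre duality: on $\piii$ one has $\tH^2(F(-3)) \simeq \tH^1(F^\vee(-1))^\vee$, so the assumption $\tH^2(F(-3)) \neq 0$ is exactly the existence of a non-zero $\xi \in \tH^1(F^\vee(-1))$. Feeding $\xi$ into Lemma~\ref{L:erat(-1)} produces a locally split monomorphism $\Omega_\piii(1) \lra F^\vee$, whose dual is a locally split epimorphism $F \lra \text{T}_\piii(-1)$; I set $F_1 := \Ker(F \ra \text{T}_\piii(-1))$, a vector bundle of rank $r - 3$.

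Since $c(\text{T}_\piii(-1)) = 1 + h + h^2 + h^3$ and $(1 + h + h^2 + h^3)(1 - h) = 1$ in the Chow ring of $\piii$, the identity $c(F_1) = c(F)(1 - h)$ gives $c_1(F_1) = 4$, $c_2(F_1) = c_2 - 5$ and $c_3(F_1) = c_3 - c_2$. From the dual sequence $0 \ra \Omega_\piii(1) \ra F^\vee \ra F_1^\vee \ra 0$ and the vanishing of $\tH^i(\Omega_\piii(1))$ for $i \leq 2$ I read off $\tH^i(F_1^\vee) = 0$, $i = 0,\, 1$. Moreover $F_1(-2)$ is a subsheaf of $F(-2)$, so $\tH^0(F_1(-2)) = 0$; and combining the vanishing $\tH^2(F(-2)) = 0$ from Lemma~\ref{L:h2f(-2)=0}(b) with $\tH^i(\text{T}_\piii(-3)) = 0$, $i = 1,\, 2$, I also get $\tH^2(F_1(-2)) = 0$.

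Next I would pin down the rank and set up the target reduction. The cases $r \leq 4$ are excluded directly: $r = 3$ forces $F \simeq \text{T}_\piii(-1)$, contradicting $c_1 = 5$, while for $r = 4$ the line bundle $F_1$ has $c_1 = 4$, hence $F_1 \simeq \sco_\piii(4)$, and twisting $0 \ra F_1 \ra F \ra \text{T}_\piii(-1) \ra 0$ by $\sco_\piii(-2)$ exhibits $\sco_\piii(2)$ as a subsheaf of $F(-2)$, contradicting $\tH^0(F(-2)) = 0$. Thus $r \geq 5$, and it remains to present $F_1$ as an extension $0 \ra (r-5)\sco_\piii \ra F_1 \ra \scf_1(2) \ra 0$ with $\scf_1$ a rank $2$ reflexive sheaf. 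Granting such a presentation, $c(\scf_1(2)) = c(F_1)$ yields $c_1(\scf_1) = 0$, $c_2(\scf_1) = c_2 - 9$, $c_3(\scf_1) = c_3 - c_2$, and twisting the sequence by $\sco_\piii(-2)$ shows $\tH^0(\scf_1)$ to be a quotient of $\tH^0(F_1(-2)) = 0$, so that $\scf_1$ is stable.

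The main obstacle is producing exactly this presentation: finding $r-5$ sections of $F_1$ spanning a trivial subbundle whose cokernel is reflexive of rank $2$. The natural attempt is to take the cokernel of a map $(r-5)\sco_\piii \ra F_1$ given by general sections, which is reflexive of rank $2$ precisely when the degeneracy locus of the sections has codimension $\geq 3$ in $\piii$. The difficulty is that $F_1$, being merely a subbundle of the globally generated bundle $F$, is itself \emph{not} obviously globally generated --- already $\tH^1(F_1(-1))$ injects into $\tH^1(F(-1))$, which is in general non-zero --- so neither the supply of sections nor the codimension of the degeneracy locus is automatic, and the ``general sections drop rank in codimension $\geq 3$'' principle must be underpinned by extra cohomology. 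I would therefore spend the bulk of the argument establishing enough positivity of $F_1$ (global generation in codimension $\leq 2$), drawing on the estimates of Lemma~\ref{L:h2f(-2)=0} and, through $\tH^2(F(-3)) \simeq \tH^2(G(-1)) \neq 0$, on the spectrum of the rank $3$ bundle $G$ attached to $F$ in the proof of that lemma (the non-stable possibilities being governed by Prop~\ref{P:fprimunstable}); a useful check along the way is the snake-lemma comparison of $F \ra \text{T}_\piii(-1)$ with the extension $0 \ra (r-3)\sco_\piii \ra F \ra G(2) \ra 0$, which already identifies the locally free case $c_3 = c_2$. Once the reflexive rank $2$ quotient is in place, reflexivity makes $c_3(\scf_1) = c_3 - c_2 \geq 0$ record the length of its non-locally-free locus, with $c_3 = c_2$ the exact case in which $\scf_1$ is a vector bundle.
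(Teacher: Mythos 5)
Your set-up is correct and matches the paper's: Serre duality turns $\tH^2(F(-3))\neq 0$ into a non-zero class in $\tH^1(F^\vee(-1))$, Lemma~\ref{L:erat(-1)} gives the epimorphism $F\ra \text{T}_\piii(-1)$ with kernel $F_1$ of rank $r-3$, the Chern class computation is right, and your exclusion of $r\leq 4$ (via $F_1\simeq\sco_\piii(4)$ contradicting $\tH^0(F(-2))=0$) is essentially the paper's one-line argument. But the heart of the lemma --- producing the presentation $0\ra (r-5)\sco_\piii\ra F_1\ra\scf_1(2)\ra 0$ with $\scf_1$ reflexive of rank $2$ --- is exactly the step you leave open, and the substitute strategy you sketch (establishing global generation of $F_1$ in codimension $\leq 2$ so that general sections of $F_1$ degenerate in codimension $\geq 3$) is neither carried out nor what is needed: nothing in the hypotheses gives you such positivity of $F_1$, and the paper never proves anything of the sort.

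The missing idea is to take the trivial subsheaf of $F_1$ not from sections of $F_1$ chosen on their own, but from $r-1$ \emph{general sections of $F$ itself}, which is globally generated: one chooses a general $(r-1)$-dimensional subspace $W\subseteq\tH^0(F)$, so that $0\ra W\otimes\sco_\piii\ra F\ra\sci_Y(5)\ra 0$ with $Y$ a nonsingular connected curve, and simultaneously (by genericity) $W$ surjects onto $\tH^0(\text{T}_\piii(-1))=k^4$. The kernel of $W\ra k^4$ is $(r-5)$-dimensional and lands in $\tH^0(F_1)$; the Snake Lemma applied to the resulting map of short exact sequences identifies the cokernel $\sce_1$ of $(r-5)\sco_\piii\ra F_1$ as an extension $0\ra\sco_\piii(-1)\ra\sce_1\ra\sci_Y(5)\ra 0$. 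Applying $\sch om_{\sco_\piii}(\ast,\sco_\piii(-1))$ to this extension yields a connecting map $\delta\colon\sco_\piii\ra\omega_Y(-2)$ whose cokernel is $\sce xt^1_{\sco_\piii}(\sce_1,\sco_\piii(-1))$; the vanishing $\tH^0(F_1^\vee(-1))=0$ forces $\delta\neq 0$, and since $Y$ is integral the cokernel of $\delta$ has finite support, whence $\sce_1$ is reflexive of rank $2$ and $\scf_1:=\sce_1(-2)$ is the sheaf you want (stability then follows from $\tH^0(F_1(-2))=0$ as you say). Without this mechanism --- or some replacement for it --- your argument does not establish the conclusion.
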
 

We recall, in connection with the above lemma, that the stable rank 2 
reflexive sheaves $\scf$ on $\piii$ with $c_1(\scf) = 0$ and $c_2(\scf) \leq 
3$ are studied by Chang in \cite{ch2}. 

\begin{proof}[Proof of Lemma~\emph{\ref{L:h2f(-3)neq0}}] 
According to Lemma~\ref{L:erat(-1)}, there exists an epimorphism $\e \colon 
F \ra \text{T}_\piii(-1)$. Let $F_1$ be its kernel. It has $c_1(F_1) = 4$ and 
$\tH^0(F_1(-2)) = 0$ hence it must have rank at least 2. One also has 
$\tH^0(F_1^\vee) = 0$.   

Now, if $W$ is a general vector subspace of dimension $r - 1$ of $\tH^0(F)$ 
then one has an exact sequence$\, :$ 
\[
0 \lra W \otimes_k \sco_\piii \lra F \lra \sci_Y(5) \lra 0\, , 
\]
where $Y$ is a nonsingular curve which is \emph{connected} by 
\cite[Lemma~1.1]{acm3}. Since $W$ is general, one can also assume that 
$\tH^0(\e)$ maps $W$ surjectively onto $\tH^0(\text{T}_\piii(-1))$ (the map 
$\tH^0(\e) \colon \tH^0(F) \ra \tH^0(\text{T}_\piii(-1))$ is surjective  
because the only vector subspace of $\tH^0(\text{T}_\piii(-1))$ generating 
$\text{T}_\piii(-1)$ globally is $\tH^0(\text{T}_\piii(-1))$). One gets a 
commutative diagram$\, :$ 
\[
\SelectTips{cm}{12}\xymatrix{0\ar[r] & (r-5)\sco_\piii\ar[r]\ar[d] & 
W \otimes \sco_\piii\ar[r]\ar[d] & 4\sco_\piii\ar[r]\ar[d] & 0\\ 
0\ar[r] & F_1\ar[r] & F\ar[r]^-\e & \text{T}_\piii(-1)\ar[r] & 0}
\]
If $\sce_1$ is the cokernel of $(r-5)\sco_\piii \ra F_1$ then it sits into an 
exact sequence$\, :$ 
\[
0 \lra \sco_\piii(-1) \lra \sce_1 \lra \sci_Y(5) \lra 0\, . 
\]
Applying $\sch om_{\sco_\piii}(\ast , \sco_\piii(-1))$, one gets an exact 
sequence$\, :$ 
\[
0 \lra \sco_\piii(-6) \lra \sce_1^\vee(-1) \lra \sco_\piii \overset{\delta}{\lra} 
\omega_Y(-2) \lra \sce xt_{\sco_\piii}^1(\sce_1 , \sco_\piii(-1)) \lra 0\, . 
\]
One cannot have $\delta = 0$ because, otherwise, $\sce_1^\vee(-1) \simeq 
\sco_\piii \oplus \sco_\piii(-6)$ and this would contradict the fact that 
$\tH^0(F_1^\vee(-1)) = 0$. Since $\delta \neq 0$ and since $Y$ is a connected 
nonsingular curve one gets that the support of $\sce xt_{\sco_\piii}^1(\sce_1 , 
\sco_\piii(-1))$ is 0-dimensional or empty which implies that $\sce_1$ is 
reflexive (of rank 2). $\scf_1 := \sce_1(-2)$ has the Chern classes from the 
statement and $\tH^0(\scf_1) = 0$ (i.e., $\scf_1$ is stable) because  
$\tH^0(F_1(-2)) = 0$. 
\end{proof}  

\begin{remark}\label{R:h2f(-3)neq0} 
As we saw in the above proof, the map 
$\tH^0(F) \ra \tH^0(\text{T}_\piii(-1))$ is surjective. One deduces easily an 
exact sequence$\, :$ 
\[
0 \lra \sco_\piii(-1) \xra{\left(\begin{smallmatrix} u\\ 
v\end{smallmatrix}\right)} F_1 \oplus 4\sco_\piii \lra F \lra 0\, , 
\]
with $v$ defined by 4 linearly independent linear forms. It follows that if 
the multiplication map $\tH^0(F_1) \otimes \tH^0(\sco_\piii(1)) \ra 
\tH^0(F_1(1))$ is surjective then, up to an automorphism of $F_1 \oplus 
4\sco_\piii$, one can assume that $u = 0$ hence $F \simeq \text{T}_\piii(-1) 
\oplus F_1$.  
\end{remark} 

\begin{prop}\label{P:c2geq9} 
Let $F$ be a globally generated vector bundle on $\piii$ with $c_1 = 5$ and 
such that ${\fam0 H}^i(F^\vee) = 0$, $i = 0,\, 1$, and ${\fam0 H}^0(F(-2)) = 
0$. Then $c_2 \geq 9$ and if $c_2 = 9$ then $c_3 = 5$ and one of the following 
holds$\, :$ 
\begin{enumerate}
\item[(i)] $F \simeq \Omega_\piii(3)$$\, ;$ 
\item[(ii)] $F \simeq \sco_\piii(1) \oplus N(2)$, where $N$ is a 
nullcorrelation bundle. 
\end{enumerate}
\end{prop}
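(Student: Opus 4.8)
The plan is to reduce, exactly as in the proof of Lemma~\ref{L:h2f(-2)=0}(b), to the normalized rank~$3$ bundle associated with $F$ and then split into a stable and an unstable case. First I would observe that $\text{rk}\, F \geq 3$. The case $\text{rk}\, F = 1$ is impossible, since $F = \sco_\piii(5)$ has $\tH^0(F(-2)) \neq 0$. For $\text{rk}\, F = 2$ one has $c_3 = 0$, so the congruence $c_3 \equiv c_1c_2 \pmod 2$ forces $c_2$ even; a general section gives $0 \to \sco_\piii \to F \to \sci_Y(5) \to 0$ with $Y$ smooth and connected (connectedness from $\tH^1(F^\vee) = \tH^1(\sci_Y) = 0$), and $F(-3)$ is a stable rank~$2$ bundle with $c_1 = -1$ and $c_2(F(-3)) = c_2 - 6$ (stability because $\tH^0(F(-3)) \subseteq \tH^0(F(-2)) = 0$). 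By the restriction theorem of Barth quoted in the proof of Lemma~\ref{L:h2f(-2)=0}, $F(-3)$ is either a nullcorrelation bundle, giving $c_2 = 7$ which is excluded by parity, or restricts stably, forcing $c_2 - 6 \geq 2$, i.e. $c_2 \geq 8$. Being even, $c_2 = 8$ would then have to be ruled out by restricting to a general plane and invoking Ellia's classification \cite{e} (equivalently \cite[Prop.~3.6]{acm1}) of globally generated rank~$2$ bundles on $\pii$; this is the one point where the rank~$2$ case needs real care.

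For $\text{rk}\, F = r \geq 3$ I would write, following Lemma~\ref{L:h2f(-2)=0}(b), an exact sequence $0 \to (r-3)\sco_\piii \to F \to G(2) \to 0$ with $G$ of rank~$3$, $c_1(G) = -1$, $c_2(G) = c_2 - 8$, $c_3(G) = c_3 - 2c_2 + 12$, and recall that $r = 3 + \h^2(G(-2))$. If $G$ is stable, Bogomolov's inequality $2\cdot 3\cdot c_2(G) - 2\,c_1(G)^2 \geq 0$ gives $6\,c_2(G) \geq 2$, whence $c_2(G) \geq 1$ and $c_2 \geq 9$. In the equality case $c_2(G) = 1$ the restriction theorem of Schneider \cite{sch} (again quoted in the proof of Lemma~\ref{L:h2f(-2)=0}) identifies the unique stable such bundle as $G \simeq \Omega_\piii(1)$; then $\h^2(G(-2)) = \h^2(\Omega_\piii(-1)) = 0$ by Bott's formula, so $r = 3$ and $F \simeq G(2) = \Omega_\piii(3)$, which is item~(i). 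A direct Chern-class computation gives $c_3(\Omega_\piii(3)) = 5$.

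If $G$ is not stable I would apply Prop.~\ref{P:fprimunstable}. In each of its three cases the relevant rank~$2$ bundle $M$ has $c_1(M) = 0$ and $\tH^0(M) = 0$, hence is stable, and therefore $c_2(M) \geq 1$ by Bogomolov's inequality (a stable rank~$2$ bundle with $c_1 = c_2 = 0$ cannot exist). In case~(i) this reads $c_2(M) = c_2 - 8 \geq 1$, i.e. $c_2 \geq 9$; case~(iii) reads $c_2(M) = c_2 - 9 \geq 1$, i.e. $c_2 \geq 10$; and case~(ii) has $c_2 = 12$. Thus $c_2 = 9$ can occur only in case~(i) with $c_2(M) = 1$, forcing $M = N$ a nullcorrelation bundle; here $c_3 = c_2 - 4 = 5$, and the extension $0 \to N(2) \to F \to \sco_\piii(1) \to 0$ splits because $\text{Ext}^1(\sco_\piii(1), N(2)) = \tH^1(N(1)) = 0$ (from the defining sequence of $N$ and Bott vanishing). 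This yields item~(ii), $F \simeq \sco_\piii(1) \oplus N(2)$.

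Combining the two cases gives $c_2 \geq 9$, and the equality analysis produces exactly items~(i) and~(ii), both with $c_3 = 5$, as required. I expect the main obstacle to be not the lower bound itself—which drops out of Bogomolov's inequality together with Prop.~\ref{P:fprimunstable}—but the precise handling of equality: namely showing that $c_2(G) = 1$ forces $G \simeq \Omega_\piii(1)$ and $r = 3$ in the stable case, that $M$ must be a nullcorrelation bundle in the unstable case, and that the resulting extensions split cleanly. The secondary delicate point, as noted above, is the exclusion of the rank~$2$ bundles with $c_2 = 8$.
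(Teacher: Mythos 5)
Your rank $\geq 3$ argument is essentially the paper's: the same split into $G$ stable (Schneider's theorem giving $c_2(G)\geq 1$ with equality only for $\Omega_\piii(1)$, hence $r=3$ and $F\simeq\Omega_\piii(3)$) versus $G$ unstable (Prop.~\ref{P:fprimunstable}, with $c_2(M)\geq 1$ and the splitting $F\simeq\sco_\piii(1)\oplus N(2)$ from $\tH^1(N(1))=0$). That part is fine.

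The genuine gap is in the rank $2$ case, precisely at the point you flag yourself: excluding $c_2=8$. Writing $F=M(3)$ with $M$ stable of $c_1(M)=-1$, $c_2(M)=c_2-6$, parity and stability give $c_2(M)\geq 2$, i.e.\ $c_2\geq 8$; but your proposed route for killing $c_2(M)=2$ --- restrict to a general plane and invoke Ellia's classification --- cannot work. The bundles $M(3)$ with $M$ stable, $c_1(M)=-1$, $c_2(M)=2$ \emph{are} globally generated with $c_1=5$, $c_2=8$ (they occur in Prop.~\ref{P:h0f(-2)neq0} of the paper), so neither they nor their plane restrictions are excluded by any classification of globally generated bundles; moreover $\tH^0(F(-2))=0$ does not pass to $\tH^0(F_H(-2))=0$ for a hyperplane $H$, so you cannot feed the hypothesis into a restriction argument. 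What actually rules this case out is a specific structural fact about these bundles, due to Hartshorne--Sols and Manolache: every stable rank $2$ bundle $M$ on $\piii$ with $c_1(M)=-1$, $c_2(M)=2$ has $\tH^0(M(1))\neq 0$, which contradicts the hypothesis $\tH^0(F(-2))=0$ directly. This input is missing from your proposal, and without it the bound $c_2\geq 9$ is not established in rank $2$. (A minor further slip in the same passage: Barth's restriction theorem and the nullcorrelation exception concern $c_1=0$, not $c_1=-1$, so the dichotomy you invoke there is not the right statement; the correct elementary inputs are stability, i.e.\ $c_2(M)\geq 1$, together with the parity $c_2(M)\equiv 0\pmod 2$ coming from $c_3=0$.)
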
 

\begin{proof}
Assume, firstly, that $F$ has rank 2. In this case, $F = M(3)$ where $M$ is a 
rank 2 vector bundle with $c_1(M) = -1$ and $\tH^0(M(1)) = 0$. In particular, 
$M$ is stable (i.e., $\tH^0(M) = 0$). It follows that $c_2(M) \geq 2$ (use 
\cite[Cor.~3.3]{ha} and the fact that $c_2(M) \equiv 0 \pmod{2}$). But, as 
shown by Hartshorne and Sols \cite{hs} and by Manolache \cite{ma}, if 
$c_2(M) = 2$ then $\tH^0(M(1)) \neq 0$. It remains that $c_2(M) \geq 4$ hence 
$c_2 = c_2(M) + 3c_1(M) + 3^2 \geq 10$. 

If $\text{rk}\, F \geq 3$ then, according to Prop.~\ref{P:fprimunstable}, one 
has to consider three cases$\, :$ 

\vskip2mm 

\noindent
{\bf Case 1.}\quad $F$ \emph{as in Prop.}~\ref{P:fprimunstable}(i). 

\vskip2mm 

\noindent
In this case, $c_2 = c_2(M) + 8$. Since $M$ is stable it follows 
that $c_2(M) \geq 1$ hence $c_2 \geq 9$. Moreover, if $c_2 = 9$, i.e., 
if $c_2(M) = 1$, then $M$ is isomorphic to a nullcorrelation bundle 
$N$. Since $\tH^1(N(1)) = 0$ it follows that $F \simeq \sco_\piii(1) 
\oplus N(2)$. 

\vskip2mm 

\noindent
{\bf Case 2.}\quad $F$ \emph{as in Prop.}~\ref{P:fprimunstable}(iii). 

\vskip2mm 

\noindent
In this case, $c_2 = c_2(M) + 9 \geq 10$. 

\vskip2mm 

\noindent
{\bf Case 3.}\quad \emph{The rank} $3$ \emph{vector bundle} $G$ 
\emph{associated to} $F$ \emph{in the statement of 
Prop.}~\ref{P:fprimunstable} \emph{is stable}. 

\vskip2mm

\noindent
The first two Chern classes of $G$ are $c_1(G) = -1$, $c_2(G) = c_2 - 8$. 
According to the results of 
Schneider \cite{sch}, $c_2(G) \geq 1$ and if $c_2(G) = 1$ then $G \simeq 
\Omega_\piii(1)$. One deduces that $c_2 \geq 9$ and if $c_2 = 9$ then  
$G \simeq \Omega_\piii(1)$. The formula $r = 3 + \h^2(G(-2))$ 
(deduced at the beginning of the proof of Prop.~\ref{P:fprimunstable}) implies 
that, in the case $c_2 = 9$, $r = 3$ hence $F = G(2) \simeq \Omega_\piii(3)$.  
\end{proof} 

\begin{remark}\label{R:fprimstable} 
Let $F$ be a globally generated vector bundle of rank $r \geq 3$ on $\piii$, 
with Chern classes $c_1 = 5$, $10 \leq c_2 \leq 12$, $c_3$, such that 
$\tH^i(F^\vee) = 0$, $i = 0,\, 1$, and $\tH^0(F(-2)) = 0$. According to 
Prop.~\ref{P:fprimunstable}, except for the cases stated in 
the conclusion of that proposition, $F$ can be realized as an extension$\, :$ 
\[
0 \lra (r - 3)\sco_\piii \lra F \lra G(2) \lra 0\, , 
\]
for some \emph{stable} rank 3 vector bundle $G$ with Chern classes $c_1(G) = 
-1$, $c_2(G) = c_2 - 8$, $c_3(G) = c_3 - 2c_2 + 12$. Moreover, $r = 3 + 
\h^2(G(-2))$ (as we saw at the beginning of the proof of 
Prop.~\ref{P:fprimunstable}). The intermediate cohomology of $G$ can be 
described, in part, by a sequence of integers $(k_1,\, k_2,\, \ldots ,\, k_m)$,
$k_1 \geq \cdots \geq k_m$, called the \emph{spectrum} of $G$ and denoted by 
$k_G$, according to the following formulae$\, :$ 
\begin{enumerate} 
\item[(i)] $\h^1(G(l)) = \h^0(\bigoplus_{i=1}^m\sco_\pj(k_i + l + 1))$ for 
$l \leq -1$$\, ;$ 
\item[(ii)] $\h^2(G(l)) = \h^1(\bigoplus_{i=1}^m\sco_\pj(k_i + l + 1))$ for 
$l \geq -2$.   
\end{enumerate} 
Moreover, one has$\, :$ 
\begin{enumerate} 
\item[(iii)] $m = c_2(G) = c_2 - 8$ and $-2\sum k_i = c_3(G) + c_2(G) = 
c_3 - c_2 + 4$$\, ;$ 
\item[(iv)] If $k \geq 0$ occurs in the spectrum then $0,\, 1, \ldots ,\, k$ 
occur too$\, ;$ 
\item[(v)] If $k \leq -1$ occurs in the spectrum then $-1,\, -2, \ldots ,\, k$ 
occur too$\, ;$ 
\item[(vi)] If $0$ does not occur in the spectrum then $-1$ occurs at least  
twice$\, ;$ 
\item[(vii)] If $-1 \geq k_{i-1} > k_i > k_{i+1}$ for some $i$ with $2 \leq i 
\leq m-1$ then $k_{i+1} > k_{i+2} > \cdots > k_m$ and $F$ has an \emph{unstable 
plane} $H$ of order $-k_m$, that is, $\tH^0(F_H^\vee(k_m)) \neq 0$ and 
$\tH^0(F_H^\vee(k_m-1)) = 0$.
\end{enumerate} 
Proofs of the above facts can be found in the papers of Okonek and Spindler 
\cite{oks1}, \cite{oks2} and of Coand\u{a} \cite{co1}. These proofs use the 
approach of Hartshorne \cite{ha}, \cite{ha2}, who considered the case of 
stable rank 2 reflexive sheaves on $\piii$. Compact, self-contained arguments 
can be also found in \cite[Appendix~B]{acm3}.    
\end{remark} 

\begin{lemma}\label{L:spectrumg} 
Let $G$ be a stable rank $3$ vector bundle on $\piii$ with $c_1(G) = -1$, 
$c_2(G) = m$, and let $k_G = (k_i)_{1 \leq i \leq m}$ be its spectrum. Assume 
that $2 \leq m \leq 4$ and that $G(2)$ is globally generated. Then 
$1 \geq k_1 \geq \cdots \geq k_m \geq -2$. 
\end{lemma}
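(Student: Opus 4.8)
The plan is to reduce the two inequalities to the single pair of cohomological vanishings $\tH^1(G(-3)) = 0$ and $\tH^2(G) = 0$, and then to deduce these from Lemma~\ref{L:h2f(-2)=0} by attaching to $G$ the auxiliary bundle $F := P(P(G(2)))$.

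First I would translate the claim into cohomology using the spectrum formulae of Remark~\ref{R:fprimstable}. Formula (i), evaluated at $l = -3$, gives $\h^1(G(-3)) = \h^0(\bigoplus_{i=1}^m \sco_\pj(k_i - 2)) = \sum_i \max(k_i - 1, 0)$, so that $\tH^1(G(-3)) = 0$ is equivalent to $k_1 \leq 1$. Similarly formula (ii), evaluated at $l = 0$, gives $\h^2(G) = \h^1(\bigoplus_{i=1}^m \sco_\pj(k_i + 1)) = \sum_i \max(-k_i - 2, 0)$, so that $\tH^2(G) = 0$ is equivalent to $k_m \geq -2$. Since the spectrum is non-increasing by definition, these two vanishings together yield the desired chain $1 \geq k_1 \geq \cdots \geq k_m \geq -2$.

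Next I would produce the bundle $F$. Because $G$ is stable with $c_1(G) = -1$, its dual $G^\vee$ is stable with $c_1 = 1$, and $G^\vee(-2)$ has negative slope; hence $\h^0((G(2))^\vee) = \h^0(G^\vee(-2)) = 0$. Remark~\ref{R:f} then furnishes an exact sequence $0 \to s\sco_\piii \to F \to G(2) \to 0$ with $s := \h^1((G(2))^\vee)$, where $F = P(P(G(2)))$ is globally generated, satisfies $\tH^i(F^\vee) = 0$ for $i = 0,\, 1$, and carries the Chern classes of $G(2)$, namely $c_1(F) = 5$ and $c_2(F) = c_2(G(2)) = m + 8 \leq 12$ (using $m \leq 4$). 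Moreover $F$ has rank $\geq 3$. Twisting the sequence by $-2$ and using $\h^0(G) = 0$ (stability, since $c_1(G) < 0$) gives $\tH^0(F(-2)) = 0$, so $F$ meets all the hypotheses of Lemma~\ref{L:h2f(-2)=0}.

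Finally I would run the cohomology through this sequence. Since $\h^1(\sco_\piii(t)) = \h^2(\sco_\piii(t)) = 0$ for every $t$, twisting $0 \to s\sco_\piii \to F \to G(2) \to 0$ by $t$ yields $\tH^1(F(t)) \izo \tH^1(G(t+2))$ for all $t$, and, whenever in addition $\h^3(\sco_\piii(t)) = 0$ (i.e. $t \geq -3$), $\tH^2(F(t)) \izo \tH^2(G(t+2))$. Taking $t = -5$ gives $\tH^1(G(-3)) \simeq \tH^1(F(-5)) = 0$ by Lemma~\ref{L:h2f(-2)=0}(a), whence $k_1 \leq 1$; taking $t = -2$ gives $\tH^2(G) \simeq \tH^2(F(-2)) = 0$ by Lemma~\ref{L:h2f(-2)=0}(b), whence $k_m \geq -2$. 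The only genuinely non-mechanical step is the decision to replace $G$ by $F = P(P(G(2)))$, which is precisely what makes Lemma~\ref{L:h2f(-2)=0} applicable; once that reduction is in place the argument is routine bookkeeping with the spectrum formulae and with the vanishing of the intermediate cohomology of $\sco_\piii(t)$.
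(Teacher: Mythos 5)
Your proof is correct and takes essentially the same route as the paper: the bundle $F = P(P(G(2)))$ you build via Remark~\ref{R:f} is exactly the universal extension $0 \lra \tH^1(G^\vee(-2))^\vee \otimes_k \sco_\piii \lra F \lra G(2) \lra 0$ that the paper's proof starts from, and both arguments then extract $k_1 \leq 1$ and $k_m \geq -2$ from the vanishings $\tH^1(F(-5)) = 0$ and $\tH^2(F(-2)) = 0$ supplied by Lemma~\ref{L:h2f(-2)=0}. The only difference is that you spell out the verification of the hypotheses of that lemma and the spectrum bookkeeping, which the paper leaves implicit.
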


\begin{proof} 
Consider the universal extension$\, :$ 
\[
0 \lra \tH^1(G^\vee(-2))^\vee \otimes_k \sco_\piii \lra F \lra G(2) \lra 0\, . 
\]
$F$ is a globally generated vector with $c_1 = 5$, $10 \leq c_2 \leq 12$,  
and such that $\tH^i(F^\vee) = 0$, $i = 0,\, 1$, and $\tH^0(F(-2)) = 0$. 
It follows, from Lemma~\ref{L:h2f(-2)=0}, that $\tH^1(F(-5)) = 0$ and 
$\tH^2(F(-2)) = 0$ hence $\tH^1(G(-3)) = 0$ and $\tH^2(G) = 0$. Using the 
definition of the spectrum one gets the conclusion of the lemma.  
\end{proof} 

\begin{lemma}\label{L:impossiblespectra} 
Let $G$ be a stable rank $3$ vector bundle on $\piii$ with $c_1(G) = -1$, 
$2 \leq c_2(G) \leq 3$, and such that $G(2)$ is globally generated. Then 
$G$ cannot have any of the following spectra$\, :$ $(1 , 0 , -1)$, 
$(0 , -1 , -2 , -2)$, $(1 , 0 , -1 , -2)$, $(1 , 0 , -1 , -1)$.   
\end{lemma}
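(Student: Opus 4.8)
The plan is to suppose, toward a contradiction, that such a $G$ exists for one of the four spectra, and to convert the global generation of $G(2)$ into the globally generated bundle $F$ of Lemma~\ref{L:spectrumg}. Concretely, the universal extension
\[
0 \lra \tH^2(G(-2)) \otimes_k \sco_\piii \lra F \lra G(2) \lra 0
\]
gives a globally generated $F$ with $\tH^0(F(-2)) = 0$, $\tH^i(F^\vee) = 0$ ($i = 0, 1$), Chern classes $c_2(F) = c_2(G) + 8$ and $c_3(F) = c_3(G) + 2c_2(F) - 12$, and rank $r = 3 + \h^2(G(-2))$. Reading the cohomology of $G$ off the spectrum via Remark~\ref{R:fprimstable}(i),(ii) and transporting it to $F$ through $\tH^i(F(l)) \simeq \tH^i(G(l+2))$ ($i = 1, 2$), together with the relation $-2\sum k_i = c_3(G) + c_2(G)$, I would first record the numerics: $(1,0,-1)$ gives $c_2(F) = 11$, $c_3(F) = 7$, $r = 4$; while $(1,0,-1,-1)$, $(1,0,-1,-2)$ and $(0,-1,-2,-2)$ all have $c_2(F) = 12$, with $c_3(F) = 10, 12, 18$ and $r = 5, 6, 8$ respectively.

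The four cases then split according to the value $\tH^2(F(-3)) \simeq \tH^2(G(-1))$, which by Remark~\ref{R:fprimstable}(ii) equals $\sum_i \h^1(\sco_\pj(k_i))$. For $(1,0,-1,-2)$ and $(0,-1,-2,-2)$ this is $1$ and $2$, so $\tH^2(F(-3)) \neq 0$ and Lemma~\ref{L:h2f(-3)neq0} applies: $F$ sits in an extension of $\text{T}_\piii(-1)$ by a bundle $F_1$, and $F_1$ is in turn built from a \emph{stable} rank $2$ reflexive sheaf $\scf_1$ with $c_1(\scf_1) = 0$, $c_2(\scf_1) = c_2(F) - 9 = 3$ and $c_3(\scf_1) = c_3(F) - c_2(F)$, i.e. $c_3(\scf_1) = 0$ in the first case and $c_3(\scf_1) = 6$ in the second. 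I would then reconstruct the intermediate cohomology (equivalently the spectrum) that $F$ \emph{must} have, given this forced structure, by chasing the two defining extensions together with the admissible spectra of $\scf_1$ (whose top term is $\leq 0$ because $\tH^1(\scf_1(-2)) = 0$, and whose entries are constrained by $c_2(\scf_1) = 3$ and $c_3(\scf_1)$), and show that the result cannot coincide with $(1,0,-1,-2)$, resp. $(0,-1,-2,-2)$. This over-determination is the source of the contradiction in these two cases.

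For $(1,0,-1)$ and $(1,0,-1,-1)$ one has $\tH^2(F(-3)) = 0$, so no $\text{T}_\piii(-1)$-quotient is available (the hypothesis of Lemma~\ref{L:erat(-1)} fails, since $\tH^1(F^\vee(-1)) \simeq \tH^2(F(-3))^\vee = 0$) and a direct cohomological count is needed. Here the key input is that $c_3(F) = 7$ (resp. $10$) does not lie in the range forcing many sections, so Lemma~\ref{L:h0f(-1)c2=11} (resp. Lemma~\ref{L:h0f(-1)c2=12}) yields $\h^0(F(-1)) = \h^0(G(1)) \leq 1$; combined with the formulae of Lemma~\ref{L:h2f(-2)=0}(b) this pins down $\h^1(F(l))$ for all $l \leq -1$. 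With the full Beilinson--Horrocks table of $G(1)$ (equivalently of $F$) in hand, I would run the contraction calculus of Definition~\ref{D:contraction}: global generation of $G(2)$ forces a differential in the monad to be an epimorphism (or some map a locally split monomorphism), and Lemma~\ref{L:vetter}/Corollary~\ref{C:sasakura} show this is impossible for the prescribed cohomology, exactly as in the monad obstructions used in Section~\ref{S:c1=5onp4}.

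The hard part, as flagged, is the bookkeeping in the two cases with $\tH^2(F(-3)) \neq 0$: one cannot finish with a single numerical inequality but must track the intermediate cohomology of $F$ through both extensions defining $F_1$ and $\scf_1(2)$ and rule out every admissible spectrum of $\scf_1$. The $(0,-1,-2,-2)$ case is the most delicate, since $F$ then has rank $8$ and carries the most intermediate cohomology, so the candidate spectra of $\scf_1$ must be determined precisely before the clash with $c_3(\scf_1) = 6$ becomes visible.
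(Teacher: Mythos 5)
Your proposal is a plan rather than a proof: in each of the four cases the decisive step is deferred (``show that the result cannot coincide with\dots'', ``show this is impossible for the prescribed cohomology''), and the routes you choose are considerably harder than what is actually needed. The key idea you are missing is the following elementary observation, which disposes of the three spectra beginning with $1,0,-1$ in one line. Since $c_2(F)\in\{11,12\}$, one has $\tH^0(F_H(-3))=0$ for \emph{every} plane $H\subset\piii$ (from the classification on $\pii$), so multiplication by every non-zero linear form $h\colon\tH^1(F(-4))\ra\tH^1(F(-3))$ is injective; the Bilinear Map Lemma then forces $\h^1(F(-3))\geq\h^1(F(-4))+3$ whenever $\tH^1(F(-4))\neq 0$. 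For each of $(1,0,-1)$, $(1,0,-1,-2)$, $(1,0,-1,-1)$ the spectrum gives $\h^1(F(-4))=\h^1(G(-2))=1$ and $\h^1(F(-3))=\h^1(G(-1))=3<4$, an immediate contradiction. Your alternative for $(1,0,-1)$ and $(1,0,-1,-1)$ --- Beilinson monads plus the contraction calculus --- is not set up on $\piii$ in this paper (Lemma~\ref{L:sasakura} and Corollary~\ref{C:sasakura} live on $\piv$), and it is not at all clear that the cohomology table you would obtain yields an obstruction; you give no indication of which map would fail to be an epimorphism.

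Your treatment of the two spectra with $\tH^2(F(-3))\neq 0$ is also problematic. For $(1,0,-1,-2)$ your route through Lemma~\ref{L:h2f(-3)neq0} produces a stable rank~$2$ sheaf $\scf_1$ with $c_1=0$, $c_2=3$, $c_3=0$, i.e.\ a bundle; but an instanton of charge $3$ has spectrum $(0,0,0)$, which gives $\h^1(\scf_1(-1))=3$ and hence $\h^1(F(-3))=3$, exactly matching what the spectrum of $G$ predicts --- so the ``over-determination'' you invoke does not materialize, and that case would not close by your method. For $(0,-1,-2,-2)$ there is likewise no ``clash with $c_3(\scf_1)=6$'': stable rank~$2$ reflexive sheaves with $c_1=0$, $c_2=3$, $c_3=6$ exist (the bound is $c_3\leq 8$), so the contradiction must come from a finer cohomological comparison that you do not carry out. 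The paper handles this last spectrum quite differently: from $\chi(G(1))=2$ one gets $\h^0(G(1))\geq 2$, then the liaison argument of Lemma~\ref{L:h0f(-1)c2=12} produces a stable reflexive sheaf $\scg$ with $c_1=-1$, $c_2=2$, $c_3=2$, and Chang's description of such sheaves yields $\tH^1(\scg(1))=0$, whence $\tH^1(\sci_Y(2))=0$ by liaison and finally $\tH^1(G(-1))=0$, contradicting the spectrum. You would need to supply an argument of comparable precision to complete your version.
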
 

\begin{proof} 
We make, firstly, the following observation$\, :$ let $F$ be a globally 
generated vector bundle on $\piii$ with $c_1(F) = 5$, $c_2(F) \in \{11,\, 
12\}$. It follows, from the proof of \cite[Prop.~3.6]{acm1}, that 
$\tH^0(F_H(-3)) = 0$, for every plane $H \subset \piii$. Applying the 
Bilinear Map Lemma \cite[Lemma~5.1]{ha} to the multiplication map 
$\tH^1(F(-4)) \otimes \tH^0(\sco_\piii(1)) \ra \tH^1(F(-3))$ one deduces that 
if $\tH^1(F(-4)) \neq 0$ then $\h^1(F(-3)) \geq \h^1(F(-4)) + 3$.   

Now, if $G$ has spectrum $(1 , 0 , -1)$ then $\h^1(G(-2)) = 1$ and $\h^1(G(-1)) 
= 3$ hence, according to the above observation (applied to $F := G(2)$), 
$G(2)$ cannot be globally generated. 

The spectra $(1 , 0 , -1 , -2)$ and $(1 , 0 , -1 , -1)$ can be eliminated 
similarly. 

Finally, assume, by contradiction, that $G$ has spectrum $(0 , -1 , -2 , -2)$ 
and that $G(2)$ is globally generated. The Chern classes of $G$ are 
$c_1(G) = -1$, $c_2(G) = 4$, $c_3(G) = 6$ hence, by Riemann-Roch, 
$\chi(G(1)) = 2$. It follows that $\h^0(G(1)) \geq 2$. As in the proof of 
Lemma~\ref{L:h0f(-1)c2=12}, one has exact sequences$\, :$ 
\begin{gather*}
0 \lra 2\sco_\piii \lra G(2) \lra \sci_Y(5) \lra 0\, ,\\
0 \lra \sco_\piii(-2) \lra \scg \lra \sci_{Y^\prime}(1) \lra 0\, , 
\end{gather*}    
where $Y$ is a nonsingular connected curve of degree 12, $Y^\prime$ is a 
locally Cohen-Macaulay curve of degree 4, locally complete intersection 
except at finitely many points, directly linked to $Y$ by a complete 
intersection of type $(4 , 4)$, and $\scg$ is a stable reflexive sheaf with 
$c_1(\scg) = -1$, $c_2(\scg) = 2$, $c_3(\scg) = c_3(G(2)) - 16 = 2$. According 
to \cite[Lemma~2.4]{ch2}, $\scg$ can be realized as an extension$\, :$ 
\[
0 \lra \sco_\piii(-1) \lra \scg \lra \sci_Z \lra 0\, , 
\] 
where $Z$ is either the union of two disjoint lines or a divisor of the form 
$2L$ on a nonsingular quadric surface, $L$ being a line. It follows that 
$\tH^1(\scg(1)) = 0$ hence $\tH^1(\sci_{Y^\prime}(2)) = 0$. But, by the well 
known behaviour of the Hartshorne-Rao module $\tH^1_\ast(\ast)$ under liaison, 
$\tH^1(\sci_{Y^\prime}(2)) \simeq \tH^1(\sci_Y(2))^\vee$ hence $\tH^1(\sci_Y(2)) 
= 0$. This implies that $\tH^1(G(-1)) = 0$ which \emph{contradicts} the 
fact that the spectrum of $G$ is $(0 , -1 , -2 , -2)$. 
\end{proof} 

\begin{lemma}\label{L:h0fH(-3)neq0} 
Let $F$ be a globally generated vector bundle on $\piii$ of rank $r \geq 3$, 
with $c_1 = 5$, $10 \leq c_2 \leq 12$, such that ${\fam0 H}^i(F^\vee) = 0$, 
$i = 0,\, 1$, and ${\fam0 H}^0(F(-2)) = 0$. If there exists a plane $H_0 
\subset \piii$ such that ${\fam0 H}^0(F_{H_0}(-3)) \neq 0$ then $c_2 = 10$, 
$c_3 = 4$ and $F$ is the kernel of an epimorphism $\sco_\piii(3) \oplus 
3\sco_\piii(2) \ra \sco_\piii(4)$. 
\end{lemma}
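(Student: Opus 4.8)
The plan is to prove the statement in three stages: first reduce to $c_2 = 10$, then pin down $c_3$, the rank, and the shape of $F$, and finally produce the asserted resolution. Everything is driven by a single elementary observation. Since $F$ is a vector bundle with $\tH^0(F(-2)) = 0$, also $\tH^0(F(-3)) = 0$ (a section of $F(-3)$ would give, after multiplication by a linear form, a section of $F(-2)$). Hence, writing $h_0 = 0$ for an equation of $H_0$, the exact sequence
\[
0 \lra F(-4) \xra{h_0} F(-3) \lra F_{H_0}(-3) \lra 0
\]
shows that the hypothesis $\tH^0(F_{H_0}(-3)) \neq 0$ \emph{forces} $\tH^1(F(-4)) \neq 0$.

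For the reduction, I would argue as follows. By Prop.~\ref{P:c2geq9} one has $c_2 \geq 9$. If $c_2 = 9$ then, again by Prop.~\ref{P:c2geq9}, $F \simeq \Omega_\piii(3)$ or $F \simeq \sco_\piii(1) \oplus N(2)$ with $N$ a nullcorrelation bundle; in both cases $\tH^1(F(-4)) = 0$ (Bott vanishing for $\tH^1(\Omega_\piii(-1))$, respectively the instanton vanishing $\tH^1(N(-2)) = 0$), contradicting the observation above. If $c_2 \in \{11 , 12\}$ then $\tH^0(F_H(-3)) = 0$ for \emph{every} plane $H$ — this is the fact from the proof of \cite[Prop.~3.6]{acm1} that is recalled in the proof of Lemma~\ref{L:impossiblespectra} — again contradicting the hypothesis. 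Therefore $c_2 = 10$.

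Now I determine $c_3$, the rank, and $F$ itself. With $c_2 = 10$, Prop.~\ref{P:fprimunstable} gives a dichotomy: either $F$ is one of the unstable bundles (i) or (iii) of that proposition (case (ii) needs $c_2 = 12$), or $F \simeq G(2)$ for a \emph{stable} rank $3$ bundle $G$ with $c_1(G) = -1$, $c_2(G) = 2$, $c_3(G) = c_3 - 8$ (Remark~\ref{R:fprimstable}). In the two unstable cases $F$ is built from a rank $2$ bundle $M$ with $\tH^1(M(-2)) = 0$, and a short computation from the defining extensions, using $\tH^1(\sco_\piii(-3)) = 0$ (case (i)) and $\tH^1(\text{T}_\piii(-5)) = \tH^1(\Omega_\piii^2(-1)) = 0$ (case (iii)), yields $\tH^1(F(-4)) = 0$ — impossible. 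Hence $G$ is stable and $\tH^1(F(-4)) = \tH^1(G(-2)) \neq 0$. By Lemma~\ref{L:spectrumg} the spectrum $(k_1 , k_2)$ of $G$ satisfies $1 \geq k_1 \geq k_2 \geq -2$, and formula (i) of Remark~\ref{R:fprimstable} shows that $\tH^1(G(-2)) \neq 0$ forces $k_1 = 1$; the spectrum axioms then force $k_2 = 0$. So $G$ has spectrum $(1 , 0)$, whence $c_3(G) = -4$, i.e. $c_3 = 4$, while $r = 3 + \h^2(G(-2)) = 3$ and $F \simeq G(2)$.

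It then remains to produce the resolution. I would compute the full cohomology of $F$ from Riemann--Roch, the spectrum and Serre duality, the two decisive quantities being $\tH^0(F^\vee(2)) = \tH^0(G^\vee)$ and $\tH^0(F^\vee(3)) = \tH^0(G^\vee(1))$. The expected values $\h^0(G^\vee) = 3$ and $\h^0(G^\vee(1)) = 13$ say that the graded module $\bigoplus_l \tH^0(F^\vee(l))$ is minimally generated by three elements of degree $2$ and one of degree $3$, with a single relation of degree $4$; reading this off (via Beilinson's theorem, applied as in the proof of Prop.~\ref{P:c1=5c2=12onp4}, or via Horrocks' splitting of the intermediate cohomology) gives
\[
0 \lra \sco_\piii(-4) \lra \sco_\piii(-3) \oplus 3\sco_\piii(-2) \lra F^\vee \lra 0 ,
\]
whose dual is the asserted $0 \to F \to \sco_\piii(3) \oplus 3\sco_\piii(2) \to \sco_\piii(4) \to 0$. \textbf{The main obstacle is exactly this last step.} Establishing the values $\h^0(G^\vee) = 3$ and $\h^1(G^\vee) = \h^2(G(-4)) = 0$ is not formal, since the spectrum controls $\tH^1(G(l))$ only for $l \leq -1$ and $\tH^2(G(l))$ only for $l \geq -2$; one must bring in the global generation of $F = G(2)$ (or argue through the curve $Y'$ directly linked to $Y$, in the style of Lemma~\ref{L:h0f(-1)c2=12}) to obtain the remaining vanishings. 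One must then verify that the three maps $F \to \sco_\piii(2)$ together with one map $F \to \sco_\piii(3)$ define a subbundle inclusion whose quotient is \emph{exactly} $\sco_\piii(4)$, i.e. that the associated degeneracy locus is empty — a point which is once more forced by the global generation of $F$.
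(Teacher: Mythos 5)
Your reduction to $c_2 = 10$, your exclusion of the unstable cases of Prop.~\ref{P:fprimunstable} (done by computing $\tH^1(F(-4)) = 0$ there rather than, as the paper does, by noting that $\tH^0(F_{H_0}(-3)) = 0$ in those cases; both work), and your identification of the spectrum $(1,0)$, hence $c_3 = 4$, $r = 3$ and $F = G(2)$, all match the paper's argument and are correct. The problem is the final step, which you yourself flag as "the main obstacle": the conclusion of the lemma is precisely the existence of the resolution $0 \to F \to \sco_\piii(3) \oplus 3\sco_\piii(2) \to \sco_\piii(4) \to 0$, and your proposal does not establish it. You would need $\h^0(G^\vee) = 3$ and $\tH^1(G^\vee) = 0$ (equivalently $\tH^2(G(-4)) = 0$), plus a generation statement for $\tH^0_\ast(F^\vee)$ and the emptiness of a degeneracy locus, none of which you derive; as you note, the spectrum controls $\tH^2(G(l))$ only for $l \geq -2$, so these are genuinely open points in your outline.

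The paper closes this gap without ever dualizing, via the universal extension $0 \to G \to B \to \sco_\piii(2) \to 0$ associated with the one-dimensional space $\tH^1(G(-2))$. Two inputs make $B$ manageable: first, Castelnuovo--Mumford (using $\tH^2(G(-2)) = 0$ and $\tH^3(G(-3)) = 0$) shows $\tH^1_\ast(G)$ is generated in degrees $\leq -1$; second, the multiplication map $\tH^1(G(-2)) \otimes \tH^0(\sco_\piii(1)) \ra \tH^1(G(-1))$ is surjective, because otherwise two independent linear forms would annihilate $\tH^1(G(-2))$ inside $\tH^1_\ast(G)$ and the sequence $0 \ra \sco_\piii(-2) \ra 2\sco_\piii(-1) \ra \sci_L \ra 0$ tensored with $G$ would yield $\tH^0(\sci_L \otimes G) \neq 0$, contradicting stability. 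Together these give $\tH^1(B(-1)) = 0$, so $B$ is $0$-regular; the counts $\h^0(B(-1)) = \h^0(\sco_\piii(1)) - \h^1(G(-1)) = 1$ and $\h^0(B) = \chi(B) = 7$ then force $B \simeq \sco_\piii(1) \oplus 3\sco_\piii$ (an epimorphism from $\sco_\piii(1) \oplus 3\sco_\piii$ onto the rank-$4$ bundle $B$ with the same $c_1$ is an isomorphism), and twisting the universal extension by $\sco_\piii(2)$ gives exactly the asserted presentation of $F = G(2)$. If you want to complete your write-up, this is the missing mechanism; your Riemann--Roch/Beilinson route could in principle be made to work, but you would still have to supply the vanishing $\tH^2(G(-4)) = 0$ and the degeneracy-locus argument, which is more work than the universal-extension device.
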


\begin{proof} 
If $c_2 \in \{11,\, 12\}$ then the proof of \cite[Prop.~3.6]{acm1} shows that 
$\tH^0(F_H(-3)) = 0$ for every plane $H \subset \piii$. Assume, now, that 
$c_2 = 10$. If $M$ is a rank 2 vector bundle on $\piii$ with $\tH^0(M(-1)) = 0$ 
and $\tH^1(M(-2)) = 0$ then $\tH^0(M_H(-1)) = 0$ for every plane $H \subset 
\piii$. It follows that $F$ does not satisfy the hypothesis of 
Prop.~\ref{P:fprimunstable} hence it can be realized as an extension$\, :$ 
\[
0 \lra (r-3)\sco_\piii \lra F \lra G(2) \lra 0\, , 
\] 
for some \emph{stable} rank 3 vector bundle $G$ with $c_1(G) = -1$, $c_2(G) = 
2$. Since $\tH^0(F(-3)) = 0$ and $\tH^0(F_{H_0}(-3)) \neq 0$ one deduces that 
$\tH^1(F(-4)) \neq 0$ hence $\tH^1(G(-2)) \neq 0$. The only possible 
spectrum for $G$ is, therefore, $k_G = (1 , 0)$. It follows that $c_3(G) = -4$ 
hence $c_3 = 4$ and, since $\h^2(G(-2)) = 0$, $r = 3$, i.e., $F = G(2)$ (look  
at the beginning of the proof of Prop.~\ref{P:fprimunstable}). 

Now, one has $\tH^1(G(l)) = 0$ for $l \leq -3$, $\h^1(G(-2)) = 1$ and 
$\h^1(G(-1)) = 3$. Since $\tH^2(G(-2)) = 0$ and $\tH^3(G(-3)) \simeq 
\tH^0(G^\vee(-1))^\vee = 0$ it follows, from the Castelnuovo-Mumford lemma, that 
the graded $S$-module $\tH^1_\ast(G)$ is generated in degrees $\leq -1$. 

\vskip2mm 

\noindent 
{\bf Claim.}\quad \emph{The multiplication map} $\tH^1(G(-2)) \otimes 
\tH^0(\sco_\piii(1)) \ra \tH^1(G(-1))$ \emph{is surjective}. 

\vskip2mm 

\noindent 
\emph{Indeed}, if it is not then there exist two linearly independent 
linear forms $h_0$ and $h_1$ annihilating $\tH^1(G(-2))$ inside $\tH^1_\ast(G)$. 
Let $L \subset \piii$ be the line of equations $h_0 = h_1 = 0$. Tensorizing by 
$G$ the exact sequence $0 \ra \sco_\piii(-2) \ra 2\sco_\piii(-1) \ra \sci_L 
\ra 0$ one deduces that $\tH^0(\sci_L \otimes G) \neq 0$ which 
\emph{contradicts} the fact that $\tH^0(G) = 0$. 

\vskip2mm 

Consider, now, the universal extension$\, :$ 
\[
0 \lra G \lra B \lra \sco_\piii(2) \lra 0\, . 
\]
$B$ is a rank 4 vector bundle with $\tH^1(B(-1)) = 0$, $\tH^2(B(-2)) \simeq 
\tH^2(G(-2)) = 0$ and $\tH^3(B(-3)) \simeq \tH^3(G(-3)) = 0$. It follows that 
$B$ is 0-regular. One has 
$\h^0(B(-1)) = \h^0(\sco_\piii(1)) - \h^1(G(-1)) = 1$ and $\h^0(B) = \chi(B) =  
\chi(G) + \chi(\sco_\piii(2)) = 7$. One deduces that the graded $S$-module 
$\tH^0_\ast(B)$ has one minimal generator of degree $-1$ and three minimal 
generators of degree 0. The epimorphism $\sco_\piii(1) \oplus 3\sco_\piii \ra B$ 
defined by these generators must be an isomorphism because $B$ has rank 4. 
\end{proof}

\begin{lemma}\label{L:h2f(-3)=0}  
Let $F$ be a globally generated vector bundle on $\piii$ of rank $r \geq 3$, 
with $c_1 = 5$, $10 \leq c_2 \leq 12$, such that ${\fam0 H}^i(F^\vee) = 0$, 
$i = 0,\, 1$. Assume that $F$ can be realized as an extension$\, :$ 
\[
0 \lra (r - 3)\sco_\piii \lra F \lra G(2) \lra 0\, , 
\] 
where $G$ is a stable rank $3$ vector bundle with $c_1(G) = -1$ $($see 
Remark~\emph{\ref{R:fprimstable}}$)$. Assume, moreover, that 
${\fam0 H}^2(F(-3)) = 0$ and that $F$ is not the bundle from the conclusion of 
Lemma~\emph{\ref{L:h0fH(-3)neq0}}. Put $s := {\fam0 h}^1(F(-3)) - 
{\fam0 h}^1(F(-4))$. Then$\, :$ 

\emph{(a)} ${\fam0 H}^0(F_H^\vee) = 0$ and ${\fam0 h}^1(F_H^\vee) = s$, 
for any plane $H \subset \piii$$\, ;$

\emph{(b)} The graded $S$-module ${\fam0 H}_\ast^1(F)$ is generated in 
degrees $\leq -2$$\, ;$ 

\emph{(c)} If ${\fam0 H}^1(F(-4)) \neq 0$ then $s \geq 3$ and if, moreover, 
$s = 3$ then the multiplication map ${\fam0 H}^1(F(-4)) \otimes 
{\fam0 H}^0(\sco_\piii(1)) \ra {\fam0 H}^1(F(-3))$ is surjective$\, ;$ 

\emph{(d)} ${\fam0 H}^1(F^\vee(l)) = 0$ for $l \leq 0$ and, if 
${\fam0 h}^1(F(-4)) \leq 1$, then the graded $S$-module 
${\fam0 H}^1_\ast(F^\vee)$ is generated by ${\fam0 H}^1(F^\vee(1))$$\, ;$ 

\emph{(e)} ${\fam0 h}^1(F_H^\vee(1)) = {\fam0 h}^1(F^\vee(1)) + 
{\fam0 h}^2(F^\vee)$, for any plane $H \subset \piii$$\, ;$ 

\emph{(f)}  ${\fam0 h}^1(F_H^\vee(l)) \leq 
\max \, ({\fam0 h}^1(F_H^\vee(l-1)) - 1 , 0)$, $\forall \, l \geq 1$,  
for any plane $H \subset \piii$.    
\end{lemma}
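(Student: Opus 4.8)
The plan is to express every cohomology group occurring in the statement through the stable bundle $G$ and to play the two Serre dualities — on $\piii$ and on a plane $H \simeq \pii$ — against one another. From the defining extension one reads off $\tH^i(F(l)) \izo \tH^i(G(l+2))$ for $i = 1,\, 2$ and all $l$, while the stability of $G$ (with $c_1(G) = -1 < 0$) forces $\tH^0(F(-2)) = 0$, hence also $\tH^0(F(-3)) = 0$. Serre duality on $\piii$ gives $\tH^1(F^\vee(l)) \izo \tH^2(F(-4-l))^\vee$ and $\tH^2(F^\vee(l)) \izo \tH^1(F(-4-l))^\vee$, and on a plane $\h^i(F_H^\vee(l)) = \h^{2-i}(F_H(-3-l))$. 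The recurring input is that, because $F$ is excluded from the conclusion of Lemma~\ref{L:h0fH(-3)neq0}, one has $\tH^0(F_H(-3)) = 0$ for every plane $H$; since $\tH^0(F(-3)) = 0$ as well, this is exactly the injectivity of multiplication $h \colon \tH^1(F(-4)) \ra \tH^1(F(-3))$ by every nonzero linear form (its kernel is the cokernel of $\tH^0(F(-3)) \ra \tH^0(F_H(-3))$).

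For (a) the vanishing $\tH^0(F_H^\vee) \izo \tH^1(F^\vee(-1)) \izo \tH^2(F(-3))^\vee = 0$ is immediate from the hypothesis. For the second assertion I compute $\h^1(F_H^\vee) = \h^1(F_H(-3))$ by planar Serre duality and read $\h^1(F_H(-3))$ off the restriction sequence of $F(-4) \ra F(-3)$: since $\tH^2(F(-4)) \izo \tH^1(F^\vee)^\vee = 0$ and $\tH^2(F(-3)) = 0$, it equals $\dim \Cok(h \colon \tH^1(F(-4)) \ra \tH^1(F(-3)))$, which is $s$ precisely because $h$ is injective. Part (c) uses the same pairing: applying the Bilinear Map Lemma \cite[Lemma~5.1]{ha} to the nondegenerate map $\tH^1(F(-4)) \times \tH^0(\sco_\piii(1)) \ra \tH^1(F(-3))$ gives $\h^1(F(-3)) \geq \h^1(F(-4)) + 3$ once $\tH^1(F(-4)) \neq 0$, i.e. $s \geq 3$; and the equality case forces the induced map on the tensor product to be onto when $s = 3$. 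The first half of (d) and all of (e) are then pure diagram chasing: $\tH^1(F^\vee(l)) \izo \tH^2(F(-4-l))^\vee = 0$ for $l \leq -1$ (as $-4-l \geq -3$) and $= 0$ for $l = 0$ by hypothesis; while in the restriction sequence of $F^\vee \ra F^\vee(1)$ the group $\tH^2(F^\vee(1)) \izo \tH^1(F(-5))^\vee$ vanishes by Lemma~\ref{L:h2f(-2)=0}(a), so $\tH^1(F^\vee) = 0$ yields $0 \ra \tH^1(F^\vee(1)) \ra \tH^1(F_H^\vee(1)) \ra \tH^2(F^\vee) \ra 0$ and hence $\h^1(F_H^\vee(1)) = \h^1(F^\vee(1)) + \h^2(F^\vee)$.

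The generation statements (b), (f) and the second half of (d) are the heart of the matter and are handled by Castelnuovo–Mumford regularity on planes together with the co-multiplication form of the Bilinear Map Lemma. On any plane $H$ the bundle $F_H$ is $1$-regular — indeed $\tH^1(F_H) = 0$ by the proof of \cite[Prop.~3.6]{acm1} and $\tH^2(F_H(-1)) \izo \tH^0(F_H^\vee(-2))^\vee = 0$ — so $\tH^1(F_H(l)) = 0$ for $l \geq 0$; feeding this into the restriction sequences of $F(l-1) \ra F(l)$ shows that $\tH^1(F(l-1)) \otimes \tH^0(\sco_\piii(1)) \ra \tH^1(F(l))$ is onto for $l \geq 0$, leaving only the boundary degree $l = -1$, which I dispatch using $\tH^0(F_H(-3)) = 0$ together with the spectrum; this proves (b). For (f) I restrict $F_H^\vee$ to a general line $L$: since $\rk F \geq 3$ and $c_1 = 5$ the generic splitting type of $(F_H)_L$ has all summands of degree $\leq 2$, whence $\tH^1((F_H^\vee)_L(l)) = 0$ for $l \geq 1$ and $\tH^1(F_H^\vee(l-1)) \ra \tH^1(F_H^\vee(l))$ is onto for a general line; applying the Bilinear Map Lemma to a general pencil of lines then produces the drop-by-one estimate. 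Finally the second half of (d) combines (e) and (f): under $\h^1(F(-4)) \leq 1$ one has $\h^2(F^\vee) = \h^1(F(-4)) \leq 1$, so (e) bounds $\h^1(F_H^\vee(1))$ and (f) forces $\tH^1(F_H^\vee(l)) = 0$ for $l \geq 2$; since $\tH^2(F^\vee(l-1)) \izo \tH^1(F(-3-l))^\vee = 0$ for $l \geq 2$, the restriction sequences give surjectivity of $\tH^1(F^\vee(l-1)) \otimes \tH^0(\sco_\piii(1)) \ra \tH^1(F^\vee(l))$ for $l \geq 2$, i.e. generation by $\tH^1(F^\vee(1))$.

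I expect the genuine difficulty to be concentrated in the generation claims: controlling the boundary degree $l = -1$ in (b), ensuring in (f) that a general pencil of lines avoids the jumping-line locus of $F_H$ so that the Bilinear Map Lemma applies with a two-dimensional space of forms, and verifying in (d) that the hypothesis $\h^1(F(-4)) \leq 1$ is exactly enough to bound $\h^1(F^\vee(1)) + \h^2(F^\vee)$ by $1$. The dualities and Bilinear Map Lemma inputs underlying (a), (c), (e) are, by contrast, routine once the injectivity of $h$ on $\tH^1(F(-4))$ — equivalently, the exclusion of the bundle of Lemma~\ref{L:h0fH(-3)neq0} — is in hand.
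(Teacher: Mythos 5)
Your parts (a), (c), (e) and the vanishing half of (d) are correct and essentially coincide with the paper's proof (for (a) the paper works with the dual restriction sequence and $\tH^2(F_H^\vee) \simeq \tH^0(F_H(-3))^\vee = 0$, which is the Serre-dual form of your computation with $\tH^1(F_H(-3))$). The difficulties are concentrated exactly where you predicted, and in each of the three places your sketch does not close. For (b), your restriction argument yields surjectivity of $\tH^1(F(l-1)) \otimes \tH^0(\sco_\piii(1)) \ra \tH^1(F(l))$ only for $l \geq 0$, where $\tH^1(F_H(l)) = 0$; the remaining target degree $-1$ is the entire content of the statement, and ``dispatch using $\tH^0(F_H(-3)) = 0$ together with the spectrum'' is not an argument --- the spectrum controls $\h^1(F(l))$ only for $l \leq -3$ and says nothing about $\tH^1(F(-2)) \ra \tH^1(F(-1))$. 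The paper gets (b) in one line from the generalized Castelnuovo--Mumford lemma \cite[Lemma~1.21]{acm1}, using precisely $\tH^2(F(-3)) = 0$ and $\tH^3(F(-4)) \simeq \tH^0(F^\vee)^\vee = 0$; the fact that your proof of (b) never uses the hypothesis $\tH^2(F(-3)) = 0$ should have been a warning sign.

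For (f), the strict drop by one is a genuine theorem about stable bundles on $\pii$, not a formal consequence of generic splitting types: your claim that all summands of $(F_H)_L$ on a general line have degree $\leq 2$ is unjustified, and even granting it, surjectivity of $h \colon \tH^1(F_H^\vee(l-1)) \ra \tH^1(F_H^\vee(l))$ for general $h$ only gives non-increase, while the Bilinear Map Lemma applied to a nondegenerate pairing produces \emph{lower} bounds on the target, i.e.\ an inequality in the wrong direction. The paper proves (f) by reducing, for $l = 1$, to a general plane where $G_H$ is stable (both sides being independent of $H$ by (a) and (e)) and invoking \cite[Prop.~1.6(b)]{co1}, and, for $l \geq 2$, to a stable rank~$2$ quotient $Q$ of $F_H$ (stability coming from $\tH^0(F_H(-3)) = 0$) and invoking \cite[Thm.~5.3]{ha}. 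Finally, your deduction of the second half of (d) from (e) and (f) fails: (f) forces $\tH^1(F_H^\vee(l)) = 0$ for $l \geq 2$ only if $\h^1(F_H^\vee(1)) \leq 1$, and (e) gives $\h^1(F_H^\vee(1)) = \h^1(F^\vee(1)) + \h^2(F^\vee)$, which you cannot bound by $1$ without knowing $\tH^1(F^\vee(1)) = 0$ --- which is not available (in the situation of Lemma~\ref{L:(1,0,0,-1)} one only gets $\h^1(F^\vee(1)) \leq 1$, and the generation statement of (d) is used there in an essential way). The paper's argument is different and is the one missing idea you needed: when $\h^1(F(-4)) = 1$, form the extension $0 \ra F(-4) \ra A \ra \sco_\piii \ra 0$ killing $\tH^1(F(-4))$, check $\tH^0(A(1)) = 0$ from the injectivity of multiplication by every linear form on $\tH^1(F(-4))$, and apply the Castelnuovo--Mumford lemma to $A^\vee$, using $\tH^1_\ast(A^\vee) \izo \tH^1_\ast(F^\vee(4))$.
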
 

\begin{proof} 
(a) Since $\tH^i(F^\vee) = 0$, $i = 0,\, 1$, one has $\tH^0(F_H^\vee) \izo 
\tH^1(F^\vee(-1))$. But $\tH^1(F^\vee(-1)) \simeq \tH^2(F(-3))^\vee = 0$ hence 
$\tH^0(F_H^\vee) = 0$. For the second relation one uses the exact 
sequence$\, :$ 
\[
0 = \tH^1(F^\vee) \ra \tH^1(F_H^\vee) \ra \tH^2(F^\vee(-1)) \ra \tH^2(F^\vee) 
\ra \tH^2(F_H^\vee) 
\] 
and the fact that, by Lemma~\ref{L:h0fH(-3)neq0}, $\tH^2(F_H^\vee) \simeq 
\tH^0(F_H(-3))^\vee = 0$. 

\vskip2mm 

(b) This follows from the Castelnuovo-Mumford lemma (in the slightly more 
general form stated in \cite[Lemma~1.21]{acm1}) because $\tH^2(F(-3)) = 0$ 
and $\tH^3(F(-4)) \simeq \tH^0(F^\vee)^\vee = 0$. 

\vskip2mm 

(c) Since $\tH^0(F_H(-3)) = 0$, for any plane $H \subset \piii$, by 
Lemma~\ref{L:h0fH(-3)neq0}, one deduces that the multiplication by any 
non-zero linear form $h \colon \tH^1(F(-4)) \ra \tH^1(F(-3))$ is injective. 
One applies, now, the Bilinear Map Lemma \cite[Lemma~5.1]{ha}.  

\vskip2mm 

(d) One has, by hypothesis, $\tH^1(F^\vee) = 0$ and $\tH^1(F^\vee(-1)) \simeq 
\tH^2(F(-3))^\vee = 0$. On the other hand, for $l \leq -2$, $\tH^1(F^\vee(l)) 
\simeq \tH^2(F(-l-4))^\vee = 0$, by Lemma~\ref{L:h2f(-2)=0}(b). 

Now, if $\tH^1(F(-4)) = 0$ then, by Serre duality, $\tH^2(F^\vee) = 0$ and  
$\tH^3(F^\vee(-1)) \simeq \tH^0(F(-3))^\vee = 0$. It follows, from the 
Castelnuovo-Mumford lemma, that $\tH^1_\ast(F^\vee)$ is generated in degrees 
$\leq 1$ hence, actually, by $\tH^1(F^\vee(1))$. 

Assume, finally, that $\h^1(F(-4)) = 1$ and consider the extension$\, :$ 
\[
0 \lra F(-4) \lra A \lra \sco_\piii \lra 0 
\]
defined by a non-zero element of $\tH^1(F(-4)) \simeq k$. One has $\tH^1(A) 
= 0$. Moreover, as we noticed in the proof of (c), the multiplication by 
any non-zero linear form $h \colon \tH^1(F(-4)) \ra \tH^1(F(-3))$ is injective. 
This implies that $\tH^0(A(1)) = 0$. It follows, by Serre duality, that 
$\tH^2(A^\vee(-4)) = 0$ and $\tH^3(A^\vee(-5)) = 0$. One deduces, from the 
Castelnuovo-Mumford lemma, that $\tH^1_\ast(A^\vee)$ is generated in degrees 
$\leq -3$. But $\tH^1_\ast(A^\vee) \izo \tH^1_\ast(F^\vee(4))$. 

\vskip2mm 

(e) One uses the exact sequence$\, :$ 
\[
0 = \tH^1(F^\vee) \ra \tH^1(F^\vee(1)) \ra \tH^1(F_H^\vee(1)) \ra \tH^2(F^\vee) 
\ra \tH^2(F^\vee(1)) 
\] 
and the fact that $\tH^2(F^\vee(1)) \simeq \tH^1(F(-5))^\vee = 0$, by 
Lemma~\ref{L:h2f(-2)=0}(a).  

\vskip2mm 

(f) We treat, firstly, the case $l = 1$.  
If $H \subset \piii$ is an arbitrary plane then $\h^1(F_H^\vee) = s$ (by 
(a)) and $\h^1(F_H^\vee(1)) = \h^1(F^\vee(1)) + \h^2(F^\vee)$ (by (e)). It 
follows that, in order to prove the inequality from the statement for $l = 1$, 
one can assume that $H$ is a \emph{general} plane. We shall, actually, assume 
that $G_H$ is stable (using the restriction theorem of 
Schneider \cite{sch}$\, ;$ see, also, Ein et al. \cite[Thm.~3.4]{ehv}).   
By Serre duality on $H$, one has $\h^1(F_H^\vee) = \h^1(F_H(-3))$ and 
$\h^1(F_H^\vee(1)) \simeq \h^1(F_H(-4))$. Using the exact sequence$\, :$ 
\[
0 \lra (r - 3)\sco_H \lra F_H \lra G_H(2) \lra 0\, ,  
\]  
and applying \cite[Prop.~1.6(b)]{co1} (with $E = G_H$ and $N_l = 
\tH^1(F(l-2))$), one gets that $\h^1(F_H(-4)) = 0$ or $\h^1(F_H(-4)) < 
\h^1(F_H(-3))$. 

Assume, now, that $l \geq 2$ and let $H \subset \piii$ be an arbitrary plane. 
$r - 2$ general global sections of $F_H$ define an exact sequence$\, :$ 
\[
0 \lra (r - 2)\sco_H \lra F_H \lra Q^\prime \lra 0\, , 
\]
with $Q^\prime$ a rank 2 vector bundle on $H$ with $c_1(Q^\prime) = 5$. 
Consider the normalized rank 2 vector bundle $Q := Q^\prime(-3)$ which has 
$c_1(Q) = -1$. Since $\tH^0(F_H(-3)) = 0$, by Lemma~\ref{L:h0fH(-3)neq0}, 
it follows that $\tH^0(Q) = 0$, i.e., $Q$ is stable. Applying 
\cite[Thm.~5.3]{ha} (with $\sce = Q$ and $N_{-l} = \tH^1(F_H(-l-3))$) one gets 
that $\tH^1(F_H(-l-3)) = 0$ or $\h^1(F_H(-l-3)) < \h^1(F_H(-l-2))$ , i.e., 
$\tH^1(F_H^\vee(l)) = 0$ or $\h^1(F_H^\vee(l)) < \h^1(F_H^\vee(l-1))$. 
\end{proof} 

\begin{lemma}\label{L:(1,0,0,-1)} 
Let $F$ be a globally generated vector bundle on $\piii$ of rank $r \geq 3$, 
with $c_1 = 5$, $c_2 = 12$, and such that ${\fam0 H}^i(F^\vee) = 0$, $i = 0,\, 
1$. Assume that $F$ can be realized as an extension$\, :$ 
\[
0 \lra (r - 3)\sco_\piii \lra F \lra G(2) \lra 0\, , 
\] 
where $G$ is a stable rank $3$ vector bundle with $c_1(G) = -1$, $c_2(G) = 4$ 
and spectrum $k_G = (1 , 0 , 0 , -1)$ $($see 
Remark~\emph{\ref{R:fprimstable}}$)$. Then $F$ is the kernel of an 
epimorphism $4\sco_\piii(2) \oplus \sco_\piii(1) \ra \sco_\piii(4)$. 
\end{lemma}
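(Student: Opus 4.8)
The plan is to pin down the cohomology of $F$ from the spectrum, reduce the assertion to a single vanishing via Lemma~\ref{L:h2f(-3)=0}, prove that vanishing by a third Chern class argument, and then read off the resolution of $F^\vee$. First I would compute the invariants: since the spectrum is $(1,0,0,-1)$ one has $\sum k_i = 0$, so by Remark~\ref{R:fprimstable} $c_3(G) = -c_2(G) = -4$ and $c_3 = c_3(G) + 2c_2 - 12 = 8$; moreover $\h^2(G(-2)) = 1$, whence $r = 3 + \h^2(G(-2)) = 4$. Reading $\tH^1_\ast(F) \simeq \tH^1_\ast(G(2))$ off the defining extension and combining the spectrum with Lemma~\ref{L:h2f(-2)=0}(b) gives $\h^1(F(-4)) = 1$, $\h^1(F(-3)) = 4$, $\h^1(F(-2)) = 6$ and $\tH^2(F(l)) = 0$ for $l \geq -3$; in particular $\tH^2(F(-3)) = 0$. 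As $c_2 = 12$, $F$ is not the bundle of Lemma~\ref{L:h0fH(-3)neq0}, so Lemma~\ref{L:h2f(-3)=0} applies with $s = \h^1(F(-3)) - \h^1(F(-4)) = 3$.

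Next I would reformulate the goal. Dualizing, the assertion is equivalent to $F^\vee$ admitting a resolution $0 \to \sco_\piii(-4) \to 4\sco_\piii(-2) \oplus \sco_\piii(-1) \to F^\vee \to 0$. Any bundle $E$ on $\piii$ with $\tH^1_\ast(E) = 0$ has such a length-one resolution by sums of line bundles: take a minimal surjection $L_0 \to E$ from a sum of line bundles, so its kernel $K'$ satisfies $\tH^1_\ast(K') = 0$ (minimality) and $\tH^2_\ast(K') \simeq \tH^1_\ast(E) = 0$, and conclude by Horrocks' splitting criterion that $K'$ splits. Hence it suffices to prove $\tH^1_\ast(F^\vee) = 0$. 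By Lemma~\ref{L:h2f(-3)=0}(d) we have $\tH^1(F^\vee(l)) = 0$ for $l \leq 0$ and $\tH^1_\ast(F^\vee)$ is generated by $\tH^1(F^\vee(1))$, so everything comes down to showing $\tH^1(F^\vee(1)) = 0$.

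The heart of the proof — and the step I expect to be the main obstacle — is this vanishing. Dualizing $0 \to \sco_\piii \to F \to G(2) \to 0$ gives $0 \to G^\vee(-2) \to F^\vee \to \sco_\piii \to 0$; its class $\e \in \tH^1(G^\vee(-2))$ is nonzero (otherwise $\tH^0(F^\vee) \neq 0$) and so spans this one-dimensional space. Twisting by $1$ and passing to cohomology identifies $\tH^1(F^\vee(1))$ with the cokernel of $\delta = (\,\cdot\,\e)\colon \tH^0(\sco_\piii(1)) \to \tH^1(G^\vee(-1))$, a map $k^4 \to k^3$, so I must show $\delta$ is surjective. If it were not, its kernel would contain two independent linear forms $h_0, h_1$ annihilating $\e$. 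Tensoring the Koszul resolution $0 \to \sco_\piii(-2) \to 2\sco_\piii(-1) \to \sci_L \to 0$ of the line $L = \{h_0 = h_1 = 0\}$ by $G^\vee$, and using $\tH^0(G^\vee(-1)) = 0$, yields $\tH^0(\sci_L \otimes G^\vee) \simeq \tH^1(G^\vee(-2)) \neq 0$, i.e. a nonzero morphism $\psi\colon G \to \sci_L$. Stability of $G$ forces $\mathrm{im}\,\psi$ to have vanishing first Chern class — a divisorial part would give a quotient line bundle of slope $\leq -1 < \mu(G) = -\tfrac13$ — so $K := \Ker \psi$ is a rank $2$ reflexive sheaf with $c_1(K) = -1$, and a Chern class computation (e.g. $c_2(K) = 3$, $c_3(K) = -5$ when $\mathrm{im}\,\psi = \sci_L$) gives $c_3(K) < 0$, contradicting the inequality $c_3 \geq 0$ for rank $2$ reflexive sheaves on $\piii$ (\cite[Thm.~8.2]{ha}). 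Thus $\delta$ is surjective and $\tH^1(F^\vee(1)) = 0$.

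Finally, with $\tH^1_\ast(F^\vee) = 0$ in hand, I would identify the resolution. The remaining cohomology of $F^\vee(l)$ being controlled by Lemma~\ref{L:h2f(-2)=0}(a), a Riemann--Roch computation (using $0 \to G^\vee \to F^\vee(2) \to \sco_\piii(2) \to 0$ and the Euler characteristics of $G^\vee(-1)$ and $G^\vee$) gives $\h^0(F^\vee(1)) = 1$, $\h^0(F^\vee(2)) = 8$ and $\h^0(F^\vee(l)) = 0$ for $l \leq 0$. Since multiplication by $\tH^0(\sco_\piii(1))$ carries the one-dimensional $\tH^0(F^\vee(1))$ isomorphically onto a four-dimensional subspace of $\tH^0(F^\vee(2))$, the module $\tH^0_\ast(F^\vee)$ has exactly one minimal generator in degree $1$ and four in degree $2$; hence $L_0 = 4\sco_\piii(-2) \oplus \sco_\piii(-1)$, the splitting kernel $K'$ has rank $1$, and $c_1(F^\vee) = -5$ forces $K' = \sco_\piii(-4)$. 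Dualizing the resulting resolution $0 \to \sco_\piii(-4) \to 4\sco_\piii(-2) \oplus \sco_\piii(-1) \to F^\vee \to 0$ produces the asserted exact sequence.
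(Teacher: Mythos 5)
Your overall architecture is sound and genuinely different from the paper's: you reduce everything to the single vanishing $\tH^1(F^\vee(1)) = 0$ (which, combined with Lemma~\ref{L:h2f(-3)=0}(d) and Horrocks' splitting criterion, does yield a two-term resolution of $F^\vee$ by sums of line bundles), whereas the paper only proves $\h^1(F^\vee(1)) \leq 1$, writes down the Horrocks monad $0 \ra \sco_\piii(-1) \ra 6\sco_\piii \oplus 2\sco_\piii(-1) \ra \sco_\piii(2) \oplus 2\sco_\piii \ra 0$ of $F(-2)$, and then spends a page (the ``Claim'' at the end of its proof) showing that the component $\sco_\piii(-1) \ra 2\sco_\piii(-1)$ of the left-hand differential is non-zero --- which is exactly equivalent to your vanishing. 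Your preliminary computations ($r=4$, $c_3=8$, $\h^1(F(-4))=1$, $\h^1(F(-3))=4$, $\h^1(F(-2))=6$, $\h^2(G(-3))=3$, the identification of $\tH^1(F^\vee(1))$ with $\Cok\delta$, and the concluding bookkeeping) all check out.

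The gap is in the proof that $\delta$ is surjective. Granting the two linear forms $h_0, h_1$ and the resulting non-zero $\psi \colon G \ra \sco_\piii$ factoring through $\sci_L$, stability only tells you that $\mathrm{im}\, \psi = \sci_W$ for some closed subscheme $W$ of codimension $\geq 2$ containing $L$; there is no reason to have $W = L$. In general $c_3(\Ker \psi) = -4 - 3\deg W + 2\chi(\sco_W)$, which is $-5$ when $W = L$ but need not be negative otherwise: for instance $W = L$ together with three isolated points gives $c_2(\Ker\psi)=3$ and $c_3(\Ker\psi)=1$, which is perfectly consistent with $\Ker\psi$ being a stable rank $2$ reflexive sheaf (and with \cite[Thm.~8.2]{ha}). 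So the contradiction is obtained only in the one case you compute, and the ``e.g.''\ hides precisely the hard part. This is not a cosmetic omission: the statement $\tH^1(F^\vee(1))=0$ is the whole difficulty of the lemma --- it is what the paper's lengthy final Claim establishes, via an argument that analyses a morphism $\text{T}_\piii(-1) \ra \sco_\piii(2)$ with zero-dimensional degeneracy locus of length $5$ and derives an impossible presentation $3\sco_\piii \ra 2\sco_\piii(1) \ra \sco_Z(4) \ra 0$ --- and one should not expect it to fall out of a two-line Chern class count. To repair your route you would have to classify the possible images $\sci_W$ (using that $\sci_W(2)$ is globally generated as a quotient of $G(2)$) and exclude each one, at which point the work is comparable to the paper's.
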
 

\begin{proof} 
One has $c_3(G) = -4$ hence $c_3 = 8$. Moreover, $r = 4$ because $\h^2(G(-2)) 
= 1$ (one uses the spectrum). By Lemma~\ref{L:h2f(-3)=0}(b), the graded 
$S$-module $\tH^1_\ast(F)$ is generated in degrees $\leq -2$.  
One has $\h^1(F(-4)) = \h^1(G(-2)) = 1$ 
and $\h^1(F(-3)) = 4$ hence, by Lemma~\ref{L:h2f(-3)=0}(c), the multiplication 
map $\tH^1(F(-4)) \otimes \tH^0(\sco_\piii(1)) \ra \tH^1(F(-3))$ is bijective. 
If $H \subset \piii$ is a general plane, of equation $h = 0$, then $G_H$ is 
stable. In particular, $\tH^0(F_H(-2)) = \tH^0(G_H) = 0$ hence multiplication 
by $h \colon \tH^1(F(-3)) \ra \tH^1(F(-2))$ is injective. Since $\h^1(F(-2)) 
= 6$ (see Lemma~\ref{L:h2f(-2)=0}(b)), one deduces that the multiplication 
map $\tH^1(F(-3)) \otimes \tH^0(\sco_\piii(1)) \ra \tH^0(F(-2))$ has 
corank $\leq 2$. It follows that the graded $S$-module $\tH^1_\ast(F)$ has 
one minimal generator of degree $-4$ and at most two minimal generators of 
degree $-2$. 

On the other hand, by Lemma~\ref{L:h2f(-3)=0}(d), the graded $S$-module 
$\tH^1_\ast(F^\vee)$ is generated by $\tH^1(F^\vee(1))$. We want to estimate 
$\h^1(F^\vee(1))$. Let $H \subset \piii$ be a plane. $\h^1(F_H^\vee) = 3$ (by 
Lemma~\ref{L:h2f(-3)=0}(a)) hence $\h^1(F_H^\vee(1)) \leq 2$ (by 
Lemma~\ref{L:h2f(-3)=0}(f)). Since $\h^2(F^\vee) = \h^1(F(-4)) = 1$, it 
follows, from Lemma~\ref{L:h2f(-3)=0}(e), that $\h^1(F^\vee(1)) \leq 1$. 

By what has been proven so far, $F(-2)$ is the cohomology sheaf of a Horrocks 
monad of the form$\, :$ 
\[
0 \lra \sco_\piii(-1) \overset{\beta}{\lra} B \overset{\alpha}{\lra} 
\sco_\piii(2) \oplus 2\sco_\piii \lra 0\, , 
\]  
where $B$ is a direct sum of line bundles. $B$ must have rank 8, $\h^0(B) = 
\h^0(\sco_\piii(2) \oplus 2\sco_\piii) - \h^1(F(-2)) = 6$, $\h^0(B(-1)) = 0$ 
and $\tH^0(B^\vee(-2)) = 0$ (because $\tH^0(F^\vee) = 0$). It follows that 
$B \simeq 6\sco_\piii \oplus 2\sco_\piii(-1)$. Since there is no epimorphism 
$2\sco_\piii(-1) \ra \sco_\piii$, the component $6\sco_\piii \ra 2\sco_\piii$ of 
$\alpha$ must be surjective hence $F(-2)$ is the cohomology sheaf of a 
monad of the form$\, :$ 
\[
0 \lra \sco_\piii(-1) \overset{\beta^\prim}{\lra} 4\sco_\piii \oplus 
2\sco_\piii(-1) \overset{\alpha^\prime}{\lra} \sco_\piii(2) \lra 0\, . 
\]
In order to complete the proof of the lemma, it suffices to verify the 
following$\, :$ 

\vskip2mm 

\noindent 
{\bf Claim.}\quad \emph{The component} $\sco_\piii(-1) \ra 2\sco_\piii(-1)$ 
\emph{of} $\beta^\prim$ \emph{is non-zero}. 

\vskip2mm 

\noindent 
\emph{Indeed}, assume, by contradiction, that this component is zero. 
Then one has an exact sequence$\, :$ 
\[
0 \lra F(-2) \lra \text{T}_\piii(-1) \oplus 2\sco_\piii(-1) 
\overset{\alpha^\secund}{\lra} \sco_\piii(2) \lra 0\, . 
\] 
Let $\alpha_1^\secund \colon \text{T}_\piii(-1) \ra \sco_\piii(2)$ and 
$\alpha_2^\secund \colon 2\sco_\piii(-1) \ra \sco_\piii(2)$ be the components of 
$\alpha^\secund$. $\Cok \alpha_1^\secund \simeq \sco_Z(2)$, for some closed 
subscheme $Z$ of $\piii$. Let $\pi$ denote the composite epimorphism$\, :$ 
\[
2\sco_\piii(-1) \overset{\alpha_2^\secund}{\lra} \sco_\piii(2) \lra \sco_Z(2)\, . 
\]
Restricting to $Z$ the exact sequence$\, :$ 
\[
0 \lra \Ker \alpha_1^\secund \lra F(-2) \lra 2\sco_\piii(-1) \overset{\pi}{\lra} 
\sco_Z(2) \lra 0 
\]
one gets an epimorphism $F_Z(-2) \ra \sco_Z(-4)$. Since $F$ is globally 
generated, it follows that $\dim Z \leq 0$. Since $c_3(\Omega_\piii(3)) = 5$, 
$Z$ is a 0-dimensional subscheme of $\piii$ of length 5. $\alpha_1^\secund$ 
can be extended to a Koszul resolution of $\sco_Z(2)$$\, :$ 
\[
0 \lra \sco_\piii(-3) \lra \Omega_\piii \lra \text{T}_\piii(-1) 
\overset{\alpha_1^\secund}{\lra} \sco_\piii(2) \lra \sco_Z(2) \lra 0
\] 
(we used the fact that ${\bigwedge}^2(\text{T}_\piii(-1)) \simeq 
\Omega_\piii(2)$). One gets an exact sequence$\, :$ 
\[
0 \lra \sco_\piii(-3) \lra \Omega_\piii \lra F(-2) \lra 2\sco_\piii(-1) 
\overset{\pi}{\lra} \sco_Z(2) \lra 0\, . 
\]
Since $\sci_Z(1)$ is not globally generated the map $\tH^0(\pi(1)) \colon 
\tH^0(2\sco_\piii) \ra \tH^0(\sco_Z(3))$ is injective. One gets that 
$\tH^0(F(-1)) = 0$ hence $\h^1(F(-1)) = 3$ (by Lemma~\ref{L:h2f(-2)=0}(b)). 
It follows, from the last asssertion in Lemma~\ref{L:h2f(-2)=0}(b), that 
$\tH^1(F) = 0$. The above exact sequence implies, now, that 
$\tH^1(\Ker \pi(2)) = 0$ and that $\Ker \pi(2)$ is globally generated. 
Using the exact sequence$\, :$ 
\[
0 \lra \Ker \pi(2) \lra 2\sco_\piii(1) \lra \sco_Z(4) \lra 0 
\]   
one deduces that $\h^0(\Ker \pi(2)) = \h^0(2\sco_\piii(1)) - \h^0(\sco_Z(4)) 
= 3$. One obtains, now, an exact sequence$\, :$ 
\[
3\sco_\piii \lra 2\sco_\piii(1) \lra \sco_Z(4) \lra 0\, . 
\]
But such an exact sequence cannot exist because $Z$ has codimension 3 in 
$\piii$. This \emph{contradiction} shows that the component $\sco_\piii(-1) 
\ra 2\sco_\piii(-1)$ of $\beta^\prim$ is non-zero and the claim is proven. 
\end{proof} 

\begin{remark}\label{R:(1,0,0,-1)} 
One can actually show that, under the hypothesis of Lemma~\ref{L:(1,0,0,-1)}, 
$F \simeq \sco_\piii(1) \oplus F_0$, where $F_0$ is the kernel of an 
epimorphism $4\sco_\piii(2) \ra \sco_\piii(4)$$\, :$ see Claim 4.5 in the proof 
of \cite[Prop.~4.13]{acm3}. 
\end{remark} 

\begin{lemma}\label{L:(1,1,0,-1)} 
Let $F$ be a globally generated vector bundle on $\piii$ of rank $r \geq 3$, 
with $c_1 = 5$, $c_2 = 12$, and such that ${\fam0 H}^i(F^\vee) = 0$, $i = 0,\, 
1$. Assume that $F$ can be realized as an extension$\, :$ 
\[
0 \lra (r - 3)\sco_\piii \lra F \lra G(2) \lra 0\, , 
\] 
where $G$ is a stable rank $3$ vector bundle with $c_1(G) = -1$, $c_2(G) = 4$ 
and spectrum $k_G = (1 , 1 , 0 , -1)$ $($see 
Remark~\emph{\ref{R:fprimstable}}$)$. Then $r = 4$, $c_3 = 6$ 
and ${\fam0 H}^1(F^\vee(1)) = 0$. 
\end{lemma}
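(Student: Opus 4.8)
The plan is to read off the numerical invariants from the spectrum and then to pin down $\h^1(F^\vee(1))$ by squeezing it between a lower and an upper bound supplied by Lemma~\ref{L:h2f(-3)=0}. First I would extract $c_3$ and $r$ directly from $k_G = (1,1,0,-1)$. By Remark~\ref{R:fprimstable}(iii) one has $-2\sum k_i = c_3 - c_2 + 4$; since $\sum k_i = 1$ and $c_2 = 12$, this gives $c_3 = 6$. For the rank I would use the formula $r = 3 + \h^2(G(-2))$ (obtained at the start of the proof of Prop.~\ref{P:fprimunstable}) together with Remark~\ref{R:fprimstable}(ii), which yields $\h^2(G(-2)) = \h^1(\bigoplus_{i=1}^4 \sco_\pj(k_i - 1)) = 1$, the only contribution coming from the entry $k_i = -1$. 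Hence $r = 4$.

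Next I would verify that Lemma~\ref{L:h2f(-3)=0} is applicable. Since $\tH^i((r-3)\sco_\piii(-3)) = 0$ for $i = 1, 2$, the defining extension gives $\tH^2(F(-3)) \simeq \tH^2(G(-1))$, and Remark~\ref{R:fprimstable}(ii) computes $\h^2(G(-1)) = \h^1(\bigoplus \sco_\pj(k_i)) = 0$, so $\tH^2(F(-3)) = 0$. Moreover $F$ cannot be the bundle from the conclusion of Lemma~\ref{L:h0fH(-3)neq0}, since that bundle has $c_2 = 10$ whereas here $c_2 = 12$. Thus all hypotheses of Lemma~\ref{L:h2f(-3)=0} are met, and I may freely use its parts (a), (e), (f).

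I would then compute the three numbers feeding the estimates. The same comparison of cohomology in the defining extension (using $\tH^i(\sco_\piii(l)) = 0$, $i=1,2$, for $l = -3, -4$) gives $\tH^1(F(-4)) \simeq \tH^1(G(-2))$ and $\tH^1(F(-3)) \simeq \tH^1(G(-1))$, and Remark~\ref{R:fprimstable}(i) yields $\h^1(G(-2)) = \h^0(\bigoplus \sco_\pj(k_i - 1)) = 2$ and $\h^1(G(-1)) = \h^0(\bigoplus \sco_\pj(k_i)) = 5$. Therefore $s = \h^1(F(-3)) - \h^1(F(-4)) = 3$, and by Serre duality on $\piii$ one has $\h^2(F^\vee) = \h^1(F(-4)) = 2$.

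The conclusion then follows by the squeeze, which is the only step with any real content. For an arbitrary plane $H \subset \piii$, Lemma~\ref{L:h2f(-3)=0}(a) gives $\h^1(F_H^\vee) = s = 3$, so Lemma~\ref{L:h2f(-3)=0}(f) at $l = 1$ bounds $\h^1(F_H^\vee(1)) \leq \max(s - 1, 0) = 2$. On the other hand, Lemma~\ref{L:h2f(-3)=0}(e) gives $\h^1(F_H^\vee(1)) = \h^1(F^\vee(1)) + \h^2(F^\vee) = \h^1(F^\vee(1)) + 2 \geq 2$. Comparing the two bounds forces $\h^1(F^\vee(1)) = 0$, as required. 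I do not expect any genuine obstacle here: the whole argument is a matter of feeding the spectrum into the apparatus of Lemma~\ref{L:h2f(-3)=0}, and the only point demanding care is getting the identifications $\tH^1(F(-4)) \simeq \tH^1(G(-2))$, $\tH^1(F(-3)) \simeq \tH^1(G(-1))$ and the \emph{spectrum bookkeeping} exactly right.
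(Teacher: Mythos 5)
Your proposal is correct and follows essentially the same route as the paper: read off $c_3=6$ and $r=4$ from the spectrum via Remark~\ref{R:fprimstable}, then squeeze $\h^1(F^\vee(1))$ between the upper bound from Lemma~\ref{L:h2f(-3)=0}(a),(f) and the lower bound from Lemma~\ref{L:h2f(-3)=0}(e) using $\h^2(F^\vee)=\h^1(F(-4))=2$. Your version merely spells out the spectrum bookkeeping and the verification of the hypotheses of Lemma~\ref{L:h2f(-3)=0} that the paper leaves implicit.
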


\begin{proof} 
Using Remark~\ref{R:fprimstable} one sees easily that $r = 4$ and $c_3(G) = 
-6$ hence $c_3 = 6$. Lemma~\ref{L:h2f(-3)=0}(a) implies that $\h^1(F_H^\vee) 
= 3$, for every plane $H \subset \piii$, while item (f) of the same lemma 
implies, now, that $\h^1(F_H^\vee(1)) \leq 2$. Since $\h^2(F^\vee) = 
\h^1(F(-4)) = 2$ (use the spectrum), one deduces, from 
Lemma~\ref{L:h2f(-3)=0}(e), that $\h^1(F^\vee(1)) = 0$.  
\end{proof}

\begin{remark}\label{R:(1,1,0,-1)} 
One can show that there is no bundle $F$ satisfying the hypothesis of 
Lemma~\ref{L:(1,1,0,-1)}$\, :$ see Case~7 in the proof of 
\cite[Prop.~4.13]{acm3}.  
\end{remark}

\end{document}